\documentclass[12pt]{amsart}
\usepackage{graphicx}
\usepackage{color}
\usepackage{verbatim}
\newtheorem{theorem}{Theorem}[section]
\newtheorem{corollary}[theorem]{Corollary}
\newtheorem{remark}[theorem]{Remark}

\newtheorem{lemma}[theorem]{Lemma}
\newtheorem{proposition}[theorem]{Proposition}
\newtheorem{definition}{Definition}[section]

\numberwithin{equation}{section}
\author{Xiaoli Han, Jiayu Li, Jun Sun, Jie Zhou}

\address{Xiaoli Han, Department of Mathematical Sciences, Tsinghua University, Beijing 100084, P. R. of China.}
\email{hanxiaoli@mail.tsinghua.edu.cn}

\address{Jiayu Li, School of Mathematical Sciences, University of Science and Technology of China, Hefei 230026, P. R. China}
\email{jiayuli@ustc.edu.cn}

\address{Jun Sun, School of Mathematics and Statistics, Wuhan University, Wuhan 430072, P. R. of China.}
\email{sunjun@whu.edu.cn}

\address{Jie Zhou, School of Mathematical Sciences, Captital Normal University, Beijing 100048, P. R. of China.}
\email{zhoujiemath@cnu.edu.cn}

\thanks {The first author is supported by National Key R$\&$D Program of China 2022YFA1005400. The second author is supported by NSFC No. 12531002, 12431004, 11721101. The third author is supported by NSFC No. 12671074, 12531002, 12271039. The fourth author is supported by NSFC No. 12301077, 12531001.}

\begin{document}

	\title[A Bernstein type theorem for a two dimensional translating soliton]
	{A Bernstein type theorem for a two dimensional translating soliton in ${\mathbb R}^4$ with finite total curvature}

\begin{abstract}
		  In this paper, we prove that any two dimensional complete translating soliton in ${\mathbb R}^4$ with finite total curvature is a plane. It is in fact a consequence of our main result.
\end{abstract}
\maketitle

	{\bf Mathematics Subject Classification (2020):} 53C42 (primary), 53E10 (secondary).

	\section{Introduction}
	
	\allowdisplaybreaks

\vspace{.1in}

Given an initial immersion $\Sigma_0=F_0(\Sigma)$,
we consider a one-parameter family of smooth maps
$F_t=F(\cdot, t):\Sigma\rightarrow M$,
	with the corresponding images
	$\Sigma_t=F_t(\Sigma)$
	satisfying the mean curvature flow equation (MCF):
	\begin{equation}\label{meaneqn}
		\left\{\begin{array}{lll}
			\displaystyle \frac{d}{dt}F(x,t)={\bf H}(x,t)\\
			\displaystyle \,\,\,\,\,\,F(x,0)=F_0(x).
		\end{array}\right.
	\end{equation}
	Here ${\bf H}(x,t)$ is the mean curvature vector of $\Sigma_t$ at $F(x,t)$ in $M$.

The mean curvature flow is the negative gradient flow of the area functional. If the flow exists globally and converges as time tends to  infinity, then the limit must be a minimal submanifold. It plays an important role in the study of the existence of minimal subanifolds in Riemannian manifolds, as well as in the study of the topology of submanifolds.

In general, the mean curvature flow develops singularities whenever the maximum norm of the second fundamental form blows up (\cite{Huisken1}).
Using the blow-up rate of the second fundamental form, Huisken classified the singularities of the mean curvature flow into two types (\cite{Huisken2}). Using the monotonicity formula introduced in the same work (\cite{Huisken2}), he proved that the smooth blow-up limits of Type I singularities are self-shrinking. Colding-Minicozzi (\cite{CM1}) further demonstrated that self-shrinkers serve as canonical singularity models not only for Type I singularities but also for generic singularities arising in mean curvature flow. They also established the uniqueness of cylindrical singularities of mean curvature flows in ${\mathbb R}^{n+1}$ and investigated the singular set of flows with generic singularities (\cite{CM2}, \cite{CM3}). Subsequently, they extended their analysis to singularities of mean curvature flows of arbitrary codimension (\cite{CM4}). Recently, Bamler-Kleiner proved the multiplicity one conjecture for mean curvature flows in ${\mathbb R}^3$, which states that every tangent flow of a 2-dimensional mean curvature flow in ${\mathbb R}^3$ is a smooth self-similar shrinker of one multiplicity (\cite{BK}).

By standard blow-up analysis (\cite{HuS2}), we see that any blow-up limit of a Type II singularity of the mean curvature flow yields an eternal solution to the mean curvature flow with a bounded second fundamental form in ${\mathbb C}^2$ .
	
	\vspace{.1in}
	
As a special model of Type II singularity, translating solitons play an important role in the study of mean curvature flow.
	
	\begin{definition}
		A submanifold $\Sigma^n$ in ${\mathbb R}^{n+k}$ is a \textbf{translating soliton} for the mean curvature flow if there is a constant unit vector ${\bf T}\in {\mathbb R}^{n+k}$, such that
		\begin{align*}
			\Sigma^n_t=\Sigma^n+t{\bf T}
		\end{align*}
		is a solution to the mean curvature flow. Equivalently, it holds
		\begin{equation}\label{e-TS}
			{\bf T}^{\perp}={\bf H}
		\end{equation}
		on $\Sigma^n$ everywhere, where ${\bf H}$ is the mean curvature vector of $\Sigma^n$ in ${\mathbb R}^{n+k}$.
	\end{definition}
	
    Hamilton proved that any strictly convex eternal solution to the mean curvature flow in ${\mathbb R}^{n+1}$ where the mean curvature attains its maximum at some point in space-time must be a translating soliton (\cite{Hamilton}). Using this result together with refined convexity estimates, Huisken-Sinestrari proved that the blow-up limits of Type II singularities for mean convex mean curvature flow are strictly convex translating solitons or products of strictly convex translating solitons with Euclidean subspaces (\cite{HuS1,HuS2}). Wang showed that a convex translating soliton to the mean curvature flow must be rotationally symmetric if it arises as the limit flow of a mean convex mean curvature flow in ${\mathbb R}^3$ (\cite{WangXJ}). Jian and his collaborators also studied properties of rotationally symmetric convex translating solitons (\cite{GHH,JLC,JJLW}, etc.). Bao and Shi, as well as Ma and Miquel, established several Bernstein type theorems for translating solitons in ${\mathbb R}^{n+1}$ (\cite{BS,MM}). Lynch and Tinaglia proved that any complete embedded translating soliton in ${\mathbb R}^3$ with finite total curvature is a vertical plane (\cite{LT}; see also \cite{Khan}). Xin obtained various rigidity results for higher codimension translating solitons (\cite{Xin}). Neves-Tian established a rigidity result for Lagrangian translating solitons in ${\mathbb R}^4$ (\cite{NT}). They showed that under certain natural assumptions-specifically, if the first Betti number of the Lagrangian translating soliton is finite, and either $\int_{\Sigma}|{\bf H}|^2d\mu$ is finite or $\Sigma$ is static and almost calibrated, then the translating soliton must be a plane.

    	\vspace{.1in}
	
Our main result in this paper is as follows:

    		\begin{theorem}\label{maintheorem}
		Let $\Sigma$ be a properly immersed  translating soliton in ${\mathbb R}^4$ satisfying
\begin{enumerate}
\item ${\rm {Area}}(\Sigma\cap B_R)\le \Lambda R^2$ for all $R>0$,
\item the genus of $\Sigma$ is finite,
\item there exists $R_k\to \infty$ such that
$$\lim_{k\to \infty}\int_{(A_{\frac{R_k}{2},4R_k})\cap \Sigma}|{\bf A}|^2d\mu=0,$$
where $A_{\frac{R_k}{2},4R_k}=B_{4R_k}(0)\backslash B_{\frac{R_k}{2}}(0)$,
\item $\lim_{|x|\to \infty} \int_{B_1(x)\cap \Sigma}|{\bf A}|^2d\mu\to 0.$
\end{enumerate} Then $\Sigma$ is a plane.
	\end{theorem}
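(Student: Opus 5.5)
We plan a blow-down argument at infinity, using the small energy on annuli to show that $\Sigma$ is asymptotically a union of flat planes with controlled multiplicity, and then a Gauss--Bonnet/flux type identity to force $\Sigma$ to be flat.

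\textbf{Step 1: decay of curvature at infinity.} By assumption (4), for $|x|$ large the energy $\int_{B_1(x)\cap\Sigma}|{\bf A}|^2d\mu$ is small. The translating soliton equation ${\bf H}={\bf T}^\perp$ gives $|{\bf H}|\le 1$. It is also an elliptic system, namely the minimal surface equation for the conformally changed metric $e^{\langle {\bf T},x\rangle}\delta$ (Ilmanen's observation). Together with the area bound (1), an $\varepsilon$-regularity theorem for surfaces with $L^2$-small second fundamental form and bounded mean curvature (Allard/Schoen--Simon type, or the graphical decomposition lemma) yields pointwise decay: $|{\bf A}|(x)\to 0$ as $|x|\to\infty$. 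Locally, $\Sigma\cap B_{1/2}(x)$ is then a union of at most $C(\Lambda)$ graphs with small gradient of second derivatives.

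\textbf{Step 2: blow-down along $R_k$.} Set $\Sigma_k=R_k^{-1}\Sigma$. On $\Sigma_k$ the mean curvature is $|{\bf H}_k|\le R_k^{-1}\to0$, the area ratios are bounded by $\Lambda$, and by (3) we have $\int_{A_{1/2,4}\cap\Sigma_k}|{\bf A}_k|^2\to 0$. By scaling invariance of the $L^2$ norm in dimension two and the $\varepsilon$-regularity of Step 1, $\Sigma_k\cap A_{1,2}$ converges smoothly to finitely many flat pieces. Since the limit is a stationary cone (a limit of surfaces with $H\to0$, using the monotonicity formula), these pieces are $2$-planes through $0$, counted with multiplicity. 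Hence for large $k$, $\Sigma\cap A_{R_k,2R_k}$ is a union of $m\le C\Lambda$ annuli, each nearly planar. The boundary curves $\gamma_i=\Sigma\cap\partial B_{R_k}$ are nearly round circles with geodesic curvature $\kappa_g\approx 1/R_k$, so $\int_{\gamma_i}\kappa_g\,ds\to 2\pi$.

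\textbf{Step 3: Gauss--Bonnet.} Apply Gauss--Bonnet to $\Sigma_{R_k}=\Sigma\cap B_{R_k}$, which is connected for large $k$ since $\Sigma$ is connected. Let $g$ be the finite genus from (2) and $m_k$ the number of boundary components. Then
\[
\int_{\Sigma_{R_k}}K\,d\mu+\sum_{i}\int_{\gamma_i}\kappa_g\,ds=2\pi(2-2g-m_k).
\]
The boundary term tends to $2\pi m_k$, so
\[
\int_{\Sigma_{R_k}}K\,d\mu\to 4\pi(1-g-m_k).
\]
Now use the soliton structure. Since $\Delta x={\bf H}={\bf T}^\perp$, we get $\Delta\langle x,{\bf T}\rangle=|{\bf H}|^2$. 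Integrating over $\Sigma_{R_k}$ and using $|\nabla^\Sigma\langle x,{\bf T}\rangle|\le1$,
\[
\int_{\Sigma_{R_k}}|{\bf H}|^2=\int_{\partial\Sigma_{R_k}}\langle {\bf T},\nu\rangle .
\]
On the nearly planar ends the conormal is $\nu\approx x/R_k$, so $\langle{\bf T},\nu\rangle\approx\langle {\bf T},x\rangle/R_k$. If ${\bf T}$ is not orthogonal to a limit plane $P$, this integral over the circle is bounded but the planar piece would conflict with ${\bf H}={\bf T}^\perp$: on the end, $|{\bf H}|\to|{\bf T}^{\perp_P}|>0$, which contradicts $|{\bf A}|\to0$ from Step 1. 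Hence every limit plane contains ${\bf T}$. This step, excluding planes not containing ${\bf T}$, is where I expect the main technical work.

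\textbf{Step 4: conclusion.} With all ends containing ${\bf T}$, I then want to show $\int_\Sigma|{\bf H}|^2<\infty$. Once that holds, $|{\bf A}|^2=|{\bf H}|^2-2K$ gives
\[
\int_\Sigma|{\bf A}|^2=\int_\Sigma |{\bf H}|^2-8\pi(1-g-m)<\infty,
\]
where $m=\lim m_k$. In fact the flux identity over the symmetric circles gives boundary contributions that tend to $0$ because the circles are symmetric. Hence $\int_\Sigma|{\bf H}|^2=0$, so ${\bf H}\equiv0$ and ${\bf T}$ is tangent everywhere. Then $\Sigma$ is minimal and invariant under translation in direction ${\bf T}$, so $\Sigma=\gamma\times\mathbb{R}{\bf T}$ with $\gamma$ a geodesic, that is, a line. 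Therefore $\Sigma$ is a plane.

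The hard part remains Step 3/4: the asymptotic limit $\langle x,{\bf T}\rangle$ growth and precise flux cancellation. I would first try to get quantitative decay $|{\bf A}|\le o(1/|x|)$ on the ends via the Simon-type three-annulus lemma applied to the graph functions.
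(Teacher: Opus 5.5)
\textbf{There is a genuine gap in Step 4.} Your Step 1 is the paper's Proposition~\ref{p-vanishing of second fundamental form}. Your flux identity $\int_{\Sigma\cap B_r}|{\bf H}|^2d\mu=\oint_{\partial(\Sigma\cap B_r)}\langle\nu,{\bf T}\rangle$ is exactly the one the paper closes with. The gap is the sentence that the boundary flux tends to $0$ ``because the circles are symmetric.'' This is the entire difficulty, and nothing in Steps 2--3 supports it.

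Write an end as a (multi)graph $u$ over a plane $P\ni{\bf T}$. Along the lifted circle $\gamma_r$ one has $\langle\nu,{\bf T}\rangle|\gamma_r'|=r\cos t+R$ with $|R|\le Cr|Du|^2$. The symmetric term integrates to zero, but the error is of size $\int_{\gamma_r}|Du|^2ds$. Your blow-down gives at best $|Du|=o(1)$ on $A_{R_k,2R_k}$, relative to a plane $P_k$ that you have not shown to be independent of $k$. So the error is only $o(1)\cdot R_k$. Since $r\mapsto\int_{\Sigma\cap B_r}|{\bf H}|^2$ is nondecreasing, such a bound tells you nothing.

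That limit planes contain ${\bf T}$ is the easy part; it is immediate from $|{\bf T}^\perp|=|{\bf H}|\le\sqrt2|{\bf A}|\to0$. What is missing is control, \emph{with a rate}, of the second tangent direction orthogonal to ${\bf T}$. The Joyce--Lee--Tsui surfaces of Section~5 show this is not a technicality. There $|{\bf A}|\to0$ and ${\bf T}$ is asymptotically tangent, yet $\int_{\Sigma\cap\{r<|x|<2r\}}|{\bf H}|^2d\mu$ stays bounded below by a positive constant, because $\cos\alpha_J$ has no limit at infinity for any $J$.

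Your fallback, $|{\bf A}|=o(1/|x|)$ via a three-annulus lemma, is not within reach of the tools you cite, nor would it suffice:
\begin{itemize}
\item Rescaling by $R_k^{-1}$ gives $|{\bf H}_k|=R_k|{\bf T}^\perp|\le R_k$, not $\le R_k^{-1}$ as you wrote. The drift $\langle\nabla x_1,\nabla\,\cdot\,\rangle$ dominates at large scales, so there is no scale-invariant equation.
\item For the same reason your monotonicity/cone argument does not apply, and there is no smooth convergence. You only get the $W^{2,2}$-weak, $W^{1,p}$-strong convergence of Proposition~\ref{structure of properly immersed small annulus}, which also leaves your geodesic-curvature asymptotics in Step~3 unjustified.
\item Even $o(1/|x|)$ curvature decay only bounds the tilt by $o(1)$ along a circle of length $\sim r$.
\end{itemize}

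\textbf{How the paper supplies the rate.} It works through K\"ahler angles: once ${\bf T}$ is asymptotically tangent, $\cos\alpha_J\approx\langle J{\bf T},e_2\rangle$ records exactly the second direction.
\begin{itemize}
\item Using (3) with $\delta_k=o(1/\log R_k)$, it analyses the level sets of the nonlinear equation for $\cos\alpha_J$. With the oscillation bound of Proposition~\ref{prop-osi} and pseudo negative gradient curves, it proves that $\cos\alpha_J$ converges on the annuli $\hat{\mathcal A}_k$.
\item It then picks $I_j$ with $\cos\alpha_{I_j}\to1$ there. Using the subsolution $\sin^2\frac{\alpha_{I_j}}{2}e^{x_1/2}$ and the Neves--Tian barrier, it propagates this to the whole end, obtaining $\sin^2\frac{\alpha_{I_j}}{2}\le B|x|^{-\beta}e^{-(|x|+x_1)/2}$ (Theorem~\ref{prop-alpha-decay}).
\item This exponential decay, plus the gradient bound of Corollary~\ref{cor-dalpha} near the direction $-{\bf T}$ (where the exponential gives nothing), makes $\int_0^{2\pi}r|\sin\alpha|\,dt\to0$ on the multigraph over $P_{I_j}=\mathrm{span}\{{\bf T},I_j{\bf T}\}$.
\end{itemize}
That is precisely the proof of the cancellation you assert without argument.

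Your Gauss--Bonnet bookkeeping plays no role in the conclusion, and it also ignores the covering multiplicity of the ends. It does not yield $\int_\Sigma|{\bf H}|^2<\infty$.
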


By Corollary \ref{cor-finite-total-curvature}, it is easy to see that

\begin{theorem}\label{maintheorem-1}
Any 2 dimensional complete translating soliton in ${\mathbb R}^4$ with finite total curvature is a plane.
	\end{theorem}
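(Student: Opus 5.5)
Theorem \ref{maintheorem-1} follows once we check that a complete translating soliton $\Sigma^2\subset{\mathbb R}^4$ with $\int_\Sigma|{\bf A}|^2d\mu<\infty$ satisfies hypotheses (1)--(4) of Theorem \ref{maintheorem}. The two tail conditions are immediate. For (3) we take $R_k=2^k$: the annuli $A_{R_k/2,4R_k}$ lie in the complement of $B_{R_k/2}$, and $\int_{\Sigma\setminus B_{R}}|{\bf A}|^2d\mu\to0$ as $R\to\infty$ by finiteness of the total curvature. For (4), $B_1(x)\subset {\mathbb R}^4\setminus B_{|x|-1}$, so the same tail estimate gives the limit.

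The substantive hypotheses are (1), quadratic area growth, (2), finite genus, and properness. For these I would invoke Corollary \ref{cor-finite-total-curvature}, which I expect is a structure result for complete surfaces with finite total curvature. The relevant classical facts, due to White and to M\"uller--\v{S}ver\'ak, are these: a complete immersed surface $\Sigma^2\subset{\mathbb R}^n$ with $\int|{\bf A}|^2<\infty$ is conformally a compact Riemann surface with finitely many punctures, which gives finite genus, and it is properly immersed. Moreover each end is, outside a compact set, a multigraph of finite multiplicity over a plane with decaying slope. Summing the end multiplicities $m_i$ gives ${\rm Area}(\Sigma\cap B_R)\le \pi(\sum m_i+o(1))R^2+C$, and hence $\le\Lambda R^2$ for all $R>0$ after enlarging $\Lambda$ to cover small $R$. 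Here small $R$ is handled by smoothness and local finiteness, since the immersion is proper. If the Corollary only supplies the Gauss--Bonnet/Chern--Osserman type bound, I would use the monotonicity-type identity $\Delta|x|^2=4+2\langle x,{\bf H}\rangle$ together with $|{\bf H}|\le1$ as a backup. That route needs care, because the soliton term $\langle x,{\bf T}^\perp\rangle$ is not obviously integrable.

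The only real obstacle is (1) uniformly in $R$, since the translating structure is not used in the classical theory. I would therefore take the area-growth statement directly from the earlier Corollary, which I assume was proved precisely to feed into Theorem \ref{maintheorem}. With (1)--(4) verified, Theorem \ref{maintheorem} yields that $\Sigma$ is a plane.
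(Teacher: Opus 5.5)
Your proposal is correct and follows the paper's route. The paper deduces Theorem \ref{maintheorem-1} from Theorem \ref{maintheorem} by citing Corollary \ref{cor-finite-total-curvature}, whose forward direction (via Huber's theorem and M\"uller--\v{S}ver\'ak) supplies exactly the quadratic area growth, finite genus and properness you need, while hypotheses (3) and (4) follow, as you say, from $\int_{\Sigma\setminus B_R}|{\bf A}|^2d\mu\to 0$, so your backup argument via $\Delta|x|^2$ is never needed.
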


    The following corollary is clear.
\begin{corollary}\label{thm-3-dim}
Any complete translating soliton in ${\mathbb R}^3$ with finite total curvature is a plane.
	\end{corollary}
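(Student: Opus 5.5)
The statement to prove is Corollary \ref{thm-3-dim}: a complete translating soliton in ${\mathbb R}^3$ with finite total curvature is a plane. The plan is to reduce it directly to Theorem \ref{maintheorem-1} by viewing the surface in ${\mathbb R}^4$.

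Let $\Sigma\subset{\mathbb R}^3$ be a complete translating soliton with translating vector ${\bf T}\in{\mathbb R}^3$, $|{\bf T}|=1$, and with $\int_\Sigma|{\bf A}|^2d\mu<\infty$. Consider the totally geodesic inclusion $\iota:{\mathbb R}^3\to{\mathbb R}^4$, $x\mapsto (x,0)$, and set $\tilde\Sigma=\iota(\Sigma)$. The first step is to check that $\tilde\Sigma$ is a translating soliton in ${\mathbb R}^4$ with respect to $\tilde{\bf T}=({\bf T},0)$.

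\begin{enumerate}
\item Because ${\mathbb R}^3\times\{0\}$ is totally geodesic, the second fundamental form of $\tilde\Sigma$ in ${\mathbb R}^4$ coincides with that of $\Sigma$ in ${\mathbb R}^3$. So $|\tilde{\bf A}|=|{\bf A}|$ and $\tilde{\bf H}=({\bf H},0)$.
\item The normal space of $\tilde\Sigma$ at a point splits as $N_x\Sigma\oplus{\mathbb R}e_4$. Since $\tilde{\bf T}$ has no $e_4$-component, $\tilde{\bf T}^\perp=({\bf T}^\perp,0)$.
\item Combining these, $\tilde{\bf T}^\perp=({\bf T}^\perp,0)=({\bf H},0)=\tilde{\bf H}$, so (\ref{e-TS}) holds for $\tilde\Sigma$.
\end{enumerate}

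Equivalently, at the level of the flow, $\Sigma+t{\bf T}$ solving MCF in ${\mathbb R}^3$ implies $\tilde\Sigma+t\tilde{\bf T}$ solves MCF in ${\mathbb R}^4$. This is because the mean curvature vector is computed identically in the totally geodesic slice.

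The surface $\tilde\Sigma$ is complete, since the induced metric is unchanged. It has finite total curvature, since $\int_{\tilde\Sigma}|\tilde{\bf A}|^2d\mu=\int_\Sigma|{\bf A}|^2d\mu<\infty$. Theorem \ref{maintheorem-1} therefore applies and shows that $\tilde\Sigma$ is a plane in ${\mathbb R}^4$. This plane lies in ${\mathbb R}^3\times\{0\}$, so $\Sigma$ is a plane in ${\mathbb R}^3$.

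The only point needing care is the identification of normal components under the inclusion, which is immediate from the orthogonal splitting in item 2. There is no real obstacle here: all the analytic content sits in Theorem \ref{maintheorem-1}, and through it in Theorem \ref{maintheorem} and Corollary \ref{cor-finite-total-curvature}.
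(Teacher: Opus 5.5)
Your proposal is correct and is exactly the argument the paper intends when it says the corollary ``is clear'' after Theorem \ref{maintheorem-1}. You embed $\mathbb{R}^3$ totally geodesically in $\mathbb{R}^4$, note that the soliton equation ${\bf T}^{\perp}={\bf H}$, completeness and $\int|{\bf A}|^2d\mu$ are all preserved, and then apply Theorem \ref{maintheorem-1}.
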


\begin{remark}\label{rmk-total curvature}
 In \cite{LT}, Lynch and Tinaglia proved Corollary \ref{thm-3-dim}
 under the additional assumption that the surface is embedded.
 \end{remark}

By Corollary \ref{cor-finite-total-curvature}, we see that the total curvature of a surface $\Sigma$ is finite if and only if $\Sigma$ has quadratic area growth, finite genus and mean curvature vector squared integrable. So we can restate our Theorem \ref{maintheorem-1}.

\begin{theorem}\label{maintheorem-H}
		Let $\Sigma$ be a complete 2 dimensional translating soliton in ${\mathbb R}^4$ satisfying
        \begin{enumerate}
		\item  ${\rm Area}(\Sigma\cap B_R(x))\leq \Lambda R^2$ for all $R>0$ and $x\in {\mathbb R}^4$;
		
		\item  the genus of $\Sigma$ is finite,

        \item $\int_{\Sigma}|{\bf H}|^2 d\mu<\infty$,
        \end{enumerate}
	 for some positive constant $\Lambda$. Then $\Sigma$ is a plane.
	\end{theorem}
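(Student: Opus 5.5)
We will deduce Theorem \ref{maintheorem-H} from Theorem \ref{maintheorem} by checking that the three hypotheses here imply hypotheses (1)--(4) there. Hypothesis (1) of Theorem \ref{maintheorem} follows at once from (1) here with $x=0$, and (2) is assumed directly. So the work is to show $\int_\Sigma |{\bf A}|^2 d\mu<\infty$. Once the total curvature is finite, hypotheses (3) and (4) follow. For (3), the annuli $A_{R_k/2,4R_k}$ with, say, $R_k=8^k$ eventually leave every compact set, so their $|{\bf A}|^2$-mass is a tail of a convergent integral and tends to $0$. For (4), properness of the immersion (see below) gives $\Sigma\cap B_1(x)\subset \Sigma\setminus B_{|x|-1}(0)$, whose $|{\bf A}|^2$-mass tends to $0$. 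Alternatively, one can simply invoke Corollary \ref{cor-finite-total-curvature}, which the paper states gives precisely the equivalence ``finite total curvature $\iff$ quadratic area growth, finite genus and $\int|{\bf H}|^2<\infty$'', and then apply Theorem \ref{maintheorem-1}.

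If I had to supply the finite total curvature argument myself, I would proceed as follows. First, I would obtain properness. Quadratic area growth together with $\int|{\bf H}|^2<\infty$ and the monotonicity formula for surfaces with $L^2$ mean curvature (Simon's monotonicity) show that the immersion is proper with bounded density. Next comes the Gauss equation, $2K=|{\bf H}|^2-|{\bf A}|^2$, so that
$$\int_{\Sigma\cap B_R}|{\bf A}|^2 d\mu=\int_{\Sigma\cap B_R}|{\bf H}|^2d\mu-2\int_{\Sigma\cap B_R}K\,d\mu.$$
It therefore suffices to bound $-\int K$ from above on exhausting domains $\Sigma_R$. For this I would apply Gauss--Bonnet on $\Sigma_R$:
$$\int_{\Sigma_R} K=2\pi\chi(\Sigma_R)-\int_{\partial\Sigma_R}k_g.$$
Finite genus gives $\chi(\Sigma_R)\le 2$ up to the number of boundary components. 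With suitably chosen radii, the boundary term is controlled via the coarea formula: one averages over $R\in[r,2r]$ and uses the area bound and $\int|{\bf H}|^2$ to bound $\int_{\partial \Sigma_R} |k_g|$ by a universal multiple of $\Lambda$ plus small terms, in the style of the Ilmanen/Chen--Li argument for surfaces with $L^2$-bounded mean curvature.

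The main obstacle is the boundary geodesic curvature term. I would handle it by writing $k_g$ along the level set $\{|x|=R\}$ in terms of the Hessian of $|x|^2$, which equals $2\,\mathrm{Id}+2\langle x,{\bf A}\rangle$. The $\langle x,{\bf A}\rangle$ contribution would then be estimated by Cauchy--Schwarz, choosing $R$ by averaging so that the error terms are absorbed. The bound on the number of boundary components, which is needed to control $\chi$ from below, comes from the area bound and the monotonicity formula applied to the ends.

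Since the paper provides Corollary \ref{cor-finite-total-curvature} precisely for this purpose, the intended proof is the short reduction: Corollary \ref{cor-finite-total-curvature} shows that the total curvature is finite, and then Theorem \ref{maintheorem-1} (equivalently, Theorem \ref{maintheorem} via the tail argument above) yields that $\Sigma$ is a plane.
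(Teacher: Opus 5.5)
Your proposal is correct and follows the paper's route: Corollary~\ref{cor-finite-total-curvature} turns hypotheses (1)--(3) into finite total curvature together with properness, and Theorem~\ref{maintheorem-1} (equivalently Theorem~\ref{maintheorem}, whose hypotheses (3) and (4) are tails of $\int_\Sigma|{\bf A}|^2d\mu$, as you argue) then shows that $\Sigma$ is a plane. Your optional sketch of finite total curvature likewise mirrors the paper's Ilmanen-type local Gauss--Bonnet argument, with one difference: the paper obtains properness from the Michael--Simon isoperimetric inequality (Proposition~\ref{prop-iso-proper}), which bounds the area of intrinsic balls from below, whereas monotonicity only bounds extrinsic balls and is not enough on its own.
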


Because the assumption that the mean curvature is squared integrable implies that the surface has a lower bound for the intrinsic ball (Proposition \ref{prop-iso-proper}), we see that it has finitely many ends. If we assume in addition that the first Betti number is finite, then genus must be finite. Hence, Theorem \ref{maintheorem-H} improves the theorem in \cite{NT}.

	\vspace{.1in}

In the proof of the theorem, we use the properties of the K\"ahler angle $\alpha$ of the translating soliton with respect to a compatible complex structure in the ambient space ${\mathbb R}^4$ (see Section 2). As is well known, such compatible complex structures are parametrized  by the $2$-sphere.

A key ingredient in the proof of the main theorem is Theorem \ref{prop-alpha-decay}, in which we establish, for each end of $\Sigma$, a decay estimate for $\alpha$ associated with some compatible complex structure.
To prove Theorem \ref{prop-alpha-decay}, we carefully examine the level sets of $\cos \alpha$. For this purpose, we make use of the equation derived in \cite{CL1} (see also \cite{HL2}). However, since equation (\ref{e-cosalpha}) is nonlinear, much more additional care is required in the analysis.

\vspace{.1in}

In Section 5, we verify that the translating solitons constructed by Joyce-Lee-Tsui \cite{JLT} are symplectic with respect to the compatible complex structure $J_3$ on ${\mathbb C}^2$ given by (\ref{e-J-1}). Indeed, one can make $\cos\alpha$ arbitrarily close to $1$.  Moreover, these examples are shown to have uniformly bounded second fundamental form, genus 0, first Betti number 0 and extrinsic quadratic area growth. In other words, there do exist nontrivial translating solitons with K\"ahler angle arbitrarily small, for which
\begin{equation*}
      \lim_{r\to\infty}\int_{\Sigma\cap\{r<|{\bf x}|<2r\}}|{\bf H}|^2d\mu>0.
\end{equation*}
In particular, the assumptions (1), (2) and (4) of Theorem \ref{maintheorem} are satisfied while the assumption (3) fails. Since the level sets of $\cos\alpha$ are lines in this case, the function does not admit a limit at infinity. This absence of a limiting value is precisely what our proof relies upon.

\vspace{.1in}
	
	The subsequent sections are organized as follows: in Section 2, we provide some preliminary results which will be used later; in Section 3, we obtain the key decay estimate for the K\"ahler angle; in Section 4, we prove the main theorem of this paper; in Section 5, we examine the examples of a family of symplectic translating solitons.

    \vspace{.2in}

	\section{Preliminaries}
	
	\vspace{.1in}
	
In this section, we first collect several useful formulas for later use. Consider a translating soliton $\Sigma$ for the mean curvature flow. Throughout this part, we fix the vector ${\bf T}=(1, 0, 0, 0)$. Here we list some notations of balls which will be used in the paper.  By an immersed surface $\Sigma\subset \mathbb{R}^n$, we mean there exists an immersion map $f:\Sigma\to \mathbb{R}^n$ and the induced metric $g=f^*g_{\mathbb{R}^n}$ is given to $\Sigma$.  For any $x\in \Sigma$, $\hat{B}_r(x)$ will denote the intrinsic ball of radius $r$ centered at $x$. For Any $x\in \Sigma$, we will also use the notation $B_r(x)$ to denote the ball in $\mathbb{R}^n$ of radius $r$ centered at $f(x)$. By
    $\Sigma\cap B_r(x)$, we always mean $\Sigma\cap f^{-1}(B_r(f(x_))$. By $\Sigma^r_x$, we mean the connected component of $\Sigma\cap f^{-1}(B_r(f(x)))$ containing $x$.

It is known that the complex structure on ${\mathbb R}^4$ is given by
\begin{equation}\label{e-J}
{\mathcal J}=\left\{xJ_1+yJ_2+zJ_3|x^2+y^2+z^2=1\right\},
\end{equation}
where
\begin{equation}\label{e-J-10}
J_1=\left(\begin{array}{cccc}
    0 & -1 & 0 & 0 \\
    1 & 0  & 0 & 0 \\
    0 & 0  & 0 & -1 \\
    0 & 0  & 1 & 0 \\
\end{array}
\right),
J_2=\left(\begin{array}{cccc}
    0 & 0 & -1 & 0 \\
    0 & 0  & 0 & 1 \\
    1 & 0  & 0 & 0 \\
    0 & -1  & 0 & 0 \\
\end{array}
\right),
J_3=\left(\begin{array}{cccc}
    0 & 0  & 0 & -1 \\
    0 & 0  & -1 & 0 \\
    0 & 1  & 0 & 0 \\
    1 & 0  & 0 & 0 \\
\end{array}
\right).
\end{equation}

Let $\Sigma$ be a real oriented surface immersed in ${\mathbb R}^4$. For each $J\in {\mathcal J}$,  the K\"ahler angle $\alpha_J$  of $\Sigma$  with respect to $J$ is defined by (\cite{CW})
\begin{equation}\label{e-cosalpha-J}
\cos\alpha_J:=\langle Je_1,e_2\rangle,
\end{equation}
where $\{e_1,e_2\}$ is an oriented local orthonormal frame of $T\Sigma$. As $\{e_1, J_1e_1, J_2e_1, J_3e_1\}$ forms an orthonormal basis of ${\mathbb R}^4$, one readily sees that
\begin{equation}\label{e-complex}
\cos\alpha_{J_1}^2+\cos\alpha_{J_2}^2+\cos\alpha_{J_3}^2=1.
\end{equation}

	\vspace{.1in}

We recall the elliptic equations satisfied by geometric quantities on a translating soliton $\Sigma$ in ${\mathbb R}^4$.
	
	\begin{proposition} (\cite{HL2})\label{pro1}
		On a 2 dimensional translating soliton $\Sigma$ in ${\mathbb R}^4$, the following identity holds:

		\begin{equation}\label{e-x1}
			\Delta\langle {\bf T},x\rangle=\Delta x_1=|{\bf H}|^2,
		\end{equation}
		where $x$ is the position vector in ${\mathbb R}^4$; for any $J\in {\mathcal J}$,
		\begin{equation}\label{e-cosalpha}
			\Delta\cos\alpha_J+\langle\nabla x_1,\nabla\cos\alpha_J\rangle=-|\overline{\nabla}J_{\Sigma}|^2\cos\alpha_J,
		\end{equation}
		where $|\overline{\nabla}J_{\Sigma}|^2=|h^1_{2i}+h^2_{1i}|^2+|h^1_{1i}-h^2_{2i}|^2$ which is independent of the choice of oriented orthonormal frame;
	
		\begin{equation}\label{e-H2}
			\Delta|{\bf H}|^2+\langle\nabla x_1,\nabla|{\bf H}|^2\rangle\geq2|\nabla{\bf H}|^2-2|{\bf A}|^2|{\bf H}|^2;
		\end{equation}

        		\begin{equation}\label{e-A2}
			\Delta|{\bf A}|^2+\langle\nabla x_1,\nabla|{\bf A}|^2\rangle\geq2|\nabla{\bf H}|^2-3|{\bf A}|^4.
		\end{equation}
	\end{proposition}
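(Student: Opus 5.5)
This is a computation in a local frame. I would first use the soliton equation ${\bf H}={\bf T}^\perp$ and the standard submanifold formulas, and then use the parallelism of $J$ in ${\mathbb R}^4$.

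\textbf{Identity (\ref{e-x1}).} For any immersion the position vector satisfies $\Delta x={\bf H}$. Hence
$$\Delta x_1=\langle {\bf T},{\bf H}\rangle=\langle {\bf T}^\perp,{\bf H}\rangle=|{\bf H}|^2.$$
We will also need the tangential gradient $\nabla x_1={\bf T}^\top$. Differentiating ${\bf H}={\bf T}^\perp$ along $e_k$ gives the key relations
$$\nabla_{e_k}{\bf H}=-\sum_i\langle {\bf T},e_i\rangle\,{\bf A}(e_i,e_k), \qquad H^\alpha_{,k}=-\sum_i h^\alpha_{ik}\langle {\bf T},e_i\rangle .$$
The second form follows from $\langle {\bf T},e_\alpha\rangle_{,k}=-h^\alpha_{ik}\langle{\bf T},e_i\rangle$.

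\textbf{Identity (\ref{e-cosalpha}).} Fix a point and take a normal frame with $\nabla e_i=0$ there. Since $\overline\nabla J=0$,
$$(\cos\alpha)_{,k}=\langle J{\bf A}(e_1,e_k),e_2\rangle+\langle Je_1,{\bf A}(e_2,e_k)\rangle .$$
Writing this in the normal basis $e_3,e_4$, it is a linear combination of $h^\alpha_{1k}$ and $h^\alpha_{2k}$ with coefficients $\langle Je_\alpha,e_2\rangle$ and $\langle Je_1,e_\alpha\rangle$. Choosing $e_3,e_4$ adapted to $J$ (as in Chern--Wolfson), these coefficients are $\pm\sin\alpha$ or $0$.

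Differentiating once more and using the Codazzi equation $h^\alpha_{ik,k}=H^\alpha_{,i}$ produces two kinds of terms:
\begin{itemize}
\item first-order terms in $H_{,i}$;
\item quadratic terms in $h$, coming from differentiating the coefficients.
\end{itemize}
The standard Chern--Wolfson/Chen--Li calculation shows that the quadratic terms assemble to $-|\overline\nabla J_\Sigma|^2\cos\alpha$. The first-order terms equal $\sin\alpha\,(H^3_{,1}\pm H^4_{,2})$ or the like. Substituting $H^\alpha_{,k}=-h^\alpha_{ik}\langle{\bf T},e_i\rangle$ turns them into exactly $-\langle{\bf T}^\top,\nabla\cos\alpha\rangle=-\langle\nabla x_1,\nabla\cos\alpha\rangle$. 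This cancellation is the main obstacle. The pattern of $h$ contracted with $\nabla x_1$ must match the expression for $(\cos\alpha)_{,k}$ obtained above, so I would do the bookkeeping carefully in the adapted frame.

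\textbf{Inequalities (\ref{e-H2}) and (\ref{e-A2}).} Both follow from Simons-type identities. For $|{\bf H}|^2$, we have
$$\Delta|{\bf H}|^2=2|\nabla{\bf H}|^2+2\langle{\bf H},\Delta^\perp{\bf H}\rangle .$$
Differentiating the relation $\nabla_k{\bf H}=-{\bf A}({\bf T}^\top,e_k)$ again, and using Codazzi together with $\nabla_k{\bf T}^\top=\sum_i\langle {\bf T},{\bf A}(e_i,e_k)\rangle e_i$, gives
$$\Delta^\perp{\bf H}=-\nabla_{{\bf T}^\top}{\bf H}-\sum_{i,k}\langle{\bf H},{\bf A}(e_i,e_k)\rangle{\bf A}(e_i,e_k).$$
This yields $\Delta|{\bf H}|^2+\langle\nabla x_1,\nabla|{\bf H}|^2\rangle=2|\nabla{\bf H}|^2-2\sum|\langle{\bf H},{\bf A}\rangle|^2$. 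The last term is at least $-2|{\bf A}|^2|{\bf H}|^2$ by Cauchy--Schwarz.

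For $|{\bf A}|^2$, Simons' identity gives
$$\Delta|{\bf A}|^2=2|\nabla{\bf A}|^2+2\langle{\bf A},\nabla^2{\bf H}\rangle+(\text{cubic terms}).$$
I would substitute $\nabla^2{\bf H}$ from the soliton relation, which produces the drift term $-\langle\nabla x_1,\nabla|{\bf A}|^2\rangle$ plus quartic terms in ${\bf A}$. The remaining quartic terms are bounded below by $-3|{\bf A}|^4$ using the known codimension-two estimate of Li--Li. Finally, $|\nabla{\bf A}|^2\ge|\nabla{\bf H}|^2$ in dimension 2 holds up to the constant, since $|\nabla{\bf A}|^2\ge\frac{3}{4}|\nabla{\bf H}|^2$; I would check the stated constant $2$ here, adjusting via Kato-type inequalities if necessary.
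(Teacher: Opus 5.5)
Your derivations of \eqref{e-x1} and \eqref{e-H2} are correct. Your plan for \eqref{e-cosalpha} is the standard Chen--Li computation behind \cite{HL2}; the paper itself gives no proof beyond that citation.

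The drift cancellation you call the main obstacle is actually immediate in invariant form. Set $\omega(X,Y)=\langle JX,Y\rangle$. In any oriented orthonormal frame, $e_k(\cos\alpha_J)=\omega({\bf A}(e_k,e_1),e_2)+\omega(e_1,{\bf A}(e_k,e_2))$, because the tangential connection terms drop out ($\omega(e_i,e_i)=0$). By Codazzi, the first-order part of $\Delta\cos\alpha_J$ is $\omega(\nabla_{e_1}{\bf H},e_2)+\omega(e_1,\nabla_{e_2}{\bf H})$. Substituting $\nabla_{e_j}{\bf H}=-{\bf A}(\nabla x_1,e_j)$ turns this into $-\langle\nabla x_1,\nabla\cos\alpha_J\rangle$, with no adapted frame needed.

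The genuine gap is the last step of \eqref{e-A2}, and it cannot be closed. Your Simons-identity argument, combined with the Li--Li estimate, proves
\[
\Delta|{\bf A}|^2+\langle\nabla x_1,\nabla|{\bf A}|^2\rangle\ge 2|\nabla{\bf A}|^2-3|{\bf A}|^4 .
\]
No Kato-type or other pointwise inequality turns $2|\nabla{\bf A}|^2$ into $2|\nabla{\bf H}|^2$. By Codazzi, $\nabla{\bf A}$ is totally symmetric, and in dimension two the bound $|\nabla{\bf A}|^2\ge\frac34|\nabla{\bf H}|^2$ is sharp.

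Moreover, the extremal configuration is compatible with the soliton constraint:
\begin{itemize}
\item Take ${\bf T}=e_1$ tangent at a point, with ${\bf A}(e_1,e_1)=-4c\,e_3$, ${\bf A}(e_2,e_2)=4c\,e_3$ and ${\bf A}(e_1,e_2)=0$.
\item Take the $e_3$-components of $\nabla{\bf A}$ to be $h_{111}=3c$, $h_{122}=c$, $h_{112}=h_{222}=0$.
\item Then $\nabla{\bf H}=-{\bf A}(\nabla x_1,\cdot)$ holds.
\item The translator equation for a graph over the tangent plane is analytic and elliptic. So Cauchy--Kovalevskaya, with Cauchy data on a line, produces a local translator in ${\mathbb R}^3\subset{\mathbb R}^4$ with this jet.
\end{itemize}
On that translator the exact codimension-one identity $\Delta|{\bf A}|^2+\langle\nabla x_1,\nabla|{\bf A}|^2\rangle=2|\nabla{\bf A}|^2-2|{\bf A}|^4$ gives $24c^2-2048c^4$. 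This is strictly smaller than $2|\nabla{\bf H}|^2-3|{\bf A}|^4=32c^2-3072c^4$ whenever $c^2<1/128$.

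So \eqref{e-A2} as printed cannot be proved by any local computation. The inequality that holds has $2|\nabla{\bf A}|^2$ on the right; the printed version is presumably a misprint. This is harmless for the paper, which uses only the consequence $\ge-3|{\bf A}|^4$, in the proof of Proposition \ref{p-vanishing of second fundamental form}.
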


	\begin{corollary}
		On a translating soliton $\Sigma$ in ${\mathbb R}^4$, we have for any $J\in {\mathcal J}$
		\begin{equation}\label{e-sinalpha}
			\Delta\sin^2\frac{\alpha_J}{2}+\langle\nabla x_1,\nabla\sin^2\frac{\alpha_J}{2}\rangle=\frac{1}{2}|\overline{\nabla}J_{\Sigma}|^2\cos\alpha_J.
		\end{equation}
		Furthermore, at the point where $\alpha_J\neq0$, we have
		\begin{equation}\label{e-alpha}
			\Delta\alpha_J+\langle\nabla x_1,\nabla\alpha_J\rangle=\frac{\cos\alpha_J}{\sin\alpha_J}(|\overline{\nabla}J_{\Sigma}|^2-|\nabla\alpha_J|^2).
		\end{equation}
	\end{corollary}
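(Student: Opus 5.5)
The corollary follows from equation (\ref{e-cosalpha}) of Proposition \ref{pro1} by a chain-rule computation. Write $L=\Delta+\langle\nabla x_1,\nabla\,\cdot\,\rangle$. For any smooth $f$ and function $u$ on $\Sigma$ we have
$$L f(u)=f'(u)\,Lu+f''(u)|\nabla u|^2 .$$
We apply this with $u=\cos\alpha_J$, which satisfies $Lu=-|\overline{\nabla}J_{\Sigma}|^2u$.

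For (\ref{e-sinalpha}), use $\sin^2\frac{\alpha_J}{2}=\frac{1-\cos\alpha_J}{2}$, which is an affine function of $u$. The second-order term therefore vanishes, and
$$L\sin^2\tfrac{\alpha_J}{2}=-\tfrac12 Lu=\tfrac12|\overline{\nabla}J_{\Sigma}|^2\cos\alpha_J .$$
This holds everywhere on $\Sigma$, since $\cos\alpha_J$ is a smooth function (it is defined by $\langle Je_1,e_2\rangle$ for an oriented frame) and no division occurs.

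For (\ref{e-alpha}), work on the open set where $\alpha_J\neq0$. Because $\alpha_J\in[0,\pi]$ and $\cos$ is a diffeomorphism from $(0,\pi)$ onto $(-1,1)$, $\alpha_J=\arccos u$ is smooth wherever $\alpha_J\in(0,\pi)$. (At $\alpha_J=\pi$ the right side of (\ref{e-alpha}) is singular anyway, so the identity is understood where $\sin\alpha_J\neq0$.)

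Rather than differentiating $\arccos$, I would differentiate $u=\cos\alpha_J$ directly:
$$\nabla u=-\sin\alpha_J\nabla\alpha_J,\qquad Lu=-\sin\alpha_J\,L\alpha_J-\cos\alpha_J|\nabla\alpha_J|^2 .$$
Setting this equal to $-|\overline{\nabla}J_{\Sigma}|^2\cos\alpha_J$ and dividing by $-\sin\alpha_J\neq0$ gives
$$L\alpha_J=\frac{\cos\alpha_J}{\sin\alpha_J}\left(|\overline{\nabla}J_{\Sigma}|^2-|\nabla\alpha_J|^2\right),$$
which is (\ref{e-alpha}).

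There is no real obstacle here. The only points needing care are the sign bookkeeping and the regularity of $\alpha_J$: it is smooth only away from $\alpha_J\in\{0,\pi\}$, and that is why the statement restricts to points with $\alpha_J\neq 0$.
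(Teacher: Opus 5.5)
Your proof is correct and is the intended derivation. The paper states this corollary without proof as an immediate consequence of (\ref{e-cosalpha}), obtained exactly as you do: through $\sin^2\frac{\alpha_J}{2}=\frac{1-\cos\alpha_J}{2}$ and the chain rule applied to $\cos\alpha_J$. Your remark that the second identity really needs $\sin\alpha_J\neq 0$, i.e.\ also $\alpha_J\neq\pi$, is a correct clarification.
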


	Let ${\bf V}$ denote the tangential part of ${\bf T}$ on a translating soliton $\Sigma$, i.e.,
	\begin{equation*}
		{\bf T}={\bf H}+{\bf V},
	\end{equation*}
	then we have ${\bf V}=\nabla x_1$ and
	
	\begin{proposition}\label{prop-A-H}(\cite{HL2})
		Suppose that $\Sigma$ is a translating soliton in ${\mathbb R}^4$. Then the following identity holds on $\Sigma$:
		\begin{equation*}
			|{\bf A}|^2=|{\bf H}|^2+2\frac{|\nabla{\bf H}|^2}{|{\bf V}|^2}+\frac{{\bf V}\cdot \nabla|{\bf H}|^2}{|{\bf V}|^2},
		\end{equation*}
		at the point where ${\bf V}$ does not vanish.
	\end{proposition}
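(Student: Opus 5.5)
We want $|{\bf A}|^2=|{\bf H}|^2+2|\nabla{\bf H}|^2/|{\bf V}|^2+{\bf V}\cdot\nabla|{\bf H}|^2/|{\bf V}|^2$ wherever ${\bf V}\neq 0$. The plan is to differentiate the soliton equation ${\bf H}={\bf T}^\perp$ along $\Sigma$, and then compute $|\nabla {\bf H}|^2$ and ${\bf V}\cdot\nabla|{\bf H}|^2$ in a frame adapted to ${\bf V}$.

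First, the derivative identity. Since ${\bf T}$ is constant, ${\bf T}={\bf V}+{\bf H}$ with ${\bf V}=\nabla x_1$ gives, for any tangent vector $e_i$, $0=\overline\nabla_{e_i}{\bf V}+\overline\nabla_{e_i}{\bf H}$. Taking normal parts yields
\begin{equation*}
\nabla^\perp_{e_i}{\bf H}=-{\bf A}(e_i,{\bf V}),
\end{equation*}
that is, $H^\alpha_{,i}=-h^\alpha_{ij}V_j$. Taking tangential parts yields $\nabla_{e_i}{\bf V}=\langle {\bf H},{\bf A}(e_i,e_j)\rangle e_j$, although this is not needed below.

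Next, the two quantities. With $|\nabla{\bf H}|^2=\sum_{i,\alpha}(H^\alpha_{,i})^2$ we get
\begin{equation*}
|\nabla{\bf H}|^2=\sum_{i,\alpha}\Big(\sum_j h^\alpha_{ij}V_j\Big)^2,
\qquad
{\bf V}\cdot\nabla|{\bf H}|^2=2H^\alpha H^\alpha_{,i}V_i=-2H^\alpha h^\alpha_{ij}V_iV_j .
\end{equation*}
At a point with ${\bf V}\neq0$, choose the orthonormal frame $e_1={\bf V}/|{\bf V}|$ and $e_2\perp e_1$. Then
\begin{equation*}
|\nabla{\bf H}|^2=|{\bf V}|^2\big(|h_{11}|^2+|h_{12}|^2\big),
\qquad
{\bf V}\cdot\nabla|{\bf H}|^2=-2|{\bf V}|^2\langle{\bf H},h_{11}\rangle,
\end{equation*}
where $h_{ij}$ denotes the normal vector ${\bf A}(e_i,e_j)$.

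Now evaluate the right-hand side of the claimed identity. Using ${\bf H}=h_{11}+h_{22}$,
\begin{align*}
|{\bf H}|^2+2(|h_{11}|^2+|h_{12}|^2)-2\langle h_{11}+h_{22},h_{11}\rangle
&=|h_{11}|^2+2\langle h_{11},h_{22}\rangle+|h_{22}|^2+2|h_{11}|^2+2|h_{12}|^2-2|h_{11}|^2-2\langle h_{11},h_{22}\rangle\\
&=|h_{11}|^2+|h_{22}|^2+2|h_{12}|^2\\
&=|{\bf A}|^2 .
\end{align*}
This proves the identity.

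The whole argument is routine. The only points needing care are getting the sign in $\nabla^\perp{\bf H}=-{\bf A}(\cdot,{\bf V})$ right, and noting that the identity is frame independent, so it suffices to verify it in the adapted frame at each point where ${\bf V}\neq0$.
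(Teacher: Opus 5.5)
Your argument is correct. Differentiating ${\bf T}={\bf V}+{\bf H}$ gives $\nabla^\perp_{e_i}{\bf H}=-{\bf A}(e_i,{\bf V})$, and evaluating $|\nabla{\bf H}|^2$ and ${\bf V}\cdot\nabla|{\bf H}|^2$ in the frame $e_1={\bf V}/|{\bf V}|$ is the standard derivation of this identity, which the paper does not reprove but quotes from \cite{HL2} (here $|\nabla{\bf H}|^2$ means $|\nabla^\perp{\bf H}|^2$, as you used).
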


	\begin{corollary}\label{cor-plane}
		Suppose that $\Sigma$ is a translating soliton in ${\mathbb R}^4$. If $\Sigma$  is minimal, then it is a plane.
	\end{corollary}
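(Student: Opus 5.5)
If $\Sigma$ is minimal, then ${\bf H}\equiv 0$, and the soliton equation (\ref{e-TS}) gives ${\bf T}^{\perp}=0$ everywhere. So ${\bf T}$ is tangent to $\Sigma$ at every point, ${\bf V}={\bf T}$ and $|{\bf V}|\equiv 1$. Proposition \ref{prop-A-H} is then valid at every point of $\Sigma$. Substituting ${\bf H}\equiv 0$ (so $\nabla{\bf H}\equiv 0$ and $\nabla|{\bf H}|^2\equiv 0$) gives
\begin{equation*}
|{\bf A}|^2=0+2\frac{0}{1}+\frac{0}{1}=0
\end{equation*}
on all of $\Sigma$. A connected surface with vanishing second fundamental form is an open piece of an affine plane. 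Since $\Sigma$ is complete (or properly immersed, as in the standing assumptions), it is the whole plane.

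As a sanity check that does not use Proposition \ref{prop-A-H}, one can see directly why the second fundamental form vanishes. Because ${\bf T}$ is a constant vector field that is tangent to $\Sigma$, for any tangent $X$ we have $0=\overline{\nabla}_X{\bf T}=\nabla_X{\bf T}+{\bf A}(X,{\bf T})$, so ${\bf A}(X,{\bf T})=0$. Pick the orthonormal frame $e_1={\bf T}$, $e_2\perp{\bf T}$. Then ${\bf A}(e_1,e_1)={\bf A}(e_1,e_2)=0$, and minimality ${\bf A}(e_1,e_1)+{\bf A}(e_2,e_2)=0$ forces ${\bf A}(e_2,e_2)=0$. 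Hence ${\bf A}\equiv 0$.

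There is no real obstacle here. The only point needing care is that Proposition \ref{prop-A-H} requires ${\bf V}\neq 0$, and this holds everywhere because $|{\bf V}|=|{\bf T}|=1$ when ${\bf H}=0$. The passage from ${\bf A}\equiv 0$ to "plane" uses connectedness and completeness, both implicit in the setting.
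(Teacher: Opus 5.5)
Your proof is correct and is the argument the paper intends. The paper gives no separate proof; the corollary is read off from Proposition \ref{prop-A-H}, which applies everywhere because ${\bf H}\equiv 0$ forces ${\bf V}={\bf T}$ with $|{\bf V}|=1$, so $|{\bf A}|^2\equiv 0$. Your frame computation showing ${\bf A}(\cdot,{\bf T})=0$ is the ${\bf H}=0$ case of the identity $\nabla^{\perp}{\bf H}=-{\bf A}(\cdot,{\bf V})$ behind that proposition, and your passage to the whole plane via completeness is fine.
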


	Fix any $J\in {\mathcal J}$. In a local orthonormal frame $\{e_1,e_2,v_3,v_4\}$, where $\{e_1,e_2\}$ spans $T\Sigma$ and $\{v_3,v_4\}$ spans $N\Sigma$, denote ${\bf W}=\partial_2\alpha_J v_3+\partial_1\alpha_J v_4$. It is easy to see $|{\bf W}|^2=|\nabla\alpha_J|^2$.
	
	\begin{lemma}\label{lemma-HJW} (\cite{HL3})
		On a surface $\Sigma$ in ${\mathbb R}^4$, we have
		\begin{equation}\label{e-HJW}
			|\overline{\nabla}J_{\Sigma}|^2=|{\bf H}|^2+2{\bf H}\cdot{\bf W}+2|{\bf W}|^2.
		\end{equation}
	\end{lemma}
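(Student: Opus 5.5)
The identity (\ref{e-HJW}) is pointwise and uses no soliton equation, so I would prove it by a direct frame computation. Both sides are independent of the choice of oriented orthonormal frame: the left side by the remark in Proposition \ref{pro1}, and the right side because $|{\bf H}|^2$, $|{\bf W}|^2=|\nabla\alpha_J|^2$ and ${\bf H}\cdot{\bf W}$ are invariant once ${\bf W}$ is read as $\nabla\alpha_J$ rotated into the normal bundle by the frame correspondence below. It therefore suffices to check it in one convenient frame at each point.

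\emph{Step 1: the frame.} At a point with $\sin\alpha_J\neq 0$, I would take the Chern--Wolfson frame adapted to $J$:
\[
Je_1=\cos\alpha\, e_2+\sin\alpha\, v_3,\qquad Je_2=-\cos\alpha\, e_1+\sin\alpha\, v_4 .
\]
Here $v_3,v_4$ are the normalized normal parts of $Je_1,Je_2$. Using $J^2=-1$ and orthogonality of $J$, this gives
\[
Jv_3=-\sin\alpha\, e_1+\cos\alpha\, v_4,\qquad Jv_4=-\sin\alpha\, e_2-\cos\alpha\, v_3 .
\]
Write $h^\beta_{ij}=\langle\overline\nabla_{e_i}e_j,v_\beta\rangle$ for $\beta=3,4$; these are the $h^1,h^2$ of Proposition \ref{pro1}.

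\emph{Step 2: the gradient of $\alpha$.} Differentiate $\cos\alpha=\langle Je_1,e_2\rangle$ along $e_i$, using that $J$ is parallel in $\mathbb R^4$. The tangential connection terms drop out because $\langle Je_k,e_k\rangle=0$. The normal parts give
\[
-\sin\alpha\,\partial_i\alpha=h^3_{1i}\langle Jv_3,e_2\rangle+h^4_{1i}\langle Jv_4,e_2\rangle+h^3_{2i}\langle Je_1,v_3\rangle+h^4_{2i}\langle Je_1,v_4\rangle .
\]
Inserting the pairings from Step 1 yields
\[
\partial_i\alpha = h^4_{1i}-h^3_{2i}\qquad (i=1,2).
\]
Possibly the opposite sign appears, depending on the orientation convention for $(v_3,v_4)$. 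I would fix that orientation so that ${\bf W}=\partial_2\alpha\, v_3+\partial_1\alpha\, v_4$ matches the statement.

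\emph{Step 3: the quadratic identity.} Set $H^3=h^3_{11}+h^3_{22}$ and $H^4=h^4_{11}+h^4_{22}$. The left side is
\[
\sum_{i}\big[(h^3_{2i}+h^4_{1i})^2+(h^3_{1i}-h^4_{2i})^2\big].
\]
The right side is
\[
(H^3)^2+(H^4)^2+2(\partial_2\alpha\,H^3+\partial_1\alpha\,H^4)+2\big((\partial_1\alpha)^2+(\partial_2\alpha)^2\big),
\]
with $\partial_1\alpha=h^4_{11}-h^3_{12}$ and $\partial_2\alpha=h^4_{12}-h^3_{22}$. Both sides are quadratic forms in the six independent quantities $h^3_{11},h^3_{12},h^3_{22},h^4_{11},h^4_{12},h^4_{22}$, using $h_{12}=h_{21}$. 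I would expand both and compare coefficients; a convenient substitution is $h^4_{11}=\partial_1\alpha+h^3_{12}$ and $h^4_{12}=\partial_2\alpha+h^3_{22}$.

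\emph{Main obstacle.} The real difficulty is sign and index bookkeeping, not any conceptual step. If the naive expansion produces a mismatch, I would first recheck the orientation of $(v_3,v_4)$ and the pairing $v_3\leftrightarrow\partial_2\alpha$, $v_4\leftrightarrow\partial_1\alpha$ in ${\bf W}$. At the points where $\sin\alpha=0$, the interior of that set either has $\nabla\alpha=0$ with $|\overline\nabla J_\Sigma|^2=|{\bf H}|^2$, which can be checked directly there, or the identity follows by continuity from nearby points with $\sin\alpha\neq0$. This requires interpreting $|{\bf W}|^2=|\nabla\alpha|^2$ as in the statement.
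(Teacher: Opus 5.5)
\textbf{Genuine gap: the frame you fix has the wrong orientation, and with your explicit formulas Step~3 is false.} Your route, a pointwise computation in a Chern--Wolfson frame adapted to $J$, is the standard one behind \cite{HL3}, which the paper cites for this lemma. The problem is the orientation of $(v_3,v_4)$.

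The expression $\sum_i[(h^3_{2i}+h^4_{1i})^2+(h^3_{1i}-h^4_{2i})^2]$ is not invariant under $v_4\mapsto -v_4$. For the structure with $J_\Sigma e_1=e_2$ and $J_\Sigma v_3=v_4$ one checks
\[
(\overline\nabla_{e_k}J_\Sigma)e_1=(h^3_{2k}+h^4_{1k})v_3+(h^4_{2k}-h^3_{1k})v_4 ,
\]
so the formula computes $|\overline\nabla J_\Sigma|^2$ only in frames where $J_\Sigma v_3=v_4$. Moreover, the $J_\Sigma$ for which \eqref{e-cosalpha} holds is the one whose K\"ahler form $e^{12}+e^{34}$ (wedges of the dual coframe of $(e_1,e_2,v_3,v_4)$) has the same duality type as $\omega_J=\langle J\cdot,\cdot\rangle$. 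For example, $J_\Sigma=J$ on a $J$-holomorphic curve, where \eqref{e-cosalpha} forces $|\overline\nabla J_\Sigma|^2=0$.

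With your choice $v_4=(Je_2)^\perp/\sin\alpha$, the relation $J^2=-1$ forces $Jv_3=-\sin\alpha\,e_1-\cos\alpha\,v_4$ and $Jv_4=-\sin\alpha\,e_2+\cos\alpha\,v_3$. Your Step~1 formulas would give $J^2e_1=-e_1+\sin2\alpha\,v_4$. Hence $\omega_J=\cos\alpha(e^{12}-e^{34})+\sin\alpha(e^{13}+e^{24})$ has the opposite type. Your left-hand side is therefore the derivative of the other structure, and it differs from $|\overline\nabla J_\Sigma|^2$ by $4\sum_i(h^3_{1i}h^4_{2i}-h^3_{2i}h^4_{1i})$, a normal-curvature term.

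Concretely, take $h^3_{11}=-h^3_{22}=h^4_{12}=1$ and all other components zero. Then ${\bf H}=0$ and your left side is $0$. But $\partial_2\alpha=h^4_{12}-h^3_{22}=2$, so your right side is $8$. Since ${\bf H}=0$, no reading of ${\bf W}$ with $|{\bf W}|=|\nabla\alpha|$ can rescue this. So ``fixing the orientation so that ${\bf W}$ matches the statement'' is not a free choice. The orientation is dictated by which $J_\Sigma$ the lemma is about, and the argument has to derive it rather than tune it.

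\textbf{Repair.} Use the Chern--Wolfson/Chen--Li normalization $v_4=-(Je_2)^\perp/\sin\alpha$:
\[
Je_2=-\cos\alpha\,e_1-\sin\alpha\,v_4,\qquad Jv_3=-\sin\alpha\,e_1+\cos\alpha\,v_4,\qquad Jv_4=\sin\alpha\,e_2-\cos\alpha\,v_3 .
\]
Now $\omega_J=\cos\alpha(e^{12}+e^{34})+\sin\alpha(e^{13}-e^{24})$ has the right type. Your Step~2 computation then gives $\partial_i\alpha=-(h^3_{2i}+h^4_{1i})$.

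Put $a_i=h^3_{2i}+h^4_{1i}=-\partial_i\alpha$ and $b_i=h^3_{1i}-h^4_{2i}$. The symmetry $h_{12}=h_{21}$ gives $H^3=a_2+b_1$ and $H^4=a_1-b_2$, so
\[
|\overline\nabla J_\Sigma|^2=a_1^2+a_2^2+(H^3-a_2)^2+(a_1-H^4)^2=|{\bf H}|^2+2(H^3\partial_2\alpha+H^4\partial_1\alpha)+2|\nabla\alpha|^2 .
\]
This is exactly \eqref{e-HJW} with ${\bf W}=\partial_2\alpha\,v_3+\partial_1\alpha\,v_4$, and no six-variable coefficient comparison is needed.

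The identity should be read on $\{\sin\alpha\neq0\}$, where $\alpha$ is differentiable. At boundary points of $\{\sin\alpha=0\}$ there is in general no $\nabla\alpha$ to pass to the limit in, so your closing continuity argument is not available.
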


	\begin{corollary}\label{cor-HJA}
		On a surface $\Sigma$ in ${\mathbb R}^4$, we have
		\begin{equation}\label{e-HJ}
		 2|{\bf A}|^2\geq	|\overline{\nabla}J_{\Sigma}|^2\geq \frac{1}{2}|{\bf H}|^2,
		\end{equation}
		and for any $J\in {\mathcal J}$
		\begin{equation}\label{e-J-nablaalpha}
			|\overline{\nabla}J_{\Sigma}|^2\geq |\nabla\alpha_J|^2.
		\end{equation}
	\end{corollary}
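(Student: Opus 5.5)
Both inequalities in \eqref{e-HJ} come from Lemma \ref{lemma-HJW} together with the explicit formula $|\overline{\nabla}J_{\Sigma}|^2=|h^1_{2i}+h^2_{1i}|^2+|h^1_{1i}-h^2_{2i}|^2$ from Proposition \ref{pro1}. Here $h^\beta_{ij}$ are the components of the second fundamental form in a frame $\{e_1,e_2,v_3,v_4\}$, and the summation runs over $i=1,2$ with $\beta$ indexing the normal directions.

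For the lower bound, I will treat the right-hand side of \eqref{e-HJW} as a quadratic form in ${\bf W}$. Completing the square gives
$$|{\bf H}|^2+2{\bf H}\cdot{\bf W}+2|{\bf W}|^2=\tfrac12|{\bf H}|^2+2\bigl|{\bf W}+\tfrac12{\bf H}\bigr|^2\ge \tfrac12|{\bf H}|^2 .$$
The same expression gives \eqref{e-J-nablaalpha}:
$$|{\bf H}|^2+2{\bf H}\cdot{\bf W}+2|{\bf W}|^2=|{\bf H}+{\bf W}|^2+|{\bf W}|^2\ge |{\bf W}|^2=|\nabla\alpha_J|^2 .$$

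For the upper bound $2|{\bf A}|^2\ge|\overline{\nabla}J_{\Sigma}|^2$, I use the explicit formula. I expand each squared term, writing $h^\beta_{ij}$ for the coefficient of the $\beta$-th normal. The elementary inequality $(a+b)^2\le 2(a^2+b^2)$ bounds each term by twice the sum of squares of the entries involved:
$$|\overline{\nabla}J_{\Sigma}|^2\le 2\sum_{\beta,i}\bigl[(h^\beta_{2i})^2+(h^\beta_{1i})^2\bigr]=2|{\bf A}|^2 .$$
The only point requiring care is the bookkeeping. I must check that the indices appearing (the pairs $(1,2i)$ with $(2,1i)$, and $(1,1i)$ with $(2,2i)$, over both normal directions) each occur exactly once, so that the total is exactly $2\sum|h^\beta_{ij}|^2=2|{\bf A}|^2$. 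This follows from the symmetry $h_{12}=h_{21}$ and by checking the four cases $i\in\{1,2\}$ for both brackets.

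None of these steps is a genuine obstacle. The main risk is a sign or index mismatch in interpreting the notation $h^1_{2i}$, but since the bound $(a\pm b)^2\le 2(a^2+b^2)$ is insensitive to signs, the estimate holds regardless.
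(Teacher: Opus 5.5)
Your proposal is correct and matches the argument the paper intends; the paper states this corollary without proof, as an immediate consequence of Lemma \ref{lemma-HJW} and the explicit formula for $|\overline{\nabla}J_{\Sigma}|^2$. Completing the square in \eqref{e-HJW} in two ways gives the lower bound and \eqref{e-J-nablaalpha}, and $(a\pm b)^2\le 2(a^2+b^2)$ gives the upper bound. One small point: you do not need the symmetry $h^\beta_{12}=h^\beta_{21}$, because the eight entries $h^1_{2i},h^2_{1i},h^1_{1i},h^2_{2i}$ ($i=1,2$) are already exactly the eight ordered components $h^\beta_{ij}$, each appearing once in $|{\bf A}|^2=\sum_{\beta,i,j}(h^\beta_{ij})^2$.
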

	
	\vspace{.1in}
	The following proposition asserts that the assumption of non-concentration of  $L^m$ mean curvature at infinity  implies a uniform isoperimetric inequality at infinity. In addition, the condition of extrinsically locally finite volume yields properness and the finiteness of ends.
\begin{proposition}\label{prop-iso-proper} There exists $\epsilon_0=\epsilon_0(m,n)>0$ such that the following holds.

	Assume  $\Sigma$ is an $m$-dimensional complete immersed submanifold in ${\mathbb R}^n$.
    \begin{enumerate}
    \item If $A\subset \Sigma$ is a set of finite perimeter such that  $$\int_{A}|{\bf H}|^md\mathrm{vol}\le \epsilon_0,$$
then the isoperimetric inequality holds
\begin{align}\label{isoperimetric inequality}
\left(\mathcal{H}^{m}(A)\right)^{1-\frac{1}{m}}\le D_1 \mathcal{H}^{m-1}(\partial A)
\end{align}
for some  $D_1=D_1(m,n)<\infty$.
\item  If there exists $r_1>0$ such that
        \begin{align}\label{finite total mean curvature}
        \lim_{R\to \infty}\sup_{x\in \Sigma\backslash \hat{B}_{R}(p_0)}\int_{\hat{B}_{r_1}(x)}|{\bf H}|^md\mathrm{vol}\le \epsilon_0,
        \end{align}
 then there exists a compact subset $K\subset \Sigma$ such that  for any $\hat{B}_r(x)\subset \Sigma\backslash K$ with $r\le r_1$, there holds
\begin{align}\label{volume lower bound}
\mathrm{Vol}(\hat{B}_r(x))\ge \kappa r^m,
\end{align}
where $\kappa=\frac{1}{m^mD_1^m}$.

\item In addition to \eqref{finite total mean curvature},  if we also  assume
\begin{align*}
{\rm{Vol}}(\Sigma\cap B_R(0))<+\infty \  \ {\rm for \ all} \ R>0,
\end{align*}
then $\Sigma$ is properly immersed in $\mathbb{R}^n$ and only has  finitely many ends.
\end{enumerate}

	\end{proposition}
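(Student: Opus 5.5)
The plan is to prove the three parts in order, each feeding the next.

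\emph{Part (1).} I would use the Michael--Simon Sobolev inequality for $m$-dimensional submanifolds of $\mathbb{R}^n$: for Lipschitz $f\ge 0$ with compact support on $\Sigma$,
\begin{align*}
\Big(\int_\Sigma f^{\frac{m}{m-1}}\Big)^{\frac{m-1}{m}}\le C(m,n)\Big(\int_\Sigma|\nabla f|+\int_\Sigma|{\bf H}|f\Big).
\end{align*}
Approximating $\chi_A$ by Lipschitz functions (the BV version, for $A$ of finite volume and finite perimeter) gives
\begin{align*}
\mathcal{H}^m(A)^{1-\frac1m}\le C\,\mathcal{H}^{m-1}(\partial A)+C\int_A|{\bf H}|.
\end{align*}
H\"older's inequality yields
\begin{align*}
\int_A|{\bf H}|\le\Big(\int_A|{\bf H}|^m\Big)^{\frac1m}\mathcal{H}^m(A)^{1-\frac1m}\le \epsilon_0^{\frac1m}\mathcal{H}^m(A)^{1-\frac1m}.
\end{align*}
Choosing $\epsilon_0$ so that $C\epsilon_0^{1/m}\le\frac12$ lets me absorb this term. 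This gives \eqref{isoperimetric inequality} with $D_1=2C$.

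\emph{Part (2).} Choose $R$ so large that the supremum in \eqref{finite total mean curvature} is at most $\epsilon_0$; I would use $2\epsilon_0$ and shrink $\epsilon_0$ if the limit is attained only approximately. Set $K=\overline{\hat B_{R}(p_0)}$, which is compact by completeness.

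Let $\hat B_r(x)\subset\Sigma\setminus K$ with $r\le r_1$. For every $s\le r$, the set $A=\hat B_s(x)$ lies in $\hat B_{r_1}(x)$, so $\int_A|{\bf H}|^m\le\epsilon_0$. For a.e. $s$ it also has finite perimeter, since it is precompact.

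Put $V(s)=\mathrm{Vol}(\hat B_s(x))$. By the coarea formula for the distance function, $V'(s)\ge\mathcal{H}^{m-1}(\partial\hat B_s(x))$ for a.e. $s$. Part (1) then gives $V^{1-1/m}\le D_1V'$, that is, $(V^{1/m})'\ge \frac{1}{mD_1}$. Integrating from $0$ to $r$ gives $V(r)\ge \frac{r^m}{m^mD_1^m}=\kappa r^m$.

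\emph{Part (3): properness.} Suppose $x_k\in\Sigma$ leave every compact set intrinsically while $f(x_k)\in B_{R_0}(0)$. Pass to a subsequence whose points are pairwise at intrinsic distance $>2r$, with $r=\min(r_1,1)$, and whose balls $\hat B_r(x_k)$ lie in $\Sigma\setminus K$. These balls are disjoint and their images lie in $B_{R_0+1}(0)$, since $f$ is distance non-increasing. Part (2) gives each of them volume at least $\kappa r^m$, so $\mathrm{Vol}(\Sigma\cap B_{R_0+1})=\infty$, a contradiction.

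\emph{Part (3): finitely many ends.} Fix a compact $K'\supset K$ and choose $\rho$ with $f(K')\subset B_\rho(0)$. Each unbounded component $E$ of $\Sigma\setminus K'$ is non-compact, so by properness $f$ is unbounded on $E$. Since $\partial E\subset K'$ and $E$ is connected, $E$ contains a point $y$ with $|f(y)|=2\rho$. The intrinsic distance from $y$ to $\partial E$ is at least the extrinsic distance, which is at least $\rho$. Hence $\hat B_{r}(y)\subset E$ for $r=\min(r_1,\rho)$, and these balls are disjoint for distinct components. Comparing with the finite volume of $\Sigma\cap B_{2\rho+r}$ then bounds the number of unbounded components of $\Sigma\setminus K'$.

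\emph{Main obstacle.} The bound in the last step depends on $\rho$, and so on $K'$. Finiteness of the number of ends requires a bound uniform in $K'$, that is, that the unbounded components stop splitting as $K'$ grows. I would first try to get this by working at a fixed scale $r_1$ and using intrinsic annuli $\hat B_{2\rho}\setminus\hat B_\rho$, counting components that reach far out, as the proof of \cite{NT}-type results does. Failing that, I would show that a splitting of an end produces arbitrarily many disjoint volume-$\kappa r_1^m$ balls inside a fixed extrinsic ball, contradicting local finiteness of volume. This uniform control is the step I expect to need the most care.
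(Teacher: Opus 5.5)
Your arguments for part (1), part (2) and properness follow the paper's proof and are correct. These are Michael--Simon applied to $\chi_A$ with H\"older absorption of $\int_A|{\bf H}|$, the coarea inequality $V^{1-\frac1m}\le D_1V'$, and infinitely many disjoint intrinsic $r$-balls inside one extrinsic ball. You are slightly more careful than the paper about the hypothesis $\lim\le\epsilon_0$ and about $V'(s)\ge\mathcal{H}^{m-1}(\partial\hat B_s(x))$.

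The gap is the one you flag in the finiteness of ends. It cannot be closed, because under the stated hypotheses that assertion is false.

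\textbf{Counterexample.} Riemann's minimal examples in $\mathbb{R}^3$ satisfy every hypothesis:
\begin{itemize}
\item they are complete and properly embedded, so $\mathrm{Area}(\Sigma\cap B_R(0))<\infty$ for every $R$;
\item they have ${\bf H}\equiv 0$, so \eqref{finite total mean curvature} holds for every $r_1$;
\item they have infinitely many planar ends.
\end{itemize}

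\textbf{What your count actually gives.} Your per-$K'$ count is all these hypotheses yield. It shows that each $\Sigma\setminus B_\rho(0)$, $\rho$ large, has finitely many non-compact components. The bound grows with $\rho$, because your balls have fixed radius $\le r_1$ while the extrinsic ball containing them has radius about $2\rho$. In Riemann's example new ends separate from one another only at heights tending to infinity. That is exactly why your fallback (arbitrarily many disjoint $\kappa r_1^m$-balls inside a fixed extrinsic ball) fails, and why no fixed-scale annulus argument can work.

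\textbf{The paper has the same gap.} Its proof hides this problem in its first sentence, asserting that infinitely many ends yield one radius $R$ with infinitely many non-compact components of $\Sigma\setminus B_R(0)$. Infinitely many ends only gives, for each $N$, some $R_N$ with $N$ such components, and $R_N$ may tend to infinity. In fact, by properness and Sard, $\Sigma\setminus B_R(0)$ has only finitely many components for a.e.\ $R$.

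\textbf{A correct version.} It needs scale-invariant hypotheses, which Corollary \ref{cor-finite-total-curvature} actually has. Suppose $\mathrm{Vol}(\Sigma\cap B_R(0))\le\Lambda R^m$ for all $R$, and $\int_{\Sigma\setminus K}|{\bf H}|^m\le\epsilon_0$ for some compact $K$ (e.g.\ $\int_\Sigma|{\bf H}|^m<\infty$). Then:
\begin{itemize}
\item Part (1) gives $\mathrm{Vol}(\hat B_r(x))\ge\kappa r^m$ whenever $\hat B_r(x)\subset\Sigma\setminus K$, for every $r$ and not just $r\le r_1$.
\item Take $\rho$ with $f(K)\subset B_\rho(0)$, and in each non-compact component $E_i$ of $\Sigma\setminus B_\rho(0)$ pick $y_i$ with $|f(y_i)|=2\rho$.
\item The balls $\hat B_{\rho/2}(y_i)\subset E_i$ are disjoint and lie in $f^{-1}(B_{5\rho/2}(0))$.
\item Hence the number of components is at most $5^m\Lambda/\kappa$, uniformly in $\rho$.
\end{itemize}
In the main theorem the ends are counted separately by Lemma \ref{lem:topology and geometry of end}. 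That lemma likewise plays a scale-invariant lower area bound per end against quadratic area growth.
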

    \begin{proof} Applying the Michael-Simon Sobolev inequality (\cite{MS}, \cite{B}), we have for any  smooth  positive function $f$ on any compact subset $\hat{\Sigma}\subset\Sigma$, there holds
    \begin{align*}
    \int_{\hat{\Sigma}}\sqrt{|\nabla f|^2+f^2|{\bf H}|^2}+\int_{\partial\hat{\Sigma}}f\ge C(m,n)\|f\|_{L^\frac{m}{m-1}(\hat{\Sigma})}.
    \end{align*}
    By the smoothness of \(\Sigma\) , the smooth functions are dense in $BV(\Sigma)$. Therefore, given any set $A\subset \Sigma$ with finite perimeter, substituting $f=\chi_A$  into the Michael-Simon Sobolev inequality yields
    \begin{align*}
    \|{\bf H}\|_{L^1(A)}+\mathcal{H}^{m-1}(\partial A)\ge C(m,n)\left(\mathcal{H}^m(A)\right)^{1-\frac{1}{m}}.
    \end{align*}
    Take $\epsilon_0=\left(\frac{C(m,n)}{2}\right)^{m}$.
    Since $\int_{A}|{\bf H}|^md\rm{vol}\le \epsilon_0$, we know  $\|{\bf H}\|_{L^1(A)}\le \frac{C(m,n)}{2}\left(\mathcal{H}^m(A)\right)^{1-\frac{1}{m}}$ and hence
    \begin{align*}
    \mathcal{H}^{m-1}(\partial A)\ge \frac{C(m,n)}{2}\left(\mathcal{H}^m(A)\right)^{1-\frac{1}{m}}.
    \end{align*}
    Thus  \eqref{isoperimetric inequality} follows by taking $D_1=\frac{2}{C(m,n)}$.
    Especially,\eqref{finite total mean curvature} implies there exists a compact subset $K\subset \Sigma$ such that  for any $\hat{B}_r(x)\subset \Sigma\backslash K$ with $r\le r_1$, the coarea formula implies  $\phi(s)=\mathcal{H}^m(\hat{B}_s(x))$ satisfies
    \begin{align*}
    \phi'(s)=\mathcal{H}^{m-1}(\partial \hat{B}_r(x)) \ge \frac{1}{D_1}\phi^{1-\frac{1}{m}}(s).
    \end{align*}
    Combining with $\phi(0)=0$, we get
    $$\phi(r)\ge \frac{1}{m^mD_1^m}r^m,\quad \forall r\le r_1.$$
    The volume lower bound \eqref{volume lower bound} follows by taking $\kappa=\frac{1}{m^mD_1^m}$.

 Now, we prove the properness  by contradiction. Otherwise,  there exists $R>0$ such that $\overline{B_{R}(0)}\cap \Sigma$ is not compact.  Since $\Sigma$ is complete, we get a  sequence of $x_i\in\Sigma\cap \overline{B_R(0)}$ such that
 \begin{align*}
 d_\Sigma(x_i,K)\to \infty \quad \text{ and } d_\Sigma(x_i,x_j)\ge 2r_1.
 \end{align*}
 Without loss of generality, we may assume that $\hat{B}_{r_1}(x_i)\subset \Sigma\backslash K$ for each $i$. Thus
 $\{\hat{B}_{r_1}(x_i)\}_{i=1}^\infty$ are disjoint family of balls in $B_{2r_1+R}(0)\cap \Sigma$, which further implies the contradiction
 \begin{align*}
 \infty>{\rm{Vol}}(\Sigma\cap B_{2r_1+R}(0))\ge \sum_{i=1}^\infty {\rm{Vol}}(\hat{B}_{r_1}(x_i))\ge \Sigma_{i=1}^\infty \kappa r_1^m=\infty.
 \end{align*}
 If $\Sigma$ has infinite ends in $\mathbb{R}^n$, then there exists $B_R(0)\supset K$ such that $\Sigma\backslash B_R(0)$ contains infinite many non-compact connected components $\{E_i\}$. By taking $y_i\in E_i$ such that $d(y_i,B_R(0))=r_1$,  we get infinite many disjoint balls $\{\hat{B}_{r_0}(y_i)\subset E_i\}$ in $B_{R+2}(0)\cap \Sigma$, which again implies the contradiction
 \begin{align*}
 \infty>{\rm{Vol}}(\Sigma\cap B_{2r_1+R}(0))\ge \sum_{i=1}^\infty {\rm{Vol}}(\hat{B}_{r_1}(y_i))=\infty.
 \end{align*}
    \end{proof}

\begin{remark}
Note that the uniformly symplectic assumption $\cos\alpha \ge \delta > 0$ yields an isoperimetric inequality (\cite{CHLS}) on $\Sigma$.
If we assume further that $\mathrm{Area}(\Sigma\cap B_R(0))<\infty, \forall R<\infty$, then $\Sigma$ is properly immersed in $\mathbb{R}^4$ and only has  finitely many ends.  In particular, any translating soliton arising as blow up limit of sympelctic mean curvature flow must be properly immersed and has  finitely many ends. The same conclusion also holds for almost Lagrangian mean curvature flow.
    \end{remark}

\begin{corollary}\label{cor-finite-total-curvature} Assume $\Sigma\subset \mathbb{R}^n$ is a complete immersed surface. Then $\Sigma$ has finite total curvature, i.e.,
\begin{align*}
\int_{\Sigma}|{\bf A}|^2d\mu<\infty.
\end{align*}
if and only if
\begin{enumerate}
\item ${\rm {Area}}(\Sigma\cap B_R)\le \Lambda R^2$ for all $R>0$,
\item the genus of $\Sigma$ is finite,
\item $\int_{\Sigma}|{\bf H}|^2d\mu<\infty,$
\end{enumerate}
for some constant $\Lambda<+\infty$. Moreover, if one of the two equivalent statements holds, then $\Sigma$ is properly immersed in $\mathbb{R}^n$ and only has finitely many ends.
\end{corollary}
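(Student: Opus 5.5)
We prove the two implications separately, using the Gauss equation together with Gauss--Bonnet.

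\emph{Finite total curvature implies (1)--(3).} Condition (3) is immediate, since $|{\bf H}|^2\le 2|{\bf A}|^2$ for surfaces. For (1) and (2) we use the Gauss equation $2K=|{\bf H}|^2-|{\bf A}|^2$, which gives $\int_\Sigma|K|\,d\mu<\infty$.
\begin{itemize}
\item Genus: by Huber's theorem, a complete surface with $\int|K|<\infty$ is conformally a compact Riemann surface with finitely many punctures. In particular its genus is finite and it has finitely many ends.
\item Area growth: quadratic extrinsic area growth is the classical consequence of the monotonicity formula. For an immersed surface, $r^{-2}\mathrm{Area}(\Sigma\cap B_r(x))$ is monotone up to an error controlled by $\int|{\bf H}|^2$. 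Simon's monotonicity identity gives
\[
\frac{\mathrm{Area}(\Sigma\cap B_r(x))}{\pi r^2}\le C\Big(\liminf_{s\to\infty}\frac{\mathrm{Area}(\Sigma\cap B_s)}{\pi s^2}+\int_\Sigma|{\bf H}|^2\Big).
\]
\item Density at infinity: we still need finite density at infinity. For this we invoke the structure of complete surfaces of finite total curvature (M\"uller--\v Sver\'ak, or Chen/White). Each end is properly immersed and has a well-defined finite multiplicity, and the density at infinity equals the total multiplicity of the ends. This bounds $\Lambda$.
\end{itemize}
Properness and finiteness of ends then also follow from Proposition \ref{prop-iso-proper}(3). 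Its hypothesis \eqref{finite total mean curvature} with $m=2$ holds because $\int|{\bf H}|^2<\infty$ makes the tail integral tend to zero.

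\emph{(1)--(3) imply finite total curvature.} Here the plan has four steps.
\begin{enumerate}
\item Apply Proposition \ref{prop-iso-proper}: (3) gives \eqref{finite total mean curvature}, and (1) gives local finiteness of volume. Hence $\Sigma$ is proper and has finitely many ends.
\item Choose $R$ large and $r\in(R,2R)$ generic. Apply Gauss--Bonnet to the compact piece $\Sigma_r=\Sigma\cap B_r$ and use Gauss to write $|{\bf A}|^2=|{\bf H}|^2-2K$. This yields
\[
\int_{\Sigma_r}|{\bf A}|^2=\int_{\Sigma_r}|{\bf H}|^2-4\pi\chi(\Sigma_r)+2\int_{\partial\Sigma_r}k_g .
\]
\item Bound $-\chi(\Sigma_r)=2g(\Sigma_r)-2+b(\Sigma_r)$. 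The genus term is bounded by (2). The number $b(\Sigma_r)$ of boundary curves needs bounding, and this is where properness and finiteness of ends enter, together with a Sard/coarea choice of $r$.
\item Control the geodesic curvature term. Since $\partial\Sigma_r$ is the level set of $|x|$, we have $k_g=\langle\nabla^\Sigma_{\tau}\tau,\nu\rangle$, where $\nu$ is the conormal. This is bounded by $1/r$ plus a contribution of the normal curvature ${\bf A}$ tangent to the sphere, weighted by $|\nabla^\Sigma|x||^{-1}$. Integrating $\int_R^{2R}dr\int_{\partial\Sigma_r}|k_g|$ via coarea bounds it by $C R^{-1}\mathrm{Area}(\Sigma\cap B_{2R})\le C\Lambda$, plus a term $\int|{\bf A}||x|^{-1}$ that is absorbed by Cauchy--Schwarz and (1).
\end{enumerate}
Averaging in $r$ then gives $\int_{\Sigma_R}|{\bf A}|^2\le C(\Lambda,g,\int|{\bf H}|^2,\#\text{boundary curves})$, uniformly in $R$, so the total curvature is finite.

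The main obstacle is the uniform bound on the number of boundary components of $\Sigma\cap B_r$ in step 3, which also enters step 4. A single end can meet a sphere in many circles. I would first try to control $b(\Sigma_r)$ by the isoperimetric/volume lower bound \eqref{volume lower bound}: each boundary curve bounds a piece of an end reaching distance $\gtrsim r_1$ inside the annulus $B_{2r}\setminus B_{r/2}$. The total area in that annulus is at most $C\Lambda r^2$, but this bound alone is too weak, since it allows of order $r^2$ components. The fallback is a monotonicity-based argument. By Allard-type rescaling $x\mapsto x/R$ of the annulus, using small $\int|{\bf H}|^2$ on the annulus and bounded density, the rescaled surface is close to a finite union of multiplicity-bounded cones. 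This bounds the number of boundary circles by $\Lambda/\pi$ for a generic choice of scale, and completes the argument.
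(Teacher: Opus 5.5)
Your forward direction matches the paper's (Huber's theorem plus M\"uller--\v{S}ver\'ak). The converse has a genuine gap at step 3, which you flag yourself. After Gauss--Bonnet, $-4\pi\chi(\Sigma_s)=8\pi g-8\pi c+4\pi b$. So the number $b$ of boundary curves of $\Sigma\cap B_s$ enters with the bad sign, and nothing in your proposal bounds it.

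Your fallback does not work, for three reasons.
\begin{enumerate}
\item Varifold convergence of the blow-downs $\Sigma/R$ gives no control on the topology of $\Sigma\cap\partial B_s$. Thin tubes, fingers or small bumps crossing the sphere carry negligible area and vanish in the limit.
\item Allard's theorem cannot upgrade the convergence. In dimension two it needs ${\bf H}\in L^p$ with $p>2$, whereas you only have the scale-invariant $L^2$ bound. It also needs density ratio close to one, and the limit may have multiplicity.
\item Identifying the limiting stationary cone as a union of planes, rather than a cone over a geodesic network with junctions, would itself require the curvature control you are trying to prove.
\end{enumerate}
So a bound like $b\lesssim\Lambda$ is true only a posteriori, and the argument as proposed is circular.

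The paper avoids this by quoting a modified version of Ilmanen's local Gauss--Bonnet inequality. There, components enter only through $-8\pi c'$ with the good sign, and no boundary count appears. If you want to prove it along your own lines, the missing ingredient is to pay for each boundary curve with its curvature.

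Every component $\gamma$ of $\Sigma\cap\partial B_s$ is a closed curve in $\mathbb{R}^n$. By Fenchel's theorem, $2\pi\le\int_\gamma|\vec k|\le\int_\gamma(|k_g|+|{\bf A}|)$. Summing over components gives $2\pi\, b(\Sigma_s)\le\int_{\partial\Sigma_s}(|k_g|+|{\bf A}|)$. Here $|k_g|\le(1+|x||{\bf A}|)/|x^T|$, with $x^T$ the tangential part of the position vector. This is exactly the quantity your step 4 already controls by coarea averaging in $s$.

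Step 4 also needs a fix: the absorption does not work as written. You cannot absorb $\epsilon\int_{\Sigma\cap B_{2R}}|{\bf A}|^2$ into $\int_{\Sigma\cap B_R}|{\bf A}|^2$ without an a priori growth bound. Instead, average with a squared cutoff $\phi=\psi(|x|)^2$ and write $\int\phi K=\int_0^1\int_{\{\phi>t\}}K\,dt$. Then the error term satisfies $\int|\nabla\phi||{\bf A}|\le\epsilon\int\phi|{\bf A}|^2+\epsilon^{-1}\int|\nabla\psi|^2$, and it is absorbed on the left. This yields an estimate of the same type as the one the paper quotes. Letting $R\to\infty$ and then $r\to\infty$ finishes the proof.
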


\begin{proof} ($\Longleftarrow$)
 By Proposition \ref{prop-iso-proper}, $\Sigma$ is properly immersed in ${\mathbb R}^n$. Hence we may apply a modified version of Ilmanen's local Gauss-Bonnet formula (\cite{Il1}) to obtain, for any $r<R$
			\begin{eqnarray*}
				\frac{1}{2}\int_{\Sigma\cap B_{r}(0)}|{\bf A}|^2
				&\leq &\int_{\Sigma\cap B_{R}(0)}|{\bf H}|^2+8\pi g(\Sigma\cap B_R(0))-8\pi c'(\Sigma\cap B_R(0))\\
				& &+\frac{48\pi \Lambda_1 R^2}{(R-r)^2},
			\end{eqnarray*}
			where
			\begin{equation*}
				\Lambda_1=\sup_{S\in [r,R]}\frac{{\rm Area}(\Sigma\cap B_{S}(0))}{\pi S^2}\leq \Lambda
			\end{equation*}
			and $c'(\Sigma\cap B_R(0))$ is the number of components of $\Sigma\cap B_R(0)$ that meet $B_r$. Note that $g(\Sigma\cap B_R(0))$ is uniformly bounded by the genus $g(\Sigma)$. Hence
			\begin{eqnarray*}
				\frac{1}{2}\int_{\Sigma\cap B_{r}(0)}|{\bf A}|^2
				\leq \int_{\Sigma\cap B_{R}(0)}|{\bf H}|^2+8\pi g(\Sigma)+\frac{48\pi \Lambda R^2}{(R-r)^2}.
			\end{eqnarray*}
			Letting $R\to\infty$ first and then $r\to\infty$ , we obtain
			\begin{equation*}
				\int_{\Sigma}|{\bf A}|^2\leq 2\int_{\Sigma}|{\bf H}|^2+ 16\pi g(\Sigma)+96\pi \Lambda<+\infty.
			\end{equation*}

        \vspace{.1in}

        ($\Longrightarrow$) If $\Sigma$ has finite total curvature, then (3) follows immediately. By Huber's theorem, $\Sigma$ is conformal to a compact Riemann surface with finitely many points removed. Consequently, $\Sigma$ is properly immersed in $\mathbb{R}^n$ with finite genus and  quadratic area growth \cite[Corollary 4.2.5]{MS1}.
\end{proof}
Recall that (\cite{SY}) the isoperimetric inequality  is equivalent to the Sobolev inequality (note that $n=2$ in our case)
\begin{equation}\label{e-soblev}
    \left(\int_{\Omega}v^2\right)^{\frac{1}{2}}\leq D_1\int_{\Omega}|\nabla v|
\end{equation}
for any $v\in W_0^{1,1}(\Omega)$. A direct calculation using the H\"older inequality yields, for all $q\geq 2$, that
\begin{equation}\label{e-soblev-2}
    \left(\int_{\Omega}v^q\right)^{\frac{1}{q}}\leq \frac{q}{2}D_1|\Omega|^{\frac{1}{q}}\left(\int_{\Omega}|\nabla v|^2\right)^{\frac{1}{2}}
\end{equation}
for any $v\in W_0^{1,q}(\Omega)$ and $v\geq 0$. Here $|\Omega|$ denote the area of $\Omega$. Actually, we have the following improved Sobolev inequality:

\begin{proposition}\label{prop-sob}
    Let $\Omega$ be a bounded domain on $\Sigma$ with smooth boundary such that the isoperimetric inequality
\begin{equation}
			\left({\mathcal H}^2(A)\right)^{\frac{1}{2}}\leq D_1{\mathcal H}^1(\partial A)
		\end{equation}
        holds for any set $A\subset \Omega$  of finite perimeter.
	 Then for any  $v\in W_0^{1,q}(\Omega)$, $v\geq 0$ and $q\geq 2$, we have
    \begin{equation}\label{e-soblev-3}
    \left(\int_{\Omega}v^q\right)^{\frac{1}{q}}\leq Cq^{\frac{1}{2}}|\Omega|^{\frac{1}{q}}\left(\int_{\Omega}|\nabla v|^2\right)^{\frac{1}{2}}
\end{equation}
for some constant $C$ depending only on $D_1$.
\end{proposition}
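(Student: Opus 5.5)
The plan is to prove \eqref{e-soblev-3} by a dyadic level-set decomposition (Maz'ya truncation). This lets us apply the $L^1$ Sobolev inequality \eqref{e-soblev} on each layer separately, rather than to $v^{q/2}$ as a whole, which is what causes the loss $q/2$ in \eqref{e-soblev-2}.

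First, the isoperimetric inequality on subsets of $\Omega$ gives \eqref{e-soblev}, namely $\|w\|_{L^2(\Omega)}\le D_1\|\nabla w\|_{L^1(\Omega)}$ for $w\in W^{1,1}_0(\Omega)$. This follows from the coarea formula and the layer-cake representation, exactly as in \cite{SY}. Combining it with H\"older's inequality, for any $w\ge0$ with support in a set $E\subset\Omega$ we get
\begin{equation*}
\|w\|_{L^2}\le D_1|E_w|^{1/2}\|\nabla w\|_{L^2},
\end{equation*}
where $E_w=\{\nabla w\neq0\}$. From this and the Chebyshev inequality, the level sets $\{v>t\}$ satisfy weak-type bounds.

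Next, for $v\ge0$ I set $v_k=\min\{(v-2^k)_+,2^k\}$ for $k\in\mathbb Z$. Each $v_k$ lies in $W_0^{1,1}(\Omega)$, and $\nabla v_k=\nabla v\,\chi_{\{2^k<v\le 2^{k+1}\}}$. Applying the previous step to $v_k$ gives
\begin{equation*}
2^{k}|\{v>2^{k+1}\}|^{1/2}\le \|v_k\|_{L^2}\le D_1 |\{v>2^k\}|^{1/2}\, a_k,
\end{equation*}
where $a_k=\|\nabla v\|_{L^2(\{2^k<v\le2^{k+1}\})}$ and $\sum_k a_k^2=\|\nabla v\|_2^2$. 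Writing $\mu_k=|\{v>2^k\}|\le|\Omega|$, this becomes $4^k\mu_{k+1}\le 4D_1^2\mu_k a_k^2$. Since $\int v^q\le \sum_k 2^{(k+1)q}\mu_k$, it remains to bound $\sum_k 2^{kq}\mu_{k}$.

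For this last sum, the crude estimate $\mu_k\le \min(|\Omega|,\ \cdot)$ is not enough. Instead I would use the recursion multiplicatively. Let $k_0$ be the level where $2^{2k_0}\approx \|\nabla v\|_2^2/|\Omega|$. Above $k_0$, iterating the recursion shows that $\mu_k$ decays super-geometrically, like
\begin{equation*}
|\Omega|\prod_{j<k}\bigl(4D_1^2a_j^2 4^{-j}\bigr).
\end{equation*}
Optimizing via AM--GM then gives a Gaussian-type tail,
\begin{equation*}
\mu(\{v>t\})\lesssim |\Omega|\exp\!\left(-c\,t^2/\|\nabla v\|_2^2\right).
\end{equation*}
This is the Trudinger--Moser-type exponential integrability in dimension two. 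Integrating,
\begin{equation*}
\int v^q = q\int_0^\infty t^{q-1}\mu(\{v>t\})\,dt\le C|\Omega|\,\Gamma(q/2+1)\,\|\nabla v\|_2^q\, c^{-q/2},
\end{equation*}
and Stirling's formula gives $\Gamma(q/2+1)^{1/q}\sim C\sqrt q$. This yields \eqref{e-soblev-3}.

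The main obstacle is the exponential estimate. I expect it is easier to derive it directly, by applying \eqref{e-soblev-2} with a fixed exponent to the truncations $(v-t)_+$ on $\{v>t\}$. This gives the differential inequality
\begin{equation*}
\mu(t+s)\le \bigl(C\|\nabla v\|_{L^2(\{v>t\})}\bigr)^2\mu(t)\, s^{-2}
\end{equation*}
for the distribution function. Choosing steps $s$ so that the factor on the right is at most $1/e$, the number of steps needed to reach height $t$ is of order $t^2/\|\nabla v\|_2^2$, because $\sum s_j^2$ is controlled. Tracking the constants there, and making sure the dependence is only on $D_1$ (with $|\Omega|$ entering through the normalization), is where care is needed.
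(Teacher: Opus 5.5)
Your overall architecture matches the paper's. Everything reduces to a Gaussian tail $|\{v>t\}|\le C|\Omega|\exp(-ct^2/\|\nabla v\|_2^2)$; the layer-cake formula then produces $\Gamma(\frac q2+1)$, and Stirling gives $q^{1/2}$. The gap is that neither of your two routes actually yields that tail.

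The dyadic recursion $4^k\mu_{k+1}\le 4D_1^2a_k^2\mu_k$ is too coarse at the borderline exponent. Take $4^{k_0}\ge 4D_1^2\|\nabla v\|_2^2$; so $k_0$ is fixed by $2^{k_0}\approx D_1\|\nabla v\|_2$, and $|\Omega|$ should not enter. Then each factor is at most $4^{k_0-k}a_k^2/\|\nabla v\|_2^2$. Iterating and applying AM--GM gives only $\mu_{k_0+n}\le|\Omega|\,n^{-n}4^{-n(n-1)/2}$, i.e.\ $|\{v>t\}|\lesssim|\Omega|\exp(-c\log^2(t/\|\nabla v\|_2))$. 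A Gaussian tail at $t=2^{k_0+n}$ would require decay like $\exp(-c\,4^{n})$. The recursion admits sequences that decay only like $\exp(-cn^2)$: take equality with $a_k^2$ spread like $(k-k_0+1)^{-2}$. So no optimization of the product can recover the Gaussian tail. Fed into the layer-cake formula, the tail you do get makes $\|v\|_q$ grow exponentially in $q$, which is worse even than \eqref{e-soblev-2}.

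Your fallback fails for a related reason. With the one-sided truncation $(v-t)_+$, the right-hand side involves $\|\nabla v\|_{L^2(\{v>t\})}$, i.e.\ energies over nested sets. Hence $\sum_j\|\nabla v\|^2_{L^2(\{v>t_j\})}$ is not bounded by $\|\nabla v\|_2^2$; it is about $n\|\nabla v\|_2^2$ if the energy sits near the top of $v$. So ``$\sum s_j^2$ is controlled'' is false. With constant steps $s=\sqrt{e}D_1\|\nabla v\|_2$, your inequality gives only $|\{v>t\}|\le e|\Omega|\exp(-t/(\sqrt{e}D_1\|\nabla v\|_2))$, which reproduces the factor $q$ rather than $q^{1/2}$.

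The missing idea is to combine the two-sided truncation you already used, $w=\min\{(v-t)_+,s\}$, with adaptive step lengths. This gives $s^2\mu(t+s)\le D_1^2E(t,s)^2\mu(t)$ with $E(t,s)=\|\nabla v\|_{L^2(\{t<v<t+s\})}$, which is continuous in $s$. Choose $s_j>0$ with $s_j=\sqrt{e}D_1E(t_j,s_j)$ and set $t_{j+1}=t_j+s_j$; if no such $s_j$ exists, right-continuity of $\mu$ forces $\mu(t_j)=0$ and you are done. Then $\mu(t_n)\le e^{-n}|\Omega|$. The layers are disjoint, so Cauchy--Schwarz gives $t_n\le\sqrt{e}D_1\sqrt{n}\,\|\nabla v\|_2$. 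Hence $\mu(t)\le e|\Omega|\exp(-t^2/(eD_1^2\|\nabla v\|_2^2))$.

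Repaired this way, your argument is a more elementary, self-contained alternative to the paper's proof. The paper obtains the same tail, with $D_1^2$ in place of $eD_1^2$, from the P\'olya--Szeg\"o inequality $D_1^{-2}\int_0^M(\frac{dv^*}{ds})^2s\,ds\le\|\nabla v\|_2^2$. Substituting $s=Me^{-\tau}$ turns this into $\int_0^\infty (y')^2\,d\tau\le D_1^2\|\nabla v\|_2^2$ with $y(0)=0$, so $v^*(Me^{-\tau})\le D_1\|\nabla v\|_2\sqrt{\tau}$. The price is relying on rearrangement theory and its approximation lemmas.
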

\begin{proof}
The result follows from the  P\'olya-Szeg\"o rearrangement inequality (See \cite{MonSe}, \cite{AB1} for exmaple) . We include a sketch of the proof to clarify the power $\frac{1}{2}$ on $q$, which is useful for later application. By approximating (see \cite[Lemma 3.6]{MonSe}, \cite[Lemma 4]{AB1}), we assume  $v$ is a nonnegative compactly supported  Lipschitz function such that  $\mathcal{H}^2(\{x\in \Omega| |\nabla v|(x)=0, v(x)>0\})=0$.
Denote the distribution function and rearrangement of $v$ by
\begin{align*}
\lambda_v(t)=\mathcal{H}^2(\Omega_t),  \text{ where }  \Omega_t=\{x\in \Omega| |v|>t\} \quad \text{ and } \quad v^*(s)=\inf\{t| \lambda_v(t)\le s\}, s\ge 0.
\end{align*}
Then  $v^*(s)=0$ for $s\ge M=:\mathcal{H}^2(\Omega)$ and $v^*|_{[0,M]}$ is strictly decreasing and absolutely continuous \cite[Lemma 5]{AB1}.

Using the isoperimetric inequality, by the same proof of  \cite[Theorem 1]{AB1}, there holds
the P\'olya-Szeg\"o rearrangement inequality
\begin{align*}
D_1^{-2}\int_0^M(\frac{dv^*}{ds})^2sds\le
\int_{\Omega}|\nabla v|^2d\mathcal{H}^2.
\end{align*}
For convenience, we define $s=s(\tau)=Me^{-\tau}$ and $y(\tau)=v^*(s(\tau))$, then the above estimate becomes
\begin{align*}
D_1^{-2}\int_0^\infty(y'(\tau))^2d\tau=D_1^{-2}\int_0^M(\frac{dv^*}{ds})^2sds\le \int_{\Omega}|\nabla v|^2d\mathcal{H}^2.
\end{align*}
Note that $y(0)=v^*(M)=0$. We know
\begin{align*}
y(\tau)=\int_0^\tau y'\le \|y'\|_{L^2}\sqrt{\tau}\le D_1\|\nabla v\|_{L^2(\Omega)}\sqrt{\tau}.
\end{align*}
So, we get
\begin{align*}
\int_{\Omega}v^q&=q\int_0^{\infty}t^{q-1}\lambda_v(t)dt\\
&=q\int_0^{\infty}t^{q-1}\lambda_{v^*}(t)dt=\int_{[0,M]}v^{*q}ds\\
&=\int_{0}^\infty( y(\tau))^{q}Me^{-\tau} d\tau\\
&\le \int_{0}^\infty\left(D_1\|\nabla v\|_{L^2(\Omega)}\sqrt{\tau}\right)^{q}Me^{-\tau} d\tau,
\end{align*}
and hence
\begin{align*}
\|v\|_{L^q(\Omega)}\le D_1|\Omega|^{\frac{1}{q}}\|\nabla v\|_{L^2(\Omega)}\left(\int_{0}^\infty\tau^{\frac{q}{2}}e^{-\tau}d\tau\right)^{\frac{1}{q}}= D_1|\Omega|^{\frac{1}{q}}\|\nabla v\|_{L^2(\Omega)}\left(\Gamma(\frac{q}{2}+1)\right)^{\frac{1}{q}}.
\end{align*}

By the Stirling formula $\Gamma(z+1)\sim \sqrt{2\pi z}\cdot(\frac{z}{e})^z$ as $z\to \infty$, there holds
\begin{align*}
\left(\Gamma(\frac{q}{2}+1)\right)^{\frac{1}{q}}\sim\left[\sqrt{\pi q}(\frac{q}{2e})^{\frac{q}{2}}\right]^{\frac{1}{q}}\sim(2e)^{-\frac{1}{2}}q^{\frac{1}{2}} \quad \text{as } q\to \infty.
\end{align*}
As a result, we get
\begin{align*}
\|v\|_{L^q(\Omega)}\le C_1D_1|\Omega|^{\frac{1}{q}}q^{\frac{1}{2}}\|\nabla v\|_{L^2(\Omega)},
\end{align*}
where $C_1$ is an absolute constant independent of $\Omega, q$ or $v$.
\end{proof}
	
	\vspace{.1in}

Finally we consider a graphical surface $\Sigma$ in $\mathbb{C}^2$ defined by
	\begin{equation*}
		(u,v,f(u,v),g(u,v)) \ \ \ \ {\rm with} \  (u,v)\in U\subset {\mathbb R}^2,
	\end{equation*}
	for some differentiable functions $f,g$. By direct computations, we see that the K\"ahler angle of $\Sigma$ is given by (\cite{HL3})
	\begin{equation}\label{graph-cos-1}
		\cos\alpha_{J_1}=\frac{1+f_ug_v-f_vg_u}{\sqrt{1+f_u^2+f_v^2+g_u^2+g_v^2+(f_ug_v-f_vg_u)^2}}.
	\end{equation}

 	\vspace{.2in}

\section{The decay estimate for the K\"ahler angle}

    \vspace{.1in}

    \vspace{.1in}
This section is devoted to deriving the decay estimate for the Kähler angle.

\subsection{Local structure of surfaces with small total curvature}
In preparation, we first analyze the geometry of properly immersed surfaces in the annulus $A_{R,2R}$ with small total curvature. We start by classifying the model case: proper immersions of open surfaces into a flat annulus.

\begin{lemma}\label{classification} Assume $f:(M,g)\to (D_2\backslash D_1,g_0)$ is a proper Riemannian immersion from an open surface to a flat annulus and $\mathrm{Area}_g(M)<\infty$. Then,  there exists an integer $1\le d=\frac{\mathrm{Area}(M)}{3\pi}$ such that
\begin{enumerate}
\item $(M,g)$ is conformal to $D_{r}\backslash D_1$ for $r=2^{\frac{1}{d}}$
\item under the conformal parameterization $\varphi: D_r\backslash D_1\to M$, $f\circ \varphi(z)=(e^{i\theta_0}z)^d$ or $f\circ \varphi(z)=(e^{i\theta_0}\bar{z})^d$.
\end{enumerate}
\end{lemma}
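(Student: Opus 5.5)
The plan is to show that $f$ is a finite-sheeted covering map of the annulus, and then read off the conformal structure and the explicit form of $f$ from the classification of coverings of $D_2\backslash D_1$. We assume throughout that $M$ is connected; otherwise the statement is applied componentwise.

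First, since $f$ is a Riemannian immersion between surfaces of the same dimension, the differential $df$ is a linear isometry at every point. Hence $f$ is a local isometry and in particular a local diffeomorphism. We claim that a proper local diffeomorphism $f:M\to N$ onto a connected manifold $N=D_2\backslash D_1$ is a covering map.
\begin{enumerate}
\item Fibers are discrete, since $f$ is a local diffeomorphism, and compact, since $f$ is proper. Hence each fiber $f^{-1}(y)$ is finite, say $\{p_1,\dots,p_k\}$.
\item Choose disjoint neighborhoods $U_i\ni p_i$ on which $f$ is a diffeomorphism onto its image.
\item Properness gives a small disc $V\ni y$ with $f^{-1}(V)\subset\bigcup U_i$. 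Indeed, otherwise points outside $\bigcup U_i$ would accumulate, via properness, at a point of the fiber over $y$ lying outside $\bigcup U_i$, which is impossible.
\item Shrinking $V$ so that $V\subset\bigcap_i f(U_i)$, the set $V$ is evenly covered.
\end{enumerate}
Moreover $f$ is surjective: its image is open because $f$ is a local diffeomorphism, and closed because proper maps to locally compact Hausdorff spaces are closed. So the number of sheets $d\ge1$ is constant. Since $f$ is a local isometry, we get $\mathrm{Area}_g(M)=d\cdot \mathrm{Area}(D_2\backslash D_1)=3\pi d$. This gives $d=\mathrm{Area}(M)/(3\pi)$, an integer, and finite area forces $d<\infty$, which is consistent with the finiteness from properness.

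Next, connected coverings of the annulus correspond to subgroups of $\pi_1(D_2\backslash D_1)\cong\mathbb Z$. A $d$-sheeted connected cover corresponds to $d\mathbb Z$, and a model for it is $p_d:D_{r}\backslash D_1\to D_2\backslash D_1$, $p_d(z)=z^d$, with $r=2^{1/d}$. By uniqueness of coverings there is a homeomorphism $\varphi:D_r\backslash D_1\to M$ with $f\circ\varphi=p_d$. We may upgrade $\varphi$ to a diffeomorphism, since both $f$ and $p_d$ are local diffeomorphisms. Then $\varphi^*g=\varphi^*f^*g_0=p_d^*g_0=d^2|z|^{2d-2}|dz|^2$, which is conformal to the flat metric. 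Thus $(M,g)$ is conformal to $D_r\backslash D_1$, and under $\varphi$ we have $f\circ\varphi(z)=z^d$.

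The remaining freedom accounts for the stated normal forms. Composing $\varphi$ with deck transformations, which are the rotations $z\mapsto e^{2\pi i k/d}z$, or with any rotation of the parameter domain yields $(e^{i\theta_0}z)^d$. If one insists that $\varphi$ be orientation-compatible with a prescribed orientation of $M$, one may instead need the reflection $z\mapsto\bar z$, giving $(e^{i\theta_0}\bar z)^d$. Any conformal parametrization of $M$ by $D_r\backslash D_1$ differs from $\varphi$ by a conformal or anticonformal automorphism of the annulus. The modulus fixes the domain, and these automorphisms are rotations, reflections, and the inversion $z\mapsto r/\bar z$. The inversion is excluded by requiring $|f\circ\varphi|\to1$ as $|z|\to1$.

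The main obstacle I expect is the covering-map step, specifically checking carefully that properness yields evenly covered neighborhoods and constant finite degree. Everything after that is standard covering theory together with the explicit pullback of the flat metric under $z^d$.
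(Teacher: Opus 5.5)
Your proof is correct, and it takes a different and more economical route than the paper's.

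The paper proceeds in four steps. It first uses the conformal classification of topological annuli to get some conformal parametrization $A_{r',R}\to M$. It then rules out the degenerate cases $r'=0$ or $R=\infty$ by lifting to a conformal map $\mathbb{C}\to\mathbb{D}$ and applying Liouville's theorem. Next, it lifts the conformal covering $f\circ\varphi$ through $w\mapsto w^d$ to a conformal isomorphism between round annuli. Finally, it compares conformal moduli to get $r=2^{1/d}$ and to conclude that the lift is a rotation or a reflection.

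You bypass all of this. Topological uniqueness of the connected $d$-sheeted cover of the annulus gives $\varphi$ with $f\circ\varphi=p_d$ directly. Since $g=f^*g_0$, the identity $\varphi^*g=p_d^*g_0=d^2|z|^{2d-2}|dz|^2$ shows that $\varphi$ is automatically conformal. So the conformal type of $M$ and the value $r=2^{1/d}$ come out at once, with no uniformization, Liouville, or modulus argument. The same reasoning works if $f$ is only assumed to be a conformal covering, because $\varphi$ is locally $(f|_U)^{-1}\circ p_d$.

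Two smaller points also favor your version. First, you note that properness alone forces finite fibers. So the finite-area hypothesis, which the paper uses to exclude the disk case, is logically redundant. Second, you make explicit that the inversion of the annulus must be excluded by a boundary normalization. The paper's assertion that the lifted isomorphism ``is a rotation'' passes over the automorphism $z\mapsto r/z$. This is harmless under the existential reading of item (2), but your treatment is more careful.

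The paper's route works directly with the intrinsic conformal structure of $M$ via uniformization. Yours needs only standard covering-space theory plus the fact that the metric is a pullback, and it yields exactly the normal form used later in the compactness argument.
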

\begin{proof}
Since a proper immersion between open manifolds of the same dimension is a covering map, we know $f$ must be a Riemannian covering map. Consequently, $M$ is homeomorphic to either an annulus or a disk. Moreover, in the case $M$ homeomorphic to a disk, the  multiplicity of the covering is infinite, which contradicts $\mathrm{Area}_g(M)<\infty$.  As a result, $M$ is homeomorphic to an annulus. By the conformal classification of topological annuli, we know there exists a conformal parameterization
\begin{align*}
\varphi:A_{r',R}:=D_{R}\backslash D_{r'}\to M, 0\le r'< R\le \infty.
\end{align*}
We next show $r'>0$ and $R<\infty$.  In fact, if $r'=0$ or $R=\infty$, then, we know $A_{r',R}$ is parabolic and is conformally covered by $\mathbb{C}$. Noting that $D_2\backslash D_1$ is hyperbolic, which is conformally covered by $\mathbb{D}$, we can lift the conformal immersion $f:M\to A_{1,2}$ to be a conformal immersion $\tilde{f}: \mathbb{C}\to \mathbb{D}$, and get a contradiction by the Liouville theorem.  As a result, we know $0<r'<R<\infty$. Taking $r=\frac{R}{r'}$, we can assume $A_{r',R}=A_{1,r}$ without loss of generality since they are conformal. So, under the conformal parameterization $\varphi: A_{1,r}\to M$, $f\circ \varphi: A_{1,r}\to A_{1,2}$ is a conformal covering map. Assume $d$ is the multiplicity of the covering map $f\circ \varphi$ and $\pi(w)=w^{d}: A_{1,2^{\frac{1}{d}}}\to A_{1,2}$ is the standard covering map. Then, $f\circ \varphi$ lifts to a conformal isomorphism $\hat{f}: A_{1,r}\to A_{1,2^{\frac{1}{d}}}$.  So, $A_{1,r}$ and $A_{1,2^{\frac{1}{d}}}$ has the same conformal module, which implies $r=2^{\frac{1}{d}}$ and $\hat{f}$ is a rotation. That is, $\hat{f}(z)=e^{i\theta_0}z$ or $\hat{f}(z)=e^{i\theta_0}\bar{z}$. Noting $f\circ \varphi=\pi\circ \hat{f}$, we get the conclusion

$$\text{either }\quad  f\circ \varphi(z)=(e^{i\theta_0}z)^d \quad \text { or} \quad  f\circ \varphi(z)=(e^{i\theta_0}\bar{z})^d.$$
Finally, since $f:M\to A_{1,2}$ is a Riemannian covering with multiplicity $d$, we know
$$3\pi d= d\cdot \mathrm{Area}(D_2\backslash D_1)=\mathrm{Area}(M).$$
\end{proof}

Then, we use a compactness argument to transfer the above rigidity theorem to surfaces in the annulus with volume bounds and  small total curvature.
\begin{proposition}\label{structure of properly immersed small annulus}
 For any $\Lambda>0$, there exists $C, \epsilon_0>0$ such that the following holds.

Assume $\Sigma\subset \mathbb{R}^n$ is an immersed compact surface (with boundary) such that  for some fixed $R>0$, there holds
\begin{align}\label{proper condition}
\partial \Sigma\cap \mathrm{interior}(B_{4R}\backslash  B_{\frac{R}{2}})=\emptyset, \quad  \Sigma\cap\partial B_{\frac{3R}{2}}\neq\emptyset,
\end{align}
and
\begin{align}\label{small total curvature in annulus}
\int_{\Sigma\cap (B_{4R}\backslash B_{\frac{R}{2}})}|{\bf A}|^2d\mu\le \epsilon_0
\quad \text{ and }
 \quad
\mathrm{Area}(\Sigma\cap (B_{4R}\backslash B_{\frac{R}{2}}))\le \Lambda R^2.
\end{align}
Then, there exists an integer $1\le d\le \frac{\Lambda}{3\pi}$ and   a conformal parametrization $f: D_{(2R)^{\frac{1}{d}}}\backslash D_{R^{\frac{1}{d}}}\to \Omega_R$ for some domain  $\Omega_R\subset \Sigma$ with \begin{align}\label{Hausdorff distance estimate}
d_{H}(\left(\Sigma\cap (B_{2R}\backslash B_R)\right)_{x},\Omega_R)\le \psi(\epsilon_0)R
\end{align}
such that  $df\otimes df=e^{2u}g_{\mathbb{R}^2}$ and
\begin{align}\label{annulus estimate}
C^{-1}\le \frac{e^{2u}}{d^2|z|^{2d-2}}\le C
\quad \text{ and } \quad
1-\psi(\epsilon_0)\le \frac{|f(z)|}{|z|^d}\le 1+\psi(\epsilon_0), \forall z\in D_{(2R)^{\frac{1}{d}}}\backslash D_{(\frac{R}{2})^{\frac{1}{d}}}.
\end{align}
Here $\psi$ is a function such that   $\lim_{\epsilon\to 0}\psi( \epsilon)=0$, $x$ is a point on $\Sigma\cap \partial B_{\frac{3R}{2}}$ and $\left(\Sigma\cap (B_{2R}\backslash B_R)\right)_x$ means the connected component of $\Sigma\cap (B_{2R}\backslash B_R)$ containing $x$.
\end{proposition}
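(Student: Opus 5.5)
By scaling invariance of the hypotheses (the total curvature $\int|{\bf A}|^2$ is scale invariant in dimension two, and the area bound is quadratic), I will normalize to $R=1$. I would then argue by contradiction and compactness. Suppose the statement fails for some $\Lambda$. Then there are surfaces $\Sigma_k$ satisfying (\ref{proper condition}) and (\ref{small total curvature in annulus}) with $\epsilon_0=\epsilon_k\to 0$, together with points $x_k\in\Sigma_k\cap\partial B_{3/2}$, for which no such parametrization exists.

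\textbf{Step 1: local graphical structure and convergence.} On $\Sigma_k\cap A_{1/2,4}$ the total curvature is at most $\epsilon_k$ and the area is at most $\Lambda$. I would use a standard $\epsilon$-regularity lemma for surfaces with small $L^2$ second fundamental form, in the style of Simon's graphical decomposition lemma for $\int|{\bf A}|^2$ small. It gives that on every ball $B_\rho(y)$ with $y\in A_{1/2,4}$, $\rho\le\frac18$, each component of $\Sigma_k\cap B_\rho(y)$ meeting $B_{\rho/2}(y)$ is, away from small "pimples" of total diameter $\le\psi(\epsilon_k)\rho$, a Lipschitz graph with small gradient over a plane. The pimples can be excluded here: a disk-type pimple would require genus or a closed component, and curvature $\int|{\bf A}|^2\ge 4\pi$ is needed for a closed piece, so they are absent.

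The area bound limits the number of sheets to at most $C\Lambda$. Passing to a subsequence, $\Sigma_k\cap A_{3/4,3}$ converges in Hausdorff distance and as varifolds to a finite union of flat pieces with multiplicity. More precisely, the Gauss map has oscillation $\psi(\epsilon_k)$ along each sheet, so each connected sheet converges to a piece of a single affine $2$-plane $P$, which meets $\partial B_{3/2}$. The limit near $x_k$ is therefore $P\cap A_{3/4,3}$, where $P$ is a plane through a point at distance $\le 3/2$ from the origin.

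\textbf{Step 2: the limit plane passes through the origin.} This is the main obstacle. In general a sheet could converge to an off-center plane, and $P\cap A_{1,2}$ is then not an annulus around $0$. I would use the hypothesis $\partial\Sigma\cap{\rm int}(A_{1/2,4})=\emptyset$: a sheet cannot end inside the annulus, so the limit sheet is a proper immersion into $A_{1/2,4}$ with no interior boundary.

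If $P$ misses $B_{1/2}$, then $P\cap A_{1/2,4}$ contains a flat piece whose boundary lies only on $\partial B_4$, namely $P\cap B_4$. The sheet would then cover the disk $P\cap B_4$, which is simply connected and reaches the region near $B_{1/2}$. I would expect this to be reconciled with the rigidity of Lemma \ref{classification} only after taking $P\cap A_{1,2}$. Concretely, after projecting to $P$ (a small-gradient graph map), the sheet over $P\cap A_{3/4,3}$ becomes a proper covering of the planar domain $P\cap A_{3/4,3}$. The planar case of Lemma \ref{classification}, and its analogue for off-center domains, then gives either an annular multiple covering or a contradiction when the domain is not a round annulus around $0$.

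If this case analysis is too delicate, I would first try to prove that the limit plane contains $0$ by a monotonicity argument. The Gauss map is nearly constant on a component crossing the whole annulus, and such a component is nearly conical, since its position vector is nearly tangent. Failing that, I would restrict the conclusion to sheets that are almost conical.

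\textbf{Step 3: conformal parametrization and estimates.} Once the sheet through $x_k$ is a small-gradient graph over $P\cap A_{3/4,3}$ with $0\in P$, I would compose the graph map with the covering given by Lemma \ref{classification} applied to $P\cap A_{1,2}$. The projection map is $C^0$-close to an isometry and has $W^{1,2}$-small conformal distortion, because its derivative is controlled by the Gauss map oscillation. Solving a Beltrami equation with small dilatation then yields a conformal parametrization $f$ of a domain $\Omega$ that is close to $z\mapsto z^d$ or $\bar z^d$. This gives the two-sided bound on $e^{2u}/(d^2|z|^{2d-2})$ and the ratio estimate $|f(z)|/|z|^d\to1$ uniformly. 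The Hausdorff estimate (\ref{Hausdorff distance estimate}) follows from the convergence in Step 1.

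The degree count is $\mathrm{Area}\approx 3\pi d\le\Lambda$, which gives $d\le\Lambda/(3\pi)$. Rescaling back to general $R$ produces the domain $D_{(2R)^{1/d}}\backslash D_{R^{1/d}}$. This contradicts the choice of the sequence $\Sigma_k$ for $k$ large, which proves the statement.
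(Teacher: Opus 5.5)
You correctly identify Step 2 as the crux, but it is left open, and none of your three fallbacks closes it.

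\textbf{Why the fallbacks fail.} Let $h$ be the distance from $0$ to the limit plane $P$, with foot point $p$.
\begin{itemize}
\item An off-centre limit gives no contradiction with Lemma \ref{classification}. If $h<\frac12$, then $P\cap(B_4\backslash B_{\frac12})$ is a round annulus centred at $p$. If $\frac12<h\le\frac32$, it is a disc, which a connected sheet covers once. Both are consistent with every hypothesis.
\item The ``nearly conical'' monotonicity idea is circular. The position vector is nearly tangent to the sheet exactly when $P$ passes near $0$.
\item Restricting to almost conical sheets changes the statement.
\end{itemize}

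\textbf{The step cannot be proved.} Read quantitatively, as it is used in Lemma \ref{lem:topology and geometry of end} (with $\psi(\epsilon_k)\to 0$), the statement is false without it. Take $R=1$, any $h\in(0,\frac32]$, and $\Sigma=P\cap\overline{B_5}$. It satisfies (\ref{proper condition}) and (\ref{small total curvature in annulus}) with $\int|{\bf A}|^2=0$ once $\Lambda\ge 16\pi$. So for every $\epsilon>0$ there would be a conformal $f_\epsilon=p+w_\epsilon$ with $1-\psi(\epsilon)\le|f_\epsilon(z)|/|z|^d\le 1+\psi(\epsilon)$ on $1<|z|<2^{1/d}$. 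Here $w_\epsilon$ is holomorphic after identifying $P\cong\mathbb{C}$ and conjugating if necessary.

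Fix $d$ along a subsequence. Montel's theorem gives a holomorphic limit $w$ with $h^2+|w(z)|^2=|z|^{2d}$.
\begin{itemize}
\item If $h>1$, this is impossible where $|z|^d<h$.
\item If $h\le 1$, then $|w|>0$ on the open annulus, so $\log|w|^2=\log(|z|^{2d}-h^2)$ would be harmonic. But its Laplacian is $-4d^2h^2|z|^{2d-2}(|z|^{2d}-h^2)^{-2}<0$.
\end{itemize}

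\textbf{The paper's proof has the same hole.} Its sentence ``By connectedness and properness, we can assume $F_\infty(\Sigma_\infty)=D_4\backslash D_{\frac12}$'' is exactly the unproved claim $0\in P$. The global setting does not rescue it either. Take $R_k$ sparse; an entire graph of bounded slope can then coincide with the plane at height $\frac{R_k}{4}$ on all of $\Sigma\cap A_{\frac{R_k}{2},4R_k}$ for every $k$, bending only in between.

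A correct version must do two things:
\begin{itemize}
\item recentre the conclusion at the foot point of the limit plane, with the parameter annulus adjusted;
\item exclude the disc case, e.g.\ by requiring that $x$ lie on a loop not bounding a disc in $\Sigma\cap(B_4\backslash B_{\frac12})$, which is the situation in Lemma \ref{construction of annulus}.
\end{itemize}
For that version the compactness-plus-Lemma \ref{classification} skeleton, yours and the paper's, is the right one.

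\textbf{Separate gap in Step 3.} Your Beltrami argument does not give the two-sided bound on $e^{2u}/(d^2|z|^{2d-2})$. A Beltrami coefficient small in $L^\infty$ and $W^{1,2}$ by itself only yields a quasiconformal change of variables with $\log|\phi_z|\in W^{1,2}$, not $L^\infty$. Pointwise control of the conformal factor needs the Wente-type $L^\infty$ estimate for surfaces with small $\int|{\bf A}|^2$ (M\"uller--\v{S}ver\'ak \cite{MS1}, H\'elein). That estimate is what the compactness theorem of \cite{SZ}, used in the paper, supplies as the bi-Lipschitz bound (\ref{bi-Lipschitz}).
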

\begin{proof}
  Since the proposition is scaling invariant, we only need to consider the case $R=1$. Arguing by contradiction, it suffices to establish \eqref{annulus estimate} for a sequence $\Sigma_i$ satisfying the assumptions of the proposition with $$\int_{\Sigma_i\cap (B_4\backslash B_{\frac{1}{2}})}|{\bf A}_i|^2d\mu_i\le \epsilon_i\to 0.$$
  For any $x_i\in\Sigma_i\cap \partial B_{\frac{3}{2}}$,  consider the connected component $\left(\Sigma_i\cap(B_4\backslash B_{\frac{1}{2}}) \right)_{x_i}$ of $\Sigma_i\cap (B_4\backslash B_{\frac{1}{2}})$ containing $x_i$. By the compactness of surfaces with small total curvature and uniform bound on their areas \cite[Proposition 1.4 and Theorem 1,3]{SZ}, we know there exists a proper immersion
$$F_\infty:(\Sigma_\infty,g_\infty, p_\infty)\to B_{4}\backslash B_{\frac{1}{2}} $$
of a connected surface
such that after composition with an embedding $\varphi_i:\Sigma_\infty\to \Sigma_i$,
the sequence of  immersions  $F_i\circ \varphi_i:\Sigma_\infty\to \left(\Sigma_i\cap(B_{4}\backslash B_{\frac{1}{2}} )\right)_{x_i} \to B_{4}\backslash B_{\frac{1}{2}}$ converges to $F_\infty$ weakly in $W^{2,2}_{\mathrm{loc}}$ and strongly in $W^{1,p}_{\mathrm{loc}}$ for any $p<\infty$ and $F_\infty(p_\infty)=\lim_{i\to \infty} x_i\in \partial B_{\frac{3}{2}}$. Moreover, there exists $C$ such that
\begin{align}\label{bi-Lipschitz}
C^{-1}g_\infty\le \varphi_i^*g_i\le C g_\infty \quad \text{ for } i \quad \text{ large}.
\end{align}
By the weak lower semi-continuity\cite{Morrey}\cite{Langer}\cite[Appendix B]{SZ} and $\epsilon_i\to 0$, we know
\begin{align*}
\int_{\Sigma_\infty}|{\bf A}_\infty|^2d\mu_\infty\le \liminf_{i\to \infty}\int_{\Sigma_i\cap (B_4\backslash B_{\frac{1}{2}})}|{\bf A}_i|^2d\mu_i=0,
\end{align*}
which means $F_\infty(\Sigma_\infty)$ is totally geodesic in $\mathbb{R}^n$. By connectedness and  properness, we can assume  $F_\infty(\Sigma_\infty)=D_{4}\backslash D_{\frac{1}{2}}$. Moreover, the strongly convergence in $W^{1,p}_{\mathrm{loc}}$ implies
$$\mathrm{Area}_{g_\infty}(\Sigma_\infty)\le \Lambda.$$ So, by Lemma \ref{classification}, without loss of generality,  we may assume $\Sigma_\infty=D_{4^{\frac{1}{d}}}\backslash D_{(\frac{1}{2})^{\frac{1}{d}}}$ and $g_\infty=\hat{f}^{*}g_{\mathbb{R}^2}$, where $1\le d\le \frac{\Lambda}{3\pi}$ is an integer and $F_\infty=\hat{f}(z)=z^d$.    The conclusion follows from \eqref{bi-Lipschitz} and the $C_{\mathrm{loc}}^0$ convergence of $F_i\circ \varphi_i$ to $F_\infty$.
\end{proof}
\begin{lemma}\label{lem:topology and geometry of end}
 Let $\Sigma\subset \mathbb{R}^n$ be a  properly  immersed  surface with
\begin{enumerate}
\item ${\rm {Area}}(\Sigma\cap B_R)\le \Lambda R^2$ for all $R>0$,
\item the genus of $\Sigma$ is finite,
\item there exists $R_k\to \infty$ such that $\lim_{k\to \infty}\int_{(A_{\frac{R_k}{2},4R_k})\cap \Sigma}|{\bf A}|^2d\mu=0$. Here $A_{\frac{R_k}{2},4R_k}=B_{4R_k}(0)\backslash B_{\frac{R_k}{2}}(0)$.
\end{enumerate}
Then the number of ends of  $\Sigma$ is no more  than $2\Lambda$ and they are all annular. More precisely, there exists a large number $R_0>0$ such that $\Sigma\backslash B_{R_0}(0)$ contains finite many  non-compact connected   components
$\{\Sigma_j\}_{j=1}^{N\le 2\Lambda}$, which are all homeomorphic  to $\mathbb{D}^c=\{z\in \mathbb{C}||z|\ge 1\}$.
Moreover, for each $j\in \{1,2,\ldots, N\}$ and $k$ large, if $x_k\in \Sigma_j\cap \partial B_{\frac{3R_k}{2}}$, then,  there exist a positive integer $d= d_{j,k}\le \frac{\Lambda}{3\pi}$ and   a conformal parameterization $f: D_{(2R_k)^{\frac{1}{d}}}\backslash D_{R_k^{\frac{1}{d}}}\to \Omega_{R_k}$ for some domain $\Omega_{R_k}\subset \Sigma_j$  containing $x_k$ and satisfying
\begin{align*}
d_{H}(\left(\Sigma_j\cap (B_{2R_k}\backslash B_{R_k})\right)_{x},\Omega_{R_k})\le \psi(\epsilon_k)R_k
\end{align*}
such that  $df\otimes df=e^{2u}g_{\mathbb{R}^2}$ and  for any $ z\in D_{(2R_k)^{\frac{1}{d}}}\backslash D_{(R_k)^{\frac{1}{d}}}$, there holds
\begin{align} \label{conformal factor control}
C^{-1}\le \frac{e^{2u}}{d^2|z|^{2d-2}}\le C
\end{align}
\begin{align}\label{norm}
1-\psi(\epsilon_k)\le \frac{|f(z)|}{|z|^d}\le 1+\psi(\epsilon_k).
\end{align}
\end{lemma}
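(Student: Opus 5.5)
The plan is to derive the lemma from Proposition \ref{structure of properly immersed small annulus}, applied at the scales $R_k$, together with the finite genus and a counting argument that uses the area bound.

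First, fix $k$ large, so that $\epsilon_k:=\int_{\Sigma\cap A_{R_k/2,4R_k}}|{\bf A}|^2d\mu\le\epsilon_0$, where $\epsilon_0=\epsilon_0(\Lambda)$ is from Proposition \ref{structure of properly immersed small annulus}.
\begin{itemize}
\item Since $\Sigma$ is properly immersed, $\Sigma\cap\overline{B_{4R_k}}$ is compact.
\item Each connected component of $\Sigma\cap\overline{B_{4R_k}}$ (after a slight perturbation of the radius, to get smooth boundary) satisfies \eqref{proper condition} once it meets $\partial B_{3R_k/2}$.
\item The hypotheses give \eqref{small total curvature in annulus} with area bound $\Lambda(4R_k)^2$, so we replace $\Lambda$ by $16\Lambda$ in the constants.
\end{itemize}
For every $x\in\Sigma\cap\partial B_{3R_k/2}$, the proposition then yields a conformal annulus $\Omega_{R_k}$ with degree $d$. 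By \eqref{annulus estimate}, $\Omega_{R_k}$ is a graph-like $d$-fold cover of a planar annulus, up to an error $\psi(\epsilon_k)R_k$. From the compactness argument, the whole component $(\Sigma\cap(B_{2R_k}\backslash B_{R_k}))_x$ is close to $\Omega_{R_k}$, and its image is close to a flat annulus of area about $3\pi dR_k^2$.

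Second, we count the components. Distinct components $(\Sigma\cap(B_{2R_k}\backslash B_{R_k}))_x$ are disjoint. Each has area at least about $3\pi R_k^2(1-\psi)$, while the total area in $B_{2R_k}$ is at most $4\Lambda R_k^2$. Hence the number of annular pieces at scale $R_k$ is at most $4\Lambda/(3\pi(1-\psi))\le 2\Lambda$, uniformly in $k$.

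Third, we show that the ends are annular. Since the genus is finite, choose $R_0=R_{k_0}$ so large that all the genus of $\Sigma$ lies inside $B_{R_0}$. Each noncompact component $\Sigma_j$ of $\Sigma\setminus B_{R_0}$ is then a planar domain. Consider $\Sigma_j\cap(B_{R_{k}}\backslash B_{R_{k_0}})$ for $k>k_0$. Its boundary near each of the two spheres consists of curves that are $C^0$-close to $d$-fold circles coming from the annuli $\Omega$. Each such annulus contributes a core circle, which is a closed curve separating $\Sigma_j$. Since $\Sigma_j$ is noncompact and connected, and the pieces at every scale $R_k$ are annuli, $\Sigma_j$ is an increasing union of planar domains whose boundary components, at the scale $R_k$, are these core circles. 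I would argue that $\Sigma_j$ has exactly one such circle at each scale: two circles at one scale would, by planarity and connectedness, either give two ends or disconnect. Splitting $\Sigma_j$ further if necessary, we may assume each $\Sigma_j$ contains exactly one annular piece at each scale. The region between consecutive core circles is then a compact planar domain with two boundary circles and no further noncompact part, so each $\Sigma_j$ is a union of annuli glued along circles and hence homeomorphic to $\mathbb{D}^c$. The number of ends equals the number of annular pieces at a large scale, so $N\le 2\Lambda$.

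Finally, the estimates \eqref{conformal factor control} and \eqref{norm} are exactly \eqref{annulus estimate}, applied to $x_k\in\Sigma_j\cap\partial B_{3R_k/2}$.

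The main obstacle is the topological step: excluding compact components or extra boundary circles between two scales without any curvature control on the intermediate region. I would handle it with the finite genus plus the Euler characteristic, via Gauss–Bonnet-free topology: a compact planar region bounded by one outer and one inner circle, containing no handles, is an annulus. Extra boundary components would have to be inner circles that bound compact pieces, and these would have to close up inside $B_{R_0}$, which we exclude by enlarging $R_0$.
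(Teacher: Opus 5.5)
Your Steps 1--2 and your final paragraph follow the paper: apply Proposition \ref{structure of properly immersed small annulus} at the good scales $R_k$, then count the resulting disjoint annuli against the area bound. The genuine gap is Step 3.

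\textbf{The gap.} Your topological argument rests on the claim that each $\Sigma_j$ meets each good scale in exactly one annular piece, because ``two circles at one scale would either give two ends or disconnect''. This claim is false, and so is your assertion that the number of ends equals the number of annular pieces. Hypothesis (3) controls $|{\bf A}|$ only inside $A_{R_k/2,4R_k}$, so a single end can cross a good scale several times.

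For example, take $\Sigma\cong\mathbb{R}^2$ whose end is made of flat sheets in nearby parallel planes, joined by thin folds. Let its radial profile go out to radius $10R_k$, back in to radius $R_k/10$, out to $10R_{k+1}$, and so on, with $R_{k+1}\gg R_k$ so that every fold lies in a gap between consecutive good annuli. This surface is properly embedded, has genus $0$ and one end, has area at most about $3\pi R^2$ in $B_R$, and has zero curvature in every $A_{R_k/2,4R_k}$. Yet it has three annular pieces at every good scale. Their core circles are parallel non-contractible circles in $\Sigma_j\cong\mathbb{D}^c$, and the region between two of them is an annulus containing a fold outside $B_{4R_k}$. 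So there is no second end and no disconnection.

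``Splitting $\Sigma_j$ further'' is not an operation you can perform on a fixed connected component. Enlarging $R_0$ does not help either, since such folds can occur in every gap. Hence $\Sigma_j\cap(B_{R_k}\setminus B_{R_{k_0}})$ need not be a single annulus; it can be disconnected, with several boundary circles on each sphere. Gluing annuli along core circles therefore does not produce $\mathbb{D}^c$.

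\textbf{The missing ingredient.} The paper's route makes any per-scale analysis of circles unnecessary.
\begin{enumerate}
\item The count only needs an inequality. For $R_k\gg R$, properness forces every non-compact component of $\Sigma\setminus B_R$ to reach $\partial B_{3R_k/2}$. Each such component therefore contains at least one of the disjoint domains $\Omega_{R_k}$. Apply the area lower bound to these domains, not to the components $(\Sigma\cap(B_{2R_k}\setminus B_{R_k}))_x$, which are only Hausdorff-close to them. So the number $N(R)$ of non-compact components, which is nondecreasing in $R$, is at most $2\Lambda$.
\item Thus $\Sigma$ has finitely many ends. Together with finite genus, the classification of non-compact surfaces (Ker\'ekj\'art\'o--Richards) shows that $\Sigma$ is homeomorphic to a closed surface with $N$ points removed. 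Each end then has a neighbourhood homeomorphic to $\mathbb{D}^c$; this is how the paper obtains the annular ends.
\item The conformal estimates follow, as you say, from Proposition \ref{structure of properly immersed small annulus} applied to $(\Sigma_j\cap A_{R_k/2,4R_k})_{x_k}$. This set is connected and lies outside $B_{R_0}$, so $\Omega_{R_k}\subset\Sigma_j$.
\end{enumerate}
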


\begin{proof}
 Let $N(R)$ be the number of non-compact connected  components $\Sigma\backslash B_R(0)$, then $N(R)$ is non-decreasing as $R\to \infty$, since for any two points $x_1,x_2$ belonging to two different non-compact components $\Sigma_1,\Sigma_2$ of $\Sigma\backslash B_{R_1}(0)$ and $R_2>R_1$ and any $y_j\in \Sigma_j\backslash B_{R_2}, j=1,2$, $y_1$ and $y_2$ can not be connected by a curve in $\Sigma\backslash B_{R_2}$.   Below we show $N(R)\le 2\Lambda$ for large $R$. In fact, since $\Sigma$ immersed in $\mathbb{R}^n$ properly,  by Proposition \ref{structure of properly immersed small annulus} and the assumption
\begin{align*}
\epsilon_k:=\int_{(A_{\frac{R_k}{2},4R_k})\cap \Sigma}|{\bf A}|^2d\mu\to 0,
\end{align*}
we can take $k$ large such that each connected component $\Sigma_j$ of $\Sigma\backslash B_{R}$ contains an annulus $A_j\subset B_{(1+\psi(\epsilon_k))2R_k}\backslash B_{(1-\psi(\epsilon_k))R_k}$ with area estimate
\begin{align*}
\mathrm{Area}(A_j)\ge (1-\psi(\epsilon_k))d_{k,i}\cdot  3\pi R_k^2,
\end{align*}
where $d_{k,i}\ge 1$ is an integer.
Thus the area bound assumption $\mathrm{Area}(\Sigma\cap B_{r})\le \Lambda r^2, \forall r$ implies
\begin{align*}
N(R)\le \frac{4\pi \Lambda(1+\psi(\epsilon_k) )^2R_k^2}{(1-\psi(\epsilon_k))3\pi R_k^2 }.
\end{align*}
Letting $k\to \infty$, we get $N(R)\le 2\Lambda$.
 Since $N(R)$ is non-increasing, we know the number of ends of $\Sigma$ is
$$N=\lim_{R\to \infty}N(R)\le 2\Lambda. $$
Since $\Sigma$ is complete and  has finite  genus, we know $\Sigma$ is homeomorphic to a closed surface with $N$ points removed.  By the properness of the immersion,  there exists $R_0\gg 1$ such that $\Sigma\backslash B_{R_0}$ contains exactly  $N$ non-compact connected components  $\{\Sigma_j\}_{j=1}^N$. Each $\Sigma_j$ is homeomorphic to $\mathbb{D}^c$ and $\Sigma_j$ also immersed in $\mathbb{R}^n$ properly as a surface with compact boundary.  Moreover, for each $1\le j\le N$,  large $k$ and $x_k\in \Sigma_j\cap \partial B_{\frac{3R_k}{2}}$, the properness of the immersion again implies $\left(\Sigma_j\cap(A_{\frac{R_k}{2},4R_k})\right)_{x_k}$ satisfies \eqref{proper condition} and \eqref{small total curvature in annulus}  for $R=R_k$, Thus, the conclusion follows  from  Proposition \ref{structure of properly immersed small annulus}.

\end{proof}

In the following, for convenience, we suppress the index $1\le j\le N$ and adopt the notation $E=\Sigma_j$ to denote an end of $\Sigma$.

\begin{definition} Let $X$ be a connected topological space and $A,B, \Gamma\subset X$. We say  We say $\Gamma$ {\bf separates $A$ and $B$} in $X$ if $A, B \subset X\backslash \Gamma$ do not belong to the same connected component of  $X\backslash \Gamma$.  We say $\Gamma$ {\bf separates $A$ and the infinity} if $X$ is non-compact, $A, \Gamma$ are compact and $A\subset X\backslash \Gamma$ does not belongs to any non-compact connected components of $X\backslash \Gamma$.
\end{definition}
 \begin{lemma}\label{construction of annulus} Let $\Sigma\subset \mathbb{R}^{n\ge 3}$ be a properly immersed surface satisfying the same assumptions as in Lemma \ref{lem:topology and geometry of end} and $E$ be an end of $\Sigma$. Then, there exist $C,c>0$ such that  for $k$ large enough and any $\delta_k\in (0,\frac{1}{10})$,  there exist  disjoint annuli $\{\mathcal{A}_k\}_{k=1}^\infty$ in E and sub-annuli $\mathcal{\hat{A}}_k\subset \mathcal{A}_k\subset E\cap A_{R_k,2R_k}$ satisfying
\begin{enumerate}

\item $\partial \mathcal{A}_k=\Gamma_k^{\pm}$ both separate $\partial E$ and the infinity and $\mathcal{\hat{A}}_k$ separates $\Gamma_k^{\pm}$.
\item $d_E(\mathcal{\hat{A}}_k,\partial \mathcal{A}_k)\ge c\delta_kR_k$ and
\begin{align*}
\mathcal{H}^2(\mathcal{A}_k)\le C\delta_kR_k^2.
\end{align*}
\item For any non-contractible closed curve $\gamma$ in $\mathcal{A}_{k}$, there holds
\begin{align*}
\mathrm{Length}(\gamma)\ge c\pi R_k.
\end{align*}
\item As $k\to \infty$,
\begin{align}
   \int_{\mathcal{A}_{k}}|{\bf A}|^2d\mu\le C\delta_k \int_{(A_{\frac{R_k}{2},4R_k})\cap \Sigma}|{\bf A}|^2d\mu\to 0.
\end{align}
\end{enumerate}

\end{lemma}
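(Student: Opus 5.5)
We build the annuli $\mathcal{A}_k$ inside the conformal parametrization provided by Lemma \ref{lem:topology and geometry of end}. Fix the end $E$. For $k$ large, pick $x_k\in E\cap\partial B_{\frac{3R_k}{2}}$. The lemma gives an integer $d=d_{j,k}\le \frac{\Lambda}{3\pi}$ and a conformal map $f: D_{(2R_k)^{1/d}}\backslash D_{R_k^{1/d}}\to \Omega_{R_k}\subset E$. The map satisfies \eqref{conformal factor control} and \eqref{norm}. Passing to a subsequence, we may assume $d$ is independent of $k$, since it takes only finitely many values.

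Set $\rho=|z|$ and work in the normalized variable $s=\rho^d/R_k\in(1,2)$. Then $|f(z)|\approx sR_k$ up to a factor $1\pm\psi(\epsilon_k)$. Define
\begin{align*}
\mathcal{A}_k&=f\bigl(\{z: \tfrac32-\delta_k< |z|^d/R_k<\tfrac32+\delta_k\}\bigr),\\
\hat{\mathcal{A}}_k&=f\bigl(\{z: \tfrac32-\tfrac{\delta_k}{2}< |z|^d/R_k<\tfrac32+\tfrac{\delta_k}{2}\}\bigr),
\end{align*}
and let $\Gamma_k^\pm$ be the images of the two boundary circles. By \eqref{norm}, $\mathcal{A}_k\subset A_{R_k,2R_k}$ once $\psi(\epsilon_k)<\frac1{10}$. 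Passing to a further subsequence with $R_{k+1}>4R_k$ makes the $\mathcal{A}_k$ disjoint.

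The metric estimates follow by direct computation. The metric is $e^{2u}|dz|^2$ with $e^{u}\sim d|z|^{d-1}$. The radial distance across $\{|z|^d\in[a,b]\}$ is therefore comparable to $\int d\rho^{d-1}d\rho=b-a$. This gives $d_E(\hat{\mathcal{A}}_k,\partial\mathcal{A}_k)\ge c\,\delta_kR_k/2$, provided that any curve from $\hat{\mathcal A}_k$ to $\partial\mathcal{A}_k$ inside $E$ must cross the parametrized region. We get this by working within $\Omega_{R_k}$: a curve leaving $\mathcal{A}_k$ must first traverse the collar inside $\Omega_{R_k}$. The area is $\int e^{2u}\sim \int d^2\rho^{2d-2}\rho\, d\rho\, d\theta\sim \pi\bigl((\rho^d)^2\bigr)\big|_{a}^{b}\le C\delta_kR_k^2$.

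For a non-contractible closed curve $\gamma$ in $\mathcal{A}_k$, the curve winds at least once around the origin in $z$. Its length is at least $\int e^u|d\theta|\rho\ge C^{-1/2}\cdot 2\pi d\,\rho^d\ge c\pi R_k$. Alternatively, the image $f(\gamma)$ lies in $\{|y|\approx \frac32R_k\}$ and winds around the plane, and we project to the limiting plane.

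Property (1) is the step I expect to be the main obstacle. Each $\Gamma_k^\pm$ is a closed curve, homotopically nontrivial in the annular region, and $E$ is homeomorphic to $\mathbb{D}^c$. A simple closed curve in $\mathbb{D}^c$ that is nontrivial in $\pi_1$ is isotopic to the boundary and separates $\partial E$ from infinity. So we must show $\Gamma_k^\pm$ is essential in $E$ rather than bounding a disk. If it bounded a disk $D\subset E$, then, since $\Gamma_k^\pm$ lies in $A_{R_k,2R_k}$ with $|y|\approx\frac32R_k$ and winds $d$ times, $D$ would stay either inside $B_{2R_k}$ or outside $B_{R_k}$. In the first case, the component of $E$ on which $\Omega_{R_k}$ sits would be compactly contained in $B_{2R_k}$. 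This contradicts the fact, from the Hausdorff estimate, that $\Omega_{R_k}$ reaches both spheres $\partial B_{R_k}$ and $\partial B_{2R_k}$, together with the properness of $E$. To rule out the second case, use the maximum principle for $|y|^2$: $\Delta|y|^2=4+2\langle y,{\bf H}\rangle$, and $|{\bf H}|$ is small in the region by the small total curvature, giving $W^{2,2}$-closeness to planes. Hence a disk bounded by a curve near $\partial B_{\frac32R_k}$ cannot lie outside. I would settle this case through the linking and winding number of the projection to the limit plane. Since $\hat{\mathcal{A}}_k$ lies between $\Gamma_k^-$ and $\Gamma_k^+$ in the parametrization, it separates them.

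For (4), take a radial cutoff and average. The family of sub-annuli $\{|z|^d/R_k\in(t-\delta_k,t+\delta_k)\}$ for $t\in(\frac54,\frac74)$ covers $\Omega_{R_k}$ with overlap $O(1)$. By pigeonhole, some center $t_k$ gives $\int_{\mathcal{A}_k}|{\bf A}|^2\le C\delta_k\int_{A_{R_k/2,4R_k}\cap\Sigma}|{\bf A}|^2$. We therefore use $t_k$ in place of $\frac32$ above, and all the other estimates are unchanged.
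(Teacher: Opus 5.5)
\textbf{Where it matches the paper.} Your metric estimates are the same as the paper's: the pigeonhole choice of the slab for (4), the area bound, the collar-crossing bound for $d_E(\hat{\mathcal{A}}_k,\partial\mathcal{A}_k)$, the length bound, and disjointness via a subsequence.

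\textbf{The gap is property (1).} You take an \emph{arbitrary} $x_k\in E\cap\partial B_{\frac{3R_k}{2}}$. With that choice, the core circles of $\Omega_{R_k}$ need not be essential in $E\cong\mathbb{D}^c$, so no argument can prove (1).

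For example, $E$ may contain a compact flat ``pancake'' attached to the main sheet by a thin neck placed outside $A_{\frac{R_k}{2},4R_k}$. The pancake consists of two discs of radius about $10R_k$, parallel to the main sheet at heights $h_k$ and $2h_k$ with $1\ll h_k\ll R_k$, glued along their rims. Doing this along a sufficiently sparse sequence of scales preserves the genus, the quadratic area bound, and $\int_{A_{\frac{R_k}{2},4R_k}\cap\Sigma}|{\bf A}|^2\to 0$.

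If $x_k$ lies on a pancake circle, $\Omega_{R_k}$ is a flat annulus whose core bounds the flat pancake disc $D\subset B_{\frac{3R_k}{2}}$. This is your ``inside'' case. Nothing is contradicted by $\Omega_{R_k}$ reaching $\partial B_{2R_k}$, because $D$ contains only the inner half of $\Omega_{R_k}$.

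Your other steps also fail:
\begin{itemize}
\item The dichotomy ``$D\subset B_{2R_k}$ or $D$ lies outside $B_{R_k}$'' is unjustified; a disc in $E$ can leave the controlled annulus on both sides.
\item Subharmonicity of $|y|^2$ needs $|y|\,|{\bf H}|<2$ pointwise on all of $D$. A small $L^2$ bound on ${\bf A}$ in $A_{\frac{R_k}{2},4R_k}$ does not give this. Moreover, the lemma concerns general surfaces with no equation, and $D$ need not stay in that annulus.
\item In the pancake example ${\bf H}=0$ on $D$, so the maximum principle is simply consistent with $D\subset B_{\frac{3R_k}{2}}$.
\end{itemize}

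\textbf{The missing idea: choose $x_k$ on an essential circle, as the paper does.}
\begin{itemize}
\item Take a ray $\gamma:[0,\infty)\to E$ with $\gamma(0)\in\partial E$ and $d_E(\gamma(t),\partial E)\to\infty$. Use Sard to make $\partial B_{\frac{3R_k}{2}}$ transverse to $E$, so $E\cap\partial B_{\frac{3R_k}{2}}$ is a finite union of circles.
\item If all these circles were contractible in $E\cong\mathbb{D}^c$, each would have zero mod-$2$ intersection number with $\gamma$. Then $\gamma$ could be pushed off all of them by a compactly supported perturbation.
\item By properness, the perturbed ray still runs from $|x|=R_0<\frac{3R_k}{2}$ to $|x|\to\infty$, so it must meet $\partial B_{\frac{3R_k}{2}}$. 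This is a contradiction, so some circle is essential.
\item Take $x_k$ on an essential circle $\Gamma$. Then $\Gamma$ lies in the component of $E\cap(B_{2R_k}\setminus B_{R_k})$ through $x_k$, which $\Omega_{R_k}$ approximates. Hence $\Gamma$ is homotopic in $E$ to the $m$-th power of the core of $\Omega_{R_k}$, with $m\neq 0$ because $\Gamma$ is essential.
\item Since $\pi_1(E)\cong\mathbb{Z}$ is torsion-free, the core and every circle $f(\{|z|=\rho\})$ are essential. They therefore separate $\partial E$ from infinity.
\end{itemize}
With this choice of $x_k$, the rest of your argument goes through.
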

\begin{proof}
Since $E$ is connected and complete non-compact surface with compact boundary, we can take a curve $\gamma:[0,\infty)\to E$ such that $\gamma(0)\in \partial E$ and $\lim_{t\to \infty}d_g(\gamma(t),\partial E)=\infty$.
For large $k$, by the Sard theorem,  without loss of generality, we can assume $\partial B_{\frac{3R_k}{2}}$ intersects with $E$ transversely and does not touch $\partial E$. As a result, $E\cap \partial B_{\frac{3R_k}{2}}$  consists of finite many circles $\{\Gamma_i^k\}_{i\le I_k}$.

We first show there exists at least one $i$ such that $\Gamma_i^k$ is non-contractible in $E\approx \mathbb{D}^c$.
In fact,  if  all these circles $\Gamma_i^k$ were contractible in $E\approx \mathbb{D}^c$, then
$$[\gamma]\cap [\Gamma_i^k]\equiv0 \quad   \mathrm{mod}\quad   2, \quad \forall i\le I_k,$$
hence $\gamma$ can be perturbed in a compact subset of $\mathring{E}$  to be a curve $\tilde{\gamma}$ which does not intersect with $\cup_i\Gamma_i^k=E\cap \partial B_{\frac{3R_k}{2}}$. So, $\tilde{\gamma}([0,\infty))\subset E$ also satisfies $\tilde{\gamma}(0)\in \partial E$ and $\lim_{t\to \infty}d_g(\tilde{\gamma}(t),\partial E)=\infty$.  Noting that  $E$ is immersed in $\mathbb{R}^n$ properly, we know $\lim_{t\to \infty}|\tilde{\gamma}(t)|=\infty$ and $|\tilde{\gamma}(0)|=R_0<R_k$. So, by continuity, there must exists $\gamma(t_k)\in \partial B_{\frac{3R_k}{2}}$, which contradicts to the fact $\tilde{\gamma}$ does not intersect with $\cup_i\Gamma_i^k$.

Now, choose a non-contractible circle $\Gamma_{i_k}^k\subset E\cap \partial B_{\frac{3R_k}{2}}$ and a point $x_k\in \Gamma_{i_k}^k$.  Then, $\Gamma_{i_k}^k$ seperates $\partial E$ and the infinity.  By Lemma \ref{lem:topology and geometry of end}, we know there exist a
conformal parameterization $f: D_{(2R_k)^{\frac{1}{d}}}\backslash D_{R_k^{\frac{1}{d}}}\to \Omega_{R_k}$ for some domain  $\Omega_{R_k}\subset E$ containing $x_k$ such that   \eqref{conformal factor control} and \eqref{norm} hold. Especially, for any $R_k^{\frac{1}{d}}<r\le s< (2R_k)^{\frac{1}{d}}$, $f(D_s\backslash D_r)$ is an annulus(or a circle when $r=s$) which retracts to to a non-contractible  curve isotropic to $\Gamma_{i_k}^k$ in $E$. Thus $f(D_s\backslash D_r)$ separates $\partial E$ and  the infinity.

For $\delta_k\le \frac{1}{10}$ , noting
\begin{align*}
\bigcup_{l=1}^{[\frac{3}{4\delta_k}]} \left(D_{\left(\frac{5R_k}{4}+l\delta_kR_k\right)^{\frac{1}{d}}}\backslash D_{ \left(\frac{5R_k}{4}+(l-1)\delta_kR_k\right)^{\frac{1}{d}}}\right)\subset D_{\left(\frac{7R_k}{4}\right)^{\frac{1}{d}}}\backslash D_{\left(\frac{5R_k}{4}\right)^{\frac{1}{d}}}\subset D_{(2R_k)^{\frac{1}{d}}}\backslash D_{R_k^{\frac{1}{d}}},
\end{align*}
and denoting $\mathcal{A}_{k,l}=f\left(D_{\left(\frac{5R_k}{4}+l\delta_kR_k\right)^{\frac{1}{d}}}\backslash D_{ \left(\frac{5R_k}{4}+(l-1)\delta_kR_k\right)^{\frac{1}{d}}}\right)\subset E\cap A_{R_k, 2R_k}$, then $\mathcal{A}_{k,l}$ all separates $\partial E$ and the infinity and
\begin{align*}
\sum_{l=1}^{[\frac{3}{4\delta_k}]}\int_{\mathcal{A}_{k,l}}|{\bf A}|^2d\mu\le \int_{E\cap A_{R_k,2R_k}}|{\bf A}|^2d\mu=\epsilon_k\to 0.
\end{align*}
So,  there exist a positive integer  $l_k\le[\frac{3}{4\delta_k}]$ such that
\begin{align*}
\int_{\mathcal{A}_{k,l_k}}|{\bf A}|^2d\mu\le \frac{8\delta_k\epsilon_k}{3}\to 0.
\end{align*}
Define $r_k=\frac{5R_k}{4}+(l_k-1)\delta_kR_k$, $s_k=\frac{5R_k}{4}+l_k\delta_kR_k$,   $\mathcal{A}_k=\mathcal{A}_{k,l_k}=f(D_{(s_k)^{\frac{1}{d}}}\backslash D_{(r_k)^{\frac{1}{d}}})$ and $\hat{\mathcal{A}}_k=f(D_{(\frac{r_k+3s_k}{4})^{\frac{1}{d}}}\backslash D_{(\frac{3r_k+s_k}{4})^{\frac{1}{d}}})$. Then,
$\mathcal{A}_k$ and $\mathcal{\hat{A}}_k$ are both annuli such that  $\partial \mathcal{A}_k=\Gamma_{k}^{-}\cup \Gamma_{k}^+$ are two circles separating  $\partial E$
 and the infinity of $E$ and   $\mathcal{\hat{A}}_k\subset \mathcal{A}_k$ separates $\Gamma_k^-$ and $\Gamma_k^+$ in $\mathcal{A}_k$.
So, item $(1),(4)$ of the Lemma hold. By taking a subsequence if necessary, $\{\mathcal{A}_{k}\}$ are disjoint.
  Moreover,  by \eqref{conformal factor control}, we know
 \begin{align*}
 \mathcal{H}^2(\mathcal{A}_k)\le \int_{D_{(s_k)^{\frac{1}{d}}}\backslash D_{(r_k)^{\frac{1}{d}}}} e^{2u}&\le 2\pi C \int_{(r_k)^{\frac{1}{d}}}^{(s_k)^{\frac{1}{d}}}d^2\cdot|z|^{2(d-1)} rdr\\
 &=\pi Cd(s_k^2-r_k^2)\\
 &\le \pi C \cdot \frac{\Lambda}{3\pi}(\frac{7R_k}{4}+\frac{7R_k}{4})\cdot \delta_k R_k=C \delta_k R_k^2.
 \end{align*}
For any $x\in \partial \mathcal{A}_k$ and $y\in \partial \mathcal{\hat{A}}_k$, denote $x=f(z_1)$ and $y=f(z_2)$. Then, $$\left||z_1^d|-|z_2^d|\right|\ge \frac{s_k-r_k}{4}=\frac{\delta_k R_k}{4}$$ and for any curve $\sigma$ in $E$ joining $x$ and $y$, we can take  $\gamma_1=f^{-1}(\sigma)$ joining $z_1$ and $z_2$ and calculate
\begin{align*}
\mathrm{Length}(\sigma)&=\int_{0}^1 e^{u(\gamma_1(t))}|\gamma_1'(t)|dt\\
&\ge \int_0^1 C^{-1}d\cdot|\gamma_1(t)|^{d-1}|\gamma_1'(t)|dt\\
&\ge C^{-1}\left|\int_0^1\frac{d}{dt}\gamma_1^d(t)dt\right|\\
&= C^{-1}|\gamma_1^d(1)-\gamma_1^d(0)|\\
&\ge C^{-1}\left||z_1^d|-|z_2^d\right|\ge \frac{\delta_k R_k}{4C}=:c\delta_kR_k.
\end{align*}
As a result, we know $d_E(\hat{A}_k, \partial \mathcal{A}_k)\ge c\delta_k R_k$. Similarly, for any closed curve $\gamma$ in $\mathcal{A}_k$ which is non-contractible and $\gamma_2=f^{-1}(\gamma)$, we know
\begin{align*}
\mathrm{Length}(\gamma)&\ge C^{-1}\int_0^1 d\cdot |\gamma_2(t)|^{d-1}|\gamma_2'(t)|dt\\
&\ge C^{-1}\left(\frac{5R_k}{4}\right)^{\frac{d-1}{d}}\int_0^1|\gamma_2'(t)|dt\\
&\ge C^{-1}\left(\frac{5R_k}{4}\right)^{\frac{d-1}{d}} 2\pi \left(\frac{5R_k}{4}\right)^{\frac{1}{d}}\\
&=c\pi R_k.
\end{align*}
 \end{proof}

\vspace{.1in}

\subsection{Decay estimate of $\cos\alpha_J$}
From now on, in addition to the assumptions from the previous subsection, we assume that $\Sigma$ is a translating soliton satisfying
  \begin{align*}
    \lim_{x\to \infty} \int_{B_1(x)\cap \Sigma}|{\bf  A}|^2d\mu=0.
    \end{align*}
 The second preliminary step of this section is to establish that the second fundamental form vanishes asymptotically.

\begin{proposition}\label{p-vanishing of second fundamental form}
    Assume $\Sigma\subset\mathbb{R}^n$ is a tranlating soliton such that
    \begin{align*}
    \lim_{x\to \infty} \int_{B_1(x)\cap \Sigma}|{\bf  A}|^2d\mu=0.
    \end{align*}
    Then   $\lim_{x\to \infty}|{\bf A}|(x)=0$.
\end{proposition}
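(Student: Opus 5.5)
The plan is an $\epsilon$-regularity argument for the elliptic inequality \eqref{e-A2}, combined with the fact that a translating soliton has bounded drift: $|\nabla x_1|=|{\bf V}|\le 1$. This means the drift term is a lower-order perturbation at unit scale, so a Moser iteration on intrinsic balls of radius $\le 1$ should turn small $L^2$ norm of $|{\bf A}|$ into small pointwise values.

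\textbf{Step 1: smallness of $L^2$ curvature on unit balls.} Given $\epsilon>0$, choose $R$ so large that $\int_{B_1(x)\cap\Sigma}|{\bf A}|^2d\mu\le\epsilon$ for all $|x|\ge R$. Since $\hat B_{1}(x)\subset \Sigma\cap B_1(x)$, the same bound holds on intrinsic unit balls. Since $|{\bf H}|^2\le 2|{\bf A}|^2$, Proposition \ref{prop-iso-proper} then gives the isoperimetric inequality on these balls, hence the Sobolev inequality \eqref{e-soblev-2}. A volume upper bound on $\hat B_1(x)$ is also needed for the iteration. I will take it from the area bound assumed earlier, or from monotonicity: $|{\bf H}|\le 1$ on a translator, so $e^{r}\,\mathrm{Area}(\Sigma\cap B_r(x))/r^2$ is monotone and bounded by the density at infinity.

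\textbf{Step 2: Moser iteration for $u=|{\bf A}|^2$.} From \eqref{e-A2} we get $\Delta u+\langle \nabla x_1,\nabla u\rangle\ge -3u^2=-3u\cdot u$. Treat this as $\Delta u+b\cdot\nabla u+cu\ge0$ with $|b|\le1$ and potential $c=3u$. The potential satisfies $\|c\|_{L^1(\hat B_1)}\le3\epsilon$, which is small. This is exactly the borderline case in dimension two.

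The standard route is the Choi--Schoen / Ecker--Huisken type argument. Test with $\eta^2u^{p}$. For $p=1$, the quadratic term is absorbed using Sobolev:
\[
\int \eta^2u^3\le \Big(\int_{\mathrm{supp}\,\eta} u^2\Big)^{1/2}\Big(\int \eta^4u^4\Big)^{1/2}\lesssim \epsilon^{1/2}\int|\nabla(\eta u)|^2+\dots
\]
This bounds $\int\eta^2|\nabla u|^2$ and $\|u\|_{L^2}$ on a smaller ball by $C\epsilon$. I will then bootstrap to $\|u\|_{L^q}$ for some $q>1$ small, and run the usual Moser iteration with the drift term handled by Cauchy--Schwarz, using $|b|\le 1$ and radius $\le 1$. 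The result is $\sup_{\hat B_{1/4}(x)}u\le C\|u\|_{L^1(\hat B_{1/2}(x))}\le C\epsilon$.

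\textbf{Main obstacle.} The nonlinear term $-3|{\bf A}|^4$ only has a small $L^1$ potential, so it is not in $L^{q}$, $q>1$, a priori. The first step must therefore be an $\epsilon$-regularity improvement to $L^{2}$ (equivalently, a higher-integrability estimate for $|{\bf A}|^2$), via the absorption above.

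\textbf{Fallback: blow-up argument.} If the iteration is messy, I would argue by contradiction in the style of Choi--Schoen. Suppose $x_i\to\infty$ with $|{\bf A}|(x_i)\ge\delta$. Choose the point-picking maximum of $(1/2-r)^2|{\bf A}|^2$ on $\hat B_{1/2}(x_i)$ and rescale by $\lambda_i=|{\bf A}|(y_i)$. If $\lambda_i\to\infty$, the rescaled surfaces solve translator equations with vanishing drift ($\lambda_i^{-1}{\bf T}$). They have bounded curvature and converge to a minimal surface with $|{\bf A}|(0)=1$, yet total curvature $0$ by scale-invariance of $\int|{\bf A}|^2$ — a contradiction. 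If $\lambda_i$ is bounded, there is no rescaling. Bounded curvature together with the translator equation gives smooth $C^\infty$ bounds, so a subsequence converges to a translator with $\int|{\bf A}|^2=0$ on a unit ball but $|{\bf A}|\ge\delta$ at the center — again a contradiction.
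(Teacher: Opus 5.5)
\paragraph{The primary route has a gap.} It sits exactly at the step you flagged as the main obstacle.

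In your absorption estimate, the prefactor $\big(\int_{\mathrm{supp}\,\eta}u^2\big)^{1/2}=\|{\bf A}\|_{L^4}^2$ is not controlled by $\epsilon$. The hypothesis only bounds $\|u\|_{L^1}=\|{\bf A}\|_{L^2}^2$. So the conclusion that $\|u\|_{L^2}\le C\epsilon$ on a smaller ball is circular.

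This cannot be repaired at the level of $u$ alone. Suppose you put the small factor $\|u\|_{L^1}$ in front by applying the $L^1$-Sobolev inequality \eqref{e-soblev} to $\eta u^{(p+2)/2}$. Testing $\Delta u+\langle\nabla x_1,\nabla u\rangle\ge-3u^2$ against $\eta^2u^p$ then only yields $\int\eta^2u^{p-1}|\nabla u|^2\le C_p\int_{\mathrm{supp}\,\eta}u^{p+1}$. For $p>0$ the right-hand side is not controlled by the hypothesis. At $p=0$ the coercive term disappears.

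To start the iteration you need the gradient term of Simons' inequality for translators:
\[
\Delta|{\bf A}|^2+\langle\nabla x_1,\nabla|{\bf A}|^2\rangle\ge2|\nabla{\bf A}|^2-3|{\bf A}|^4 .
\]
The term $2|\nabla{\bf H}|^2$ in \eqref{e-A2} would not suffice. Test this inequality with $\eta^2$ only, and bound the quartic term using Kato's inequality:
\[
\Big(\int\eta^2|{\bf A}|^4\Big)^{1/2}\le D_1\int\big|\nabla(\eta|{\bf A}|^2)\big|\le 2D_1\|{\bf A}\|_{L^2(\mathrm{supp}\,\eta)}\Big(\Big(\int\eta^2|\nabla{\bf A}|^2\Big)^{1/2}+\Big(\int|{\bf A}|^2|\nabla\eta|^2\Big)^{1/2}\Big).
\]
Now the small factor really is $\|{\bf A}\|_{L^2}\le\epsilon^{1/2}$, so the quartic term can be absorbed. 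You obtain $\int_{\hat B_{1/2}(x)}|{\bf A}|^4\le C\epsilon^2$. From there the potential $3u$ is small in $L^2$, which is above the borderline, and your Moser iteration goes through.

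Separately, your volume bound via monotonicity is wrong as stated. For a translator, $e^r\,\mathrm{Area}(\Sigma\cap B_r(x))/r^2$ is nondecreasing, so it bounds unit-scale area only by large-scale area times $e^{R}$. Its limit as $r\to\infty$ is infinite. The proposition assumes no area bound at all; the paper gets density $\le 1+\delta$ for the component $\Sigma^1_x$ from small total curvature via \cite{SZ}.

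\paragraph{The fallback works and is a different proof.} Your blow-up argument is sound and genuinely different from the paper's. To complete it, let $y_i$ maximize $(\frac12-d(\cdot,x_i))^2|{\bf A}|^2$ on $\hat B_{1/2}(x_i)$, and set $\lambda_i=|{\bf A}|(y_i)$ and $\sigma_i=\frac12(\frac12-d(y_i,x_i))$. Maximality then gives:
\begin{itemize}
\item $|{\bf A}|\le 2\lambda_i$ on $\hat B_{\sigma_i}(y_i)$;
\item $\lambda_i\ge|{\bf A}|(x_i)\ge\delta$;
\item $\lambda_i\sigma_i\ge\delta/4$.
\end{itemize}
Hence $\lambda_i(\Sigma-y_i)$ has $|{\bf A}|\le 2$ and $|{\bf A}|(0)=1$ on an intrinsic ball of radius at least $\delta/4$; completeness of the limit is not needed. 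It solves ${\bf H}=(\lambda_i^{-1}{\bf T})^\perp$ with $\lambda_i^{-1}\le\delta^{-1}$. It is locally a graph of uniform size over its tangent plane, satisfying a quasilinear elliptic system, so Schauder estimates give $C^2$ subconvergence. Meanwhile its total curvature on that ball is $\int_{\hat B_{\sigma_i}(y_i)}|{\bf A}|^2\le\int_{B_1(x_i)\cap\Sigma}|{\bf A}|^2\to 0$. This contradicts $|{\bf A}|(0)=1$ in the limit, and it handles both of your cases at once.

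The paper instead escapes the borderline by proving higher integrability directly. The density bound from \cite{SZ} plus Allard's theorem (using $|{\bf H}|\le 1$) makes $\Sigma^1_x$ a $C^{1,\alpha}$-small graph. Then $W^{2,p}$ estimates make $\|{\bf A}\|_{L^p}$ small for $p>2$, and Moser iteration applies. Its alternative proof uses Ecker's parabolic $\varepsilon$-regularity for $\Sigma+t{\bf T}$, giving $|{\bf A}|^2(x)\le C\int_{B_1(x)}|{\bf A}|^2$.

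Those routes are quantitative, while yours yields only the qualitative limit. In exchange, yours needs nothing beyond completeness and local smooth compactness: no density bound, no Allard, no isoperimetric inequality. The qualitative statement is all the proposition claims.
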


\begin{proof}
By the assumption, we know for  any $\varepsilon>0$,  there exists $R_0>10$ such that for  any $x\in \Sigma\backslash B_{2R_0}(0)$, there holds $\int_{B_{1}(x)}|{\bf A}|^2d\mu\le \varepsilon$. Denote $\Sigma^1_x$ by the connected component of $\Sigma\cap B_1(x)$ passing through $x$. Fix $\delta>0$, by choosing $\varepsilon$ small enough, we know \cite[Corollary 5.2]{SZ}
\begin{align*}
\sup_{s\le 1}\frac{\mathcal{H}^2(\Sigma_x^1\cap B_s(x))}{\pi s^2}\le 1+\delta.
\end{align*}
Moreover, noting the translating soliton equation ${\bf H=T^{\bot}}$ implies $|{\bf H}|\le 1$, by the Allard regularity theorem\cite{Allard}, we know   an affine plane $T_{x}$ passing through $x$ such that $\Sigma_x^1\cap B(x,1)$ can be written as a graph $\{(z,u(z))|z\in D(x,1):=T_{x,1}\cap B(x,1)\}$ such that the
\begin{align*}
\|u\|_{C^{1,\alpha}(D(x,1))}\le \psi(\delta),
\end{align*}
where $u(x)=0$ for $x\in T_{x,1}\subset \mathbb{R}^n$ and $\lim_{\delta\to 0}\psi(\delta)=0$.  For simplicity, we denote $D_s:=D(x,s)$. Now, by the mean curvature equation
we get
\begin{align*}
\|u\|_{W^{2,p}(D_1)}\le C_p\left(\|u\|_{L^p(D_1)}+\|{\bf H}\|_{L^p(D_1)}\right).
\end{align*}
Noting that $|{\bf H}|\le 1$, we get (for any $p\ge 2$)
\begin{align*}
\|{\bf A}\|_{L^p(\Sigma_x^1\cap B_1(x))}&\le C_p\|D^2u\|_{L^p(D_1)}\le C_p\left(\psi(\delta)+\|{\bf H}\|_{L^2(\Sigma_x^1)}\right)\le C_p\psi(\delta).
\end{align*}
Since $\Sigma$ is a translating soliton, using the equation of the second fundamental form
\begin{align*}
\Delta |{\bf A}|^2+\langle \nabla x_1, \nabla |{\bf A}|^2\rangle\ge 2|\nabla {\bf H}|^2-3|{\bf A}|^4\ge -3|{\bf A}|^4,
\end{align*}
By Moser's iteration we get
\begin{align*}
\sup_{\Sigma_x^1\cap B_{\frac{1}{2}}(x)}|{\bf A}|^2\le C_p\|{\bf A}\|_{L^{4p}(\Sigma_x^1\cap B_1(x))}^4\le C_p\psi(\delta)
\end{align*}
for any $p>1$. Taking $p=2$, we get
\begin{align*}
|{\bf A}|(x)\le C\psi(\delta), \forall x\in \Sigma\backslash B_{R_0(\varepsilon)},
\end{align*}
where $\varepsilon=\varepsilon(\delta)$. Thus
\begin{align*}
\limsup_{x\to \infty}|{\bf A}|(x)\le \psi(\delta).
\end{align*}
Since $\delta$ is arbitrary and $\lim_{\delta\to0}\psi(\delta)=0$, we get the conclusion.
\end{proof}

\vspace{.1in}

\begin{remark}
There is an alternative parabolic approach which follows Khan's  argument \cite{Khan} by using Echer's $\varepsilon$-regularity theorem.  
\end{remark}

\begin{proof}[Alternative proof of Proposition \ref{p-vanishing of second fundamental form}]

For $\varepsilon_0>0$ given, there exists $R_0>2$ such that if $x_0\in \Sigma\backslash B_{R_0}(0)$, then  $\int_{B_1(x_0)}|{\bf A}|^2d\mu\le \varepsilon_0$. We take
$
\rho=\frac{1}{4}.
$
and consider the mean curvature flow solution \(\Sigma_t = \Sigma + t{\bf T}\) defined on the interval
\(t \in [-3\rho^2/4, \rho^2/4]\).
Notice that  $|t|\le \frac{3\rho^2}{4}$ and $|{\bf T}|=1$ implies
\begin{align*}
B_{\rho}(x_0)+t{\bf T}\subset B_{\rho+\frac{3\rho^2}{4}}(x_0).
\end{align*}
By the definition of $\rho$, we know $\rho+\frac{3\rho^2}{4}\le \frac{1}{2}$, hence
\begin{align*}
\int_{\Sigma_t\cap B_\rho(x_0)}|{\bf A}|^2d\mu=\int_{\Sigma\cap (B_\rho(x_0)+t{\bf T})}|{\bf A}|^2d\mu\le \int_{B_1(x_0)}|{\bf A}|^2d\mu\le \varepsilon_0.
\end{align*}

By Echer's  $\varepsilon$-regularity theorem(\cite[Theorem 2,1]{Ecker}) (Although Theorem 2.1 of \cite{Ecker} was proved for two-dimensional mean curvature flow in ${\mathbb R}^3$, it is easy to see that the same conclusion holds when the ambient manifold is ${\mathbb R}^n$ (see Theorem 3.1 of \cite{ChenWen}).)
\[
\sup_{[0, \rho^2/4]}
\sup_{\Sigma_t \cap B_{\rho/2}(x_0)} |{\bf A}|^2
\leq
c_0 \rho^{-4}
\int_{-3(\rho/2)^2}^{(\rho/2)^2}
\int_{\Sigma_t \cap B_\rho(x_0)} |{\bf A}|^2
\leq 16
c_0  \int_{B_1(x_0)}|{\bf A}|^2d\mu .
\]
Thus, $|{\bf A}|^2(x_0) \leq 16 c_0 \int_{B_1(x_0)}|{\bf A}|^2d\mu.$
\end{proof}

\vspace{.1in}

The above estimate shows ${\bf T}^{\bot}(x)={\bf H}(x)\to 0$ as $x\to \infty$, which means  $\bf T$ is close to a unit tangent  vector $e_1$ in $T_x\Sigma$ as $x\to \infty$. Thus to determine $T_x\Sigma$, we need to estimate $\cos\alpha_J=\langle Je_1,e_2\rangle$. In particular, if $\cos\alpha_J(x)$ is close to $1$, then $Je_1$ is close to the unit tangent vector $e_2\perp e_1$ in $T_x\Sigma$, which further implies $T_x\Sigma$ is close to the fixed plane ${\bf T}\wedge J{\bf T}$. To this end, we next derive an estimate for the function $\cos\alpha_{J}-s$ on a domain $\Omega\subset \mathcal{A}_k\subset E\cap A_{R_k,2R_k}$,  where $\partial \Omega$ is a closed contractible curve and $\cos\alpha_{J}\equiv s$ on $\partial \Omega$. Here $J$ denotes a fixed compatible complex structure on ${\mathbb R}^4$.

\begin{proposition}\label{prop-osi}
    Let $\Omega\subset \mathcal{A}_k$ be a domain described as above for $k$ sufficiently large, and $|\Omega|$ denote the area of $\Omega$. Suppose furthermore that $\cos\alpha_{J}\geq 0$. Then there is a constant $\hat{C}$ and $\xi_0\in (\frac{1}{2},1)$ such that, if $|\Omega|$ is sufficiently large, then we have
    \begin{align}\label{e-cont}
        \max_{\Omega}(\cos\alpha_{J}-s)\leq \hat C\left(\log(|\Omega|+2)\int_{\Omega}|\overline{\nabla}J_{\Sigma}|^2\right)^{\frac{\xi_0}{2}}.
    \end{align}
\end{proposition}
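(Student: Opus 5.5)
Write $w=\cos\alpha_J-s$ on $\Omega$. Then $w=0$ on $\partial\Omega$, and by (\ref{e-cosalpha})
\begin{equation*}
\Delta w+\langle\nabla x_1,\nabla w\rangle=-|\overline{\nabla}J_{\Sigma}|^2\cos\alpha_J .
\end{equation*}
Since $\cos\alpha_J\ge0$, the function $w$ is a supersolution with right-hand side bounded by $f:=|\overline{\nabla}J_\Sigma|^2\cos\alpha_J\le|\overline{\nabla}J_\Sigma|^2$. Only the positive part matters, so I set $v=w^+\in W^{1,q}_0(\Omega)$ and run a De Giorgi/Moser type argument. The available tools are the isoperimetric inequality on $\Omega$ and the improved Sobolev inequality, Proposition \ref{prop-sob}. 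The isoperimetric inequality holds because $\Omega\subset\mathcal{A}_k$ and $\int_{\mathcal{A}_k}|{\bf H}|^2\le\int_{\mathcal{A}_k}|{\bf A}|^2\to0$, by Lemma \ref{construction of annulus}(4) and Proposition \ref{prop-iso-proper}(1). One point about $\partial\Omega$: it is a closed contractible curve in the annulus, so $\Omega$ is the disk it bounds, and sets $A\subset\Omega$ inherit the inequality.

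\textbf{Step 1: energy estimate.} The drift term causes trouble: $|\nabla x_1|\le1$ but it is not a divergence. Since $\Delta x_1=|{\bf H}|^2$, I would use the weighted form $\mathrm{div}(e^{x_1}\nabla w)=-e^{x_1}f$. On $\Omega\subset A_{R_k,2R_k}$, however, $x_1$ varies by $O(R_k)$, so the weight is not bounded. Instead I test directly with $v^{p-1}$ and absorb the drift via Cauchy--Schwarz:
\begin{equation*}
\int\langle\nabla x_1,\nabla v\rangle v^{p-1}\le \tfrac12\int|\nabla v|^2v^{p-2}+\tfrac12\int v^{p}.
\end{equation*}
Because $v\le 1$, the lower-order term $\int v^p\le|\Omega|$ is harmless only after normalization, so the $\log(|\Omega|+2)$ factor should come from this. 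The basic step, with $p=2$, gives
\begin{equation*}
\int|\nabla v|^2\lesssim\int f v+\int\langle\nabla x_1,\nabla v\rangle v .
\end{equation*}
A cleaner route is to test with $v$ and integrate the drift by parts: $\int\langle\nabla x_1,\nabla v\rangle v=-\tfrac12\int|{\bf H}|^2v^2$, which has a good sign. This yields $\int_\Omega|\nabla v|^2\le\int_\Omega f v\le \max v\int_\Omega|\overline{\nabla}J_\Sigma|^2$. So the Dirichlet energy is controlled by $M\epsilon$, where $M=\max v$ and $\epsilon=\int_\Omega|\overline{\nabla}J_\Sigma|^2$.

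\textbf{Step 2: from energy to sup via the $q^{1/2}$ Sobolev inequality.} Proposition \ref{prop-sob} gives $\|v\|_{L^q}\le Cq^{1/2}|\Omega|^{1/q}(M\epsilon)^{1/2}$ for every $q\ge2$. Taking $q=\log(|\Omega|+2)$ makes $|\Omega|^{1/q}\le e$, so $\|v\|_{L^q}\lesssim(\log(|\Omega|+2)\,M\epsilon)^{1/2}$. To turn this into an $L^\infty$ bound I apply the same estimate to truncations $(v-t)^+$ on the superlevel sets $\Omega_t=\{v>t\}$. This gives the decay relation
\begin{equation*}
(h-t)|\Omega_h|^{1/q}\le Cq^{1/2}|\Omega_t|^{1/q}(M\epsilon)^{1/2},
\end{equation*}
which is the standard De Giorgi iteration lemma in the exponent $q$.

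\textbf{Step 3: closing the iteration.} The iteration yields $M\le C(q M\epsilon)^{\beta}$ with some $\beta<1$. Absorbing $M^\beta$ gives $M\le C(q\epsilon)^{\beta/(1-\beta)}$, so $\xi_0$ is fixed by the iteration exponents, and the stated form $(\cdot)^{\xi_0/2}$ must fall in $(\tfrac12,1)$. The main obstacle is exactly here. The exponent $1/q$ is tied to $|\Omega|$ being large, and the De Giorgi iteration must be run with a fixed $q\sim\log|\Omega|$ without losing further powers of $\log|\Omega|$. Each step gains only the measure ratio $(|\Omega_h|/|\Omega_t|)$ to a small power, so I would choose the gap sequence $h_j-h_{j+1}\sim M2^{-j}$ and track constants so that they combine to $\log(|\Omega|+2)$ to a single power.

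If this proves delicate, my fallback is to exploit that $v\le1$ and interpolate directly. Use $\max v\le\|v\|_{L^q}^{\theta}\,\|v\|_{L^\infty}^{1-\theta}$-type bounds together with the local gradient bound $|\nabla\cos\alpha_J|\le|\overline{\nabla}J_\Sigma|\lesssim|{\bf A}|\to0$ from Proposition \ref{p-vanishing of second fundamental form}. Near the maximum point, $v\ge M/2$ on an intrinsic ball of radius comparable to $M/\sup|{\bf A}|$, hence of area $\gtrsim\kappa r^2$ by Proposition \ref{prop-iso-proper}(2). Feeding this lower bound into the $L^q$ estimate gives a power of $q\epsilon$ and produces an admissible $\xi_0$.
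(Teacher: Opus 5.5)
Your Step 1 is correct. It is the same integration by parts the paper uses: the paper tests with $u^p$ for every $p$, so the drift term contributes $\frac{1}{p+1}\int_\Omega u^{p+1}|{\bf H}|^2\ge 0$. Note that the $\log(|\Omega|+2)$ comes from choosing $q\sim\log|\Omega|$ in Proposition \ref{prop-sob}, not from the drift.

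Steps 2--3, however, do not close as written. Raised to the power $q$, your decay relation reads
\begin{equation*}
|\Omega_h|\le\Bigl(\frac{Cq^{1/2}(M\epsilon)^{1/2}}{h-t}\Bigr)^{q}|\Omega_t| ,
\end{equation*}
which is linear in $|\Omega_t|$. The De Giorgi--Stampacchia lemma needs a power strictly larger than $1$ on $|\Omega_t|$ to force $|\Omega_h|=0$ at a finite level; a linear relation is satisfied by distribution functions like $e^{-\lambda t}$ that never vanish. Superlinearity would need a level-dependent energy bound such as $\int|\nabla(v-t)^+|^2\le\|f\|_{L^\infty}(M-t)|\Omega_t|$, i.e.\ a pointwise bound on $f$, which you never invoke. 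The inequality $M\le C(qM\epsilon)^{\beta}$ in Step 3 is asserted, not derived. What Step 3 actually lacks is a lower bound on $|\{v>M/2\}|$.

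Your fallback supplies exactly that lower bound, and it is a complete proof once you drop the circular ``interpolation'' phrase and cap the radius. Let $a=\sup_\Omega|\overline{\nabla}J_\Sigma|$; by (\ref{e-HJ}) and Proposition \ref{p-vanishing of second fundamental form}, $a\le 1$ for $k$ large. Let $x_0$ be a maximum point of $v$.

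\begin{itemize}
\item Since $|\nabla v|\le|\nabla\alpha_J|\le a$ by (\ref{e-J-nablaalpha}) and $v=0$ on $\partial\Omega$, no path from $x_0$ of length $<M/(2a)$ can reach $\partial\Omega$, and $v>M/2$ along it.
\item Set $\rho=\min\{M/(2a),r_1\}$. Since $M\le 2$, $\rho\ge cM$, and you get $\hat B_\rho(x_0)\subset\Omega$ with $v\ge M/2$ on it.
\item Hence $\mathrm{Area}(\hat B_\rho(x_0))\ge\kappa\rho^2\ge cM^2$. The cap at $r_1$ is needed because Proposition \ref{prop-iso-proper}(2) only covers radii $\le r_1$, while $M/a$ may be huge. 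Alternatively, rerun its coarea argument inside $\Omega$ using part (1).
\end{itemize}

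Combining this with Step 2,
\begin{equation*}
\tfrac{M}{2}\,(cM^2)^{1/q}\le\|v\|_{L^q}\le Cq^{1/2}|\Omega|^{1/q}(M\epsilon)^{1/2}.
\end{equation*}
With $q=\log(|\Omega|+2)\ge 4$ this gives $M^{\frac12+\frac2q}\le C(q\epsilon)^{1/2}$. So $M\le C(q\epsilon)^{q/(q+4)}\le C(q\epsilon)^{1/2}$ when $q\epsilon\le 1$, and trivially $M\le 2$ otherwise. This yields (\ref{e-cont}) for any fixed $\xi_0\in(\frac12,1)$.

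This is a genuinely different route from the paper's. The paper never uses a Lipschitz bound on $u$. It runs a Moser iteration with $\chi=q/2$ and $p_0=\log|\Omega|$, using $\|f\|_{L^\infty(\Omega)}\le 1$, which is the same pointwise input from Proposition \ref{p-vanishing of second fundamental form}. This gives $\|u\|_{L^\infty}\le C\|u\|_{L^{p_0}}^{\xi_0}$ with $\xi_0=\prod_i(1-1/p_i)$, an exponent that depends on $|\Omega|$. The paper then bounds $\|u\|_{L^{p_0}}$ by Proposition \ref{prop-sob} together with $|\nabla u|\le|\overline{\nabla}J_\Sigma|$.

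Your argument replaces the whole iteration by one ball-volume lower bound and produces a fixed exponent. It also barely uses the equation: replacing Step 1 by $\int_\Omega|\nabla v|^2\le\epsilon$ still works with a slightly smaller exponent (e.g.\ $\xi_0=\frac34$). So neither the PDE nor the sign condition $\cos\alpha_J\ge 0$ is essential. The paper's iteration is the more standard elliptic argument and needs a bound only on the right-hand side $f$, not on $\nabla u$. Here, though, both bounds come from the same smallness of $|{\bf A}|$. You should make the fallback your main proof.
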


\begin{proof}
    Set $u=\cos\alpha_{J}-s$. Then by (\ref{e-cosalpha}), we have
    		\begin{equation}\label{e-cosalpha-J1}
			\Delta u+\langle\nabla x_1,\nabla u\rangle=-|\overline{\nabla}J_{\Sigma}|^2\cos\alpha_{J}.
		\end{equation}
    Since $\cos\alpha_{J}\geq 0$ and $u=0$ on $\partial\Omega$, the maximum principle implies that $u>0$ in $\Omega$. Denote $f(x)=|\overline{\nabla}J_{\Sigma}|^2\cos\alpha_{J}$. For $p\geq 1$, multiplying both sides of (\ref{e-cosalpha-J1}) by $u^p$ and integrating by parts yields
\begin{align*}
    \int_{\Omega}fu^p
    =&-\int_{\Omega}u^p\Delta u-\int_{\Omega}u^p\langle\nabla x_1,\nabla u\rangle\nonumber\\
    =&p\int_{\Omega}u^{p-1}|\nabla u|^2+\frac{1}{p+1}\int_{\Omega}u^{p+1}\Delta x_1\\
    =& \frac{4p}{(p+1)^2}\int_{\Omega}|\nabla u^{\frac{p+1}{2}}|^2+\frac{1}{p+1}\int_{\Omega}u^{p+1}|{\bf H}|^2\\
    \geq &\frac{4p}{(p+1)^2}\int_{\Omega}|\nabla u^{\frac{p+1}{2}}|^2,
\end{align*}
where we used (\ref{e-x1}) in the third equality. Since $\frac{p+1}{p}\leq 2$ for $p\geq 1$, we derive that
\begin{align*}
    \int_{\Omega}|\nabla u^{\frac{p+1}{2}}|^2
    \leq \frac{(p+1)^2}{4p}\int_{\Omega}fu^p\leq \frac{p+1}{2}\int_{\Omega}fu^p.
\end{align*}
Since  $\lim_{k\to \infty} \int_{\Sigma\cap A_{\frac{R_k}{2}, 4R_k}}|{\bf A}|^2d\mu=0$, for $k$ large,  by item $(1)$ of Proposition \ref{prop-iso-proper} and Proposition \ref{prop-sob},  we can apply the Sobolev inequality (\ref{e-soblev-3}) to obtain for any $q\geq 2$ that
\begin{align*}
    \left(\int_{\Omega}u^{\frac{(p+1)q}{2}}\right)^{\frac{2}{q}}
    \leq Cq|\Omega|^{\frac{2}{q}}\int_{\Omega}|\nabla u^{\frac{p+1}{2}}|^2
    \leq C(p+1)q|\Omega|^{\frac{2}{q}}\int_{\Omega}fu^p,
\end{align*}
where $C$ is a constant depending only on $D_1$ which may vary from line to line. By H\"older inequality, we have
\begin{align*}
    ||u||_{L^{\frac{(p+1)q}{2}}}
    =&\left(\int_{\Omega}u^{\frac{(p+1)q}{2}}\right)^{\frac{2}{(p+1)q}}\\
    \leq &\left(Cq|\Omega|^{\frac{2}{q}}\right)^{\frac{1}{p+1}}(p+1)^{\frac{1}{p+1}}\left(\int_{\Omega}fu^p\right)^{\frac{1}{p+1}}\\
    \leq & \left(Cq|\Omega|^{\frac{2}{q}}\right)^{\frac{1}{p+1}}(p+1)^{\frac{1}{p+1}}||f||_{L^{p+1}}^{\frac{1}{p+1}}||u||_{L^{p+1}}^{\frac{p}{p+1}}\\
    \leq & \left(Cq|\Omega|^{\frac{2}{q}}||f||_{L^{\infty}(\Omega)}\right)^{\frac{1}{p+1}}|\Omega|^{\frac{1}{(p+1)^2}}(p+1)^{\frac{1}{p+1}}||u||_{L^{p+1}}^{\frac{p}{p+1}}.
\end{align*}
Fix $\chi=\frac{q}{2}>1$ and set $p_l=p+1$, $p_{l+1}=\frac{(p+1)q}{2}$ in the above inequality. Then we have $p_l=p_0\chi^l$ and
\begin{align*}
    &||u||_{L^{p_{l+1}}}\\
    \leq & \left(Cp_0q|\Omega|^{\frac{2}{q}}||f||_{L^{\infty}(\Omega)}\right)^{p_0^{-1}\chi^{-l}}|\Omega|^{p_0^{-2}\chi^{-2l}}\chi^{lp_0^{-1}\chi^{-l}}||u||_{L^{p_l}}^{\frac{p_l-1}{p_l}}\\
    \leq & \left(Cp_0q|\Omega|^{\frac{2}{q}}||f||_{L^{\infty}(\Omega)}\right)^{p_0^{-1}\chi^{-l}}|\Omega|^{p_0^{-2}\chi^{-2l}}\chi^{kp_0^{-1}\chi^{-l}}\\
    & \cdot \left\{\left(Cp_0q|\Omega|^{\frac{2}{q}}||f||_{L^{\infty}(\Omega)}\right)^{p_0^{-1}\chi^{-l+1}}|\Omega|^{p_0^{-2}\chi^{-2(l-1)}}\chi^{(l-1)p_0^{-1}\chi^{-l+1}}||u||_{L^{p_{l-1}}}^{\frac{p_{l-1}-1}{p_{l-1}}}\right\}^{\frac{p_l-1}{p_l}}\\
    =&\left(Cq|\Omega|^{\frac{2}{q}}||f||_{L^{\infty}(\Omega)}\right)^{p_0^{-1}\left(\chi^{-l}+\chi^{-(l-1)}\left(1-\frac{1}{p_l}\right)\right)}|\Omega|^{p_0^{-2}\left(\chi^{-2l}+\chi^{-2(l-1)}\left(1-\frac{1}{p_l}\right)\right)}\\
    & \cdot \chi^{p_0^{-1}\left(l\chi^{-l}+(l-1)\chi^{-(l-1)}\left(1-\frac{1}{p_l}\right)\right)}||u||_{L^{p_{l-1}}}^{\left(1-\frac{1}{p_{l}}\right)\left(1-\frac{1}{p_{l-1}}\right)}\\
    \leq & \cdots\\
    \leq & \left(Cq|\Omega|^{\frac{2}{q}}||f||_{L^{\infty}(\Omega)}\right)^{p_0^{-1}\beta_l}|\Omega|^{p_0^{-2}\gamma_l}\chi^{p_0^{-1}\alpha_l}||u||_{L^{p_{0}}}^{\xi_l},
\end{align*}
where
\begin{align*}
    \beta_l=&\chi^{-l}+\chi^{-(l-1)}\left(1-\frac{1}{p_l}\right)+\cdots+\chi^{-1}\left(1-\frac{1}{p_{2}}\right)\cdots\left(1-\frac{1}{p_{l-1}}\right)\left(1-\frac{1}{p_l}\right)\\
    =&\sum_{i=1}^l\chi^{-i}\left(1-\frac{1}{p_{i+1}}\right)\cdots\left(1-\frac{1}{p_l}\right)\\
    \leq& \sum_{i=1}^l\chi^{-i}\leq \frac{\chi^{-1}}{1-\chi^{-1}}=\frac{1}{\chi-1},
\end{align*}
\begin{align*}
    \gamma_l=&\chi^{-2l}+\chi^{-2(l-1)}\left(1-\frac{1}{p_l}\right)+\cdots+\chi^{-2}\left(1-\frac{1}{p_{2}}\right)\cdots\left(1-\frac{1}{p_{l-1}}\right)\left(1-\frac{1}{p_l}\right)\\
    =&\sum_{i=1}^l\chi^{-2i}\left(1-\frac{1}{p_{i+1}}\right)\cdots\left(1-\frac{1}{p_l}\right)\\
    \leq& \sum_{i=1}^l\chi^{-2i}\leq \frac{\chi^{-2}}{1-\chi^{-2}}=\frac{1}{\chi^2-1},
\end{align*}
\begin{align*}
    \alpha_l=&l\chi^{-l}+(l-1)\chi^{-(l-1)}\left(1-\frac{1}{p_l}\right)+\cdots +\chi^{-1}\left(1-\frac{1}{p_{2}}\right)\cdots\left(1-\frac{1}{p_{l-1}}\right)\left(1-\frac{1}{p_l}\right)\\
    =&\sum_{i=1}^li\chi^{-i}\left(1-\frac{1}{p_{i+1}}\right)\cdots\left(1-\frac{1}{p_l}\right)\\
    \leq& \sum_{i=1}^li\chi^{-i}\leq\sum_{i=1}^{\infty}i\chi^{-i}=\frac{\chi^{-1}}{(1-\chi^{-1})^2}=\frac{\chi}{(\chi-1)^2},
\end{align*}
and
\begin{align*}
    \xi_l=&\left(1-\frac{1}{p_{0}}\right)\cdots\left(1-\frac{1}{p_{l-1}}\right)\left(1-\frac{1}{p_l}\right)
    =\prod_{i=0}^l\left(1-\frac{1}{p_{i}}\right)\to\xi_0
\end{align*}
for some $\xi_0\in (0,1)$ when $l\to\infty$.

By Proposition \ref{p-vanishing of second fundamental form} and $\Omega\subset A_{R_k, 2R_k}$, we know $\|f\|_{L^\infty(\Omega)}\le 1$ for $k$ large.
Therefore,
\begin{align*}
    &||u||_{L^{p_{l+1}}}
    \leq  \left(Cq|\Omega|^{\frac{2}{q}}\right)^{p_0^{-1}\beta_l}|\Omega|^{p_0^{-2}\gamma_l}\chi^{p_0^{-1}\alpha_l}||u||_{L^{p_{0}}}^{\xi_l},
\end{align*}
The Sobolev inequality (\ref{e-soblev-3}) implies that
    \begin{equation*}
    ||u||_{L^{p_{0}}}\leq Cp_0^{\frac{1}{2}}|\Omega|^{\frac{1}{p_0}}\left(\int_{\Omega}|\nabla u|^2\right)^{\frac{1}{2}}.
\end{equation*}
By \eqref{e-J-nablaalpha}, we know $|\nabla u|\le |\overline{\nabla}J_{\Sigma}|$. So,
\begin{align}\label{after iteration}
    &||u||_{L^{p_{l+1}}}
    \leq  \left(Cq|\Omega|^{\frac{2}{q}}\right)^{p_0^{-1}\beta_l}|\Omega|^{p_0^{-2}\gamma_l}\chi^{p_0^{-1}\alpha_l}\cdot\left(  Cp_0^{\frac{1}{2}}|\Omega|^{\frac{1}{p_0}}||\overline{\nabla} J_{\Sigma}||_{L^{2}}\right)^{\xi_l}
\end{align}

If $|\Omega|\geq 1$, letting $l\to \infty$, then
\begin{align*}
    ||u||_{L^{\infty}(\Omega)}&\leq \left(Cq|\Omega|^{\frac{2}{q}}\right)^{\frac{1}{p_0(\chi-1)}}|\Omega|^{\frac{1}{p_0^2(\chi^2-1)}}\chi^{\frac{\chi}{p_0(\chi-1)^2}}\cdot\left(  Cp_0^{\frac{1}{2}}|\Omega|^{\frac{1}{p_0}}||\overline{\nabla} J_{\Sigma}||_{L^{2}}\right)^{\xi_0}\\
    &=\left(Cq|\Omega|^{\frac{2}{q}}\right)^{\frac{1}{p_0(\frac{q}{2}-1)}}|\Omega|^{\frac{1}{p_0^2((\frac{q}{2})^2-1)}}(\frac{q}{2})^{\frac{\frac{q}{2}}{p_0(\frac{q}{2}-1)^2}} \cdot C^{\xi_0}|\Omega|^{\frac{\xi_0}{p_0}}\left(p_0\int_{\Omega}|\overline{\nabla}J_{\Sigma}|^2\right)^{\frac{\xi_0}{2}}
\end{align*}
Now suppose that $|\Omega|$ is sufficiently large, and we choose $\frac{q}{2}=p_0=\log |\Omega|$ to obtain
\begin{align*}
    ||u||_{L^{\infty}(\Omega)}
    \leq &\left(2Ce\log |\Omega|\cdot \right)^{\frac{1}{\log |\Omega|(\log |\Omega|-1)}}e^{\frac{1}{\log |\Omega|((\log |\Omega|)^2-1)}}\\
    & \cdot(\log |\Omega|)^{\frac{1}{(\log |\Omega|-1)^2}} C^{\xi_0}e^{\xi_0}\left(\log |\Omega|\int_{\Omega}|\overline{\nabla}J_{\Sigma}|^2\right)^{\frac{\xi_0}{2}}.
\end{align*}
Notice that in the present choice of $q$ and $p_0$, we have $p_k=p_0\chi^k=(\log |\Omega|)^{k+1}$ so that
\begin{align*}
    \xi_0=\prod_{i=0}^{\infty}\left(1-\frac{1}{p_{i}}\right)=\prod_{l=1}^{\infty}\left(1-\frac{1}{(\log |\Omega|)^l}\right).
\end{align*}
It is easy to see that $\xi_0\in (\frac{1}{2},1)$ when $\log |\Omega|$ is sufficiently large. Noting that the function $(Ct)^{\frac{1}{t(t-1)}}e^\frac{1}{t(t^2-1)}\cdot t^{\frac{1}{(t-1)^2}}$ is bounded on $[2,\infty]$, we know there exists $M_0\ge 1$ such that when $|\Omega|\ge M_0$, there holds
\begin{align*}
\|u\|_{L^\infty(\Omega)}\le \hat{C}\left(\log|\Omega|\int_{\Omega}|\overline{\nabla}J_{\Sigma}|^2\right)^{\frac{\xi_0}{2}}
\end{align*}
for some $\xi_0\in(\frac{1}{2},1)$.

Finally, if $|\Omega|\le M_0$,  then we can take $q=4$ and $p_0$ large such that $\xi_0\in (\frac{1}{2},1)$.  Letting $l\to \infty$  in \eqref{after iteration}, we also  get

$$\|u\|_{L^\infty(\Omega)}\le C(M_0,p_0)\|\overline{\nabla} J_\Sigma\|_{L^{2}}^{\xi_0}\le \hat{C} \left(\log(|\Omega|+2)\int_{\Omega}|\overline{\nabla}J_{\Sigma}|^2\right)^{\frac{\xi_0}{2}}.
$$
\end{proof}

\vspace{.1in}

    Similarly to Lemma 4.4 in (\cite{NT}), we have the following lemma which we will use later.

\begin{lemma}\label{lem-f-2}
For each fixed $\mu>0$ and $\beta<\frac{2\mu}{\sqrt{1+16\mu^2}}$, there is a positive constant $R_1=R_1(\mu,\beta)$, such that for each $x^0\in {\mathbb R}^4$, the function
\begin{equation*}
			f_{2}(x)=|x-x^0|^{-\beta}\exp\left(-\mu |x-x^0|\right)
\end{equation*}
satisfies
		\begin{equation*}
			\Delta f_{2}\leq f_{2}\left(\mu^2+\frac{|{\bf H}|^2}{4}\right) \ \ {\rm for \ all} \ |x-x_0|\geq R_1.
		\end{equation*}
\end{lemma}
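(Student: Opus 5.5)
The plan is a direct computation: write $f_2$ as a radial function of $r=|x-x^0|$ restricted to $\Sigma$, use the fact that the Laplacian of the position vector on $\Sigma$ is ${\bf H}$, and absorb the mean curvature term with Young's inequality against $|{\bf H}|^2/4$. No translating soliton structure is needed; the estimate holds on any surface in ${\mathbb R}^4$.

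\emph{Laplacian of $r$ on $\Sigma$.} Since $\Delta x={\bf H}$ on $\Sigma$, we have
\begin{equation*}
\Delta \tfrac{r^2}{2}=2+\langle x-x^0,{\bf H}\rangle ,
\end{equation*}
so away from $x^0$,
\begin{equation*}
\Delta r=\frac{2-|\nabla r|^2+\langle x-x^0,{\bf H}\rangle}{r}.
\end{equation*}
Set $s=|\nabla r|^2\in[0,1]$. The normal part of the position vector satisfies $|(x-x^0)^\perp|^2=r^2(1-s)$. Write $w=\sqrt{1-s}$, so that $|\langle x-x^0,{\bf H}\rangle|\le rw|{\bf H}|$.

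\emph{Chain rule.} For $f_2=\phi(r)$ with $\phi(r)=r^{-\beta}e^{-\mu r}$, we have $\phi'=-\phi\,(\mu+\beta/r)$ and $\phi''=\phi\big((\mu+\beta/r)^2+\beta/r^2\big)$. Then
\begin{equation*}
\frac{\Delta f_2}{f_2}=\Big(\big(\mu+\tfrac{\beta}{r}\big)^2+\tfrac{\beta}{r^2}\Big)s-\big(\mu+\tfrac{\beta}{r}\big)\frac{1+w^2+\langle x-x^0,{\bf H}\rangle}{r}.
\end{equation*}
Here I used $2-s=1+w^2$.

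\emph{Absorbing ${\bf H}$.} By Young's inequality,
\begin{equation*}
-\big(\mu+\tfrac{\beta}{r}\big)\frac{\langle x-x^0,{\bf H}\rangle}{r}\le \big(\mu+\tfrac{\beta}{r}\big)w|{\bf H}|\le \frac{|{\bf H}|^2}{4}+\big(\mu+\tfrac{\beta}{r}\big)^2w^2 .
\end{equation*}
The key point is that the $w^2$ produced here combines with the $s$-coefficient, since $s+w^2=1$. This gives
\begin{equation*}
\frac{\Delta f_2}{f_2}\le \frac{|{\bf H}|^2}{4}+\big(\mu+\tfrac{\beta}{r}\big)^2+\frac{\beta s}{r^2}-\big(\mu+\tfrac{\beta}{r}\big)\frac{1+w^2}{r}.
\end{equation*}
Expanding, the right-hand side minus $\mu^2+|{\bf H}|^2/4$ equals
\begin{equation*}
\frac{\mu(2\beta-1-w^2)}{r}+\frac{\beta^2+\beta s-\beta(1+w^2)}{r^2}\le -\frac{\mu(1-2\beta)}{r}+\frac{\beta^2}{r^2}.
\end{equation*}

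\emph{Conclusion.} The hypothesis $\beta<\frac{2\mu}{\sqrt{1+16\mu^2}}$ forces $\beta<\frac12$, so $1-2\beta>0$. If $\beta\le 0$, the $r^{-2}$ term above should be kept as $(\beta^2+\beta s-\beta(1+w^2))/r^2$ and still bounded by $C(\beta)/r^2$. In either case, choosing $R_1=R_1(\mu,\beta)$ with $R_1\ge \beta^2/(\mu(1-2\beta))$ (adjusted accordingly when $\beta\le0$) makes the bracket nonpositive for $r\ge R_1$. This yields $\Delta f_2\le f_2\big(\mu^2+|{\bf H}|^2/4\big)$ there, uniformly in $x^0$.

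The main obstacle is the treatment of the ${\bf H}$ term. A crude bound $|\langle x-x^0,{\bf H}\rangle|\le r|{\bf H}|$ would lose the $(\mu+\beta/r)^2$ coefficient and break the inequality at order $\mu^2$. The point is to use the normal component $w$ and the identity $s+w^2=1$, so that the leading order stays exactly $\mu^2$ and only the $1/r$ correction must have a favorable sign. The stated bound on $\beta$ is stronger than what this argument needs: any $\beta<\frac12$ suffices.
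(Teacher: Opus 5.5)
Your computation is correct and self-contained. Its only input is $\Delta x={\bf H}$ on $\Sigma$, so it holds on any immersed surface, with $R_1$ depending only on $(\mu,\beta)$ and not on $x^0$.

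The paper does not actually prove this lemma. It only says the lemma holds ``similarly to Lemma 4.4 in \cite{NT}'', so your argument supplies a verification the paper omits rather than reproducing one it gives.

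What you do differently from a generic version of this computation is the handling of the ${\bf H}$-term. You bound $\langle x-x^0,{\bf H}\rangle$ by the normal component only and apply Young's inequality with the full coefficient $\mu+\beta/r$. The identity $s+w^2=1$ then makes the $O(1)$ part collapse exactly to $\mu^2$. The $r^{-2}$ coefficient also simplifies, since $\beta s-\beta(1+w^2)=-2\beta w^2$, which leaves
\[
\frac{\Delta f_2}{f_2}-\mu^2-\frac{|{\bf H}|^2}{4}\le \frac{\mu(2\beta-1-w^2)}{r}+\frac{\beta^2-2\beta w^2}{r^2}\le \frac{1}{r}\Big(-\mu(1-2\beta)+\frac{\beta^2+2|\beta|}{r}\Big).
\]
So the lemma holds for every $\beta<\frac12$, for instance with $R_1=\max\{1,\frac{\beta^2+2|\beta|}{\mu(1-2\beta)}\}$. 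This also makes the $\beta\le 0$ case explicit instead of ``adjusted accordingly''.

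The stated threshold $\frac{2\mu}{\sqrt{1+16\mu^2}}$ is strictly below $\frac12$ and plays no role in your proof; it is inherited from the cited argument. The paper's only use of the lemma ($x^0=0$, $\mu=\frac12$, $\beta<\frac13$) is covered either way. The citation buys brevity, while your route gives a complete argument under a weaker hypothesis.

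One cosmetic point: for $\beta<0$ and small $r$, the coefficient $\mu+\beta/r$ can be negative, so the Young step should be written with $|\mu+\beta/r|$. The resulting bound $(\mu+\beta/r)^2w^2$ is identical, so nothing changes.
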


\vspace{.1in}

\vspace{.1in}

The principal theorem in this section is stated as follows:

	\begin{theorem}\label{prop-alpha-decay}
	Let $\Sigma$ be a complete properly immersed  translating soliton in ${\mathbb R}^4$ satisfying
\begin{enumerate}
\item ${\rm {Area}}(\Sigma\cap B_R)\le \Lambda R^2$ for all $R>0$,
\item the genus of $\Sigma$ is finite,
\item there exists $R_k\to \infty$ such that $\lim_{k\to \infty}\int_{(A_{\frac{R_k}{2},4R_k})\cap \Sigma}|{\bf A}|^2d\mu=0$,
\item $\lim_{|x|\to \infty} \int_{B_1(x)\cap \Sigma}|{\bf A}|^2d\mu\to 0.$
\end{enumerate}
Then for each $\beta<\frac{1}{3}$, there are constants $B$ and $R_2$ such that, on each
end $\Sigma_j$ of $\Sigma$, $1\le j\le N$, there is a compatible complex structure $I_{j}\in {\mathcal J}$ such that
		\begin{equation}\label{e-alpha-I-j}
			\sin^2\frac{\alpha_{I_j}}{2}\leq B|x|^{-\beta}\exp\left(-\frac{|x|}{2}-\frac{x_1}{2}\right) \ \ {\rm for \ all} \ x\in \Sigma_j \ {\rm with} \ |x|\geq R_2.
		\end{equation}
	\end{theorem}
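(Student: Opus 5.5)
The plan has two parts. First, on each end $E=\Sigma_j$, show that $\sup_{E\cap\{|x|\ge R\}}\sin^2\frac{\alpha_{I_j}}{2}\to 0$ as $R\to\infty$ for a suitable $I_j\in\mathcal J$. Second, upgrade this qualitative smallness to the exponential bound (\ref{e-alpha-I-j}) by a barrier/maximum-principle argument built on Lemma \ref{lem-f-2}.

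\emph{Step 1 (choice of $I_j$ and vanishing at infinity).} By Proposition \ref{p-vanishing of second fundamental form}, $|{\bf A}|\to0$ on $E$, so $|{\bf H}|=|{\bf T}^\perp|\to 0$. Hence ${\bf T}$ is asymptotically tangent, and at each point $T_x\Sigma$ is close to a plane ${\bf T}\wedge e_2$ with $e_2\perp{\bf T}$. The oriented planes containing ${\bf T}$ form a $2$-sphere. The map $e_2\mapsto$ the unique $J\in\mathcal J$ with $J{\bf T}=e_2$ identifies this sphere with $\mathcal J$, and $\cos\alpha_J\approx\langle J{\bf T},e_2\rangle$. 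So it suffices to show that the Gauss map of $E$ has a limit at infinity.

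I would argue by oscillation using Proposition \ref{prop-osi} on the annuli $\mathcal A_k$ of Lemma \ref{construction of annulus}, applied with the three structures $J_1,J_2,J_3$ (or, after a rotation, with $\pm J$). By (\ref{e-J-nablaalpha}) and Corollary \ref{cor-HJA}, $\int_{\mathcal A_k}|\overline\nabla J_\Sigma|^2\le 2\int_{\mathcal A_k}|{\bf A}|^2\le C\delta_k\epsilon_k$. Choosing $\delta_k$ so that $\log(|\mathcal A_k|+2)\,\delta_k\epsilon_k\to0$, Proposition \ref{prop-osi} shows that on any subdomain bounded by a contractible level curve, $\cos\alpha_J$ exceeds its boundary value by $o(1)$. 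A level curve that is non-contractible has length at least $c\pi R_k$ by item (3) of Lemma \ref{construction of annulus}. Combining this with the $L^2$-gradient bound via a coarea/length argument should force $\mathrm{osc}_{\hat{\mathcal A}_k}\cos\alpha_J\to 0$.

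Next I would propagate the value from $\hat{\mathcal A}_k$ to the whole region between consecutive annuli. The function $\cos\alpha_J$ satisfies the drift equation (\ref{e-cosalpha}). Because the right-hand side $-|\overline\nabla J_\Sigma|^2\cos\alpha_J$ has a sign where $\cos\alpha_J\ge0$, the maximum principle on the compact pieces of $E$ between $\Gamma_k^+$ and $\Gamma_{k+1}^-$ gives one-sided control. The other side comes from applying the same argument to $-J$, or by using (\ref{e-complex}) to control all three components simultaneously. This yields a limit point $J_\infty$ with $\cos\alpha_{J_\infty}\to1$ along $E$, and I set $I_j=J_\infty$.

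\emph{Step 2 (exponential decay).} Put $w=\sin^2\frac{\alpha_{I_j}}{2}$, which is small for $|x|\ge R$. Following \cite{NT}, I would substitute $w=e^{-x_1/2}v$ in (\ref{e-sinalpha}). Using $\Delta x_1=|{\bf H}|^2$ and $|\nabla x_1|^2=1-|{\bf H}|^2$, the drift term is removed and the equation becomes $\Delta v\ge(\frac14-c|{\bf H}|^2)v-\frac12 e^{x_1/2}|\overline\nabla J_\Sigma|^2\cos\alpha$. The remaining issue is the sign of the source term. I would handle it with (\ref{e-alpha}) and Lemma \ref{lemma-HJW}, aiming for a differential inequality of the form $\Delta v\ge(\mu^2+\tfrac{|{\bf H}|^2}{4})v$ with $\mu=\frac12$ on $\{|x|\ge R\}$, at least up to a smallness factor. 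Lemma \ref{lem-f-2} with $\mu=\frac12$ then requires $\beta<\frac{1}{\sqrt5}$, and $\beta<\frac13$ satisfies this. The lemma provides the supersolution $f_2=|x|^{-\beta}e^{-|x|/2}$.

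I would then compare $v$ with $Bf_2$ on $E\cap\{R\le|x|\le\rho\}$. The constant $B$ is chosen to dominate on $|x|=R$, and on the outer boundary the comparison error must tend to $0$ as $\rho\to\infty$ along a sequence. Letting $\rho\to\infty$ gives $w\le B|x|^{-\beta}e^{-|x|/2-x_1/2}$.

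\emph{Main obstacle.} I expect the outer boundary term in Step 2 to be the hardest point. Where $x_1\to-\infty$ the weight $e^{x_1/2}$ is favourable, but where $x_1\approx|x|$ the comparison requires $w$ to decay like $e^{-|x|}$, which is much stronger than Step 1 provides. My first attempt would be to exploit that on an asymptotically flat end containing the ${\bf T}$ direction, $x_1$ is comparable to $|x|$ only in a thin sector. In that sector I would use the $L^2$ smallness of $|\overline\nabla J_\Sigma|$ together with the integral identity from the proof of Proposition \ref{prop-osi}, with the $|{\bf H}|^2u^{p+1}$ term kept, to replace the pointwise outer boundary condition by an integral maximum principle. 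The secondary obstacle is Step 1's control of non-contractible level curves, where the nonlinearity of (\ref{e-cosalpha}) enters.
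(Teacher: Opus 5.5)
Your two-part architecture (first $\cos\alpha_{I_j}\to1$ along the annuli, then comparison with $|x|^{-\beta}e^{-|x|/2}$ from Lemma \ref{lem-f-2} at $\mu=\frac12$) is the paper's. However, Step 2 does not close as written, and the obstacle you flag is precisely the missing idea.

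First, the source term has the opposite sign to the one you wrote. By \eqref{e-sinalpha} and \eqref{e-e-x1}, $v=e^{x_1/2}\sin^2\frac{\alpha_{I_j}}{2}$ satisfies
\[
\Delta v=\frac{1+|{\bf H}|^2}{4}\,v+\frac12\,e^{x_1/2}|\overline\nabla J_\Sigma|^2\cos\alpha_{I_j}.
\]
So wherever $\cos\alpha_{I_j}>0$ you get $\Delta v\ge\frac{1+|{\bf H}|^2}{4}v$ directly; \eqref{e-alpha} and Lemma \ref{lemma-HJW} are not needed.

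Second, the paper handles the outer boundary not by an integral maximum principle in a sector but by adding to the barrier the exact solution $h_\eta=\eta e^{x_1/2}$ of $\Delta h=\frac{1+|{\bf H}|^2}{4}h$ (this is \eqref{e-e-x1}). Then $\phi=Bf_2+h_\eta-v$ satisfies $\Delta\phi\le\frac{1+|{\bf H}|^2}{4}\phi$. It is positive on $\Sigma_j\cap\partial B_{R_2}$ by the choice of $B$. On a non-contractible curve $\gamma_k\subset\hat{\mathcal A}_k$, positivity only requires $\sin^2\frac{\alpha_{I_j}}{2}\le\eta$, which is exactly what Step 1 gives for $k$ large. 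The maximum principle yields $v\le Bf_2+\eta e^{x_1/2}$ on the region between; letting $k\to\infty$ and then $\eta\to0$ gives \eqref{e-alpha-I-j}. The $e^{-|x|}$ decay you worried about where $x_1\approx|x|$ never has to be checked on an outer boundary.

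Step 1 is also incomplete exactly where the paper does its main work. Proposition \ref{prop-osi} requires $\cos\alpha_J\ge0$ on $\Omega$. It only controls the excess of $\cos\alpha_J$ over its boundary value inside a disk bounded by one level curve. Combining this with coarea and item (3) of Lemma \ref{construction of annulus} does not by itself give $\mathrm{osc}_{\hat{\mathcal A}_k}\cos\alpha_J\to0$. The paper proceeds as follows:
\begin{enumerate}
\item Lemma \ref{lem-structure of nodal set} shows the nodal set has no closed curves far out, using that $e^{x_1/2}\cos\alpha_J$ satisfies $\Delta w=cw$ with $c>0$ there.
\item If the nodal set is nonempty outside every compact set, then $\cos\alpha_J\to0$ on $\hat{\mathcal A}_k$ (Proposition \ref{nodal line implies zero}).
\item Otherwise one starts from an interior local maximum inside a disk bounded by a short, hence contractible, level curve. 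Pseudo negative gradient curves (Lemma \ref{osc along pseudo gradient curve}) then reach all values down to $\inf_{\hat{\mathcal A}_k}\cos\alpha_J$ (Proposition \ref{No nodal line implies limit exists}).
\item Identity \eqref{e-complex} ensures at least one $J_i$ falls in the last case.
\end{enumerate}
This dichotomy is also what gives $\cos\alpha_{I_j}>0$ on the whole end outside a compact set, which Step 2 needs for the sign above.

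Your propagation step between annuli is unnecessary, since the barrier uses only the values on $\gamma_k$. Its justification via $-J$ also fails: $\cos\alpha_{-J}=-\cos\alpha_J$, and the maximum principle then reproduces the same lower bound, not an upper one.
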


\vspace{.1in}

In the next, we fix $j$ and denote  $E=\Sigma_j$.
The following lemma describes the nodal set of $\cos\alpha_J$.
\begin{lemma}\label{lem-structure of nodal set} Assume $\Sigma\subset\mathbb{R}^4$ is a properly immersed translating soliton satisfying the same assumptions of Theorem \ref{prop-alpha-decay}.  Denote $N_J=\{x\in E| \cos\alpha_J=0\}$ as the nodal set of $\cos\alpha_J$ on $E$. Assume $N_J\neq E$. Then,  there exists $R_J>0$ such that either $N_J\cap (E\backslash B_{R_J})=\emptyset$ or  $N_J\cap (E\backslash B_{R_J})$   consists  smooth curves properly embedded in $E\backslash B_{R_J}$.
 \end{lemma}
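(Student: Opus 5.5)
The function $w=\cos\alpha_J$ satisfies the linear elliptic equation
\begin{equation*}
\Delta w+\langle\nabla x_1,\nabla w\rangle+|\overline{\nabla}J_{\Sigma}|^2 w=0
\end{equation*}
from (\ref{e-cosalpha}), with smooth coefficients. So I would treat $N_J$ as the nodal set of a solution of a second order elliptic equation on a surface. The plan has three steps: show $w$ is nontrivial near infinity, show $\nabla w\ne0$ on $N_J$ far out so that $N_J$ is a smooth embedded $1$-manifold there, and then check properness.

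\textbf{Nontriviality.} By unique continuation for such equations (Aronszajn), $w\not\equiv0$ on any open subset of $E$, since $N_J\neq E$ and $E$ is connected. Locally, the Hartman--Wintner/Bers structure theorem says that near a point of $N_J$ where $\nabla w=0$, $w$ is asymptotic to a nonzero homogeneous harmonic polynomial of degree $m\ge2$. Hence $N_J$ is a locally finite graph whose singular vertices are isolated.

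\textbf{Excluding singular points far out.} This is the main obstacle. I would use the pointwise identity behind Lemma \ref{lemma-HJW}. Where $w=0$ we have $\alpha_J=\pi/2$ and $\sin\alpha_J=1$, so $|\nabla w|=|\nabla\alpha_J|=|{\bf W}|$. At a critical point of $w$ on $N_J$ we then get ${\bf W}=0$, and (\ref{e-HJW}) gives
\begin{equation*}
|\overline{\nabla}J_{\Sigma}|^2=|{\bf H}|^2 .
\end{equation*}
By Proposition \ref{p-vanishing of second fundamental form}, $|{\bf H}|\to0$, so ${\bf T}$ is almost tangent there. The plane $T_x\Sigma$ then contains a vector close to ${\bf T}$ and is Lagrangian for $J$. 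I would try to exploit the structure equations: a critical point of $\alpha_J$ on the Lagrangian locus forces the $J$-twisted components $h^1_{2i}+h^2_{1i}$, $h^1_{1i}-h^2_{2i}$ to be expressible through ${\bf H}={\bf T}^\perp$ alone. Differentiating ${\bf H}={\bf T}^\perp$ along the surface gives $\nabla {\bf H}=-{\bf A}({\bf V},\cdot)$, and the Hessian of $w$ at such a point can be computed in terms of ${\bf A}$ and ${\bf V}$. The aim is to show that $\nabla^2 w$ has a nonzero trace-free part once $|{\bf V}|\approx1$ and $|{\bf A}|$ is small. Alternatively one can show that vanishing to order $\ge2$ propagates along the integral curve of ${\bf V}$, which by properness and the translating structure would force $w\equiv0$ on an open set, contradicting unique continuation.

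If this pointwise route fails, I would fall back to a counting argument. Singular points are isolated, and each contributes at least $4$ arcs. Proposition \ref{prop-osi}, applied on the annuli $\mathcal A_k$ of Lemma \ref{construction of annulus}, bounds the oscillation of $w$ on contractible nodal domains. With Euler characteristic counting on the annular end $E\approx\mathbb D^c$, this should bound the number of singular points, so one can choose $R_J$ beyond all of them.

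\textbf{Properness.} Once $\nabla w\neq0$ on $N_J\cap(E\setminus B_{R_J})$, the implicit function theorem makes $N_J$ there a smooth embedded $1$-submanifold, closed as a subset. Since the immersion is proper, closed subsets of $E$ are properly embedded. Each component is either a closed curve or a curve leaving every compact set or ending on $\partial B_{R_J}$. So either $N_J\cap(E\setminus B_{R_J})$ is empty or it is a union of properly embedded smooth curves, as claimed.
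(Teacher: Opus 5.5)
There is a genuine gap, located at the step you yourself call the main obstacle: none of your three routes shows $\nabla\cos\alpha_J\neq0$ on $N_J$ far out, and your properness argument depends on it.

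\textbf{Why Step 2 does not go through.} At a point of $N_J$ with $\nabla w=0$, equation \eqref{e-cosalpha} already gives $\Delta w=0$, so the Hessian is automatically trace-free. Whether it vanishes depends on $\nabla{\bf A}$, and the soliton equation prescribes only the trace of that, via $\nabla^\perp{\bf H}=-{\bf A}({\bf V},\cdot)$. Everything you know far out ($|{\bf A}|$ small, $|{\bf V}|\approx1$) is an upper bound, and upper bounds cannot force a Hessian to be nonzero.

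The other two routes do not help either. The idea that higher-order vanishing propagates along ${\bf V}$ has no mechanism behind it. Proposition \ref{prop-osi} bounds oscillation on domains bounded by a level set; it does not count critical points.

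In fact the equation alone does not exclude crossings. On $\mathbb{R}^2$ with $x_1=x$, take $w=e^{-x/2}\sinh(ax)\sin(by)$ with $a^2-b^2=\frac14$ and $b\neq0$. It solves $\Delta w+\partial_x w=0$, and its nodal set $\{x=0\}\cup\{y\in\frac{\pi}{b}\mathbb{Z}\}$ has infinitely many crossings escaping to infinity, with every nodal domain unbounded. The paper does not prove absence of crossings either. It keeps the vertices given by the Bers/Colding--Minicozzi structure theorem, which matches your first step, and never claims they disappear far out.

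\textbf{What is missing.} The part the paper actually proves, and later uses, is that far out $N_J$ contains no closed curves. Your last paragraph explicitly allows closed-curve components. But Proposition \ref{nodal line implies zero} needs $(E\backslash K)\backslash N_J$ to split into non-compact strips $\mathcal{R}_i$.

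The missing idea is to absorb the drift term. By \eqref{e-x1}, $\Delta e^{x_1/2}=\frac{1+|{\bf H}|^2}{4}e^{x_1/2}$, so $v=e^{x_1/2}\cos\alpha_J$ satisfies
\[
\Delta v=\Big(\frac{1+|{\bf H}|^2}{4}-|\overline{\nabla}J_{\Sigma}|^2\Big)v .
\]
By Proposition \ref{p-vanishing of second fundamental form} and \eqref{e-HJ}, the coefficient is at least $\frac18$ outside a compact set.

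Now take a domain with compact closure, bounded by nodal curves, on which $\cos\alpha_J$ has one sign. This is either a disk bounded by a contractible nodal circle or the annulus between two non-contractible ones. On it $v$ is sub- or superharmonic with zero boundary values, so $v\equiv0$, contradicting unique continuation. Hence there is at most one nodal circle, and it is non-contractible. Choosing $R_J$ so that $B_{R_J}$ contains it gives the lemma. This argument should replace your Step 2.
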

 \begin{proof}
 If $\cos\alpha_{J}$ does not change sign on $E\backslash K$ for some compact set $K$, there is nothing to prove.
So, we assume $\cos\alpha_{J}$ changes sign on $E\backslash K$ for any $K\subset\subset E$, which implies $N_J$ is non-compact. By Proposition \ref{p-vanishing of second fundamental form}, we can choose a compact set $K\subset E$ such that $\|{\bf A}\|_{L^\infty(E\backslash K)}\le \delta_0\ll 1$. By Lemma \ref{lem:topology and geometry of end},  we can assume $E\backslash K$ is homeomorphic to $\mathbb{D}^c$ without loss of generality.

Since $\cos\alpha_J$ satisfies the elliptic equation \eqref{e-cosalpha}, by the strong unique continuation theorem and  structure theorem for nodal sets of solutions of elliptic equations (\cite{Ber}, see also \cite[Theorem 6.1, Theorem 6.12 and Lemma 6.13]{CM} for detailed discussion),  we know $N_J\cap{(E\backslash K)}$ is a graph $G$ consists of smooth curves with boundary (may be empty) on $\partial (E\backslash K)$. The vertices of the graph are all critical points of $\cos\alpha_J$.  Moreover,  as a metric space, $G$ is  properly  embedded in $E$. Especially, each compact subset intersects at most finite many curves in $G$.  See Figure \ref{fig:nodal set}.
\begin{figure}[htbp]
\centering
\includegraphics[width=0.7\textwidth]{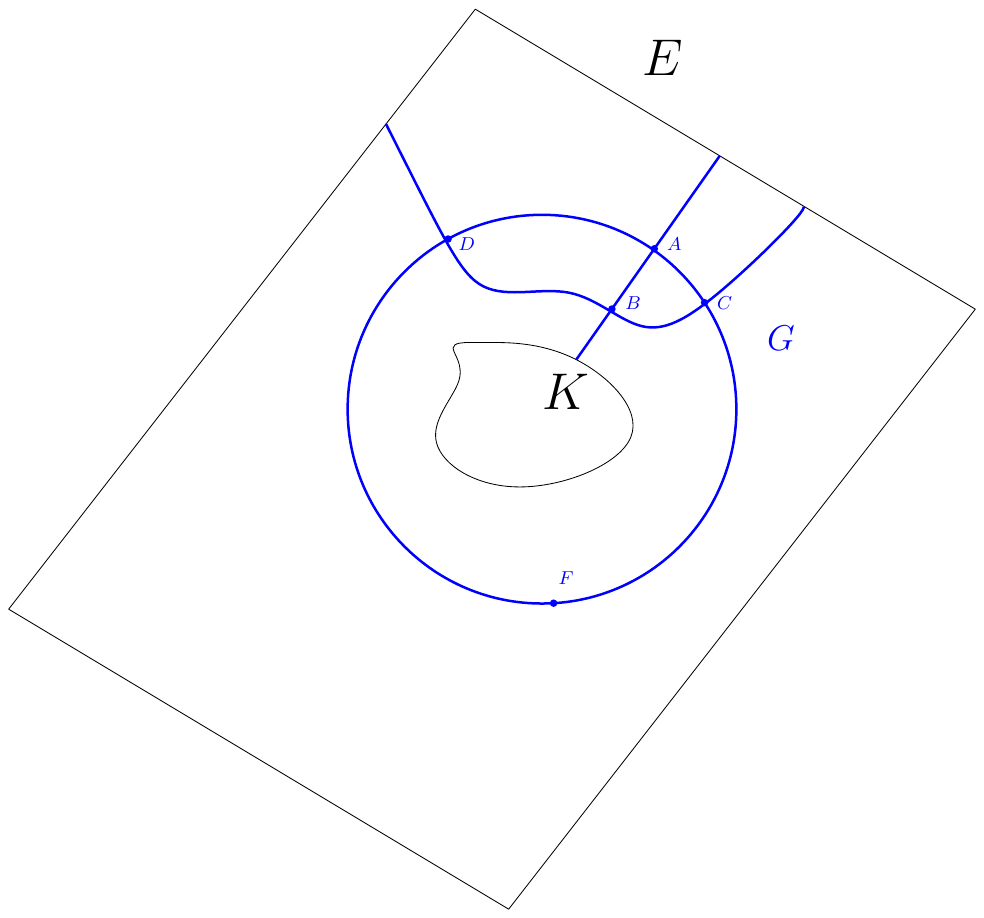}
\caption{Nodal set\quad \quad
$\widetilde{ABCA}$ is a  contractible circle in $E\backslash K$ \quad \quad \quad
$\widetilde{ACFDA}$ is a non-contractible circle in $E\backslash K$.}
\label{fig:nodal set}
\end{figure}

Let us discuss the number of  circles in the graph $G$.

\textbf{Claim 1:} {\it There is no circle in $G$ which is contractible in  $E\backslash K$.}

\vspace{.1in}
We check this by contradiction argument. Otherwise, there exists a circle $\Gamma$ which is contractible in $E\backslash K$. Since $G$ is properly embedded in $E\backslash K$, by choosing a smaller circle if necessary, we can assume $\Gamma$ bound a domain $\Omega_\Gamma\subset E\backslash K$ such that $\cos\alpha_J$ does not change sign in $\Omega_\Gamma$. Applying the strong maximum principle to the equation (\ref{e-cosalpha}), we see that $\Omega_\Gamma\cap N_J=\emptyset$.
Notice that by (\ref{e-x1}) and \eqref{e-cosalpha}, we have
\begin{equation}\label{e-e-x1}
\Delta e^{\frac{x_1}{2}}=e^{\frac{x_1}{2}}\left(\frac{1}{2}|{\bf H}|^2+\frac{|e_{1}^T|^2}{4}\right)=e^{\frac{x_1}{2}}\frac{1+|{\bf H}|^2}{4},
\end{equation}
and hence
\begin{eqnarray}\label{e-test}
  \Delta \left(e^{\frac{x_1}{2}}\cos\alpha_{J}\right)
  =e^{\frac{x_1}{2}}\left(-|\overline{\nabla}J_{\Sigma}|^2\cos\alpha_{J}+\frac{|{\bf H}|^2+1}{4}\cos\alpha_{J}\right).
\end{eqnarray}
By Proposition \ref{p-vanishing of second fundamental form}, we know  $\|{\bf A}\|_{L^\infty(E\backslash K)}\le \delta_0\ll 1$, hence  $\frac{|\bf {\bf H}|^2+1}{4}-|\overline{\nabla}J_\Sigma|^2\ge \frac{1}{8}>0$ on $E\backslash K\supset \Omega_\Gamma$.

Thus if $\cos\alpha_J\ge 0$ in  $\Omega_\Gamma$, then $\Delta \left(e^{\frac{x_1}{2}}\cos\alpha_{J_2}\right)\ge 0 $ in $\Omega_\Gamma$, which further implies  $e^{\frac{x_1}{2}}\cos\alpha_J\le 0$ in $\Omega_\Gamma$ by the maximal principle.  Similarly,  $\cos\alpha_J\le 0$ in $\Omega_\Gamma$ implies $e^{\frac{x_1}{2}}\cos\alpha_J\ge 0$ in $\Omega_\Gamma$.  As a result, $\cos\alpha_J\equiv 0$ in $\Omega_J$.   Moreover,  the  unique continuation theorem implies that $\cos\alpha_{J}\equiv 0$ on $\Sigma$. This contradicts to the assumption $N_J\neq E$.

\vspace{.1in}

\textbf{Claim 2:} {\it There is at most one circle in $G$ which is non-contractible in $E\backslash K$.}

\vspace{.1in}

We check this by contradiction argument again. Otherwise, there are two circles  $\Gamma_1$ and $\Gamma_2$ in $G$ which are both non-contractible in $E\backslash K$. Since there is no contractible circle by Claim 1 and $G$ is properly embedded in $E\backslash K$, by taking $\Gamma_1$ closer to $\Gamma_2$ if necessary, we can assume that  $\Gamma_1$ and $\Gamma_2$  bound a region $\Omega_{1,2}$ in the end $E$ such that $\cos\alpha_J$ does not change sign in $\Omega_{12}$ . By the same argument as that in Claim 1, we get a contradiction.

\vspace{.1in}

 As a result of Claim 1 and Claim 2, there exists at most one circle in $G$. Therefore, by taking $R_J$ larger such that $B_{R_J}$ contains this circle, $G\cap (E\backslash B_{R_J})$ only consists of smooth curves properly embedded in $E\backslash B_{R_J}$.

 \end{proof}

Roughly speaking, the following Lemma  shows that most of the level sets of $\cos\alpha_J$ are not long lines.

\begin{lemma}\label{almost finite length}
Assume $\Sigma\subset\mathbb{R}^4$ is a properly immersed translating soliton satisfying the same assumptions of Theorem \ref{prop-alpha-decay}. For $\mathcal{A}_k$ and $\hat{\mathcal{A}}_k$ constructed in Lemma \ref{construction of annulus}, let $F$  and $F_k$ be defined as
 \begin{align*}
 F&=\{s\in [-1,1]| s \text{ is a singular value  of } \cos\alpha_J \\
  & \quad \quad \quad \quad \quad \quad  \text{ or }  \cos\alpha_J^{-1}(s) \text{ contains a curve with infinite length }  \} \\
  F_k&=\{s\in [-1,1]|  \cos\alpha_J^{-1}(s) \text{ contains an arc joining } \hat{\mathcal{A}}_k \text{ to } \partial \mathcal{A}_k \}.
 \end{align*}
 Then, $\lim_{k\to \infty}\mathcal{L}^1(F_k)=0$ and $\mathcal{L}^1(F)=0$.
 \end{lemma}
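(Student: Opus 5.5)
The natural tool is the coarea formula applied to $w=\cos\alpha_J$ on $\mathcal{A}_k$, combined with the length and area bounds of Lemma \ref{construction of annulus}. For $F_k$, any $s\in F_k$ has a level set $w^{-1}(s)$ containing an arc from $\hat{\mathcal{A}}_k$ to $\partial\mathcal{A}_k$. By item (2) of Lemma \ref{construction of annulus}, such an arc has length at least $c\delta_kR_k$, so $\mathcal{H}^1(w^{-1}(s)\cap\mathcal{A}_k)\ge c\delta_kR_k$ for all $s\in F_k$. The coarea formula then gives
\begin{align*}
c\delta_kR_k\,\mathcal{L}^1(F_k)\le \int_{-1}^1\mathcal{H}^1(w^{-1}(s)\cap \mathcal{A}_k)\,ds=\int_{\mathcal{A}_k}|\nabla w|\,d\mu\le \mathcal{H}^2(\mathcal{A}_k)^{\frac12}\Big(\int_{\mathcal{A}_k}|\nabla w|^2\Big)^{\frac12}.
\end{align*}
Next I bound the right side. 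Since $|\nabla\cos\alpha_J|\le|\nabla\alpha_J|\le|\overline\nabla J_\Sigma|\le \sqrt2|{\bf A}|$ by Corollary \ref{cor-HJA}, items (2) and (4) of Lemma \ref{construction of annulus} give
\begin{align*}
\mathcal{L}^1(F_k)\le \frac{(C\delta_kR_k^2)^{1/2}\,(C\delta_k\epsilon_k)^{1/2}}{c\delta_kR_k}=C\epsilon_k^{1/2}\to0 .
\end{align*}
Note that $\delta_k$ cancels here, so any fixed choice (say $\delta_k=\frac1{20}$) works. This is a scale-invariant Dirichlet-energy statement, which is exactly what makes it close.

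For $F$, Sard's theorem applied to the smooth function $w$ on $\Sigma$ says the singular values form a null set. The issue is level sets containing a curve of infinite length. Such a curve is a component of a regular level set, hence a properly embedded, non-compact curve in $E$ (by properness and local finiteness, as in Lemma \ref{lem-structure of nodal set}). I would argue that it must run out to infinity in $E$. Since each $\mathcal{A}_k$ separates $\partial E$ from infinity, and the sets $\hat{\mathcal{A}}_k$ also separate, such a curve must cross $\mathcal{A}_k$ for all large $k$. A crossing of $\mathcal{A}_k$ produces an arc from $\partial\mathcal{A}_k$ to $\hat{\mathcal{A}}_k$, so $s\in F_k$ for all large $k$, giving $F\setminus(\text{singular values})\subset\liminf_k F_k$. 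The case I must handle is a curve of infinite length confined to a compact region of $E$. A regular level set is a closed embedded submanifold, so its intersection with a compact set is a compact $1$-manifold with boundary and has finite length; hence an infinite-length component must leave every compact set. Its two ends (or its one end, if it starts on $\partial E$) go to infinity and cross all sufficiently far $\mathcal{A}_k$. Then Fatou's lemma gives
\[
\mathcal{L}^1(\liminf_k F_k)\le\liminf_k\mathcal{L}^1(F_k)=0,
\]
so $\mathcal{L}^1(F)=0$.

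I expect the main obstacle to be the topological point: verifying that an unbounded level curve genuinely yields an arc joining $\hat{\mathcal{A}}_k$ to $\partial\mathcal{A}_k$. This needs the separation property of item (1) of Lemma \ref{construction of annulus}, and I would check it by following the curve from inside $\Gamma_k^-$ to outside $\Gamma_k^+$ and extracting the first sub-arc from $\partial\mathcal{A}_k$ into $\hat{\mathcal{A}}_k$.
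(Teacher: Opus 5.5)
Your proposal is correct and follows the paper's proof essentially step for step. Like the paper, you use the coarea formula with the length bound $c\delta_kR_k$ from item (2), then Cauchy--Schwarz with items (2) and (4) (where, as you note, $\delta_k$ cancels). You then use Sard's theorem for the singular values, and the separation property of item (1) to get $F\setminus\{\text{singular values}\}\subset\bigcup_l\bigcap_{k\ge l}F_k$. Your explicit argument that an infinite-length regular level curve must leave every compact set and cross $\mathcal{A}_k$ for all large $k$ fills in a detail the paper only asserts.
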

 \begin{proof}  Split $F=F_R\cup F_S$, where $F_R$ consists those regular value in $F$ and $F_S$ consists the singular values in $F$. By Sard's theorem, $\mathcal{L}^1(F_S)=0$.
 For any $s\in F_R$, $\cos\alpha_J^{-1}(s)$ embedded in $E$ properly.   Since there exists a curve $\Gamma\subset \cos\alpha_J^{-1}(s)$ of infinite length,  we know there exists  $l>0$ such that for any $k\ge l$,  $\Gamma$ intersects both $\Gamma_k^{\pm}$ non-empty.  By item (1) of Lemma \ref{construction of annulus}, we know $\hat{\mathcal{A}}_k$ separates $\Gamma_k^+$ and $\Gamma_k^{-}$ in $\mathcal{A}_k$. Hence there exists an arc in $\Gamma$ joining $\hat{\mathcal{A}}_k$ to $\partial \mathcal{A}_k$. That is,
 \begin{align*}
 F_R\subset \cup_{l=1}^\infty\cap_{k=l}^\infty F_k.
 \end{align*}
 Moreover, by item (2) of Lemma \ref{construction of annulus}, for any  $s\in F_k$, we have
		\begin{equation}\label{e-lower}
			{\mathcal H}^1\left(\mathcal{A}_k\cap \cos\alpha_J^{-1}(s)\right)\geq c\delta_k R_k.
		\end{equation}
		Thus by the coarea formula and \eqref{e-J-nablaalpha}, we have
		\begin{eqnarray*}
		c\delta_k R_k\mathcal{L}^1(F_k)	
        \leq\int_{F_k}{\mathcal H}^1\left(\mathcal{A}_k\cap \cos\alpha_J^{-1}(s)\right)ds\leq\int_{ \mathcal{A}_k}|\sin\alpha_J\nabla\alpha_{J} |d\mu\leq \int_{\mathcal{A}_k}|\overline{\nabla}J_{\Sigma}|d\mu.
		\end{eqnarray*}
By item (2) and item (4) of Lemma \ref{construction of annulus}, we know
		\begin{align}\label{small measure of long level set}
\mathcal{L}^1(F_k)&\leq (c\delta_kR_k)^{-1}\int_{ \mathcal{A}_k}|\overline{\nabla}J_{\Sigma}|d\mu\\ &\leq (c\delta_k R_k)^{-1}\left(C\delta_kR_k^2\int_{ \mathcal{A}_k}|\overline{\nabla}J_{\Sigma}|^2d\mu\right)^{\frac{1}{2}}\nonumber\\
&\leq (c\delta_k )^{-1}\left(C\delta_k^2\int_{\Sigma\cap A_{\frac{R_k}{2},4R_k}}|{
\bf A
}|^2d\mu\right)^{\frac{1}{2}}\to 0\nonumber
		\end{align}
as $k\to\infty$. Especially, we have
\begin{align*}
\mathcal{L}^1(\cap_{k=l}^\infty F_k)\le \lim_{k\to \infty}\mathcal{L}^1(F_k)=0.
\end{align*}
As a result,
\begin{align}\label{no escape infinite}
\mathcal{L}^1(F)\le \mathcal{L}^1(F_S)+ \sum_{l=1}^\infty\mathcal{L}^1(\cap_{k=l}^\infty F_k)=0.
\end{align}
 \end{proof}

 By Lemma \ref{lem-structure of nodal set}, we know there exists a compact set $K$ such that
 \begin{enumerate}
 \item either $N_J\cap (E\backslash K)=\emptyset$,
 \item  or $N_J\cap(E\backslash K)$ consists of  smooth curves properly embedded in  $E\cap  K$.
 \end{enumerate}

 \vspace{.1in}

 Below we discuss the limit behavior of $\cos\alpha_J$ on $\mathcal{A}_k$ in the two cases.  For this, we introduce some notations.
\begin{definition}
 Assume $U\subset \Sigma$ is a smooth domain in the surface $\Sigma$ with compact closure and $f$ is a smooth function on $\Sigma$.
  We call $s$ a {\bf relative regular value of $f$ with respect to $U$} if $s$ is a regular value of  $f|_{\overline{U}}$. We call $s$ a {\bf relative singular value} if $s$ is not a relative regular value.
 For any subset $A\subset U$ and  $x\in A$, we define
 \begin{align*}
 \Gamma_{f,A,U}(x):=\text{ the connected component of } f^{-1}(f(x))\cap U \text{ passing through } x.
 \end{align*}
 Sometimes, we simply write $\Gamma_{f,U}(x):=\Gamma_{f,A,U}(x)$.
 \end{definition}

Since $\Sigma$ is a translating soliton, it is a critical point of the analytic functional $\int_{\Sigma}e^{\langle x, {\bf T}\rangle}d\mathcal{H}^2$. Hence, $\Sigma$ is analytic  and $\cos\alpha_J=\langle J e_1,e_2\rangle$ is analytic on $\Sigma$ \cite[Section 5.6-5.7]{Morrey}.

\begin{definition}For a graph $\Gamma=G(V,E)$,  its {\bf boundary} $\partial \Gamma$ is defined  by the vertexes only joint with exactly one edge.
 \end{definition}

 \begin{lemma}\label{lem-graph}
 For any non-constant analytic function $f$ on $\Sigma$, an open set $U\subset \Sigma$ and any $s\in f(U)$, the level set $f^{-1}(s)\cap U$ is a graph properly embedded in $U$ with no boundary.
 \end{lemma}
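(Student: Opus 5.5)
The statement is local, so the plan is to work in an analytic chart and reduce everything to the local structure of zero sets of real-analytic functions of two variables. Fix $s\in f(U)$ and set $g=f-s$, which is analytic on $\Sigma$ and not identically zero. Since $\Sigma$ is connected, the identity theorem for real-analytic functions shows that $g$ cannot vanish on any open set. I will use the following classical fact (Łojasiewicz; for harmonic-type equations also Bers \cite{Ber}, cf. \cite{CM}). Near any point $p$ of its zero set, a non-trivial real-analytic function $g$ on a $2$-dimensional real-analytic manifold has zero set equal either to $\{p\}$ or to a finite union of analytic arcs emanating from $p$. These arcs are real-analytic and embedded on the punctured neighbourhood, and they are tangent at $p$ to finitely many directions. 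This follows from the Weierstrass preparation theorem together with a Puiseux parametrisation of each branch of $g(x,y)=0$.

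The first step is local. At each $p\in g^{-1}(0)\cap U$, choose a small coordinate disc $D_p\subset U$ on which the zero set is $\{p\}$ or a star of $k_p\ge1$ arcs meeting only at $p$. The points where $\nabla g\ne0$ are regular: there $g^{-1}(0)$ is a smooth curve through the point, and in the star picture this means $k_p=2$. We declare the points with $k_p\neq2$ to be vertices, and also the isolated points where $k_p=0$; the remaining points lie on edges. Because each $D_p$ contains at most one vertex, the vertex set is discrete in $U$. The edges are the connected components of $g^{-1}(0)\cap U$ minus the vertices, and each is a connected $1$-manifold without boundary. Together this gives a locally finite graph. Properness in $U$ follows because $g^{-1}(0)\cap U$ is closed in $U$, so any compact $K\subset U$ is covered by finitely many $D_p$ and therefore meets only finitely many vertices and edge segments.

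The second step is the absence of boundary. The main point is to rule out $k_p=1$, a single arc ending at $p$. If $g(p)=0$ and $p$ were the endpoint of exactly one arc, then $D_p\setminus g^{-1}(0)$ would be connected, so $g$ would have a fixed sign there, say $g\ge0$ near $p$. Then $p$ is a local minimum of $g$, so $\nabla g(p)=0$, and the Taylor expansion of $g$ at $p$ begins with a nonnegative homogeneous polynomial $P_m$ of even degree $m$. Restricting to a small circle around $p$, the Puiseux branches of $\{g=0\}$ are tangent to the real linear factors of $P_m$. Each real line factor gives two opposite directions, so a single arc is impossible unless the branch analysis is made more carefully.

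I expect this local claim, that the zero set of a nontrivial real-analytic function in the plane near a zero is either isolated or has an even number $k_p\ge2$ of half-branches, to be the main obstacle. I would try the standard argument. After a real-analytic change of coordinates that resolves the singularity, or directly via Weierstrass preparation $g=u\cdot\prod(y-\phi_i(x))$ with Puiseux roots $\phi_i$, the real half-branches come in pairs, one over $x>0$ and one over $x<0$, for each real root. Counting sign changes of $g$ along a small circle centred at $p$ gives the parity statement: $g$ changes sign an even number of times around a closed loop, and each crossing of the loop by a half-branch of odd multiplicity accounts for one sign change. This yields $k_p\neq1$ for the reduced set of half-branches.

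When $k_p=0$ we get an isolated point; I will treat it as a vertex with no edges, which is still not a boundary vertex in the sense of the definition. It only remains to check that no edge terminates inside $U$ without reaching a vertex. An edge is a component of a $1$-manifold that is closed in $U$ minus the vertices, and the local star structure at each of its accumulation points in $U$ shows that it either continues or ends at a vertex of degree at least $2$. Hence $\partial(f^{-1}(s)\cap U)=\emptyset$.
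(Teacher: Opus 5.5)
Your overall route is the paper's: the Weierstrass preparation/Puiseux local structure at each point of $f^{-1}(s)\cap U$, assembled into a graph, plus a properness argument. There are two differences. First, the paper does not prove that a single arc cannot end at a point; it reads ``finitely many (at least two) analytic arcs'' directly off the cited structure theorems (Puiseux, Lojasiewicz). Second, the paper proves properness by contradiction with the intrinsic length metric on a component $\Gamma$: if $x_i\to x_0\in U$, then for large $i$ the $x_i$ lie on the finitely many finite-length arcs at $x_0$, so intrinsic distances stay bounded. Your local-finiteness argument, with finitely many $D_p$ covering $K\cap g^{-1}(0)$, says the same thing.

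The step you add, excluding $k_p=1$, does not work as written. The claim that real half-branches come in pairs, one over $x>0$ and one over $x<0$, is false: for $y^2=x^3$ both half-branches lie over $x>0$. Counting sign changes on a small circle only shows that the number of half-branches across which $g$ changes sign (odd multiplicity) is even. It says nothing about a single arc of even multiplicity, which is exactly the case $g\ge 0$ you isolated in the previous paragraph. Your leading-form argument also fails there, since tangent directions of half-branches need not come in opposite pairs; in the cusp both are tangent to the positive $x$-axis.

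The fix is to make ``reduced'' precise. Replace $g$ by its square-free part $g_{\mathrm{red}}$ in $\mathbb{R}\{x,y\}$, which has the same zero set near $p$. Its Weierstrass polynomial has nonvanishing discriminant, so $\nabla g_{\mathrm{red}}\neq 0$ on $g_{\mathrm{red}}^{-1}(0)\cap D_p\setminus\{p\}$, and $g_{\mathrm{red}}$ changes sign across every half-branch. Your own observation that $D_p$ minus a single arc is connected then forces $g_{\mathrm{red}}$ to have one sign there, which is a contradiction. Alternatively, simply cite the classical fact that a real-analytic plane curve germ has an even number of half-branches, which is what the paper implicitly does.
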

 \begin{proof}
 For any $x\in f^{-1}(s)\cap U$, since $f$ is analytic, by the Weierstrass preparation theorem \cite[Theorem 6.13]{KP} and Puiseux's parametrization theorem \cite[Theorem 4.27 and Theorem 4.28]{KP} of real analytic functions with two variables, we know there exists a neghborhood $V_x$ of $x$ such that $f^{-1}(s)\cap V_x$ is either isolated or  consists of finite many (at least two) analytic arcs with exactly one common end at $x$. See also Lojiasiewicz's structure theorem (\cite[Theorem 6.3.3]{KP}, \cite{Lo}) for more precise and general description.  Thus each connected component $\Gamma$ of $f^{-1}(s)\cap U$ is a connected graph without boundary.  Let $d$ be the  intrinsic length metric on $\Gamma$. For the properness, we argue by contradiction. Otherwise, there exists $x_i\in \Gamma$ such that $x_i\to x_0\in U$ but $d(x_i,x_1)\to \infty$. So, we know $f(x_0)=\lim_{i\to \infty}f(x_i)=s$.  By the local structure of $f^{-1}(s)\cap V_{x_0}$, we know $x_i$ belongs to one of the analytic arcs joining to $x_0$ for $i$ large. Thus there exists $i_0$ such that   $d(x_i,x_j)\le d(x_i,x_0)+d(x_j,x_0)\le 2$ for $i\ge i_0$. This leads to a contradiction
 \begin{align*}
 \infty=\lim_{i\to\infty}d(x_i,x_1)\le\lim_{i\to \infty}\left( d(x_i,x_{i_0})+d(x_{i_0},x_1)\right)\le 2+d(x_{i_0},x_1)<\infty.
 \end{align*}

 \end{proof}

\begin{lemma}\label{isolated singular value} For any non-constant analytic function $f$ on $\Sigma$ and any smooth domain $U\subset \Sigma$ with compact closure, the relative singular value of $f$ with respect to $U$ is finite.
 \end{lemma}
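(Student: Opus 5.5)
We need to show that a non-constant analytic $f$ has only finitely many relative singular values with respect to $U$, i.e.\ only finitely many critical values of $f|_{\overline U}$. The plan is to use the structure of the critical set of a real analytic function together with compactness of $\overline U$.

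First, let $\mathrm{Crit}(f)=\{x\in\Sigma : \nabla f(x)=0\}$. This is the zero set of the real analytic function $|\nabla f|^2$ (analytic because $\Sigma$ and its metric are analytic, as noted above). By Łojasiewicz's structure theorem \cite[Theorem 6.3.3]{KP}, \cite{Lo}, every point $x\in\Sigma$ has a neighborhood $V_x$ with the following property: $\mathrm{Crit}(f)\cap V_x$ is a finite union of analytic strata. In our two-dimensional setting these are isolated points, finitely many analytic arcs ending at $x$, or open analytic $2$-dimensional pieces. If $\overline U$ meets $\partial U$, the same analysis applies to $f|_{\partial U}$ on the smooth boundary curve. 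We may take $\partial U$ analytic, or else treat boundary points separately using the 1-dimensional analytic structure of $f$ along $\partial U$. Relative singular values coming from $\partial U$ are handled by the same argument in one dimension.

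The key observation is that $f$ is constant on each connected stratum $S$ of $\mathrm{Crit}(f)$. On a connected analytic arc $\gamma\subset\mathrm{Crit}(f)$ we have $\frac{d}{dt}f(\gamma(t))=\langle\nabla f,\gamma'\rangle=0$. On an open 2-dimensional piece we have $\nabla f\equiv0$; by unique continuation this would make $f$ constant on $\Sigma$, which is excluded. So each $V_x$ contributes only finitely many critical values: one per stratum, and $f$ is even continuous up to the endpoint $x$ on each arc. Compactness of $\overline U$ then gives finitely many $V_{x_1},\dots,V_{x_m}$ covering $\overline U$. The set of relative singular values is contained in the union of the finitely many values from each $V_{x_i}$, so it is finite.

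The main obstacle is making rigorous the local finiteness of the strata near each point of $\mathrm{Crit}(f)$. This amounts to showing that the germ of an analytic set in dimension two decomposes into finitely many points and half-branches. I would handle it as in the proof of Lemma \ref{lem-graph}, applying the Weierstrass preparation theorem and Puiseux parametrization \cite[Theorems 4.27, 4.28, 6.13]{KP} to the analytic function $|\nabla f|^2$ near $x$. An alternative route avoids stratification altogether: the real analytic Sard-type theorem (Łojasiewicz's gradient inequality implies $f$ is constant on each connected component of $\mathrm{Crit}(f)$ locally), combined with the local finiteness of the number of connected components of an analytic set. Either way, compactness finishes the argument.
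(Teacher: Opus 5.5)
Your argument is correct and is essentially the paper's: near each point $x\in\overline U$ the critical set of $f$ is empty, or the single point $x$, or finitely many arcs ending at $x$ on which $f\equiv f(x)$ (open pieces being excluded since $f$ is non-constant), and compactness of $\overline U$ then leaves only finitely many critical values. The paper phrases this as a contradiction with critical points $x_i\to x$ having distinct values; you phrase it with a finite subcover.

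Your paragraph on $\partial U$ is unnecessary, since a relative singular value here is just $f(x)$ for some $x\in\overline U$ with $\nabla f(x)=0$. Moreover, if one did count tangencies of level sets with a merely smooth $\partial U$, your claim that these are handled ``by the same argument in one dimension'' would fail, because $f|_{\partial U}$ need not be analytic.
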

 \begin{proof}
 We argue by contradiction. Otherwise, there exists $s_i\to s$ such that all $s_i$ are different relative singular values of $f$ with respect to $U$. By the definition, there exists $x_i\in \overline{U}$ such that $Df(x_i)=0$.   Since $\overline{U}$ is compact, after passing to a subsequence, we can assume $x_i\to x\in \overline{U}$ and get $Df(x)=0$. Since $f$ is analytic and non-constant, we know there exists a neighborhood $V$ of $x$ such that the critical points of $f$ in $V$ is either  $\{x\}$ or  consists of a finite many arcs $c_i$ with common endpoint $x$.  The former case is impossible since $x_i\to x$ are all different critical points. In the later case, since $Df\equiv 0$ on each such arc $c_i$, we know $f|_{c_i}\equiv f(x)=s$. As a result, the critical value of $f$ in $V$ is exactly $s$, which also contradicts to the fact $x_i\to x$ with $f(x_i)=s_i\neq s$ and $Df(x_i)=0$.
 \end{proof}

 \begin{definition}
 If a smooth curve $\sigma_i: [t_i^-, t_i^+]\to \mathcal{A}_k$ satisfies
 \begin{align*}
 \frac{d f\circ\sigma_i}{dt}(t)<0, \forall t\in (t_i^{-} ,t_i^+),
 \end{align*}
 we call $\sigma_i$ a {\bf negative gradient curve of $f$} in $\mathcal{A}_k$.

 If $\{\sigma_i\}_{i=1}^{l}$ are finite many  negative gradient curves of $f$ in $\mathcal{A}_k$ such that for each $1\le i<l$,  there exists a curve
 $\eta_i:[\tau_i^{-},\tau_i^+]\to \mathcal{A}_k$ satisfying
 \begin{align}
f(\eta_i(\tau))&\ge f(\sigma_{i}(t_i^+)), \forall \tau\in [\tau_i^-,\tau_i^+],\\
 \eta_i(\tau_i^-)=\sigma_i(t_i^+) \text{ for } 1\le i\le l, \quad &\text{ and } \quad \eta_i(\tau_i^+)=\sigma_{i+1}(t_{i+1}^-), \text{ for } 1\le i<l.
 \end{align}
 then we call $\sigma=\cup_{i=1}^l (\sigma_i\cup \eta_i)$ a {\bf pseudo negative gradient curve of $f$} in $\mathcal{A}_k$.    Note that we can choose new parametrization of the curves such that $t_{i+1}^{-}=\tau_{i}^+$ and $t_i^+=\tau_i^-$ such that $\sigma$ is a continuous curve defined on an interval. In the following, we always assume this.

 We say that a pseudo negative gradient curve $\sigma=\cup_{i=1}^{l} \sigma_i\cup \eta_i$ {\bf joins $x$ and $y$} if $\sigma_1(t_1^-)=x$ and $\eta_l(\tau_l^+)=y$.

Moreover, we introduce the notation $c(\sigma)=\cup_{i=1}^l\{\sigma_i(t_i^-),\sigma_i(t_i^+)\}$ and call it the {\bf joint points} of $\sigma$.
 \end{definition}
 Below, we will always choose $f=\cos\alpha_J$.  The following lemma shows the oscillation estimate along pseudo negative gradient curves in $\mathcal{A}_k$. Especially, When $\mathcal{A}_k$ is constructed narraw enough, the oscillation will asymptotically vanish.
\begin{lemma}\label{osc along pseudo gradient curve}  Assume $\Sigma\subset\mathbb{R}^4$ is a properly immersed translating soliton satisfying the same assumptions of Theorem \ref{prop-alpha-decay}. For $\mathcal{A}_k$ constructed in Lemma \ref{construction of annulus} with  $\delta_k=o(\frac{1}{\log R_k})$, there exists $\hat{\delta}_k\to 0$ such that for any $x,y\in \mathcal{A}_k$ joining by a pseudo negative gradient curve of $f=\cos\alpha_J$ in $\mathcal{A}_k$, there holds
\begin{align*}
f(x)-f(y)\le \hat{\delta}_k.
\end{align*}
\end{lemma}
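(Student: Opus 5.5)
The idea is to argue by contradiction, using level sets of $f=\cos\alpha_J$ at carefully chosen values $s$ together with the oscillation estimate of Proposition \ref{prop-osi}. Fix $\eta>0$ and suppose that $x,y\in\mathcal{A}_k$ are joined by a pseudo negative gradient curve $\sigma=\cup_{i=1}^l(\sigma_i\cup\eta_i)$ with $f(x)-f(y)>\eta$. By Lemma \ref{almost finite length}, $\mathcal{L}^1(F_k)\to0$ and $\mathcal{L}^1(F)=0$. By Lemma \ref{isolated singular value}, $f$ has only finitely many relative singular values with respect to $\mathcal{A}_k$. Hence for $k$ large I can pick a value $s\in(f(y),f(x))$ with the following properties: $s$ is a relative regular value, $s\notin F\cup F_k$, and $s$ lies at distance at least $\eta/4$ from both $f(x)$ and $f(y)$.

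Since $s\notin F_k$, no component of $f^{-1}(s)\cap\mathcal{A}_k$ meeting $\hat{\mathcal{A}}_k$ reaches $\partial\mathcal{A}_k$. Such a component is therefore a smooth closed embedded curve, because $s$ is regular and Lemma \ref{lem-graph} gives a proper graph without boundary.

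Next I follow $\sigma$ from $x$. Let $t^*$ be the first time at which $f(\sigma(t))=s$. By the defining inequalities, $f$ is strictly decreasing on each $\sigma_i$, while on each $\eta_i$ it stays above the value $f(\sigma_i(t_i^+))$ at the preceding joint point. So $\sigma$ crosses the level $s$ transversally at some point $z$ of a negative gradient arc $\sigma_i$, and this crossing cannot be undone inside the superlevel set. Let $\Gamma$ be the component of $f^{-1}(s)$ through $z$, and let $\Omega$ be the component of $\{f>s\}$ containing the initial portion $\sigma([0,t^*))$, and in particular $x$. I then split into two cases.

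\emph{Case (a): $\Gamma$ is contractible in $\mathcal{A}_k$.} Then $\Gamma$ bounds a disk $\Omega\subset\mathcal{A}_k$ on which $f>s$ and $f=s$ on $\partial\Omega$. If $s\ge0$, then $\cos\alpha_J\ge0$ on $\Omega$. If $s<0$, I replace $J$ by $-J$, which flips the sign of $\cos\alpha_J$, and work instead with the sublevel disk on the $y$ side; the equation \eqref{e-cosalpha} is the same for $-J$. Proposition \ref{prop-osi} then gives
\begin{align*}
\frac{\eta}{4}\le \max_\Omega (f-s)\le \hat C\Big(\log(|\Omega|+2)\int_{\mathcal{A}_k}|\overline{\nabla}J_\Sigma|^2\Big)^{\frac{\xi_0}{2}}.
\end{align*}
By items (2) and (4) of Lemma \ref{construction of annulus} and \eqref{e-HJ}, we have $|\Omega|\le C\delta_kR_k^2$ and $\int_{\mathcal{A}_k}|\overline{\nabla}J_\Sigma|^2\le C\delta_k\epsilon_k$. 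The right-hand side is therefore at most $\hat C\big(C\log R_k\cdot\delta_k\epsilon_k\big)^{\xi_0/2}$, which tends to $0$ because $\delta_k=o(1/\log R_k)$. This gives a contradiction for $k$ large.

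\emph{Case (b): $\Gamma$ is non-contractible.} Then $\Gamma$ separates $\Gamma_k^-$ from $\Gamma_k^+$. Since $x$ and $y$ lie in the annulus, the portion of $\mathcal{A}_k$ between $\Gamma$ and a second level curve at a nearby good value $s'$ is again a domain on which $f-s$ has controlled boundary values. I will rerun the Moser iteration of Proposition \ref{prop-osi} on this sub-annulus. The integration by parts there only used $u=0$ on the boundary and the Sobolev inequality \eqref{e-soblev-3}, which still holds via Proposition \ref{prop-iso-proper}. This gives the same bound and the same contradiction.

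Finally, I set $\hat\delta_k$ to be the infimum of the admissible $\eta$. The resulting bound depends only on $\delta_k,\epsilon_k,R_k$ and the measure of the excluded set $F_k$, so $\hat\delta_k\to0$.

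The main obstacle I expect is the topological step: showing rigorously that the region $\Omega$ containing $x$ is cut off by a \emph{single} level curve of value $s$ lying inside $\mathcal{A}_k$. This is where one must exclude arcs escaping through $\partial\mathcal{A}_k$, using $s\notin F_k$ together with the fact that $\sigma$ stays in $\mathcal{A}_k$. Points of $\sigma$ near $\partial\mathcal{A}_k$ rather than in $\hat{\mathcal{A}}_k$ may need a separate treatment. If necessary I would first use the room $d_E(\hat{\mathcal{A}}_k,\partial\mathcal{A}_k)\ge c\delta_kR_k$ to push the argument into $\hat{\mathcal{A}}_k$ for a slightly enlarged annulus.
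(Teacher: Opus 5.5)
Your skeleton matches the paper's proof. You argue by contradiction, pick a good level $s$ strictly between $f(y)$ and $f(x)$ off a set of small measure, and show the level component $\Gamma$ through the crossing point bounds a disk containing $x$. You then apply Proposition \ref{prop-osi} with $|\Omega|\le C\delta_kR_k^2$, $\int_{\mathcal{A}_k}|\overline{\nabla}J_\Sigma|^2\le C\delta_k\epsilon_k$ and $\delta_k\log R_k\to 0$.

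Using the first hitting time $t^*$ is a clean substitute for the paper's rearrangement of $\sigma$ into a pseudo-monotone curve. Indeed $\sigma([0,t^*))$ is connected, lies in $\{f>s\}$, and hence misses $\Gamma$. To place $x$ in the disk $D_\Gamma$ bounded by $\Gamma$, you still need, as the paper does, the maximum principle on $D_\Gamma$ to get $f>s$ there, so that the $\{f>s\}$ side of $\Gamma$ near $z$ is the inside. That requires $\cos\alpha_J\ge 0$ on all of $D_\Gamma$. This does not follow from $s\ge 0$ as you assert: that only gives positivity on the superlevel component, and identifying that component with $D_\Gamma$ is exactly what is at stake. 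In the paper, positivity comes from the setting where the lemma is used ($f>0$ off $K$). Otherwise you would need the $e^{x_1/2}\cos\alpha_J$ argument of Claim 1 in Lemma \ref{lem-structure of nodal set} to exclude negative values inside $D_\Gamma$.

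The genuine gap is Case (b). Removing $F\cup F_k$ gives no bound on $\mathcal{H}^1(f^{-1}(s)\cap\mathcal{A}_k)$, so nothing excludes a non-contractible level circle. Your proposed repair fails for three reasons:
\begin{itemize}
\item On the region between $\Gamma$ and a second level curve of value $s'\neq s$, the function $u=f-s$ does not vanish on the whole boundary. The integration by parts behind Proposition \ref{prop-osi} then leaves an uncontrolled boundary term.
\item Nothing guarantees that a second level curve separating $x$ from $\partial\mathcal{A}_k$ exists at all.
\item Between $\Gamma$ and $\Gamma_k^{\pm}$ the boundary values of $f$ are uncontrolled.
\end{itemize}

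The missing idea is to put length into the bad set, as the paper does with
\begin{align*}
S_k=\{s:\ s \text{ singular, or } \mathcal{H}^1(f^{-1}(s)\cap\mathcal{A}_k)\ge c\delta_kR_k\}.
\end{align*}
The coarea formula, $|\nabla f|\le|\overline{\nabla}J_\Sigma|$ and Cauchy--Schwarz give
\begin{align*}
\mathcal{L}^1(S_k)\le (c\delta_kR_k)^{-1}\int_{\mathcal{A}_k}|\overline{\nabla}J_\Sigma|\le C\epsilon_k^{1/2}\to 0.
\end{align*}
For $s\notin S_k$, item (3) of Lemma \ref{construction of annulus} forbids non-contractible components, since these have length $\ge c\pi R_k$. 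So Case (b) never arises. Item (2) forbids arcs from $\hat{\mathcal{A}}_k$ to $\partial\mathcal{A}_k$, which makes $F_k$ unnecessary.

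Your remaining worry about crossing points in $\mathcal{A}_k\setminus\hat{\mathcal{A}}_k$ is legitimate. There, a short level arc with both ends on $\partial\mathcal{A}_k$ is excluded by neither criterion. The paper's proof passes from the length bound and item (3) directly to a closed contractible curve and does not treat that case separately, so it will not supply this step for you.
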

 \begin{proof}
 We argue by contradiction. Otherwise, there exists $\hat{\delta}>0$  and infinite many $k$ such that there are $x_k,y_k\in \mathcal{A}_k$ joining by pseudo negative gradient curves $\sigma^k=\cup_{i=1}^{l_k}(\sigma_i^k\cup\eta_i^k): I_k\to \mathcal{A}_k$  such that $f(x_k)-f(y_k)\ge \hat{\delta}$. Since $f\circ\sigma^k$ is  continuous on the interval $I_k$, its image is connected, which implies
$$[f(y_k),f(x_k)]\subset f(\sigma^k(I_k)).$$

Denote  $\sigma_i^k:I_i^k=[t_{i,k}^{-},t_{i,k}^+]\to \mathcal{A}_k, \quad  \eta_i^k:J_i^k=[\tau_{i,k}^{-},\tau_{i,k}^+]\to \mathcal{A}_k$ in the definition of $\sigma^k$. Then, by  definition of pseudo negative gradient curve, we know
\begin{align*}
[f(y_k),f(x_k)]\cap f(\eta_1^k(J_1^k))\subset f(\sigma_1^k(I_1^k)).
\end{align*}

If for any  $t\in (t_{2,k}^-,t_{2,k}^+)$ such that $\sigma^k(t)\ge \sigma_1^k(t_{1,k}^+)$, then the definition of pseudo negative gradient curve implies that for any $t\in [t_{1,k}^+,\tau_{2,k}^+]$, there holds $f(\sigma^k(t))\ge f(\sigma_1^k(t_{1,k}^+))$. In this case,  we replace $\eta_1^k$ by $\eta_1^k\cup \sigma_2^k\cup \eta_2^k$ and consider $t\in (t_3^-,t_3^+)$. Continue this process, we find a first $i_1\ge 1$ such that $f(\sigma^k(t))\ge f(\sigma_{1,k}(t_{1,k}^+)) $ for any $t\in[t_{1,k}^+, \tau_{i_1,k}^+]$ and  there exists $t\in (t_{i_1+1,k}^-, t_{i_1+1,k}^+)$ such that
$f(\sigma^k(t))<f(\sigma^k(t_{1,k}^+))$. We define
\begin{align*}
\tilde{t}_{2,k}^-:=\tilde{\tau}_{1,k}^+&:=\sup\{t\in (t_{i_1+1,k}^-, t_{i_1+1,k}^+)| f(\sigma^k(t))\ge f(\sigma^k(t_{1,k}^+))\}<t_{i_1+1,k}^+\\
\tilde{\eta}_{2,k}&:=\sigma^k|_{[\tau_{1,k}^-,\tilde{\tau}_{1,k}^+]} \quad \text{ and } \quad  \tilde{\sigma}_2^k:=\sigma^k|_{[\tilde{\tau}_{1,k}^+, t_{i_1+1,k}^+]}.
\end{align*}
By induction construction as above, we get a pseudo negative gradient curve
$\sigma^k=\cup_{j=1}^{\tilde{l}_k}(\tilde{\sigma}_{i_j}^k\cup\tilde{\eta}_{i_j}^k)$ of $f$ with the additional property (See Figure \ref{fig:pseudo netative gradient curve})
\begin{align}
f(\tilde{\sigma}_{i_{j+1}}^k(\tilde{t}_{j+1,k}^-))=f(\tilde{\eta}_{i_j}^k(\tilde{\tau}_{j,k}^+))=f(\tilde{\eta}_{i_j}^k(\tilde{\tau}_{j,k}^-))=f(\tilde{\sigma}_{i_j}^k(\tilde{t}_{j,k}^+)).
\end{align}
\begin{figure}[htbp]
\centering
\includegraphics[width=0.7\textwidth]{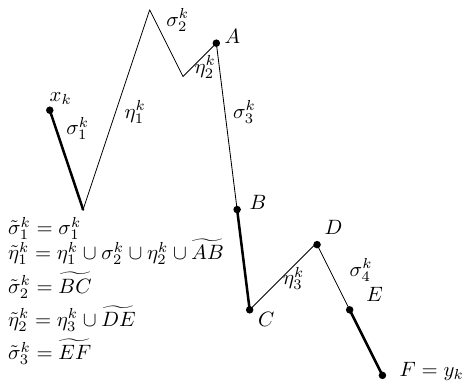}
\caption{ Rearrangement of the pseudo negative gradient curve}
\label{fig:pseudo netative gradient curve}
\end{figure}

To simplify the notations, without loss of generality,  we can assume the original pseudo negative gradient curve $\sigma^k=\cup_{i=1}^l(\sigma_i^k\cup \eta_i^k)$  satisfies
\begin{align*}
f(\sigma_{i+1}^k(t_{i+1,k}^-))=f(\eta_{i}^k(\tau_{i,k}^+))=f(\eta_i^k(\tau_{i,k}^-))=f(\sigma_i^k(t_{i,k}^+)).
\end{align*}
This further implies for any $t\in (t_{i,k}^-,t_{i,k}^+)$ and $s<t$, there holds
\begin{align}\label{pseudo monotonicity}
f(\sigma^k(s))>f(\sigma^k(t)).
\end{align}
Moreover, by the definition of pseudo negative gradient curve and induction argument, we get
\begin{align*}
[f(y_k),f(x_k)]\cap f(\eta_i^k(J_i^k))\subset \cup_{j\le i}f(\sigma_j^k(I_j^k)).
\end{align*}
As a result,
\begin{align*}
[f(y_k),f(x_k)]&=\cup_{i=1}^{l_k}\left([f(y_k),f(x_k)]\cap (f(\sigma_i^k(I_i^k))\cup f(\eta_i^k(J_i^k))\right)\\
&\subset [f(y_k),f(x_k)]\cap \cup_{i=1}^{l_k}(f(\sigma_i^k(I_i^k)))
\end{align*}

Let $c$ be the constant as in Lemma \ref{construction of annulus} and denote
\begin{align*}
S_k:=\{s\in f(\mathcal{A}_k)| \mathrm{either} \quad  s \text{ is a singular value of } \cos\alpha_J  \\\text{ or } \mathcal{H}^1(\cos\alpha_J^{-1}(s)\cap \mathcal{A}_k)\ge c\delta_k R_k \}.
\end{align*}
Then, by the same argument as in the proof of Lemma \ref{almost finite length}, we know  $\mathcal{L}^1(S_k)\to 0$.
Noting that the joint points $c(\sigma^k)$ are finite, we know
\begin{align*}
\mathcal{L}^1\left(f\left(\cup_{i=1}^k\sigma_i^k(I_i^k)\backslash c(\sigma^k)\right)\backslash S_k\right)\ge \hat{\delta}-0-\mathcal{L}^1(S_k)\to \hat{\delta}.
\end{align*}
So, there exists $t_k\in \cup_{i}(t_i^-,t_i^+)$ such that $s_k:=f(\sigma^k(t_k))\notin S_k$ and
$$\min\{|s_k-f(x_k)|,|s_k-f(y_k)|\}\ge \frac{\hat{\delta}}{3}.$$
Since $s_k\notin S_k$ implies $s_k$ is a regular value of $f$ with respect to $\mathcal{A}_k$, and $\mathcal{H}^1(f^{-1}(s_k)\cap \mathcal{A}_k)<c\delta_kR_k<cR_k$, by item (3) of Lemma \ref{construction of annulus}, we know
$\Gamma_{s_k}:=\Gamma_{f,\mathcal{A}_k}(\sigma^k(t_k))$ is a close contractible curve in $\mathcal{A}_k$ which bound a disk $\Omega_{s_k}$ in $\mathcal{A}_k$ and $\Gamma_{s_k}$ intersects transversely to $\sigma^k$ at $\sigma^k(t_k)$. So, there exists small $\varepsilon_k'>0$ such that $\Gamma_{s_k}$ separates $\sigma^k((t_k-\varepsilon_k',t_k))$ and $\sigma^k((t_k,t_k+\varepsilon_k'))$.   By the maximal principle, we know $f|_{\Omega_{s_k}}>s_k=f(\sigma^k(t_k))$, which combining with the strictly decreasing property of  $f\circ \sigma^k(\cdot)$ near $t_k$, we know $\sigma^k((t_k-\varepsilon_k',t_k))\subset \Omega_{s_k}$. Noting $\sigma^k([0,t_k))$ is connected and \eqref{pseudo monotonicity} implies $f(\sigma^k([0,t_k)))\cap \partial \Omega_{s_k}=\emptyset$, we know $\sigma^k([0,t_k))$ belongs to the connected component of $\mathcal{A}_k\backslash \partial \Omega_{s_k}$ passing through $\sigma^k(t_k-\varepsilon_k')$. Especially,  $x_k=\sigma^k(0)\in \Omega_{s_k}$.
 By using Proposition \ref{prop-osi} in $\Omega_{s_k}$ and item (2)(4) of Lemma \ref{construction of annulus}, we get the contradiction
 \begin{align*}
 0<\frac{\hat{\delta}}{3}\le \lim_{k\to\infty}|f(x_k)-s_k|&\le C\lim_{k\to \infty}(\log (C\delta_kR_k^2+2))\int_{ \mathcal{A}_{k}}|\overline{\nabla}J_{\Sigma}|^2d\mu\\
 & \le C\lim_{k\to \infty}(\log (C\delta_kR_k^2+2))\delta_k\int_{\Sigma\cap A_{\frac{R_k}{2},4R_k}}|{\bf A}|^2d\mu= 0,
 \end{align*}
where in the last equality we use $\delta_k=o(\frac{1}{\log R_k})$.
 \end{proof}

\begin{remark} From now on, we will fix $\delta_k=o(\frac{1}{\log R_k})$ (for example,take $\delta_k=\frac{1}{(\log R_k)^2}$) in the construction of $\mathcal{A}_k$ in  the rest of the section.
\end{remark}

In the following lemma, we  construct negative/positive gradient curve from points in the intersection of the upper/lower level set and a small neighborhood of a critical point to its level set.

\begin{lemma}\label{gradient flow to the level set} Assume $x$ is a critical point of an analytic function $f$ on a surface $\Sigma$ and $U\ni x$ is an open set with compact closure. Denote  $c=f(x)$ and
\begin{align*}
U^+=\{y\in U| f(y)>c\} \quad \text{ and } \quad U^-=\{y\in U|f(y)<c\}.
\end{align*}
For each $y\in U^+$, consider the negative gradient flow starting from $y$, that is, $\gamma:[0,T)\to \Sigma$ satisfying
\begin{align*}
\sigma(0)=y, \quad \text{ and } \quad \dot{\sigma}(t)=-\nabla f(\sigma(t)).
\end{align*}
Let
\begin{align*}
T_{\partial U}(y)=\sup \{t| \sigma(\tau)\in U, \forall \tau<t\} \quad \text{ and } \quad
T_{c}(y)=\sup\{t|f(\sigma(\tau))>c, \forall \tau<t\}
\end{align*}
 be the first time of touching $\partial U$ or the level set $f^{-1}(c)$ respectively.
Then,  there exists a smaller neighborhood $\hat{U}$ of $x$ such that
\begin{align}\label{touching time comparison}
T_c(y)\le T_{\partial U}(y),\quad  \forall y\in \hat{U}\cap U^+ .
\end{align}
Moreover, for any $y\in \hat{U}\cap U^+$, there exists a negative gradient curve $\sigma$ in $U$ joining $y$ to some point $y^-\in f^{-1}(c)\cap U$. For any $y\in \hat{U}\cap U^-$, there exists some point $y^+\in f^{-1}(c)\cap U$ and a negative gradient curve $\sigma$ joining $y^+$ to $y$.  See Figure \ref{fig:negative gradient}
 \end{lemma}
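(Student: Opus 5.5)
The plan is to use the local structure of an analytic function near a critical point, and then compare the two hitting times. By the Łojasiewicz structure theorem (or Weierstrass preparation and Puiseux, as in Lemma \ref{lem-graph}), we can shrink $U$ to a coordinate disk $V\subset\subset U$ around $x$ with three properties. First, $x$ is the only critical point of $f$ in $\overline V$ off the level set $f^{-1}(c)$; this follows from the argument of Lemma \ref{isolated singular value}, since critical points accumulating at $x$ lie on analytic arcs on which $f\equiv c$. Second, $f^{-1}(c)\cap\overline V$ is a finite union of analytic arcs emanating from $x$ and meeting $\partial V$ transversally. Third, $V\setminus f^{-1}(c)$ is a finite union of open sectors. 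On each sector $f-c$ has a fixed sign, and $\nabla f\neq 0$ there.

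The key tool is the Łojasiewicz gradient inequality at $x$:
\begin{align*}
|\nabla f|(y)\ge |f(y)-c|^{\theta}, \qquad y\in V,
\end{align*}
for some $\theta\in(0,1)$. This lets us bound the length of a negative gradient curve by the drop of $f$ along it, following Łojasiewicz's trick. Along $\dot\sigma=-\nabla f$ we have
\begin{align*}
\frac{d}{dt}(f(\sigma)-c)^{1-\theta}=-(1-\theta)(f-c)^{-\theta}|\nabla f|^2\le -(1-\theta)|\dot\sigma|,
\end{align*}
as long as $\sigma$ stays in $V\cap U^+$. Integrating gives
\begin{align*}
\mathrm{Length}(\sigma|_{[0,t]})\le \frac{(f(y)-c)^{1-\theta}}{1-\theta}.
\end{align*}
Now choose $\hat U=B_\rho(x)$ with $\rho$ so small that $\rho+\sup_{B_\rho}|f-c|^{1-\theta}/(1-\theta)<\mathrm{dist}(x,\partial V)$. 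Then any flow line starting in $\hat U\cap U^+$ cannot leave $V$ (hence $U$) while $f>c$. This is \eqref{touching time comparison}.

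Next we show the flow actually reaches the level set, i.e.\ that $T_c(y)$ is finite or at least that $f(\sigma(t))\to c$ with a limit point on $f^{-1}(c)$. The flow stays in the compact set $\overline V$ and has finite length, so $\sigma(t)$ converges to some point $y^-\in\overline V$ as $t\to T_c(y)$. If $f(y^-)>c$, then $y^-$ is a regular point, and the flow can be continued past it with $f$ decreasing further. This contradicts maximality unless $f(y^-)=c$. Hence $y^-\in f^{-1}(c)\cap U$. When $T_c(y)<\infty$, the curve on $[0,T_c(y)]$ is the desired negative gradient curve. When $T_c(y)=\infty$, for instance when the flow converges to $x$, we reparametrize by arclength, which is finite. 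This gives a continuous curve on a compact interval with $f\circ\sigma$ strictly decreasing on the interior, as the definition of a negative gradient curve requires.

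The case $y\in\hat U\cap U^-$ is symmetric. We run the positive gradient flow $\dot\sigma=\nabla f$ from $y$, apply the same Łojasiewicz estimate to $c-f$, obtain a point $y^+\in f^{-1}(c)\cap U$, and then reverse the orientation of the curve.

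The main obstacle is controlling the length of the flow near the critical point, i.e.\ ruling out the curve spiralling out of $U$ before hitting the level set. The Łojasiewicz inequality is exactly what handles this. I will invoke it for real analytic functions, which applies because $\cos\alpha_J$ is analytic, as noted above.
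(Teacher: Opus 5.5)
Your proposal is correct and follows essentially the same route as the paper. The Lojasiewicz gradient inequality bounds the length of the flow line from $y$ by a multiple of $(f(y)-c)^{1-\theta}/(1-\theta)$. You then choose $\hat U$ so that $d(x,y)$ plus this length stays below $d(x,\partial V)$, which gives $T_c(y)\le T_{\partial U}(y)$. Finally, finite length in the compact set $\overline V$, together with the fact that every critical point of $f$ in $\overline V$ lies on $f^{-1}(c)$, produces the endpoint $y^-$.

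One step needs tighter wording. In the case $T_c(y)=\infty$, the reason $f(y^-)=c$ is not that the flow ``can be continued past'' $y^-$. It is that a flow line cannot converge in infinite time to a regular point, since $|\nabla f|\ge\delta>0$ near such a point would force infinite length. The extra sector/arc structure of $f^{-1}(c)$ you invoke is not needed. Your arclength reparametrization is actually slightly more careful than the paper's $[0,\infty)$ curve, given the compact-interval definition of a negative gradient curve.
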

 \begin{figure}[htbp]
\centering
\includegraphics[width=0.5\textwidth]{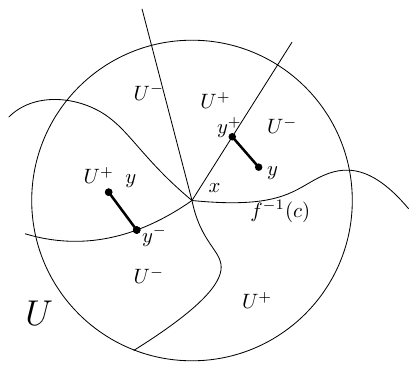}
\caption{Negative gradient flow near a singular point}
\label{fig:negative gradient}
\end{figure}
 \begin{proof} Since $f$ is analytic and $\nabla f(x)=0$, the Lojiasiewicz inequality\cite{Lo} implies there exist some neighborhood $x\in V\subset U$, some constants $C>0$ and $\theta\in (0,1)$ such that
 \begin{align}\label{Loj}
 |\nabla f(y)|\ge C |f(y)-f(x)|^{\theta}, \quad  \forall y\in V.
 \end{align}
 Especially, all the critical points of $f$ in $\overline{V}$ must be contained in the level set $f^{-1}(c)\cap\overline{V}$.
 Now, letting $$\hat{U}=\{y\in V| |f(y)-c|^{1-\theta}<\frac{C\cdot (1-\theta)}{3}d(x,\partial V) \quad \text{ and  } \quad d(y,x)< \frac{1}{3}d(x,\partial V)\},$$  we are going to show \eqref{touching time comparison}  by contradiction argument. More precisely, for any $y\in \hat{U}$,   letting
 $$t_1=\sup \{{t| \sigma(\tau)\in V, \quad } \forall \tau<t \}$$
be the first time for $\sigma$ touching $\partial V$, then $t_1\le T_{\partial U}$ and we will show
\begin{align}\label{touching time comparison new}
T_c(y)\le t_1, \quad \forall y\in \hat{U}.
\end{align}
Otherwise,  there exists some $y\in \hat{U}$ such that
 \begin{align*}
 T_c(y)> t_1.
 \end{align*}
 Then  $f(\sigma(t))\in (c,f(y)]$ and $\nabla f(\sigma(t))\neq 0$  for any $t<t_1 <\infty$, $\sigma(t_1)\in \partial V$.
Define
\begin{align*}
 s(t):=\int_0^t|\dot{\sigma}(\tau)|d\tau=\int_{0}^t|\nabla f(\sigma(\tau))|d\tau.
 \end{align*}
 Letting $\varphi(\tau)=f(\sigma(\tau))$, then $\frac{d\varphi}{d\tau}=-|\nabla f(\sigma(\tau))|^2$. So, $d\tau=-\frac{1}{|\nabla f|^2(\sigma(\tau))}d\varphi$ and \eqref{Loj} implies
 \begin{align*}
 s(t)=\int_{f(\sigma(t))}^{f(y)}\frac{d\varphi}{|\nabla f|(\sigma (\tau)  )}\le C^{-1}\int_{f(\sigma(t))}^{f(y)}\frac{d\varphi}{|f(\sigma(\tau))-c|^\theta}\le \frac{|f(y)-c|^{1-\theta}}{C\cdot (1-\theta)}.
 \end{align*}
 Letting $t\to t_1$, by the definition of $\hat{U}$,  we get the contradiction
 \begin{align*}
 0<d(x,\partial V)&\le d(x,y)+\lim_{t\to t_1} s(t)\\
 &\le \frac{1}{3}d(x,\partial V)+\frac{|f(y)-c|^{1-\theta}}{C\cdot (1-\theta)}\\
 &\le \frac{2}{3}d(x,\partial V).
 \end{align*}
 Now, for $y\in \hat{U}\cap U^+$,  if $T_c(y)=\infty$, then \eqref{touching time comparison new} implies $\sigma(t)$ stays in the compact subset  $\overline{V}$ for $t\in (0,\infty)$ and the Lojiasiewicz inequality implies $\lim_{t\to\infty}\sigma(t)=y^-$ for some critical point  $y^-\in \overline{V}$\cite[Page 70, Section 3.10,  Theorem 2]{Simon}, thus $y^-\in f^{-1}(c)$ and $\sigma:[0,\infty)\to U$ is a negative gradient curve joining $y$ to  $y^-$. If $T_c(y)<\infty$, then the definition of $T_c(y)$ implies $y^-:=\sigma(T_c(y))\in f^{-1}(c)$ and $\sigma:[0,T_c(y)]$ is a negative gradient curve joining $y$ to $y^-$.
 The conclusion for $y\in \hat{U}\cap U^-$ holds similarly.
 \end{proof}

 \begin{lemma} \label{boundary intersection} Let $\mathcal{A}$ be a closed set in a surface $\Sigma$ with connected interior $\mathring{\mathcal{A}}$ and $D\subset \mathcal{A}$ is closed subset such that
 \begin{align*}
 D\cap \mathring{\mathcal{A}}\neq \emptyset \quad \text{ and } \quad  \mathring{\mathcal{A}}\backslash D\neq \emptyset.
 \end{align*}
 Then, for any connected component $V$ of $\mathring{\mathcal{A}}\backslash D$, there holds
 \begin{align*}
 \partial V\cap \partial D\neq \emptyset.
 \end{align*}
 Moreover, there also holds
 $$\partial D\cap \mathring{\mathcal{A}}\neq \emptyset.$$
 \end{lemma}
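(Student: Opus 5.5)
The argument is a point-set topology argument that uses only the connectedness of $\mathring{\mathcal{A}}$. Throughout, interiors and boundaries of subsets of $\Sigma$ are taken in $\Sigma$, and $\partial D = D\setminus \mathring{D}$ since $D$ is closed.

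\textbf{First assertion.} Fix a connected component $V$ of $\mathring{\mathcal{A}}\setminus D$. Since $D$ is closed, $\mathring{\mathcal{A}}\setminus D$ is open in $\Sigma$. Because $\Sigma$ is a manifold, hence locally connected, its components are open, so $V$ is open. Since $D\cap\mathring{\mathcal{A}}\neq\emptyset$, $V$ is a proper subset of $\mathring{\mathcal{A}}$. The connected set $\mathring{\mathcal{A}}$ cannot contain a nonempty proper subset that is both open and closed in it. As $V$ is open, it therefore fails to be relatively closed, and we obtain a point $p\in \overline{V}\cap\mathring{\mathcal{A}}$ with $p\notin V$. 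Then $p\in \partial V$ because $V$ is open.

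We next show $p\in D$. If $p\notin D$, then $p\in \mathring{\mathcal{A}}\setminus D$ and lies in some component $V'$. This $V'$ is an open neighborhood of $p$, so it meets $V$. Distinct components are disjoint, so $V'=V$, contradicting $p\notin V$. Hence $p\in D$.

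Finally, $p\notin \mathring{D}$. Indeed, $p\in\overline{V}$, so every neighborhood of $p$ meets $V\subset \Sigma\setminus D$, and no neighborhood of $p$ lies inside $D$. Thus $p\in D\setminus\mathring{D}=\partial D$, and $p\in\partial V\cap\partial D$.

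\textbf{Second assertion.} The hypothesis $\mathring{\mathcal{A}}\setminus D\neq\emptyset$ guarantees that at least one component $V$ exists. The point $p$ produced above lies in $\mathring{\mathcal{A}}$, so $\partial D\cap\mathring{\mathcal{A}}\neq\emptyset$.

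The only step needing care is producing a boundary point of $V$ that lies inside $\mathring{\mathcal{A}}$ rather than on $\partial\mathcal{A}$. This is exactly where the connectedness of $\mathring{\mathcal{A}}$, together with the openness of $V$ supplied by local connectedness, is used. The rest is bookkeeping with the definitions of closure and interior.
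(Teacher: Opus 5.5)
Your proof is correct and runs on the same mechanism as the paper's: connectedness of $\mathring{\mathcal{A}}$ prevents $V$ from being relatively clopen, and because components are open and disjoint, a boundary point of $V$ inside $\mathring{\mathcal{A}}$ must lie in $D$, hence in $\partial D$. You argue directly, whereas the paper argues by contradiction (assuming $\partial V\cap D=\emptyset$, deducing $\partial V\subset\partial\mathcal{A}$, and concluding that $V$ is clopen). The one real difference is that you record that your point $p$ lies in $\mathring{\mathcal{A}}$, so the second assertion follows immediately, while the paper proves it by a separate connectedness argument on the decomposition $\mathring{\mathcal{A}}=\mathring{D}\sqcup(\mathring{\mathcal{A}}\setminus\mathring{D})$.
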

 \begin{figure}[htbp]
\centering
\includegraphics[width=0.3\textwidth]{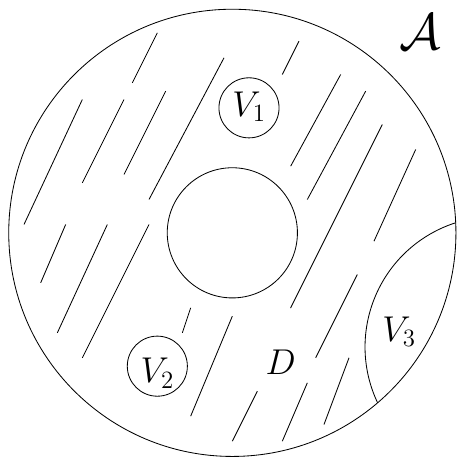}
\caption{ Picture for Lemma \ref{boundary intersection}}
\label{fig:domain}
\end{figure}
 \begin{proof}
 Denote the connected components decomposition of  $\mathring{\mathcal{A}}\backslash D$ by  $\sqcup_{\alpha\in \Lambda}V_\alpha$. Since $D$ is closed, we know each  $V_\alpha$ is open and there exits $\alpha_0\in \Lambda$ such that $V=V_{\alpha_0}$.
 Consider the boundary $\partial V=\overline{V}\backslash V$.  To show $\partial V\cap \partial D\neq \emptyset$,  it is enough to show
 \begin{align}\label{intersects no empty}
 \partial V\cap D\neq \emptyset,
 \end{align}
 since if so, then by noting $\partial V\cap \mathring{D}=\emptyset$,  we get the conclusion $\partial V\cap \partial D\neq \emptyset$.

 For \eqref{intersects no empty}, we argue by contradiction. Otherwise, for any $x\in \partial V$, there holds $x\notin D$. Since $\mathcal{A}$ is closed and $V\subset \mathcal{A}$, we know $x\in \partial V\subset \mathcal{A}$. As a result,
 $$x\in \mathcal{A}\backslash D\subset {\partial \mathcal{A}} \cup \mathring{\mathcal{A}}\backslash D. $$
 If $x\in \mathring{\mathcal{A}}\backslash D=\sqcup_{\alpha}V_{\alpha}$, by $x\in \partial V=\partial V_{\alpha_0}$, we know $x\notin V_{\alpha_{0}}$.  Then,  $x\in V_{\alpha}$ for some $\alpha\neq \alpha_0$, which contradicts $x\in \partial V_{\alpha_0}$, since $V_\alpha$ and $V_{\alpha_0}$ are two disjoint open sets.

 As  a result, we get $\partial V\subset \partial \mathcal{A}$  and
$ \mathring{\mathcal{A}}\backslash \partial V=\mathring{\mathcal{A}}$.
So,
 \begin{align*}
 \mathring{\mathcal{A}}\backslash \overline{V}=\mathring{\mathcal{A}}\backslash (\partial V \cup V)=(\mathring{\mathcal{A}}\backslash \partial V)\cap (\mathring{\mathcal{A}}\backslash V)=\mathring{\mathcal{A}}\backslash V.
 \end{align*}
 Thus both $\mathring{\mathcal{A}}\backslash V$ and $V$ are open sets of $\mathring{\mathcal{A}}$. Since $\mathring{\mathcal{A}}$ is connected and $V\neq \emptyset$, we know $V=\mathring{\mathcal{A}}$.  This implies
 \begin{align*}
 \mathring{\mathcal{A}}\cap D=\mathring{\mathcal{A}}\backslash (\mathring{\mathcal{A}}\backslash D)=V\backslash (\mathring{\mathcal{A}}\backslash D)\subset V\backslash V=\emptyset,
 \end{align*}
 which contradicts to  our assumption $\mathring{\mathcal{A}}\backslash D\neq \emptyset$.  Next, we show $\partial D\cap \mathring{\mathcal{A}}\neq \emptyset$ by similar contradiction argument.  More precisely, if $$\partial D\subset \partial\mathcal{A},$$ then $\mathring{\mathcal{A}}=\mathring{\mathcal{A}}\backslash \partial D$, which further implies
 \begin{align*}
 \mathring{\mathcal{A}}\backslash D=(\mathring{\mathcal{A}}\backslash \partial D)\cap (\mathring{\mathcal{A}}\backslash \mathring{D})=\mathring{\mathcal{A}}\cap (\mathring{\mathcal{A}}\backslash \mathring{D})=\mathring{\mathcal{A}}\backslash \mathring{D}.
 \end{align*}
 As a result, $\mathring{\mathcal{A}}\backslash \mathring{D}=\mathring{\mathcal{A}}\backslash D$ is an open set in $\mathring{\mathcal{A}}$. But $\mathring{D}$ is also open. So, by the connectedness of $\mathring{\mathcal{A}}$, we know
 either $\mathring{\mathcal{A}}=\mathring{D}$ or $\mathring{D}=\emptyset$. In the  case $\mathring{\mathcal{A}}=\mathring{D}$, we get $\mathring{\mathcal{A}}\backslash D=\emptyset$, which contradicts to our assumption. In the other case $\mathring{D}=\emptyset$, we know $D=\partial D\subset \partial \mathcal{A}$, which contradicts to the other assumptioin $D\cap \mathring{\mathcal{A}}\neq \emptyset$.
 \end{proof}

  \begin{proposition}\label{No nodal line implies limit exists}
  If $N_J\cap E\backslash K=\emptyset$, then $\lim_{k\to\infty}\sup_{x\in \mathcal{\hat{A}}_k}|\cos\alpha_J-\cos\overline{\alpha}_J|=0$ for some $\overline{\alpha}_J$.
 \end{proposition}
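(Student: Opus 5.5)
The plan is to reduce to the case $\cos\alpha_J\ge0$ on $E\backslash K$ and proceed in two steps. Since $N_J\cap(E\backslash K)=\emptyset$ and $E\backslash K$ is connected, $f:=\cos\alpha_J$ has a fixed sign there. If $f<0$ we replace $J$ by $-J\in\mathcal J$, which changes the sign of $\cos\alpha_J$. If $f$ is constant the statement is trivial, so we assume $f$ is a non-constant analytic function. Step~1 shows that $\mathrm{osc}_{\hat{\mathcal A}_k}f\to0$. Step~2 shows that the values of $f$ on the different $\hat{\mathcal A}_k$ converge to a common limit $\cos\overline\alpha_J$.

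\textbf{Step 1 (oscillation on $\hat{\mathcal A}_k$).} Let $x_k,y_k\in\hat{\mathcal A}_k$ be a maximum and a minimum point of $f$ on $\hat{\mathcal A}_k$. The aim is to join $x_k$ to $y_k$, inside $\mathcal A_k$, by a pseudo negative gradient curve of $f$. Lemma \ref{osc along pseudo gradient curve} (with $\delta_k=o(1/\log R_k)$) then gives $f(x_k)-f(y_k)\le\hat\delta_k\to0$. To build the curve we start the negative gradient flow at $x_k$.
\begin{itemize}
\item When the flow reaches a critical point or the level set of a critical value, Lemma \ref{gradient flow to the level set} lets us pass from the upper region to the critical level.
\item We then move along that level set, which is a properly embedded graph without boundary by Lemma \ref{lem-graph}, or along a path in the superlevel set. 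This is an $\eta$-piece, on which $f$ does not drop below the current value.
\item We then restart the flow. Lemma \ref{isolated singular value} guarantees that only finitely many critical values are met, so only finitely many pieces are used.
\end{itemize}
The key point is that the curve must not leave $\mathcal A_k$ before reaching a value $\le f(y_k)$. For this I would use the minimum principle: $f\ge0$ gives $\Delta f+\langle\nabla x_1,\nabla f\rangle\le0$. Hence every sublevel component $\{f<s\}$ inside $\mathcal A_k$ has a boundary point on $\partial\mathcal A_k$ or contains a point where $f$ attains a smaller value. Lemma \ref{boundary intersection} is applied with $D$ the closure of the relevant superlevel component; it shows that this component meets the boundary of the region where $f<s$. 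This lets us continue the curve toward $y_k$ through $\eta$-pieces that stay in $\{f\ge s\}$.

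\textbf{Step 2 (a common limit).} Let $c_k$ be any value of $f$ on $\hat{\mathcal A}_k$. By Step~1, $f=c_k+o(1)$ on $\hat{\mathcal A}_k$. For $i<j<l$ large, $\hat{\mathcal A}_j$ lies in the compact region of $E$ bounded by $\hat{\mathcal A}_i$ and $\hat{\mathcal A}_l$, by item (1) of Lemma \ref{construction of annulus}. The minimum principle on this region then gives
\begin{align*}
c_j\ge\min(c_i,c_l)-o(1).
\end{align*}
Thus the bounded sequence $\{c_k\}\subset[0,1]$ has no interior dips, up to errors tending to $0$. Such a sequence converges, which I would check as follows. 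Suppose $\limsup c_k-\liminf c_k=\eta>0$. Choose indices $i<j<l$ with $c_i,c_l$ near $\limsup$ and $c_j$ near $\liminf$. This contradicts the displayed inequality. Setting $\cos\overline\alpha_J:=\lim c_k$ then gives the claim.

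\textbf{Main obstacle.} I expect the main difficulty to be the topological construction in Step~1. One must ensure that the pseudo negative gradient curve stays inside $\mathcal A_k$ and actually ends at $y_k$, rather than escaping through $\partial\mathcal A_k$ or getting trapped. My first attempt would be to combine Lemmas \ref{gradient flow to the level set} and \ref{boundary intersection} with the fact that generic level sets meeting $\hat{\mathcal A}_k$ are short. These level sets are contractible closed curves, by item (3) of Lemma \ref{construction of annulus} and $\mathcal L^1(S_k)\to0$. By the minimum principle each such curve bounds a disc on which $f$ exceeds its boundary value.
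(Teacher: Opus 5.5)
Your strategy is sound, and it takes a different route from the paper's. The paper never joins the extremal points of $f=\cos\alpha_J$ on $\hat{\mathcal A}_k$ directly. Instead it:
\begin{enumerate}
\item picks a value $s_0\notin S_k$ near the top of $[a_k,b_k]$ and a strict local maximum $x_{\max}$ inside the disc bounded by the short level curve $\{f=s_0\}$;
\item lets $C_k$ be the set of points reachable from $x_{\max}$ by pseudo negative gradient curves;
\item runs an open--closed argument (Lemma \ref{boundary intersection}, plus local extensions near regular and singular points via Lemma \ref{gradient flow to the level set}) to show that $C_k$ reaches down to $a_k$;
\item uses $\mathcal L^1(S_k)\to0$ to tie $f(x_{\max})$ to $b_k$.
\end{enumerate}
That argument only yields $\mathrm{osc}_{\hat{\mathcal A}_k}f\to0$, and the paper later just passes to a subsequence. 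Your Step 2 is a real addition: the minimum principle on the compact region between non-contractible curves in $\hat{\mathcal A}_i$ and $\hat{\mathcal A}_l$ (take the indices far enough apart that the annuli are nested), followed by the ``no interior dips'' argument. It produces the single limit $\cos\overline{\alpha}_J$ that the statement, and the corollaries that follow it, actually assert.

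The obstacle you single out in Step 1 is not really there. In the paper's definition, a negative gradient curve only needs $\frac{d}{dt}(f\circ\sigma_i)<0$. An $\eta$-piece only needs to stay above the value at the end of the preceding $\sigma_i$. Hence any piecewise real-analytic path $\gamma$ from $x_k$ to $y_k$ is already a pseudo negative gradient curve:
\begin{itemize}
\item $f\circ\gamma$ has finitely many intervals of monotonicity;
\item the stretches on which it sets new running minima, cut at the finitely many zeros of its derivative, serve as the $\sigma_i$;
\item on the remaining stretches $f\circ\gamma$ stays above the current running minimum, which is the endpoint value of the last $\sigma_i$.
\end{itemize}
Since $\hat{\mathcal A}_k$ is connected, such a path exists inside $\hat{\mathcal A}_k$. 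Lemma \ref{osc along pseudo gradient curve} then gives $f(x_k)-f(y_k)\le\hat\delta_k$ at once. No gradient flow, minimum principle or Lemma \ref{boundary intersection} is needed, and the same observation would shortcut the paper's construction of $C_k$.

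Keep the path inside $\hat{\mathcal A}_k$, not merely inside $\mathcal A_k$. This is what makes the proof of that lemma work here. Take a point of the path whose value lies outside $S_k$. The level component through it has length $<c\delta_kR_k\le d_E(\hat{\mathcal A}_k,\partial\mathcal A_k)$. So it cannot end on $\partial\mathcal A_k$ and is a closed curve in $\mathring{\mathcal A}_k$, contractible by item (3) of Lemma \ref{construction of annulus}.
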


 \begin{proof}
  Since $N_J\cap E\backslash K=\emptyset$, by changing $J$ to $-J$ if necessary,  we can assume  $f:=\cos\alpha_J>0$ is a non-constant analytic function  without loss of generality.  Since $\hat{\mathcal{A}}_k$ is connected, we can assume
 \begin{align*}
 \overline{f(\hat{\mathcal{A}_k})}=[a_k,b_k].
 \end{align*}
 Then, the goal is to show
 \begin{align*}
 \mathrm{osc}_{\hat{\mathcal{A}}_k} f=b_k-a_k\to 0 \quad \text{ as } \quad k\to \infty.
 \end{align*}
Letting $c$ be the constant as in Lemma \ref{construction of annulus} and denote
\begin{align*}
S_k:=\{s\in [a_k,b_k]| \mathrm{either} \  s \text{ is a singular value of } \cos\alpha_J  \text{ or } \mathcal{H}^1(\cos\alpha_J^{-1}(s)\cap \mathcal{A}_k)\ge c R_k \}.
\end{align*}
Then, by the same argument as in the proof of Lemma \ref{almost finite length}, we know  $\mathcal{L}^1(S_k)\to 0$.  If $S_k=[a_k,b_k]$, then
$b_k-a_k=\mathcal{L}^1(S_k)\to 0$ and the work is done. So, we can assume $[a_k,b_k]\backslash S_k\neq \emptyset$.  Fix $\varepsilon_k\to 0$ and take $s_0\in [a_k,b_k]\backslash S_k$ such that $t\le s_0+\varepsilon_k$ for any other $t\in [a_k,b_k]\backslash S_k$. Then,  $\mathcal{H}^1(f^{-1}(s_0)\cap \mathcal{A}_k)<c R_k$.  Especially, by item (3) of  Lemma \ref{construction of annulus}, we know  for any $x\in f^{-1}(s_0)$,
$$\Gamma_{s_0}:=\Gamma_{f,\hat{\mathcal{A}}_k, \mathcal{A}_k}(x) \text{ is a contractible closed curve in } \mathring{\mathcal{A}}_k.$$
Thus $\Gamma_{s_0}$ bounds a disk $\Omega_{s_0}$ in $\mathring{\mathcal{A}}_k$. Taking $x_{\mathrm{max}}\in \overline{\Omega_{s_0}}$ be a local maximal point, we can assume $x_{\mathrm{max}}\in \Omega_{s_0}$ and $f(x_{\mathrm{max}})=s_{\mathrm{max}}>s_0$. Otherwise, $f_{\Omega_{s_0}}\equiv s_0$ and the unique continuation theorem implies $\cos\alpha_J\equiv s_0$ on $E$, which contradicts to the assumption in the beginning of the proof.   Now, since $f$ is analytic and non-constant, by Lemma \ref{lem-graph},  $\Gamma_{f, \mathcal{A}_k}(x_{\mathrm{max}})$ consists of a graph properly embedded graph in $\mathcal{A}_k$ without boundary in $\mathring{\mathcal{A}}_k$. Noting $\Gamma_{s_0}$ separates $x_{\max}$ and $\partial \mathcal{A}_k$ in $\mathcal{A}_k$ and  $\Gamma_{f,\mathcal{A}_k}(x_{\mathrm{max}})$ is connected, we know $\Gamma_{f,\mathcal{A}_k}(x_{\mathrm{max}})\subset \Omega_{s_0}$.  If $\Gamma_{f,\mathcal{A}_k}(x_{\mathrm{max}})$ contains a circle,  then it will bound a disk, and $f\equiv s_{\mathrm{max}}$ by the maximal principle and unique continuation theorem, again a contradiction. So, we can assume $\Gamma_{f,\mathcal{A}_k}(x_{\mathrm{max}})$ contains no circle and is a tree properly embedded in $\Omega_{s_0}$ without boundary. Since $s_{\mathrm{max}}>s_0$, we also know the tree has no boundary on $\partial \Omega_{s_0}$. As a result, the tree has no boundary, which implies
\begin{align}\label{isolated maximal point}
\Gamma_{f,\mathcal{A}_k}(x_{\mathrm{max}})\text{ is an isolated point(a trivial tree)}.
\end{align}  By Lemma \ref{isolated singular value}, there exists $\delta>0$ such that all $s\in (s_\mathrm{max}-\delta, s_{\mathrm{max}})$ are regular values. Choose a small disk $D\ni x_{\mathrm{max}}$ such that $f(\overline{D}\backslash \{x_{\mathrm{max}}\})\subset (s_\mathrm{max}-\delta, s_{\mathrm{max}})$ and denote $\hat{s}=\max_{x\in \partial D}f(x)<s_{\mathrm{max}}$. Then, for any $s\in (\hat{s}, s_{\mathrm{max}})$ and $x\in D\cap f^{-1}(s)$, $\Gamma_{f,\mathcal{A}_k}(x)$ is contained in $D$, hence $s$ is a relative regular value  of $f$ with respect to $D$ and $\Gamma_{s}:=\Gamma_{f,\mathcal{A}_k}(x)$ is a closed curve in $D$.  Denoting $\Omega_{s}$ by the disk bounded by $\Gamma_{s}$ in $D$, then
\begin{align*}
x_{\mathrm{max}}\in \Omega_{s}.
\end{align*}
In fact, there exists a local maximal point $\hat{x}_{\mathrm{max}}\in \Omega_{s}\subset D$, but $f(D\backslash\{x_{\mathrm{max}}\})\subset (s_\mathrm{max}-\delta, s_{\mathrm{max}})$ implies $x_{\mathrm{max}}$ is the only critical value in $D$. So, $\hat{x}_{\mathrm{max}}=x_{\mathrm{max}}$ and $x_{\mathrm{max}}\in \Omega_{s}$. Moreover, for any $x\in \Gamma_{s}$, Lojiasiewicz's inequality implies there exists a curve $\alpha_x(t)$ such that $\alpha_x(0)=x$, $\dot{\alpha}_x(t)=\nabla f(\alpha_x(t))$ and $\alpha_x(t)$ converges to some critical point of $f$. Noting $f(\alpha_x(t))>f(x)=s$, we know $\alpha_x(t)\subset D$. Hence the limit of $\alpha_x(t)$ is the unique critical point $x_{\mathrm{max}}$ in $D$. This way, $\alpha_x(t)$ is a curve joining $x$ and $x_{\mathrm{max}}$ and for each $\tilde{s}=f(\alpha_{x}(t))$, $\Gamma_{\tilde{s}}:=\Gamma_{f,\mathcal{A}_k}(\alpha_x(t))$ is a closed curve bounding a disk $\Omega_{\tilde{s}}\ni x_{\mathrm{max}}$. As a result,
\begin{align*}
\{\Gamma_{s}\}_{s\in (\hat{s},s_{\mathrm{max}})} \text{ shapes a foliation of the punctured  neighborhood }\\ \cup_{\hat{s}<s<s_{\mathrm{max}}}\Omega_s\backslash \{x_{\mathrm{max}}\} \text{ of }   x_{\mathrm{max}}.  \text{ See Figue \ref{fig:level set}}
\end{align*}
\begin{figure}[htbp]
\centering
\includegraphics[width=0.6\textwidth]{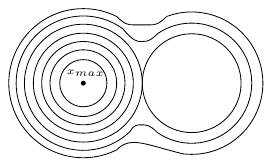}
\caption{ Level set near local maximal point}
\label{fig:level set}
\end{figure}
By taking $D$ smaller if necessary, we can assume $(\hat{s},s_{\mathrm{max}})\subset [a_k,b_k]\backslash S_k$.
Next, we want to extend these contractible circles bounding disks containing $x_{\mathrm{max}}$ bigger and bigger until the function values on these circles ``decrease"  to $a_k$.
More precisely,
define
\begin{align*}
C_k:=\{x\in \mathcal{A}_k| \text{ there exists a pseudo negative gradient  curve } \sigma  \text{ joining } x_{\mathrm{max}} \text{ and } x \}.
\end{align*}
Then, the above discussion implies
$$\cup_{s\in (\hat{s},s_{\mathrm{max}})}\Omega_s\subset C_k.$$
Moreover, Lemma \ref{osc along pseudo gradient curve} implies
\begin{align*}
\mathrm{osc}_{C_k}f\to 0.
\end{align*}
So, we only need to show $C_k$ extends large enough in the sense there exists a point $x_k\in C_k$ such that
\begin{align}\label{extend large}
f(x_k)-a_k\le \varepsilon_k\to 0.
\end{align}
Since then, by the choice of $s_0$ (that is,  $[a_k,b_k]\backslash S_k\subset [a_k, s_0+\varepsilon_k]$) and the fact $s_{\mathrm{max}}>s_0$, we know
$[a_k,b_k]\subset S_k\cup [a_k,s_0+\varepsilon_k]\subset S_k\cup [a_k,s_{\mathrm{max}}+\varepsilon_k]$, which further implies
\begin{align*}
b_k-a_k=\mathcal{L}^1([a_k,b_k])&\le \mathcal{L}^1(S_k)+s_{\mathrm{max}}+\varepsilon_k-a_k\\
&\le \mathcal{L}^1(S_k)+s_{\mathrm{max}}-f(x_k)+2\varepsilon_k\\
&\le \mathcal{L}^1(S_k)+\mathrm{osc}_{C_k}f+2\varepsilon_k\to 0.\\
\end{align*}

Define
\begin{align*}
c_k=\inf\{f(x)| x\in C_k\}.
\end{align*}
We want to show \eqref{extend large}, that is,   $c_k$ is close to $a_k$.  In fact, we will show $c_k=a_k$.

For this, we consider

\begin{align*}
D_k=\overline{C_k\cap \hat{\mathcal{A}_k}}.
\end{align*}
 Then, $f(D_k)\subset [c_k,b_k]$.

If $\mathring{\hat{\mathcal{A}}}_k\backslash D_k\subset \{f\ge c_k\}$, then $\hat{\mathcal{A}}_k\backslash D_k\subset \{f\ge c_k\}$ and
\begin{align*}
[a_k,c_k)\subset f(D_k)\subset [c_k,b_k],
\end{align*}
which implies $a_k=c_k$ and \eqref{extend large} holds trivially.
 Otherwise,  we can assume  $a_k<c_k$ and  $ \mathring{\hat{\mathcal{A}}}_k\backslash D_k\neq \emptyset$, then by Lemma \ref{boundary intersection}, we know
 $$\partial D_k\cap\mathring{\hat{\mathcal{A}}}_k\neq \emptyset.$$

   In the following, we finish the proof by  showing a contradiction to exclude this possibility.  Choose a point $x\in \partial D_k\cap \mathring{\hat{\mathcal{A}}}_k\neq \emptyset.$

 If $x$ is a regular point of $f$, then there exists an open disk $U_x\ni x$ such that $L_x:=U_x\cap \Gamma_{f,\mathcal{A}_k}(x)$ is a segment such that $U_x\backslash L_x=U_x^+\sqcup U_x^-$, where $U_x^{\pm}$ are both open half disk such that $f|_{U_x^+}>f(x)\ge c_k$ and $f|_{U_x^-}<f(x)$.  By taking $U_x$ smaller if necessary, the structure of the negative gradient flow near a regular point implies  that for any $y\in U_x$, there exists a  negative gradient curve $\sigma_{y}:[t_{y}^-,t_{y}^+]\to U_x $ such that
 \begin{align}
 \begin{cases}
 \sigma_y(t_y^-)=y  \text{ and }\sigma_y(t_y^+)\in L_x,   & \text{ if } y\in U_x^+\cup L_x,\\
 \sigma_y(t_y^-)\in L_x \text{ and } \sigma_y(t_y^+)=y &\text{ if } y\in U_x^-\cup L_x.
  \end{cases}
 \end{align}
 Define
 \begin{align*}
 \begin{cases}
 \eta_{y}(t)=\sigma_y(t_y^+-t)& \text{ if } y\in U_x^-\\
 \eta_{y_1,y_2}=\text{a curve in } U_x^+\cup L_x \text{ joining } y_1 \text{ and } y_2 &\text{ if } y_1,y_2\in U_x\cup L_x.
 \end{cases}
 \end{align*}
 Since $x\in \partial D_k\cap \mathring{\hat{\mathcal{A}}}_k$, there exists $x_i\to x$ such that $x_i\in C_k$. Then $x_i\in U_x$ for $i$ large.  Denote the pseudo negative gradient curve joining $x_{\mathrm{max}}$ to $x_i$ by $\sigma_{x_{\mathrm{max}},x_i}$.
 If $x_i\in U_x^+$, then
 \begin{align*}
 \sigma_{x_{\mathrm{max},y}}=
 \begin{cases}
 \sigma_{x_{\mathrm{max}},x_i}\cup \sigma_{x_i}\cup \eta_{\sigma_{x_i}(t_{x_i}^+),y}& \text{if } y\in U_x^+\cup L_x\\
 \sigma_{x_{\mathrm{max},x_i}}\cup \sigma_{x_i}\cup \eta_{\sigma_{x_i}(t_{x_i}^+),\sigma_y(t_y^-)}\cup \sigma_y & \text{ if } y\in U_x^-
 \end{cases}
 \end{align*}
 is a pseudo negative gradient curve joining $x_{\mathrm{max}}$ to $y$.
Similarly, if $x\in U_x^-\cup L_x$, then
\begin{align*}
 \sigma_{x_{\mathrm{max},y}}=
 \begin{cases}
 \sigma_{x_{\mathrm{max}},x_i}\cup (\eta_{x_i}\cup \eta_{\sigma_{x_i}(t_{x_i}^-),y})& \text{if } y\in U_x^+\cup L_x\\
 \sigma_{x_{\mathrm{max},x_i}}\cup (\eta_{x_i}\cup \eta_{\sigma_{x_i}(t_{x_i}^+),\sigma_y(t_y^-)})\cup \sigma_y & \text{ if } y\in U_x^-
 \end{cases}
 \end{align*}
  is a pseudo negative gradient curve joining $x_{\mathrm{max}}$ to $y$. See Case 1 of Figure \ref{fig: extending}  for the example $x_i\in U_x^+$ and $y\in U_x^-$.
 As a result, we know $U_x\subset C_k\subset D_k$, which contradicts to $x\in \partial D_k$.
\begin{figure}[htbp]
\centering
\includegraphics[width=1.0\textwidth]{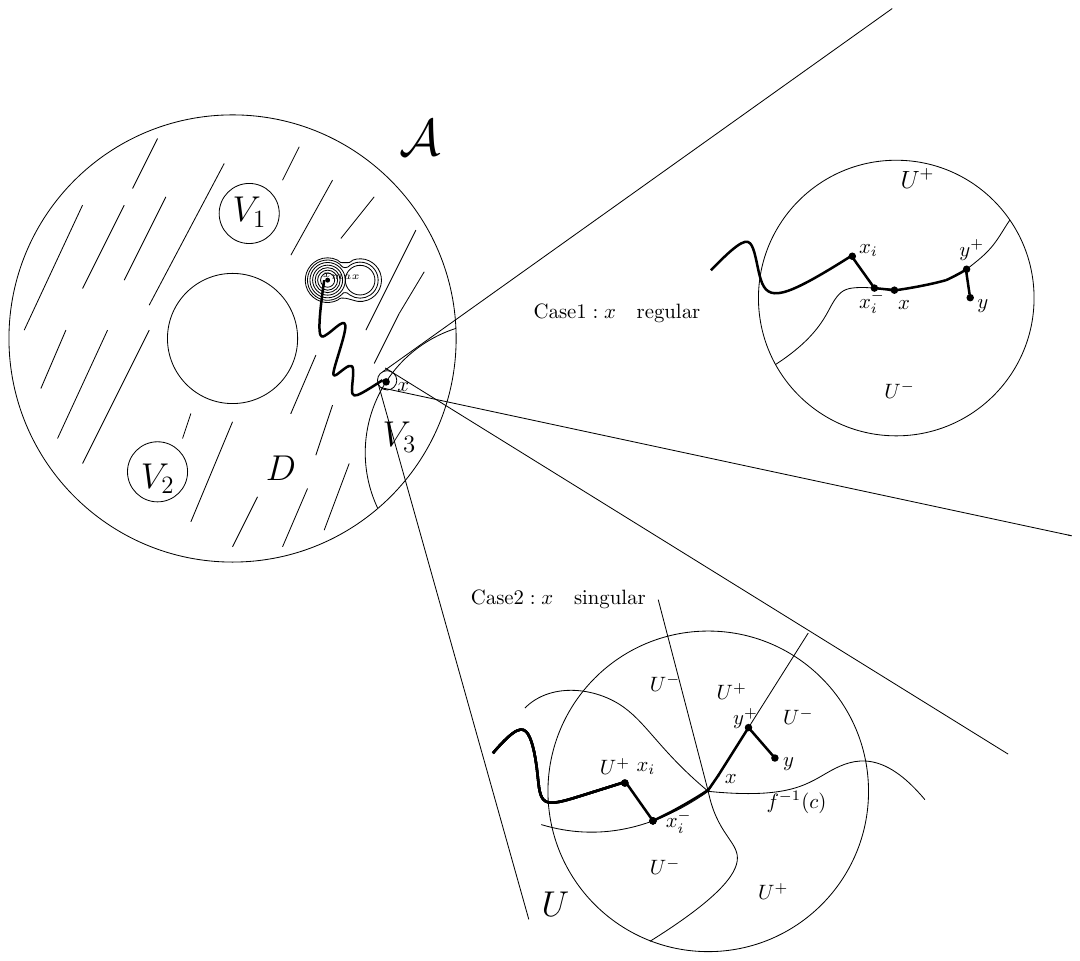}
\caption{ Extending pseudo negative gradient curve near the boundary point $x\in \partial D_k\cap \mathring{\hat{\mathcal{A}}}_k$}
\label{fig: extending}
\end{figure}

 If $x$ is a singular point of $f$, since $f$ is analytic, we know  there exists a disk $U_x\ni x$ such that  $\Gamma_x=\Gamma_{f,\mathcal{A}_k}(x)\cap U_x$
 is path connected and $U_x\backslash \Gamma_x$ consists of finite many disjoint  disks. Denote \
 $$U_x^+=\{y\in U_x\backslash \Gamma_x|f(y)>f(x)\} \quad \text{ and } \quad U_x^-=\{y\in U_x\backslash \Gamma_x|f(y)<f(x)\}.$$
 Then, by Lemma \ref{gradient flow to the level set}, there exists a smaller disk  $U_x'\subset \subset U_x$ such that for any $y\in U_x'\cap U_x^+$, there exists a negative gradient curve $\sigma_y$ joining $y$ to some point $y^-\in \Gamma_x$, and for any $y\in U_x'\cap U_x^-$, there exists a negative gradient curve $\sigma_y$ joining some  point $y^+\in  \Gamma_x$ to $y$. Similar as before, denote the inverse curve of $\sigma_y$ by $\eta_y$ for $y\in U_x'\backslash L_x$ and define $\eta_{y_1,y_2}$ by a curve joining $y_1$ to $y_2$ in $\Gamma_x$ if both $y_1,y_2\in \Gamma_x$. Then, by choosing $x_i\in U_x'\cap C_k$ and negative gradient curve $\sigma_{x_{\mathrm{max}},x_i}$ joining $x_{\mathrm{max}}$ and $x_i$, and defining
 \begin{align*}
 \sigma_{x_{\mathrm{max}},y}=
 \begin{cases}
 \sigma_{x_{\mathrm{max},x_i}}\cup (\eta_{x_i}\cup \eta_{x_i^+,y^-}\cup \eta_{y}) &\text{ if } x_i\in U_x'\cap (U_x^-\cup L_x), y\in U_x'\cap (U_x^+\cup \Gamma_x)\\
 \sigma_{x_{\mathrm{max},x_i}}\cup (\eta_{x_i}\cup \eta_{x_i^+,y^+})\cup \sigma_{y} &\text{ if } x_i\in U_x'\cap (U_x^-\cup L_x), y\in U_x'\cap U_x^-\\
 \sigma_{x_{\mathrm{max},x_i}}\cup \sigma_{x_i}\cup( \eta_{x_i^-,y^-}\cup \eta_{y}) &\text{ if } x_i\in U_x'\cap U_x^+, y\in U_x'\cap (U_x^+\cup \Gamma_x)\\
  \sigma_{x_{\mathrm{max},x_i}}\cup \sigma_{x_i}\cup \eta_{x_i^-,y^+}\cup \sigma_{y} &\text{ if } x_i\in U_x'\cap U_x^+, y\in U_x'\cap U_x^-,
 \end{cases}
 \end{align*}
 we get pseudo negative gradient curves joining $x_{\mathrm{max}}$ to $y\in U_x'$. See Case 2 of  Figure \ref{fig: extending} for the example $x_i\in U_x^+$ and $y\in U_x^-$. As a result, $x\in \mathring{D}_k$, which again contradicts to $x\in \partial D_k$.
 \end{proof}

 \begin{proposition}\label{nodal line implies zero}
 If $N_J\neq E$ and
 $N_J\cap E\backslash K\neq\emptyset$, then $\lim_{k\to\infty}\sup_{x\in \mathcal{\hat{A}}_k}|\cos\alpha_J|=0.$
 \end{proposition}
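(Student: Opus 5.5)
The plan is to reduce to the argument of Proposition \ref{No nodal line implies limit exists}, run separately on each nodal domain of $\cos\alpha_J$ in $\mathcal{A}_k$. The key change is that the lower endpoint $a_k$ is replaced by $0$, which is enforced by the nodal curves.

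\textbf{Nodal curves cross every annulus.} By Lemma \ref{lem-structure of nodal set} and the hypothesis $N_J\cap (E\backslash K)\neq\emptyset$, the set $N_J\cap(E\backslash B_{R_J})$ is a nonempty union of smooth curves properly embedded in $E\backslash B_{R_J}$. These curves contain no circle, so each one is unbounded and has infinite length. By properness it crosses $\Gamma_k^-$ and $\Gamma_k^+$ for all large $k$. Since $\hat{\mathcal{A}}_k$ separates $\Gamma_k^\pm$ (item (1) of Lemma \ref{construction of annulus}), every large $\mathcal{A}_k$ contains a nodal arc that runs through $\hat{\mathcal{A}}_k$.

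\textbf{Reduction to one nodal domain.} Fix $x\in\hat{\mathcal{A}}_k$ with $\cos\alpha_J(x)\neq0$. Replacing $J$ by $-J$ if needed, assume $f:=\cos\alpha_J>0$ near $x$. Let $V$ be the connected component of $\{f>0\}\cap\mathring{\mathcal{A}}_k$ containing $x$, and set $b=\sup_{V\cap\hat{\mathcal{A}}_k}f$. It suffices to prove $b\le o(1)$ uniformly in $V$ and $k$. Because $\overline V$ meets $N_J$, we have $\inf_{V\cap\hat{\mathcal{A}}_k}f=0$ in the cases where $V$ reaches the nodal arc inside $\hat{\mathcal{A}}_k$. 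This $0$ plays the role of $a_k$.

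\textbf{Maximal pseudo-gradient region.} I would then repeat the construction of Proposition \ref{No nodal line implies limit exists} inside $V$.
\begin{enumerate}
\item Define $S_k$ as there, so that $\mathcal{L}^1(S_k)\to0$ by the coarea estimate of Lemma \ref{almost finite length}.
\item Pick a value $s_0\in(0,b]\backslash S_k$ close to $\sup\big((0,b]\backslash S_k\big)$. Its level component in $\mathcal{A}_k$ has length $<cR_k$, so by item (3) of Lemma \ref{construction of annulus} it is a contractible closed curve. It bounds a disk $\Omega_{s_0}\subset V$, and $\Omega_{s_0}$ contains an isolated strict local maximum $x_{\max}$. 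Here the maximum principle and unique continuation exclude degenerate cases, exactly as in the cited proof.
\item Let $C_k$ be the set of points reachable from $x_{\max}$ by pseudo negative gradient curves inside $\overline V\cap\mathcal{A}_k$. By Lemma \ref{osc along pseudo gradient curve} (with $\delta_k=o(1/\log R_k)$), $\mathrm{osc}_{C_k}f\le\hat\delta_k\to0$.
\item Let $c_k=\inf_{C_k}f$. I would show $c_k=0$ using the boundary-point argument built on Lemma \ref{boundary intersection} and Lemma \ref{gradient flow to the level set}. If $c_k>0$, take a point of $\partial\overline{(C_k\cap\hat{\mathcal{A}}_k)}$ interior to $V\cap\hat{\mathcal{A}}_k$, whether regular or critical. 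The local extension of pseudo gradient curves there contradicts that it is a boundary point. The only boundary left is the nodal set, where $f=0$.
\item Conclude $b\le\mathcal{L}^1(S_k)+\hat\delta_k+2\varepsilon_k\to0$, and note that none of these bounds depends on $V$.
\end{enumerate}

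\textbf{Main obstacle.} The hard step is step 4 in the present topology. Here $\mathcal{A}$ in Lemma \ref{boundary intersection} should be $\overline{V}\cap\hat{\mathcal{A}}_k$, or a connected piece of it. This set need not be connected, and its boundary contains both nodal arcs and pieces of $\partial\hat{\mathcal{A}}_k$.

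I would handle this as follows.
\begin{itemize}
\item Positive levels $s\notin S_k$ are disjoint from $N_J$, and their components are short contractible circles inside $V$. So any component of $V\cap\hat{\mathcal{A}}_k$ on which $f$ stays above a fixed $\epsilon>0$ is cut off only by $\partial\hat{\mathcal{A}}_k$.
\item If a component of $V\cap\hat{\mathcal{A}}_k$ is cut off only by $\partial\hat{\mathcal{A}}_k$, enlarge the domain from $\hat{\mathcal{A}}_k$ to $\mathcal{A}_k$. Proposition \ref{prop-osi}, applied on the disk $\Omega_s$ bounded by a short level curve, together with item (4), still forces small oscillation.
\item It follows that $f$ must descend toward $0$ along $C_k$ before leaving $\mathcal{A}_k$.
\end{itemize}
If this topological bookkeeping fails, my fallback is to apply Proposition \ref{prop-osi} directly. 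I would use the disk bounded by the short level curve at a value $s\notin S_k$ slightly above $0$, and use pseudo-gradient connectivity to show that this disk contains $x$.
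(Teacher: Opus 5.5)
Your proposal follows essentially the paper's route. You restrict to a sign-definite nodal domain, use the nodal arcs crossing the annulus to replace $a_k$ by $0$, and rerun the pseudo negative gradient argument of Proposition \ref{No nodal line implies limit exists}. The paper does this on the global nodal strips $\mathcal{R}_i$ of $E\backslash K$ and simply asserts that $\mathcal{A}_k\cap\mathcal{R}_i$ is connected with infimum $0$; your remark that the bounds do not depend on the nodal domain supplies the uniformity the paper leaves implicit.

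The case you single out as the main obstacle cannot occur. Since $\mathring{\hat{\mathcal{A}}}_k$ is connected and contains nodal points, any component $W$ of $\{f>0\}\cap\mathring{\hat{\mathcal{A}}}_k$ is open and so cannot also be relatively closed. Hence its relative boundary is a nonempty subset of $N_J$, and $\inf_W f=0$. Running Lemma \ref{boundary intersection} with $\mathcal{A}=\overline{W}$ (whose interior lies between $W$ and $\overline{W}$ and is therefore connected) then settles step 4. Your second bullet, which would only give small oscillation rather than small values of $f$, is never needed.
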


 \begin{proof}
 Similar to the proof of  Proposition \ref{No nodal line implies limit exists}. More precisely,  by Lemma \ref{lem-structure of nodal set}, we can assume $N_J\cap(E\backslash K)$ consists of  smooth curves $\{\Gamma_i\}_{i=1}^{l\le \infty}$ properly embedded in  $E\cap  K$ and $E\backslash K$ is homeomorphic to $\mathbb{D}^c$. Thus $(E\backslash K)\backslash N_J=\sqcup_{i=1}^l \mathcal{R}_i$, where each $\mathcal{R}_i$ is a contractible non-compact domain  whose boundary consists of two curves in $N_J$ and a   compact arc(may be empty set) in $\partial (E\backslash K)$. Moreover, for fix $1\le i\le l\le \infty$,  $\cos\alpha_J$ does not change sign in  $\mathcal{R}_i$. By replacing $J$ with $-J$ if necessary, we can assume $\cos\alpha_J\ge 0$ in $\mathcal{R}_i$, and it is enough to prove
 \begin{align}\label{limit between two lines}
 \lim_{k\to \infty}\sup_{x\in \mathcal{A}_k\cap \mathcal{R}_i}\cos\alpha_J=0.
 \end{align}
 Noting that $\mathcal{A}_k\cap R_i$ is connected, we can assume
 \begin{align*}
 \overline{\{\cos\alpha_J(x)|x\in \mathcal{A}_k\cap \mathcal{R}_i\}}=[a_k,b_k].
 \end{align*}
 Since $a_k=\inf_{x\in \mathcal{A}_k\cap R_i}\cos\alpha_J=0$, by the same argument as in the proof of Proposition \ref{No nodal line implies limit exists}, we know
 \begin{align}\label{osc estimate}
  \lim_{k\to \infty}\sup_{x\in \mathcal{A}_k\cap \mathcal{R}_i}\cos\alpha_J=\lim_{k\to\infty}(b_k-a_k)=\lim_{k\to\infty}\mathrm{osc}_{\mathcal{A}_k\cap R_i} \cos\alpha_J=0.
 \end{align}
\end{proof}

 \begin{corollary}
 For $J_1,J_2,J_3$, there is at least one  $i\in \{1,2,3\}$ such that $\cos\alpha_{J_i}$ does not change sign outside a compact set of $E$.
 \end{corollary}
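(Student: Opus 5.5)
We argue by contradiction and combine Proposition \ref{nodal line implies zero} with the pointwise identity \eqref{e-complex}. Suppose that for each $i\in\{1,2,3\}$ the function $\cos\alpha_{J_i}$ changes sign outside every compact subset of $E$. Then the zero set $N_{J_i}$ meets $E\backslash K$ for every compact $K$; in particular $N_{J_i}\cap(E\backslash K)\neq\emptyset$ for the compact set $K$ produced by Lemma \ref{lem-structure of nodal set}. Moreover $N_{J_i}\neq E$, since a function vanishing identically on $E$ does not change sign. The hypotheses of Proposition \ref{nodal line implies zero} therefore hold for each $J_i$ (for $i=1,2,3$ we enlarge $K$ to a common compact set, which is harmless because that proposition only concerns $\hat{\mathcal A}_k$ with $k\to\infty$). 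We conclude that
\begin{align*}
\lim_{k\to\infty}\sup_{x\in\hat{\mathcal A}_k}|\cos\alpha_{J_i}(x)|=0,\qquad i=1,2,3.
\end{align*}

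Next we use the pointwise identity \eqref{e-complex}, namely $\cos^2\alpha_{J_1}+\cos^2\alpha_{J_2}+\cos^2\alpha_{J_3}=1$ on $\Sigma$. The annuli $\hat{\mathcal A}_k$ are nonempty by Lemma \ref{construction of annulus}, so we may pick any $x_k\in\hat{\mathcal A}_k$. Evaluating the identity at $x_k$ gives
\begin{align*}
1=\sum_{i=1}^3\cos^2\alpha_{J_i}(x_k)\le\sum_{i=1}^3\sup_{\hat{\mathcal A}_k}\cos^2\alpha_{J_i}\longrightarrow 0,
\end{align*}
which is a contradiction. Hence at least one $\cos\alpha_{J_i}$ does not change sign outside some compact subset of $E$.

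The only step needing care is checking that the dichotomy of Lemma \ref{lem-structure of nodal set} and Proposition \ref{nodal line implies zero} applies to each $J_i$ simultaneously. The one issue is the degenerate case $N_{J_i}=E$, and it is excluded automatically: $\cos\alpha_{J_i}\equiv0$ on $E$ means it does not change sign, so such an $i$ already gives the conclusion. The case $N_{J_i}\cap(E\backslash K)=\emptyset$ is likewise harmless, because then $\cos\alpha_{J_i}$ has no zeros outside $K$ and, by continuity and connectedness of $E\backslash K\cong\mathbb D^c$, keeps a fixed sign there.
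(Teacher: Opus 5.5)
Your proof is correct and is the paper's argument: assume each $\cos\alpha_{J_i}$ changes sign outside every compact set, apply Proposition \ref{nodal line implies zero} to get $\sup_{\hat{\mathcal A}_k}|\cos\alpha_{J_i}|\to 0$ for $i=1,2,3$, and contradict $\sum_i\cos^2\alpha_{J_i}=1$. Your extra checks (that $N_{J_i}\neq E$, choosing a common compact $K$ for all three $J_i$, and that $\hat{\mathcal A}_k$ is nonempty) are details the paper leaves implicit. They are valid because the annuli $\hat{\mathcal A}_k$ are built from $|{\bf A}|^2$ alone and so do not depend on $J$.
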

 \begin{proof}
 We argue by contradiction. Otherwise, by Proposition \ref{nodal line implies zero}, we know
 $$\lim_{k\to \infty}\max_{\mathcal{\hat{A}}_k}|\cos\alpha_{J_i}|=0, \forall i=1,2,3.$$
 This contradicts to $\sum_{i=1}^3\cos^2\alpha_{J_i}=1$.
 \end{proof}
 \begin{corollary}
Without loss of generality, we can assume there exists $K\subset E$ such that
$
 N_{J_1}\cap (E\backslash K)=\emptyset.
$
 Replace $J_1$ by $-J_1$ if necessary,  $\cos\alpha_{J_1}>0$ on $E\backslash K$ and $\lim_{ \mathcal{\hat{A}}_k\ni x\to \infty}\cos\alpha_{J_1}$ exists.
\end{corollary}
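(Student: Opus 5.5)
The plan is to combine the preceding corollary, which says that at least one of $\cos\alpha_{J_1},\cos\alpha_{J_2},\cos\alpha_{J_3}$ keeps a sign outside a compact set of $E$, with Propositions \ref{No nodal line implies limit exists} and \ref{nodal line implies zero}. After relabelling we may assume it is $J_1$, and we first dispose of the degenerate case.

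\textbf{Degenerate case.} If $N_{J_1}=E$, then $\cos\alpha_{J_1}\equiv 0$ on $E$. By unique continuation it vanishes on all of $\Sigma$, and the function trivially has limit $0$.

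\textbf{Main case.} If $N_{J_1}\neq E$, Lemma \ref{lem-structure of nodal set} together with the choice of $J_1$ gives a compact $K\subset E$ with $N_{J_1}\cap(E\backslash K)=\emptyset$. Since $E\backslash K$ is connected (homeomorphic to $\mathbb{D}^c$ after enlarging $K$, by Lemma \ref{lem:topology and geometry of end}) and $\cos\alpha_{J_1}$ is continuous and nonvanishing there, it has a constant sign. Replacing $J_1$ by $-J_1$, which lies in $\mathcal{J}$ and flips the sign of $\cos\alpha_{J_1}$ by \eqref{e-cosalpha-J}, we get $\cos\alpha_{J_1}>0$ on $E\backslash K$.

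\textbf{Existence of the limit.} Proposition \ref{No nodal line implies limit exists} then applies directly. Its proof already assumes positivity after the same sign change, and it shows $\operatorname{osc}_{\hat{\mathcal{A}}_k}\cos\alpha_{J_1}\to 0$. It remains to see that the values on different annuli approach a single number, namely the $\cos\overline{\alpha}_{J_1}$ in that proposition's statement. I would argue this by taking a subsequence along which $\sup_{\hat{\mathcal{A}}_k}\cos\alpha_{J_1}$ converges. Along that subsequence, the oscillation bound gives uniform convergence on $\hat{\mathcal{A}}_k$. Hence $\lim_{\hat{\mathcal{A}}_k\ni x\to\infty}\cos\alpha_{J_1}$ exists in the stated sense, where we pass to a subsequence of $R_k$ if necessary; this is harmless because the $R_k$ in hypothesis (3) may be replaced by any subsequence.

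\textbf{Main obstacle.} The only real subtlety is bookkeeping. We must check that the relabelling of $J_1,J_2,J_3$ and the sign change $J_1\to -J_1$ preserve the normalization $\sum_i\cos^2\alpha_{J_i}=1$. They do, since the relabelled triple is still orthonormal in the sense of \eqref{e-complex}. We must also check that the compact set $K$ may be taken uniform, which follows by taking the union of the compact sets produced by the lemmas above.
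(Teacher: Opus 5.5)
Your overall route is the one the paper has in mind: take the index supplied by the preceding corollary, flip the sign using $-J_1\in\mathcal{J}$, get $\mathrm{osc}_{\hat{\mathcal{A}}_k}\cos\alpha_{J_1}\to 0$ from Proposition \ref{No nodal line implies limit exists}, and pass to a subsequence to obtain a single limiting value. The paper makes the same subsequence move at the start of the proof of Theorem \ref{prop-alpha-decay}.

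The gap is your degenerate case. If the chosen index has $\cos\alpha_{J_1}\equiv 0$ on $E$, then $N_{J_1}\cap(E\backslash K)=E\backslash K\neq\emptyset$ for every compact $K$, and $\cos\alpha_{J_1}>0$ fails. So noting that the limit is $0$ does not prove the statement. This case cannot be dismissed either. The preceding corollary, read literally, allows it, since an identically vanishing function does not change sign. It also really occurs: for the plane spanned by $\mathbf{T}=(1,0,0,0)$ and $(0,0,1,0)$, one has $\cos\alpha_{J_1}\equiv\cos\alpha_{J_3}\equiv 0$, and only $J_2$ has the required property. The degenerate index must be excluded when choosing the index, not treated as a separate case.

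The fix is to redo the sum-of-squares argument with a trichotomy for each $i$:
\begin{itemize}
\item[(i)] $N_{J_i}=E$;
\item[(ii)] $N_{J_i}\neq E$ and $N_{J_i}\cap(E\backslash K)\neq\emptyset$ for every compact $K$;
\item[(iii)] $N_{J_i}\cap(E\backslash K)=\emptyset$ for some compact $K$.
\end{itemize}
In case (i), $\sup_{\hat{\mathcal{A}}_k}|\cos\alpha_{J_i}|=0$ trivially. In case (ii), it tends to $0$ by Proposition \ref{nodal line implies zero}. If no index were of type (iii), all three functions would tend to $0$ uniformly on $\hat{\mathcal{A}}_k$, contradicting \eqref{e-complex}. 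Relabel an index of type (iii) as $J_1$ and continue as you do.

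This also closes a smaller hole in your main case. Lemma \ref{lem-structure of nodal set} by itself does not turn ``does not change sign'' plus $N_{J_1}\neq E$ into $N_{J_1}\cap(E\backslash K)=\emptyset$, because its second alternative allows properly embedded nodal curves. With your formulation you would need an extra step to exclude interior zeros. Namely, where $\cos\alpha_{J_1}\ge 0$ one has $\Delta\cos\alpha_{J_1}+\langle\nabla x_1,\nabla\cos\alpha_{J_1}\rangle=-|\overline{\nabla}J_{\Sigma}|^2\cos\alpha_{J_1}\le 0$. The strong minimum principle for this inequality, together with unique continuation, then rules out interior zeros.
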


Now we can prove Theorem \ref{prop-alpha-decay}.

\vspace{.1in}

\noindent {\it Proof of Theorem \ref{prop-alpha-decay}.} Passing to a subsequence if necessary, there exist constants  $\bar{\alpha}_{J_1,j}, \bar{\alpha}_{J_2,j}, \bar{\alpha}_{J_3,j}\in (0,\frac{\pi}{2}]$ with the following property: on the fixed end $\Sigma_j$, we can find a sequence $\mathcal{\hat{A}}_k$ diverging to  infinity such that
\begin{align}\label{e-bar-1}
    \lim_{k\to\infty}\sup_{x\in \mathcal{\hat{A}}_k}|\cos\alpha_{J_i}-\cos\overline{\alpha}_{J_{i,j}}|=0,
\end{align}
for $i=1,2,3$. Notice that we have
\begin{align}\label{e-123}
    \cos^2{\bar \alpha}_{J_1,j}+\cos^2{\bar \alpha}_{J_2,j}+\cos^2{\bar \alpha}_{J_3,j}=1.
\end{align}

For the subsequent argument, we introduce a new complex structure
		\begin{equation}\label{choose-J}
I_{j}=\cos{\bar \alpha}_{J_1,j} J_1+\cos{\bar \alpha}_{J_2,j} J_2+\cos{\bar \alpha}_{J_3,j} J_3.
		\end{equation}
Then on any surface in ${\mathbb R}^4$, we have
		\begin{equation}\label{choose-alpha-J}
\cos\alpha_{I_j}=\cos{\bar \alpha}_{J_1,j}\cos\alpha_{J_1}+\cos{\bar \alpha}_{J_2,j}\cos\alpha_{J_2}+\cos{\bar \alpha}_{J_3,j}\cos\alpha_{J_3}.
		\end{equation}
(\ref{e-bar-1}) and (\ref{e-123}) imply that
		\begin{equation}\label{e-I-j-2}
\lim_{k\to\infty}\max_{ \Sigma_j\cap \mathcal{\hat{A}}_k}|\cos\alpha_{I_j}-1|=0.
		\end{equation}
Thus
		\begin{equation}\label{e-I-j}
			\lim_{k\to\infty}\max_{ \Sigma_j\cap \mathcal{\hat{A}}_k}|\alpha_{I_j}|=0.
		\end{equation}
Set
		\begin{equation*}
			u_{j}=\sin^2\frac{\alpha_{I_j}}{2}\exp\left(\frac{x_1}{2}\right).
		\end{equation*}
By (\ref{e-I-j-2}) and Proposition \ref{nodal line implies zero}, we know that $\cos\alpha_{I_j}>0$ on $\Sigma_j\backslash B_{R_{2}}$ for some $R_{2}>R_1$, where $R_1$ is given by Lemma \ref{lem-f-2}. Using (\ref{e-sinalpha}) and (\ref{e-e-x1}) we have that on $\Sigma_j\backslash B_{R_1}$
		\begin{eqnarray*}
			\Delta u_j
			&=&\Delta\left[\sin^2\frac{\alpha_{I_j}}{2}\exp\left(\frac{x_1}{2}\right)\right]\\
			&=&\left[\Delta\sin^2\frac{\alpha_{I_j}}{2}+\langle\nabla x_1,\nabla\sin^2\frac{\alpha_{I_j}}{2}\rangle\right]\exp\left(\frac{x_1}{2}\right)+\sin^2\frac{\alpha_{I_j}}{2}\Delta\exp\left(\frac{x_1}{2}\right)\\
			&=&\frac{1}{2}|\overline{\nabla}J_{\Sigma}|^2\cos\alpha_{I_j}\exp\left(\frac{x_1}{2}\right)+\frac{|{\bf H}|^2+1}{4}u_j\\
			&\geq & \frac{|{\bf H}|^2+1}{4}u_j.
		\end{eqnarray*}
		Define $h_{\eta}(x)=\eta\exp\left(\frac{x_1}{2}\right)$ and $f_{B}(x)=B|x|^{-\beta}\exp\left(-\frac{|x|}{2}\right)$. It follows from Lemma \ref{lem-f-2} that
		\begin{equation*}
			\Delta (f_B+h_{\eta}-u_j)\leq \frac{|{\bf H}|^2+1}{4} (f_B+h_{\eta}-u_j) \ \ \text{for  all} \ |x|\geq R_2,
		\end{equation*}
		where $R_2>R_1$ is chosen large enough so that $\partial\Sigma_j\subset B_{R_2}$ and the constant $B$ is chosen so that, for every $j=1,\cdots, N$, we have
		\begin{equation*}
			x\in \Sigma_j\cap \partial B_{R_2} \Rightarrow f_B(x)>u_j(x).
		\end{equation*}

        Next, by the choice of $\mathcal{\hat{A}}_k$, we know that there exists $k_0$ such that for each $k\geq k_0$, $\mathcal{\hat{A}}_k\subset \Sigma\backslash B_{R_2}$. Thus we can find a homotopically nontrivial closed curve $\gamma_k$ in  $\mathcal{\hat{A}}_k$ and a domain $\Omega_k\subset \Sigma_j$ such that $\partial \Omega_k=(\Sigma_j\cap \partial B_{R_2})\cup \gamma_k$. Then we have from (\ref{e-I-j}) that
		\begin{equation*}
			\sup_{\Sigma_j\cap \gamma_k}\left|u_j\exp\left(-\frac{x_1}{2}\right)\right|\leq \eta,
		\end{equation*}
		for all $k\geq k_0$ for possibly larger $k_0$. Thus
		\begin{equation*}
			\inf_{\gamma_k}(f_B+h_{\mu}-u_j)>0.
		\end{equation*}
		By applying the maximum principle to $\Omega_k$ for all $k$ sufficiently large, it follows that
		\begin{equation*}
			u_j(x)\leq B|x|^{-\beta}\exp\left(-\frac{|x|}{2}\right)+\eta\exp\left(\frac{x_1}{2}\right) \ \ \text{for  all} \ |x|\geq R_2.
		\end{equation*}
		Letting $\eta\to 0$, we obtain
		\begin{equation*}
			u_j(x)\leq B|x|^{-\beta}\exp\left(-\frac{|x|}{2}\right) \ \ \text{for  all} \ |x|\geq R_2,
		\end{equation*}
		i.e.,
		\begin{equation*}
			\sin^2\frac{\alpha_{I_j}}{2}\leq B|x|^{-\beta}\exp\left(-\frac{|x|}{2}-\frac{x_1}{2}\right) \ \ \text{for  all} \ |x|\geq R_2.
		\end{equation*}
This proves the desired estimate and completes the proof of the theorem.
\hfill $\square$

	\vspace{.2in}

	\section{Proof of Theorem \ref{maintheorem}}

	\vspace{.1in}
	In this section, our aim is to prove Theorem \ref{maintheorem}.  First, we establish a decay estimate for $|\overline{\nabla}J_{\Sigma}|^2$, which, by (\ref{e-J-nablaalpha}), yields a gradient estimate for the Kähler angle.

	\begin{theorem}\label{thm-DJ}
		Suppose $\Sigma$ is a complete translating soliton in ${\mathbb R}^4$ satisfying the assumptions in Theorem \ref{maintheorem}, then there exists a constant $C$ depending on $\Lambda$ and $\delta$ such that
		\begin{equation}\label{e-DJ}
			|\overline{\nabla}J_{\Sigma}|^2(x) \leq C|x|^{-\frac{\beta}{4}},\ \ for\ \ all \ |x|\geq R_2+2,
		\end{equation}
where $\beta$ and $R_2$ are given in Theorem \ref{prop-alpha-decay}.
	\end{theorem}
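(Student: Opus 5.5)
The plan is to turn the pointwise decay of $\sin^2\frac{\alpha_{I_j}}{2}$ from Theorem \ref{prop-alpha-decay} into an integral decay of $|\overline{\nabla}J_{\Sigma}|^2$ on unit balls, and then into a pointwise bound by local elliptic regularity. Fix an end $\Sigma_j$ and the complex structure $I_j$ given there, and write $w=\sin^2\frac{\alpha_{I_j}}{2}$. By \eqref{e-sinalpha},
\[
\Delta w+\langle\nabla x_1,\nabla w\rangle=\tfrac12|\overline{\nabla}J_{\Sigma}|^2\cos\alpha_{I_j}.
\]
For $|x|\geq R_2$ we have $\cos\alpha_{I_j}\ge \frac12$, since $w\to0$. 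Moreover, $|\overline{\nabla}J_{\Sigma}|$ is the same quantity for every $J\in{\mathcal J}$ (it depends only on $h$), so it suffices to work with $I_j$.

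\textbf{Step 1 (integral estimate).} Take $x^0$ with $|x^0|\ge R_2+2$ and a cutoff $\phi$ supported in $B_2(x^0)$ with $\phi\equiv1$ on $B_1(x^0)$ and $|\nabla\phi|\le 2$. Multiply the equation by $\phi^2$ and integrate by parts. This gives
\[
\tfrac14\int\phi^2|\overline{\nabla}J_{\Sigma}|^2\le \int w\,\big|\Delta\phi^2\big|+\int w\,\big|{\rm div}(\phi^2\nabla x_1)\big|.
\]
Here ${\rm div}\nabla x_1=|{\bf H}|^2\le1$, $|\nabla x_1|\le 1$, and $\Delta\phi^2$ is bounded using $|{\bf A}|\to0$ (Proposition \ref{p-vanishing of second fundamental form}) together with the choice of $\phi$ as a function of the ambient distance. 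The area of $\Sigma\cap B_2(x^0)$ is bounded by $4\Lambda$. Hence
\[
\int_{B_1(x^0)}|\overline{\nabla}J_{\Sigma}|^2\le C\Lambda\sup_{B_2(x^0)}w .
\]

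\textbf{Step 2 (handling the exponential weight).} The factor $\exp(-\frac{|x|}{2}-\frac{x_1}{2})$ is not small where $x_1\approx-|x|$, that is, in the direction $-{\bf T}$. There Theorem \ref{prop-alpha-decay} only gives $w\le B|x|^{-\beta}$. This still decays, and it is why only a power of $|x|$ appears in \eqref{e-DJ}. So on $B_2(x^0)$ I will use $w\le C|x^0|^{-\beta}$ uniformly, which gives $\int_{B_1(x^0)}|\overline{\nabla}J_{\Sigma}|^2\le C|x^0|^{-\beta}$.

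\textbf{Step 3 (pointwise bound).} The plan is to obtain pointwise decay by interpolation rather than from a clean Bochner inequality for $|\overline{\nabla}J_{\Sigma}|^2$. By Proposition \ref{p-vanishing of second fundamental form} and Allard/Schauder estimates as in its proof, $\Sigma$ is locally a graph over $B_1(x^0)$ with uniform $C^{2,\alpha}$ bounds on ${\bf A}$; in particular $|\nabla{\bf A}|\le C$. Hence $g=|\overline{\nabla}J_{\Sigma}|^2$ satisfies $|\nabla g|\le C$. If $g(x^0)=a$, then $g\ge a/2$ on an intrinsic disc of radius $ca$, whose area is at least $\kappa c^2a^2$ by Proposition \ref{prop-iso-proper}. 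This gives $a^3\le C|x^0|^{-\beta}$, so $a\le C|x^0|^{-\beta/3}\le C|x^0|^{-\beta/4}$ for large $|x^0|$.

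A sharper route would be a Moser iteration for $|\overline{\nabla}J_{\Sigma}|^2$, using that it is a quadratic expression in $h$ whose drift-Laplacian is bounded below by $-C|{\bf A}|^2|\overline{\nabla}J_{\Sigma}|^2-C|\nabla{\bf A}|\,|\overline{\nabla}J_{\Sigma}|$. The cruder interpolation above is enough for the stated exponent $\beta/4$ and I would commit to it.

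I expect the main obstacle to be Step 3, namely justifying the uniform $C^1$ bound on ${\bf A}$ far out. That requires upgrading the $L^p$ and Moser bounds of Proposition \ref{p-vanishing of second fundamental form} to Schauder estimates for the graph function $u$. Since $H=T^\perp$ is a smooth function of $Du$, bootstrapping from $C^{1,\alpha}$ to $C^{3,\alpha}$ is standard, but it should be stated carefully.
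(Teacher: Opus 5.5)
Your proof is correct, and it actually gives the stronger bound $|\overline{\nabla}J_{\Sigma}|^2(x)\le C|x|^{-\beta/3}$. Both of your steps, however, differ from the paper's.

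\emph{Integral estimate.} The paper multiplies the $\cos\alpha_{I_j}$ equation by $\phi\cos\alpha_{I_j}$. The resulting cross terms $\sin\alpha_{I_j}\langle\phi\cos\alpha_{I_j}\nabla x_1-\cos\alpha_{I_j}\nabla\phi,\nabla\alpha_{I_j}\rangle$ are controlled only through $|\sin\alpha_{I_j}|\le C|x|^{-\beta/2}$, so it obtains $\int_{\Sigma\cap B_1(x)}|\overline{\nabla}J_{\Sigma}|^2\le C|x|^{-\beta/2}$. You instead test the equation for $w=\sin^2\frac{\alpha_{I_j}}{2}$ against $\phi^2$. Its right-hand side is itself comparable to $|\overline{\nabla}J_{\Sigma}|^2$, and you move every derivative onto the cutoff and the drift. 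This gives a bound linear in $\sup w$, namely $C|x|^{-\beta}$, with no square-root loss.

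\emph{Pointwise step.} The paper applies Moser iteration to the inequality $\Delta|\overline{\nabla}J_{\Sigma}|^2+\langle\nabla x_1,\nabla|\overline{\nabla}J_{\Sigma}|^2\rangle\ge-5|\overline{\nabla}J_{\Sigma}|^2|{\bf A}|^2$, which it states without derivation. It uses this in the form $|\overline{\nabla}J_{\Sigma}|^2(x)\le C(\int_{\Sigma\cap B_1(x)}|\overline{\nabla}J_{\Sigma}|^2)^{1/2}$, which together with $|x|^{-\beta/2}$ gives exactly the exponent $\beta/4$. Your Lipschitz-plus-area-lower-bound interpolation costs a cube root instead, but it dispenses with that Simons-type inequality altogether. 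In exchange it needs $|\nabla{\bf A}|\le C$ far out. This is standard here, either via your Schauder bootstrap or via interior derivative estimates for the eternal mean curvature flow $\Sigma+t{\bf T}$, whose second fundamental form is bounded by Proposition \ref{p-vanishing of second fundamental form}.

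\emph{Two small repairs}, both also needed in the paper's argument:
\begin{itemize}
\item Assumption (1) of Theorem \ref{maintheorem} only controls balls centred at the origin. So rather than claiming $\mathrm{Area}(\Sigma\cap B_2(x^0))\le 4\Lambda$, work on the component of $\Sigma\cap B_2(x^0)$ through $x^0$, as the paper does. That component has bounded area because it is a small-gradient graph once $|{\bf A}|$ is small.
\item The bound $\cos\alpha_{I_j}\ge\frac12$ needs $|x|\ge(4B)^{1/\beta}$ in addition to $|x|\ge R_2$. The remaining compact range is absorbed into $C$, since $|{\bf A}|$ is bounded.
\end{itemize}
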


	\vspace{.1in}
	
\begin{proof}
By Proposition \ref{p-vanishing of second fundamental form}, we know $|{\bf A}|\le \Lambda$ for some $\Lambda=\Lambda(\Sigma)$.  So,
by  Moser's iteration estimate for the equation
\begin{eqnarray*}
			\Delta|\overline{\nabla}J_{\Sigma}|^2+\langle \nabla x_1,\nabla|\overline{\nabla}J_{\Sigma}|^2\rangle
			&\geq& -5|\overline{\nabla}J_{\Sigma}|^2|{\bf A}|^2.
		\end{eqnarray*}
         we know that, for any $x\in\Sigma$
	\begin{equation}\label{e-DJ-2}
		|\overline{\nabla}J_{\Sigma}|^2(x) \leq \sqrt{2}\Lambda C_1\left(\int_{\Sigma\cap B_1(x)}|\overline{\nabla}J_{\Sigma}|^2\right)^{\frac{1}{2}}.
	\end{equation}
	It suffices to estimate the right hand side of (\ref{e-DJ-2}). For this purpose, note that since $|x|\geq R_2+2$, we have $x\in \cup_{j=1}^N\Sigma_j$. Suppose $x\in \Sigma_j$ and without loss of generality, we may assume that $\Sigma\cap B_2(x)\subset \Sigma_j$ (otherwise, we $\Sigma\cap B_2(x)$ by the connected component of $\Sigma\cap B_2(x)$ containing $x$). We also have from (\ref{e-cosalpha}) that
	\begin{equation}\label{e-cosalpha-i}
		\Delta\cos\alpha_{I_j}+\langle\nabla x_1,\nabla\cos\alpha_{I_j}\rangle=-|\overline{\nabla}J_{\Sigma}|^2\cos\alpha_{I_j},
	\end{equation}
where $I_j\in{\mathcal J}$ is given by Proposition \ref{prop-alpha-decay}.

	Choose a cutoff function $\phi\in C_c^{\infty}(\Sigma\cap B_2(x))$ on $\Sigma$ such that $\phi\equiv 1$ on $\Sigma\cap B_1(x)$ and $|\nabla\phi|\leq 2$. Multiplying both sides of (\ref{e-cosalpha-i}) by $\phi\cos\alpha_{I_j}$ and integrating by parts yield
	\begin{eqnarray*}
		& & \int_{\Sigma}|\overline{\nabla}J_{\Sigma}|^2\cos^2\alpha_{I_j}\phi\\
		&=& -\int_{\Sigma}\phi\cos\alpha_{I_j}\Delta\cos\alpha_{I_j}-\int_{\Sigma}\phi\cos\alpha_{I_j}\langle\nabla x_1,\nabla\cos\alpha_{I_j}\rangle\\
		&=&\int_{\Sigma}\phi|\nabla\cos\alpha_{I_j}|^2+\int_{\Sigma}\cos\alpha_{I_j}\nabla\phi\cdot\nabla\cos\alpha_{I_j}-\int_{\Sigma}\phi\cos\alpha_{I_j}\langle\nabla x_1,\nabla\cos\alpha_{I_j}\rangle\\
		&=&\int_{\Sigma}\phi\sin^2\alpha_{I_j}|\nabla\alpha_{I_j}|^2+\int_{\Sigma}\langle -\cos\alpha_{I_j}\nabla\phi+\phi\cos\alpha_{I_j}\nabla x_1,\nabla\alpha_{I_j}\rangle \sin\alpha_{I_j}.
	\end{eqnarray*}
	Notice that $|\nabla\alpha_{I_j}|\leq |\overline{\nabla}J_{\Sigma}|\leq \sqrt{2}\Lambda$ and Theorem \ref{prop-alpha-decay},
	\begin{equation}\label{e-sin-alpha}
		\sin^2\alpha_{I_j}(y)\leq 4\sin^2\frac{\alpha_{I_j}}{2}\leq 4B|y|^{-\beta}
	\end{equation}
	and
	\begin{equation*}
		\cos^2\alpha_{I_j}\geq \frac{1}{2}
	\end{equation*}
	hold for $y\in \Sigma\cap B_2(x)$ if $R_1$ is sufficiently large. Therefore, we have
	\begin{eqnarray}\label{e-DJ-integral}
		\int_{\Sigma\cap B_1(x)}|\overline{\nabla}J_{\Sigma}|^2 \leq C\int_{\Sigma\cap B_1(x)} |y|^{-\frac{\beta}{2}} \leq C(|x|-2)^{-\frac{\beta}{2}}\leq C'|x|^{-\frac{\beta}{2}},
	\end{eqnarray} since $|x|\geq R_2+2$.
	Here, we use assumption (2) in Theorem \ref{maintheorem}.
	Now the theorem follows from  (\ref{e-DJ-integral}) and (\ref{e-DJ-2}).
\end{proof}
	
	\vspace{.1in}
	\begin{corollary}\label{cor-dalpha}
		Under the assumption of Theorem \ref{thm-DJ}, there exists a constant $C>0$ depending on $\Lambda$ and $\delta$ such that for any $J\in{\mathcal J}$, the following estimate holds:
		\begin{equation}\label{e-Dalpha}
			|\nabla\sin\alpha_J|^2(x) \leq C|x|^{-\frac{\beta}{4}},\ \  \ all \ |x|\geq R_2+2.
		\end{equation}
In particular, on $\Sigma_j$, we have
		\begin{equation}\label{e-Dalpha-2}
			|\nabla\sin\alpha_{I_j}|^2(x) \leq C|x|^{-\frac{\beta}{4}},
		\end{equation}
for all $|x|\geq R_2+2$, where $I_j\in{\mathcal J}$ is given by Theorem \ref{prop-alpha-decay}.
	\end{corollary}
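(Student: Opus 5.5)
The plan is to reduce the estimate for $\nabla\sin\alpha_J$ to the bound on $|\overline{\nabla}J_{\Sigma}|^2$ from Theorem \ref{thm-DJ}, pointwise and uniformly in $J\in{\mathcal J}$.

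First, fix $J\in{\mathcal J}$ and a point $x$ with $|x|\geq R_2+2$.

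\emph{Case 1: $\alpha_J(x)\neq 0,\pi$.} Here $\alpha_J$ is smooth near $x$. The chain rule gives
\begin{equation*}
|\nabla\sin\alpha_J|^2=\cos^2\alpha_J\,|\nabla\alpha_J|^2\leq|\nabla\alpha_J|^2 .
\end{equation*}
By \eqref{e-J-nablaalpha} in Corollary \ref{cor-HJA}, $|\nabla\alpha_J|^2\leq|\overline{\nabla}J_{\Sigma}|^2$. Theorem \ref{thm-DJ} then gives $|\nabla\sin\alpha_J|^2(x)\leq C|x|^{-\beta/4}$, with the same constant $C$ as in \eqref{e-DJ}. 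That constant depends only on $\Lambda$ and $\delta$, not on $J$, because $|\overline{\nabla}J_{\Sigma}|^2$ is the quantity appearing in Lemma \ref{lemma-HJW}, whose right-hand side is controlled independently of $J$. This is what the estimate \eqref{e-DJ} provides.

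\emph{Case 2: $\alpha_J(x)\in\{0,\pi\}$.} These are exactly the points where $\cos\alpha_J=\pm1$. Here $\sin\alpha_J=\sqrt{1-\cos^2\alpha_J}$ need not be differentiable, and this is the only delicate point.

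I would handle it in two steps. If such points form a set with nonempty interior, then $\sin\alpha_J\equiv0$ there and the gradient vanishes, so the bound holds trivially. Otherwise the bad set has empty interior. At its points the estimate is understood in the sense of the upper limit of difference quotients, or for the Lipschitz function $\sin\alpha_J$ almost everywhere. The bound then follows by continuity and approximation from Case 1, since the right-hand side $C|x|^{-\beta/4}$ is continuous.

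An alternative avoids the issue entirely. We have $|\nabla\cos\alpha_J|^2=\sin^2\alpha_J|\nabla\alpha_J|^2$, and where $\sin\alpha_J\neq0$,
\begin{equation*}
|\nabla\sin\alpha_J|=\frac{|\cos\alpha_J|\,|\nabla\cos\alpha_J|}{\sin\alpha_J}=|\cos\alpha_J|\,|\nabla\alpha_J|.
\end{equation*}
So the same bound holds wherever the gradient is defined.

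Finally, the particular statement \eqref{e-Dalpha-2} is the special case $J=I_j$ on $\Sigma_j$, where $I_j$ comes from Theorem \ref{prop-alpha-decay}. The main obstacle is only the regularity of $\sin\alpha_J$ at points where $\alpha_J$ equals $0$ or $\pi$, and I would dispose of it as above.
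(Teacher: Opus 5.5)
Your argument is correct and is essentially the paper's (which gives no separate proof): $|\nabla\sin\alpha_J|^2=\cos^2\alpha_J|\nabla\alpha_J|^2\le|\nabla\alpha_J|^2\le|\overline{\nabla}J_{\Sigma}|^2$ by \eqref{e-J-nablaalpha}, then Theorem \ref{thm-DJ}. Uniformity in $J$ holds because $|\overline{\nabla}J_{\Sigma}|^2$ is a single $J$-independent quantity bounded there, not because of Lemma \ref{lemma-HJW}, whose ${\bf W}$ does depend on $J$; your handling of points where $\sin\alpha_J=0$ goes beyond the paper and is most simply settled by noting that $\sin\alpha_J\ge0$ (with $\alpha_J\in[0,\pi]$) attains its minimum there, so wherever it is differentiable its gradient vanishes.
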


	\vspace{.1in}

	Now we can prove Theorem \ref{maintheorem}.

	\vspace{.1in}

\begin{proof}[ Proof of Theorem \ref{maintheorem}] By Proposition \ref{prop-A-H}, it suffices to show that ${\bf H}\equiv 0$ on $\Sigma$. Following the approach in \cite{NT}, we construct an exhaustion of $\Sigma$ by compact sets $K_n$, and show that
		\begin{equation}\label{e-Kn}
			\lim_{n\to\infty}\oint_{\partial K_n}\langle \nu,{\bf T}\rangle d\sigma=0
		\end{equation}
		where $\nu$ denotes the exterior unit normal to $\partial K_n$ in $\Sigma$. The theorem follows because
		\begin{eqnarray*}
			\oint_{\partial K_n}\langle \nu,{\bf T}\rangle d\sigma
			=\oint_{\partial K_n}\langle \nu,\nabla x_1\rangle d\sigma=\int_{K_n}\Delta x_1 d\mu=\int_{K_n}|{\bf H}|^2d\mu.
		\end{eqnarray*}

        \vspace{.1in}

By Section 3, there exists a compact set $K\subset \Sigma$ such that $\Sigma\backslash K$ decomposes into $N$ connected components $\Sigma_1,\cdots, \Sigma_N$, each of genus zero. Moreover, the proof of Theorem \ref{prop-alpha-decay} yields that for every $1\leq j\leq N$, there is a compatible complex structure $I_{j}\in {\mathcal J}$ with $\cos\alpha_{I_j}\to 1$ at infinity  of $\Sigma_j$.

After a suitable coordinate change, we may assume that $I_j$ takes the form of $J_1$ given in (\ref{e-J-10}) and
		\begin{equation*}
			P_{I_j}:=\mathrm{span}\{{\bf{T}}, {I_j}(\bf{T}) \}=\{(u,v,0,0)|(u,v)\in {\mathbb R}^2\}.
		\end{equation*}

Next, we estimate the distance of $T_x\Sigma$ and $P_{I_j}$.  The observations are the following geometric meaning of the mean curvature and the K\"ahler angle.
\begin{align}
&\text{ if } {\bf H}(x)=0, \quad \text{ then }   {\bf T^{\bot}}=0, \quad \text{ hence }  {\bf T}\in T_x\Sigma.\\
&\text{ if } \cos\alpha_J(x)=1, \quad \text{ then } \langle Je_1,e_2\rangle =1, \quad \text{ which implies } Je_1=\pm e_2\in T_x\Sigma.
\end{align}
Combining them together, we know
\begin{align}\label{determine the tangent plane}
\text{ if } {\bf H}(x)=0  \text{ and } \cos\alpha_J(x)=1, \quad \text{ then }\quad T_x\Sigma=\mathrm{span}\{{\bf{T}}, {I_j}({\bf T}) \}=P_{I_j}.
 \end{align}

 Now, for $x\in \Sigma_j$, by Proposition \ref{p-vanishing of second fundamental form} and Theorem \ref{prop-alpha-decay},  we know
 \begin{align*}
 \lim_{\Sigma_j\ni x\to \infty}|\cos\alpha_{I_j}(x)-1|+|{\bf H}(x)|=0,
 \end{align*}
which combining a perturbation of the observation \eqref{determine the tangent plane} gives
\begin{align}\label{tangent limit}
\lim_{\Sigma_j\ni x\to \infty}d_{G(2,4)}(T_x\Sigma, P_{I_j})=0.
\end{align}

Next, we are going to show  that \eqref{tangent limit} implies  there exists $R_2>0$ such that  $\Sigma_j\backslash B_{R_2}(0)$ can be written as the graph of a multi-function defined on $P_{I_j}$ with its gradient bounded by some small constant $S_j$ and tending to zero as $x\to \infty$.

In fact, by Proposition \ref{p-vanishing of second fundamental form} and \cite[Lemma 2.4]{CM}, we know there exists  $R_0>0$ and $r_0>0$ such that for any $x\in \Sigma_j\backslash B_{R_0}$, $\hat{B}_{r_0}(x)$ can be written as a graph over $T_x\Sigma$ with gradient and Hessian estimate. By \eqref{tangent limit},  we can rotate the coordinate such that the graph is over $P_{I_j}$. More precisely, if we denote $\pi: \mathbb{R}^{4}\to P_{I_j}$ be the orthogonal projection, then for any fix $\epsilon_0$,  $R_0$ and $r_0$ can be chosen such that for any $x\in \Sigma_j\backslash B_{R_0}$,  there exists $\Omega(x,r_0)\subset P_{I_j}$ and a function $u: \Omega (x,r_0)\to P_{I_j}^{\bot}$ such that
\begin{align}\label{local graph}
\hat{B}_{r_0}(x)={\rm Graph}_u|_{\Omega(x,r_0)}, \ \Omega(x,r_0)\supset D_{\frac{r_0}{2}}(
\pi(x)
) \ \text{  and  }
r_0^2|D^2u|+r_0|Du|\le \epsilon_0.
\end{align}

Now, choose $R_1\gg R_0$ and denote $X=\Sigma_j\backslash \pi^{-1}(D_{R_1})$, we are going to prove $\pi: X\to Y:=P_{I_j}\backslash D_{R_1}$ is a covering map by definition. That is,
$\pi:X\to Y$ is surjective and for any $y\in Y$, there exists $y\in U\subset Y$ such that $\pi^{-1}(U)=\cup_{\alpha\in \Lambda_y}U_\alpha$  such that $\pi:U_\alpha \to U$ is diffeomorphism.

In fact, for any $y\in \pi(X)$, we know there exists $x\in X$ such that  $\pi(x)=y$. By \eqref{local graph}, we know
$$\pi(\hat{B}_{r_0}(x))=\Omega(x,r_0)\supset D_{\frac{r_0}{2}}(y).$$
As a result, $\mathring{D}_{\frac{r_0}{2}}(y)\cap Y\subset \pi(X)$ is an open neighborhood of $y$ in $Y$, which implies $\pi(X)$ is open in $Y$.  Moreover, for any $y_i\in \pi(X)$ satisfying $y_i\to y_\infty\in Y$, by choosing $i$ large such that $|y_i-y_\infty|\le \frac{r_0}{4}$, then \eqref{local graph} again implies $y_\infty\in Y\cap D_{\frac{r_0}{4}}(y_i)\subset \pi(X)$. So, $\pi(X)$ is also closed in $Y$. Noting that $Y$ is connected, we know $\pi(X)=Y$. Thus $\pi:X\to Y$ is surjective.   Furthermore, for any $y\in Y$ and  $x\in \pi^{-1}(y)$, since $\pi:\hat{B}_{r_0}(x)\to \Omega(x,r_0)$ is diffeomorphism, we know there exists $r_1\ll r_0$ such that $\hat{B}_{r_1}(x)\cap \hat{B}_{r_1}(x')=\emptyset$ for any $x\neq x'\in \pi^{-1}(y)$, since otherwise, there exists $z\in \hat{B}_{r_1}(x)\cap \hat{B}_{r_1}(x')$, which combining with \eqref{local graph} implies $d_g(x,x')\le C\sqrt{1+|\epsilon|^2} r_1<r_0$ (by choosing $r_1=(2C)^{-1}r_0$), which contradicts to $\pi: \hat{B}_{r_0}(x)\to \Omega(x,r_0)$ is differomorphism.  So,  $\{\hat{B}_{r_1}(x)\backslash \pi^{-1}(D_{R_0})\}_{x\in \pi^{-1}(y)}$ are disjoint open sets  in $X$ and $\cap_{x\in \pi^{-1}(y)}\Omega(x,r_1)\cap Y\supset \mathring{D}(y,\frac{r_1}{2})\cap Y=:U$ is an open neighborhood of $y$ in $Y$ satisfying $\pi^{-1}(U)=\cup_{\alpha\in \Lambda_y:= \pi^{-1}(y)}U_\alpha$ such that $\pi: U_\alpha\to U$ is diffeomorphism. Here $U_\alpha=\pi^{-1}(U)\cap \hat{B}_{r_1}(x)$ for any $x\in \pi^{-1}(y)$. As a result, $\pi: X\to Y$ is a covering map.  That is, $\Sigma_j\backslash \pi^{-1}(D_{R_1})$ is a multi-graph over $Y=P_{I_j}\backslash D_{R_1}$ with gradient estimate. Moreover, the are bound assumption $\mathrm{Area}(\Sigma\cap B_R(x))\le \Lambda R^2$ implies the multiplicity $n_j$ of the graph is bounded by $C\Lambda$. Especially, the curve $\Gamma_{R_1}:=\pi^{-1}(\partial D_{R_1})$ is a compact homotopically non-trivial curve on $\Sigma_j$.  So, $\Gamma_{R_1}$ and $\partial \Sigma_j$ bounds a compact domain in $\Sigma_j$, hence $\pi^{-1}(D_{R_1})\cap \Sigma_j$ is compact. We can take $R_2\gg R_1$ so that $\Sigma_j\backslash B_{R_2}\subset \Sigma_j\backslash \pi^{-1}(D_{R_1})$ is contained in the multi-graph defined on $P_{I_j}$ with its gradient bounded by some small constant $S_j$ and tending to zero as $x\to \infty$ and multiplicity $n_j\le C\Lambda$. More precisely,
$\Sigma_j\backslash B_{R_2}(0)$ can be expressed as
\begin{align*}
\{(r\cos\theta, r\sin\theta, f(r,\theta), g(r,\theta))\quad | \quad  r\ge R_2 \quad \theta \in[0,2\pi n_j]\}.
 \end{align*}
 Without loss of generality, when a ray is removed in the domain,  we can choose a single-value branch of the form
		\begin{equation*}
			\{(u,v,f(u,v),g(u,v)) \ \ \ \ {\rm with} \  (u,v)\in  \mathbb{R}^2\backslash (D_{R_2}\cup [0,\infty)\times \{0\}\},
		\end{equation*}
		for some functions $f,g$ with $|Df|$ and $|Dg|$ uniformly bounded by $S_j$ and $\lim_{u,v\to \infty}(|Df|+|Dg|)=0$, where the coordinate $x_1$ equals $u$ and the coordinate $y_1$ equals $v$, and in this coordinate, $I_j$ takes the form $J_1$ given by (\ref{e-J-10}).

		\vspace{.1in}
		
		By a direct computation, we see that the K\"ahler angle $\alpha$ of $\Sigma={\rm Graph}_{f,g}$ is given by (see (\ref{graph-cos-1}))
		\begin{equation}\label{graph-cos}
			\cos\alpha=\frac{1+f_ug_v-f_vg_u}{\sqrt{\det G}}
		\end{equation}
		where $G$ is the induced metric on $\Sigma$ and
		\begin{equation}\label{graph-det}
			\det G=1+f_u^2+f_v^2+g_u^2+g_v^2+(f_ug_v-f_vg_u)^2.
		\end{equation}
		In particular, the plane $\{(u,v,0,0)|(u,v)\in {\mathbb R}^2\}$ has constant K\"ahler angle $\bar \alpha=0$. Therefore,
		\begin{equation}\label{graph-sin-0}
			\sin^2\alpha=1-\cos^2\alpha=\frac{(f_u-g_v)^2+(f_v+g_u)^2}{\det G}.
		\end{equation}
		Since $|Df|$ and $|Dg|$ are uniformly bounded, there exists a constant $D$ such that
		\begin{equation}\label{graph-sin}
			|f_u-g_v|+|f_v+g_u|\leq D|\sin\alpha|.
		\end{equation}

		\vspace{.1in}
		
		In each of the connected components $\Sigma_j$, denote by $\gamma_{j,r}$ the lift to $\Sigma_j$ of the path on $P_{I_j}$ given by
		\begin{equation*}
			c_j(t)=(r\cos t,r\sin t), \ \ 0\leq t\leq 2n_j\pi.
		\end{equation*}
		There is $t_0=t_0(r)$ such that for every $t_1\leq 2n_j\pi$, we can find functions $f$ and $g$ for which
		\begin{equation*}
			\gamma_{j,r}(t)=(r\cos t,r\sin t,f,g), \ \ t_1-t_0\leq t\leq t_1+t_0.
		\end{equation*}

		Denote $M=\left(\begin{array}{cc}
			f_u & f_v \\
			g_u & g_v
		\end{array}\right)$. Then
		\begin{eqnarray*}
			\gamma_{j,r}'(t)
			&=& \left(-r\sin t,r\cos t,f_u(-r\sin t)+f_vr\cos t,g_u(-r\sin t)+g_vr\cos t\right)\\
			&=& r\left(-\sin t,\cos t,-f_u\sin t+f_v\cos t,-g_u\sin t+g_v\cos t\right)\\
			&=&r\left(\partial_t,M(\partial_t)\right),
		\end{eqnarray*}
		where $\partial_t=(-\sin t,\cos t)$.

		Furthermore, denote $\bar{\nu}={\rm Proj}_{P_{I_j}}\nu$. Since
		\begin{equation*}
			\nu\in T\Sigma={\rm span}\{(1,0,f_u,g_u), (0,1,f_v,g_v)\},
		\end{equation*}
		there exists $a,b\in {\mathbb R}$, such that
		\begin{eqnarray*}
			\nu
			&=& a(1,0,f_u,g_u)+b(0,1,f_v,g_v)=(a,b,f_ua+f_vb,g_ua+g_vb)\\
			&=&(\bar\nu,M(\bar \nu)).
		\end{eqnarray*}
		
		Denote $\partial_r=(\cos t,\sin t)$ and assume that
		\begin{eqnarray*}
			\bar\nu&=&A_0\partial_r+B_0\partial_t=A_0(\cos t,\sin t)+B_0(-\sin t,\cos t)\\
			&=&(A_0\cos t-B_0\sin t, A_0\sin t+B_0\cos t).
		\end{eqnarray*}
		Then \begin{eqnarray*}
			\nu
			&=&(A_0\cos t-B_0\sin t, A_0\sin t+B_0\cos t,\\
			& & \ f_u(A_0\cos t-B_0\sin t)+f_v(A_0\sin t+B_0\cos t)\\
			& &  g_u(A_0\cos t-B_0\sin t)+g_v(A_0\sin t+B_0\cos t)).
		\end{eqnarray*}
		It is easy to check that
		\begin{eqnarray*}
			0
			&=&r^{-1}\langle\gamma'_{j,r}(t),\nu\rangle\\
			&=& -\sin t(A_0\cos t-B_0\sin t)+\cos t(A_0\sin t+B_0\cos t)\\
			& & +\left(-f_u\sin t+f_v\cos t\right)\left[f_u(A_0\cos t-B_0\sin t)+f_v(A_0\sin t+B_0\cos t)\right]\\
			& & +\left(-g_u\sin t+g_v\cos t\right)\left[g_u(A_0\cos t-B_0\sin t)+g_v(A_0\sin t+B_0\cos t)\right]\\
			&=& B_0\left[1+(f_u^2+g_u^2)\sin^2t+(f_v^2+g_v^2)\cos^2t-2(f_uf_v+g_ug_v)\sin t\cos t\right]\\
			& & +A_0\left[(f_v^2-f_u^2+g_v^2-g_u^2)\sin t\cos t+(f_uf_v+g_ug_v)(\cos^2t-\sin^2t)\right]
		\end{eqnarray*}
		and
		\begin{eqnarray*}
			1
			&=&|\nu|^2\\
			&=& (A_0\cos t-B_0\sin t)^2+(A_0\sin t+B_0\cos t)^2\\
			& & +\left[f_u(A_0\cos t-B_0\sin t)+f_v(A_0\sin t+B_0\cos t)\right]^2\\
			& & +\left[g_u(A_0\cos t-B_0\sin t)+g_v(A_0\sin t+B_0\cos t)\right]^2\\
			&=& A_0^2+B_0^2+\left[(f_u\cos t+f_v\sin t)^2+(g_u\cos t+g_v\sin t)^2\right]A_0^2\\
			& & +\left[(-f_u\sin t+f_v\cos t)^2+(-g_u\sin t+g_v\cos t)^2\right]B_0^2\\
			& &+2\left[(f_u\cos t+f_v\sin t)(-f_u\sin t+f_v\cos t)\right.\\
			& & \left.+(g_u\cos t+g_v\sin t)(-g_u\sin t+g_v\cos t)\right]A_0B_0.
		\end{eqnarray*}
		 By a direct computation,  we obtain that
		\begin{eqnarray*}
			& &A_0\cos t-B_0\sin t\\
			&=&\frac{(1+f_v^2+g_v^2)\cos t-(f_uf_v+g_ug_v)\sin t}{\sqrt{\det G}\sqrt{1+(f_u^2+g_u^2)\sin^2t+(f_v^2+g_v^2)\cos^2t-2(f_uf_v+g_ug_v)\sin t\cos t}}.
		\end{eqnarray*}
		Therefore,
		\begin{eqnarray*}
			\langle \nu, {\bf T}\rangle |\gamma'_{j,r}(t)|
			&=&(A_0\cos t-B_0\sin t)r\\
			& &  \cdot \sqrt{1+(f_u^2+g_u^2)\sin^2t+(f_v^2+g_v^2)\cos^2t-2(f_uf_v+g_ug_v)\sin t\cos t}\\
			&=&r\frac{(1+f_v^2+g_v^2)\cos t-(f_uf_v+g_ug_v)\sin t}{\sqrt{\det G}}\\
			&=&\frac{1+f_v^2+g_v^2}{\sqrt{\det G}}r\cos t-\frac{f_uf_v+g_ug_v}{\sqrt{\det G}}r\sin t\\
			&=& r\cos t+R,
		\end{eqnarray*}
		where
		\begin{equation*}
			R=\frac{1+f_v^2+g_v^2-\sqrt{\det G}}{\sqrt{\det G}}r\cos t-\frac{f_uf_v+g_ug_v}{\sqrt{\det G}}r\sin t
		\end{equation*}
		and $\det G$ is given by (\ref{graph-det}). Therefore, we have
		\begin{eqnarray}\label{e-gammajr}
			\left|\oint_{\gamma_{j,r}}\langle \nu,e_1\rangle d\sigma\right|
			&=&\left|\int_0^{2\pi}\langle \nu,e_1\rangle  |\gamma'_{j,r}(t)|dt\right|\nonumber\\
			&=&\left|\int_0^{2\pi}Rdt\right|\leq \int_0^{2\pi}|R|dt.
		\end{eqnarray}

		\vspace{.1in}

		Now we are going to estimate $\int_0^{2\pi}|R|dt$. Notice that we can rewrite
		\begin{eqnarray*}
			f_uf_v+g_ug_v=f_v(f_u-g_v)+g_v(f_v+g_u)
		\end{eqnarray*}
		and
		\begin{eqnarray*}
			&&(1+f_v^2+g_v^2-\sqrt{\det G})(1+f_v^2+g_v^2+\sqrt{\det G})\\
			&=&(1+f_v^2+g_v^2)^2-\det G\\
			&=&(f_v-g_u)(f_v+g_u)-(f_u+g_v)(f_u-g_v)\\
			& & +[f_v(f_v-g_u)+g_v(f_u+g_v)][f_v(f_v+g_u)-g_v(f_u-g_v)].
		\end{eqnarray*}
		Combining (\ref{graph-sin}) with the fact that $|Df|$ and $|Dg|$ are uniformly bounded, we see that
		\begin{eqnarray*}
			|R|\leq Cr|\sin\alpha|(|\cos t|+|\sin t|)
		\end{eqnarray*}
		for some constant $C$. Hence
		\begin{equation}\label{e-R}
			\int_0^{2\pi}|R|dt\leq C\int_{0}^{2\pi}r|\sin\alpha||\cos t|dt+C\int_{0}^{2\pi}r|\sin\alpha||\sin t|dt:=C(I+II),
		\end{equation}
		where
		\begin{equation*}
			I=\int_{0}^{2\pi}r|\sin\alpha||\cos t|dt, \ II=\int_{0}^{2\pi}r|\sin\alpha||\sin t|dt.
		\end{equation*}
		
		\vspace{.1in}
		
		We first estimate $II$. By Theorem \ref{prop-alpha-decay}, we have for $r\geq R_0$ that
		\begin{equation*}
			\sin^2\alpha(x)\leq 4\sin^2\left(\frac{\alpha(x)}{2}\right)\leq 4B|x|^{-\beta}\exp(-\frac{|x|}{2}-\frac{x_1}{2}).
		\end{equation*}
		Then we compute for every $\delta_1>0$ small
		\begin{eqnarray*}
			II
			&=&\int_{0}^{2\pi}r|\sin\alpha||\sin t|dt\\
			&=&\int_{0}^{\pi-\delta_1}r|\sin\alpha||\sin t|dt+\int_{\pi+\delta_1}^{2\pi}r|\sin\alpha||\sin t|dt+\int_{\pi-\delta_1}^{\pi+\delta_1}r|\sin\alpha||\sin t|dt\\
			&\leq& \int_{0}^{\pi-\delta_1}2B^{\frac{1}{2}}rr^{-\frac{\beta}{2}}e^{-\frac{r}{4}(1+\cos t)}dt+\int_{\pi+\delta_1}^{2\pi}2B^{\frac{1}{2}}rr^{-\frac{\beta}{2}}e^{-\frac{r}{4}(1+\cos t)}dt\\
			& & +\int_{\pi-\delta_1}^{\pi+\delta_1}2B^{\frac{1}{2}}rr^{-\frac{\beta}{2}} e^{-\frac{r}{4}(1+\cos t)}|\sin t|dt\\
			&\leq & 4\pi B^{\frac{1}{2}}r^{1-\frac{\beta}{2}}e^{-\frac{r}{4}(1-\cos \delta_1)}+III,
		\end{eqnarray*}
		where
		\begin{equation*}
			III=\int_{\pi-\delta_1}^{\pi+\delta_1}2B^{\frac{1}{2}}r^{1-\frac{\beta}{2}} e^{-\frac{r}{4}(1+\cos t)}|\sin t|dt.
		\end{equation*}
		Choose $\delta_1$ sufficiently small so that
		\begin{equation*}
			\frac{(t-\pi)^2}{4}\leq 1+\cos t \leq (t-\pi)^2
		\end{equation*}
		for $t\in [\pi-\delta_1,\pi+\delta_1]$. Then
		\begin{eqnarray*}
			III
			&\leq&\int_{\pi-\delta_1}^{\pi+\delta_1}2B^{\frac{1}{2}}r^{1-\frac{\beta}{2}} e^{-\frac{r}{4}\frac{(t-\pi)^2}{4}}|\sin t|dt\\
			&=&2B^{\frac{1}{2}}r^{1-\frac{\beta}{2}}\int_{-\delta_1}^{\delta_1}e^{-\frac{rt^2}{16}}|\sin t|dt\\
			&=&4B^{\frac{1}{2}}r^{1-\frac{\beta}{2}}\int_{0}^{\delta_1}e^{-\frac{rt^2}{16}}\sin tdt\\
			&\leq&4B^{\frac{1}{2}}r^{1-\frac{\beta}{2}}\int_{0}^{\delta_1}e^{-\frac{rt^2}{16}}tdt\\
			&=& 32B^{\frac{1}{2}}r^{-\frac{\beta}{2}}\int_{0}^{\frac{r\delta_1^2}{16}}e^{-s}ds
			\leq  32B^{\frac{1}{2}}r^{-\frac{\beta}{2}}.
		\end{eqnarray*}
		Therefore,
		\begin{equation}\label{e-II}
			II\leq 4\pi B^{\frac{1}{2}}r^{1-\frac{\beta}{2}}e^{-\frac{r}{4}(1-\cos \delta_1)}+ 32B^{\frac{1}{2}}r^{-\frac{\beta}{2}}.
		\end{equation}

		\vspace{.1in}
		
		Next, we estimate $I$. As above, we have
		\begin{eqnarray*}
			I
			&=&\int_{0}^{2\pi}r|\sin\alpha||\cos t|dt\\
			&=&\int_{0}^{\pi-\delta_2}r\sin\alpha|\cos t|dt+\int_{\pi+\delta_2}^{2\pi}r|\sin\alpha||\cos t|dt+\int_{\pi-\delta_2}^{\pi+\delta_2}r|\sin\alpha||\cos t|dt\\
			&\leq& \int_{0}^{\pi-\delta_2}2B^{\frac{1}{2}}rr^{-\frac{\beta}{2}}e^{-\frac{r}{4}(1+\cos t)}dt+\int_{\pi+\delta_2}^{2\pi}2B^{\frac{1}{2}}rr^{-\frac{\beta}{2}}e^{-\frac{r}{4}(1+\cos t)}dt\\
			& & +\int_{\pi-\delta_2}^{\pi+\delta_2}r\sin\alpha|\cos t|dt\\
			&\leq & 4\pi B^{\frac{1}{2}}r^{1-\frac{\beta}{2}}e^{-\frac{r}{4}(1-\cos \delta_2)}+IV,
		\end{eqnarray*}
		where
		$
			IV=\int_{\pi-\delta_2}^{\pi+\delta_2}r|\sin\alpha||\cos t|dt.
		$
		To proceed further, we set
		\begin{equation*}
			I_1=\{\sin\alpha\geq0\}\cap[\pi-\delta_2,\pi+\delta_2], \ \ I_2=\{\sin\alpha<0\}\cap[\pi-\delta_2,\pi+\delta_2].
		\end{equation*}
		Then we compute
		\begin{eqnarray*}
			IV
			&=&\int_{\pi-\delta_2}^{\pi+\delta_2}r|\sin\alpha||\cos t|dt=-\int_{\pi-\delta_2}^{\pi+\delta_2}r|\sin\alpha|\cos tdt\\
			&=&-\int_{I_1}r\sin\alpha\cos tdt+\int_{I_2}r\sin\alpha\cos tdt\\
			&=&-\int_{I_1}r\sin\alpha d\sin t+\int_{I_2}r\sin\alpha d\sin t\\
			&=& -r\sin\alpha \sin t |_{\partial I_1}+ r\int_{I_1} \sin t d\sin\alpha +r\sin\alpha \sin t |_{\partial I_2}- r\int_{I_2} \sin t d\sin\alpha \\
			&\leq & 4B^{\frac{1}{2}}r^{1-\frac{\beta}{2}}e^{-\frac{r}{4}(1-\cos \delta_2)}+r\int_{I_1} \sin t \langle \nabla\sin\alpha,\frac{\partial}{\partial t}\rangle dt-r\int_{I_2} \sin t \langle \nabla\sin\alpha,\frac{\partial}{\partial t}\rangle dt\\
			&\leq & 4B^{\frac{1}{2}}r^{1-\frac{\beta}{2}}e^{-\frac{r}{4}(1-\cos \delta_2)}+r\int_{\pi-\delta_2}^{\pi+\delta_2} |\sin t| |\nabla\sin\alpha|dt\\
			&= & 4B^{\frac{1}{2}}r^{1-\frac{\beta}{2}}e^{-\frac{r}{4}(1-\cos \delta_2)}+r\int_{-\delta_2}^{\delta_2} |\sin t| |\nabla\sin\alpha|dt\\
			&\leq& 4B^{\frac{1}{2}}r^{1-\frac{\beta}{2}}e^{-\frac{r}{4}(1-\cos \delta_2)}+Cr^{1-\frac{\beta}{8}}\int_{-\delta_2}^{\delta_2} |t|dt\\
			&=&4B^{\frac{1}{2}}r^{1-\frac{\beta}{2}}e^{-\frac{r}{4}(1-\cos \delta_2)}+Cr^{1-\frac{\beta}{8}}\delta_2^2.
		\end{eqnarray*}
		Here we have used Corollary \ref{cor-dalpha} and the fact that $\sin\alpha=0$ on $(\pi-\delta_2,\pi+\delta_2)\cap(\partial I_1\cup \partial I_2)$. For fixed $\sigma>0$ small enough, we choose $\delta_2$ such that $r^{1-\frac{\beta}{8}}\delta_2^2=r^{-\sigma}$, i.e., $\delta_2=r^{-\frac{1}{2}-\frac{\sigma}{2}+\frac{\beta}{16}}$. Then we have
		\begin{equation}\label{e-I}
			I\leq 4\pi B^{\frac{1}{2}}r^{1-\frac{\beta}{2}}e^{-\frac{r}{4}(1-\cos r^{-\frac{1}{2}-\frac{\sigma}{2}+\frac{\beta}{16}})}+4B^{\frac{1}{2}}r^{1-\frac{\beta}{2}}e^{-\frac{r}{4}(1-\cos r^{-\frac{1}{2}-\frac{\sigma}{2}+\frac{\beta}{16}})}+Cr^{-\sigma}.
		\end{equation}

		\vspace{.1in}

		Plugging (\ref{e-II}) and (\ref{e-I}) into (\ref{e-R}) yields
		\begin{eqnarray}\label{e-R-2}
			\int_0^{2\pi}|R|dt
			&\leq& 4\pi B^{\frac{1}{2}}Cr^{1-\frac{\beta}{2}}e^{-\frac{r}{4}(1-\cos \delta_1)}+ 32B^{\frac{1}{2}}Cr^{-\frac{\beta}{2}}\nonumber\\
			&  & + (4\pi B^{\frac{1}{2}}+4B^{\frac{1}{2}})Cr^{1-\frac{\beta}{2}}e^{-\frac{r}{4}(1-\cos r^{-\frac{1}{2}-\frac{\sigma}{2}+\frac{\beta}{16}})}+Cr^{-\sigma}.
		\end{eqnarray}

		\vspace{.1in}
		
		\noindent \textbf{Claim:} if $\sigma<\frac{\beta}{8}$, then we have
		\begin{equation*}
			\lim_{r\to\infty}r^{1-\frac{\beta}{2}}e^{-\frac{r}{4}(1-\cos r^{-\frac{1}{2}-\frac{\sigma}{2}+\frac{\beta}{16}})}=0.
		\end{equation*}
	 In fact,
	 \begin{eqnarray*}
	 1-\cos r^{-\frac{1}{2}-\frac{\sigma}{2}+\frac{\beta}{16}}\approx\frac{1}{2}r^{-1-\sigma+\frac{\beta}{8}}, \ \  as \ r \gg 1,
	 \end{eqnarray*}
 so \begin{eqnarray*}
 	e^{-\frac{r}{4}(1-\cos r^{-\frac{1}{2}-\frac{\sigma}{2}+\frac{\beta}{16}})} \approx e^{-\frac{1}{8}r^{\frac{\beta}{8}-\sigma}},  \ \  as \ r \gg 1.
 \end{eqnarray*} Since $\sigma<\frac{\beta}{8}$, it is easy to see that
\begin{eqnarray*}
	\lim_{r\to\infty}\frac{r^{1-\frac{\beta}{2}}}{e^{\frac{1}{8}r^{\frac{\beta}{8}-\sigma}}}=0.
\end{eqnarray*}		
		 This proves the claim.
		
		\vspace{.1in}

		With $\sigma$ so chosen, applying the claim to (\ref{e-R-2}) yields
		\begin{eqnarray*}
			\lim_{r\to\infty}\int_0^{2\pi}|R|dt=0.
		\end{eqnarray*}
		We therefore get from (\ref{e-gammajr}) that
		\begin{equation*}
			\lim_{r\to\infty}\left|\oint_{\gamma_{j,r}}\langle \nu,e_1\rangle d\sigma\right|=0.
		\end{equation*}
		As each $\Sigma_j\backslash B_{R_2}(0)$ is a graph over $P_{I_j}$ with derivatives bounded by $S_{j}$, there exists a sequence of compact sets $K_n$ forming an exhaustion of $\Sigma$ such that
		\begin{equation*}
			\partial K_n=\gamma_{1,r_n}\cup\cdots\cup \gamma_{N,r_n},
		\end{equation*}
		where $\{r_n\}_{n\in {\mathbb N}}$ is a sequence that converges to infinity. This finishes the proof of the theorem.
\end{proof}

	\vspace{.2in}

	\section{Example of Symplectic Translating Solitons}

	\vspace{.1in}
	
In \cite{JLT}, Joyce-Lee-Tsui constructed  very interesting examples for Lagrangian translating solitons with the oscillation of the Lagrangian angle arbitrarily small. Their construction is as follows (Corollary I of \cite{JLT}):

For given constants $\beta>0$ and $a>0$, define
\begin{equation}\label{e-phi}
\phi(y)=\int_0^y\frac{dt}{\left(\frac{1}{a}+t^2\right)\sqrt{P(t)}},
\end{equation}
where
		\begin{equation*}
P(t)=\frac{1}{t^2}\left[(1+at^2)e^{\beta t^2}-1\right].
		\end{equation*}
Set
\begin{equation}\label{e-Sigma}
\Sigma=\left\{\left(x\sqrt{\frac{1}{a}+y^2}e^{i\phi(y)}, \frac{y^2-x^2}{2}-\frac{i}{\beta}\left[\phi(y)+\arg\left(y+iP(y)^{-\frac{1}{2}}\right)\right]\right): x,y\in {\mathbb R}\right\}.
\end{equation}
Joyce-Lee-Tsui proved that $\Sigma$ is an embedded Lagrangian translating soliton in ${\mathbb C}^2$ with translating vector ${\bf T}=(0,0,\beta,0)\in {\mathbb C}^2={\mathbb R}^4$, which is diffeomorphic to ${\mathbb R}^2$.
Here $\Sigma$ is Lagrangian with respect to the standard complex structure
\begin{equation}\label{e-J-0}
J_0=\left(\begin{array}{cccc}
    0 & -1 & 0 & 0 \\
    1 & 0  & 0 & 0 \\
    0 & 0  & 0 & -1 \\
    0 & 0  & 1 & 0 \\
\end{array}
\right)
\end{equation}
Furthermore, since $\phi$ is monotonically increasing in $y$, there exists $\bar\phi\in (0,\frac{\pi}{2})$ such that $\phi(y)\to\bar\phi$ as $y\to\infty$ and $\phi(y)\to-\bar\phi$ as $y\to-\infty$. Joyce-Lee-Tsui showed that for fixed $\beta>0$, the map $a\mapsto \bar\phi$ is a 1-1 correspondence from $(0,\infty)$ to $(0,\frac{\pi}{2})$. The Lagrangian angle of $\Sigma$ varies between $\bar\phi$ and $\pi-\bar\phi$. In particular, the oscillation of the Lagrangian angle satisfies
\begin{equation}\label{e-Langrangian-angle-osc}
{\rm osc}_{\Sigma}\theta
\leq \pi-2\bar\phi.
\end{equation}
Hence by choosing $\bar\phi$ close to $\frac{\pi}{2}$, the oscillation of the Lagrangian angle can be made arbitrarily small.

In this section, we show that by choosing a suitable compatible complex structure, $\Sigma$ can be realized as a symplectic translating soliton whose Kähler angle becomes arbitrarily close to $0$. Furthermore, the total curvature of $\Sigma$ is infinite.

\begin{theorem}\label{thm-example}
There is a compatible complex structure $J_1$ in ${\mathbb C}^2$, such that the K\"ahler angle $\alpha$ of the surface $\Sigma$ defined by (\ref{e-Sigma}) lies between $-\frac{\pi}{2}+\bar\phi$ and $\frac{\pi}{2}-\bar\phi$. In particular, by choosing $\bar\phi$ close to $\frac{\pi}{2}$, we know that for any $\varepsilon >0$, there is a symplectic translating soliton $\Sigma$ with respect to $J_1$, such that $\cos\alpha\geq 1-\varepsilon$.

Furthermore, the translating soliton $\Sigma$ constructed above has bounded second fundamental, genus 0, first Betti number 0, extrinsic quadratic area growth, and
\begin{equation*}
      \lim_{r\to\infty}\int_{\Sigma\cap\{r<|{\bf x}|<2r\}}|{\bf H}|^2d\mu>0.
\end{equation*}
In particular,
 $$
  \int_{\Sigma}|{\bf H}|^2d\mu=\infty.
 $$
\end{theorem}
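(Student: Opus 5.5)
The approach is to work directly with the explicit parametrization (\ref{e-Sigma}) and to use the Lagrangian structure. For a Lagrangian surface with respect to $J_0$ and Lagrangian angle $\theta$, the calibrated $2$-forms $\mathrm{Re}(e^{-i\psi}dz_1\wedge dz_2)$ restrict to $\Sigma$ as $\cos(\theta-\psi)\,d\mu$. In the notation of (\ref{e-J}) these forms are exactly the Kähler forms $\omega_J(X,Y)=\langle JX,Y\rangle$ of the complex structures $J=\cos\psi\,J_2+\sin\psi\,J_3$ in the $J_2,J_3$ plane, possibly after an orientation or sign convention. Hence for such $J$ we get $\cos\alpha_J=\cos(\theta-\psi)$ on $\Sigma$. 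Since $\theta$ takes values in $[\bar\phi,\pi-\bar\phi]$ by (\ref{e-Langrangian-angle-osc}), I will choose $\psi=\frac{\pi}{2}$, the midpoint, and call the resulting structure $J_1$ (relabelled). Then $\alpha=\theta-\frac{\pi}{2}\in[-\frac{\pi}{2}+\bar\phi,\frac{\pi}{2}-\bar\phi]$, so $\cos\alpha\ge\sin\bar\phi$. This tends to $1$ as $\bar\phi\to\frac{\pi}{2}$, and that gives the symplectic statement. The only real check is the identification of $dz_1\wedge dz_2$ with $J_2,J_3$, which I will verify on the basis vectors.

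For the topological and geometric properties, I will argue as follows. Joyce--Lee--Tsui give that $\Sigma$ is embedded and diffeomorphic to $\mathbb R^2$, so its genus and first Betti number are both $0$. For bounded $|{\bf A}|$, I will use the parametrization $(x,y)$: the metric coefficients are explicit in terms of $\phi,P$, and as $|y|\to\infty$ the surface is asymptotic to the union of two half-planes, namely translated products of a ray $\sqrt{1/a+y^2}e^{i\phi}\approx |y|e^{\pm i\bar\phi}$ with the parabola direction. I will check that the curvature terms, which involve $\phi''$ and $P'/P$, decay exponentially in $y$, because $P$ grows like $e^{\beta y^2}$. Together with smoothness on compact $y$-ranges (and $x$-translation-like structure: the $x$-dependence is linear in $z_1$ and quadratic in $z_2$), this gives a uniform bound.

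For area growth, $|{\bf x}|\lesssim r$ forces $|x|\sqrt{1/a+y^2}\lesssim r$ and $|y^2-x^2|\lesssim r$, and moreover $\mathrm{Im}\,z_2$ stays bounded. I will estimate the area element, which is roughly $\sqrt{(1/a+y^2)(x^2+y^2)}$, over the region $x^2+y^2\lesssim r$, $|x||y|\lesssim r$. This should yield $\lesssim r^2$; it needs care but is routine.

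The main obstacle is the lower bound on $\int_{\Sigma\cap\{r<|{\bf x}|<2r\}}|{\bf H}|^2$. Here I will use ${\bf H}={\bf T}^\perp$, so $|{\bf H}|^2=\beta^2-|{\bf T}^\top|^2$, together with the Lagrangian relation ${\bf H}=J_0\nabla\theta$, which gives $|{\bf H}|=|\nabla\theta|$. Along the core region $|y|\lesssim1$, $|x|\sim\sqrt r$, the tangent plane is not close to containing ${\bf T}$, since $\theta$ varies in $y$ with $\theta_y$ of order one there. In these coordinates $|{\bf H}|$ is comparable to $1/|x|$, while the area element is comparable to $|x|\,dx\,dy$. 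The region $\{r<|{\bf x}|<2r\}$ corresponds to $|x|\in[\sqrt{2r},2\sqrt r]$, so the integral is $\gtrsim\int_{\sqrt{2r}}^{2\sqrt r}|x|^{-1}dx\approx\log\sqrt2>0$, independent of $r$. Summing this over dyadic annuli then gives $\int_\Sigma|{\bf H}|^2=\infty$. The delicate point is to compute $|{\bf H}|$ precisely enough on $|y|\le1$ to confirm the $1/|x|$ rate; I would first try the formula $|{\bf H}|^2=|\nabla\theta|^2$ with the explicit metric $g_{xx}\approx 1/a+y^2+x^2$, $g_{yy}\approx x^2(1/a+y^2)\phi'^2+\dots$.
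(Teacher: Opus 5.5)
Your argument is correct in outline, and it differs from the paper's in two places.

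\textbf{Kähler angle.} The paper computes $\langle JF_x,F_y\rangle/\sqrt{\det g}$ for every $J_{A,B,C}$, obtains $\cos\alpha=-[B\cos(\phi+\gamma)+C\sin(\phi+\gamma)]$, and then uses (\ref{e-phi-gamma}) to get $\bar\phi\le\phi+\gamma\le\frac{\pi}{2}$ for $y>0$, with the symmetric bound for $y<0$. Your calibration identity is the conceptual form of that computation. For the matrices in (\ref{e-J-10}) one has $\mathrm{Re}(dz_1\wedge dz_2)=\omega_{J_2}$ and $\mathrm{Im}(dz_1\wedge dz_2)=\omega_{J_3}$, so $\psi=\frac{\pi}{2}$ gives $J_3$, which is exactly the matrix in (\ref{e-J-1}). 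With the orientation $(F_x,F_y)$ the Lagrangian angle is $\theta=\phi+\gamma$, and $\cos\alpha=\sin\theta\ge\sin\bar\phi$. Because $J\mapsto\omega_J$ is linear and $\omega_{J_1}|_\Sigma=0$, your viewpoint also gives $\cos\alpha_J=b\cos\theta+c\sin\theta$ for every $J=aJ_1+bJ_2+cJ_3$. That is the general formula the paper derives by hand and uses to see that no $J$ has $\cos\alpha_J\to1$ at infinity. The price is that you import the range of $\theta$ from Joyce--Lee--Tsui, whereas the paper's computation rederives it. In fact the oscillation bound (\ref{e-Langrangian-angle-osc}) alone suffices if you take $\psi$ to be the midpoint of the range of a continuous lift of $\theta$.

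\textbf{The $|{\bf H}|^2$ bound.} The paper integrates $|{\bf H}|$ over the part of the annulus with $|y|\le1$, gets a quantity of order $r$, and then applies H\"older's inequality with the quadratic area bound. You integrate $|{\bf H}|^2=|\nabla\theta|^2=g^{yy}\theta_y^2$ directly, which is shorter and, done correctly, sharper.

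\textbf{The area element is wrong.} In these coordinates $g_{xy}=0$, $g_{xx}=x^2+y^2+\frac1a$ and $g_{yy}=g_{xx}\,\frac{ay^2e^{\beta y^2}}{(1+ay^2)e^{\beta y^2}-1}$, where the last factor lies between two positive constants. Hence $\sqrt{\det g}$ is comparable to $x^2+y^2+\frac1a$. It is not comparable to $\sqrt{(1/a+y^2)(x^2+y^2)}$, nor to $|x|$ on the strip $|y|\le1$; your own expression $g_{yy}\approx x^2(\frac1a+y^2)\phi'^2$ already gives $g_{yy}\sim x^2$ there.

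In the lower bound this is harmless, because you only need the density from below, and the truth is better than you claim. Indeed $|{\bf H}|^2\sqrt{\det g}=\frac{\beta^2}{\sqrt a}\,\frac{|y|e^{-\beta y^2/2}}{\sqrt{(1+ay^2)e^{\beta y^2}-1}}$ is bounded below by a positive constant on $|y|\le1$. So the integral over the annulus is at least of order $\sqrt r$, not about $\log\sqrt2$.

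For the area growth, however, you need the density from above. The estimate must be run with $\sqrt{\det g}\le x^2+y^2+\frac1a$ over $\{x^2+y^2\le Cr\}$, as the paper does.

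\textbf{Two smaller points.}
First, smoothness on compact $y$-ranges does not control $|{\bf A}|$ as $|x|\to\infty$. The explicit formula $|{\bf A}|^2=\frac{3+[\beta(x^2+y^2+\frac1a)+1]^2}{(x^2+y^2+\frac1a)^3ae^{\beta y^2}}$ settles boundedness.
Second, on the strip the tangent plane does become close to containing ${\bf T}$, since $|{\bf T}^\perp|=|{\bf H}|\sim|x|^{-1}$. The divergence comes from this slow decay being paired with area density of order $x^2$, not from ${\bf T}$ staying transverse.
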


	\vspace{.1in}

 The theorem implies that, the assumption (3) in Theorem \ref{maintheorem} is not superfluous.

\begin{remark} From the proof, we see that we can actually choose
\begin{equation*}
J_1=\left(\begin{array}{cccc}
    0 & 0  & 0 & -1 \\
    0 & 0  & -1 & 0 \\
    0 & 1  & 0 & 0 \\
    1 & 0  & 0 & 0 \\
\end{array}
\right)
\end{equation*}
The construction also implies that  for any $J\in\mathcal{J} $,  there holds
$\lim_{\Sigma\ni p\to \infty}\cos\alpha_J(p)\neq 1.$ This means the assumption $(3)$ in Theorem \ref{maintheorem} is key  for the limit  decay estimate  of $\alpha_J$ in Theorem \ref{prop-alpha-decay}.
\end{remark}

\begin{proof}
It is known that the compactible complex structure of ${\mathbb C}^2$ is of the form (see, for example, \cite{HLY})
\begin{equation*}
J=\left(\begin{array}{cccc}
    0 & A  & B & C \\
    -A & 0  & -C & B \\
    -B & C  & 0 & -A \\
    -C & -B  & A & 0 \\
\end{array}
\right)
\end{equation*}
or
\begin{equation*}
J=\left(\begin{array}{cccc}
    0 & A  & B & C \\
    -A & 0  & C & -B \\
    -B & -C  & 0 & A \\
    -C & B  & -A & 0 \\
\end{array}
\right)
\end{equation*}
where $A^2+B^2+C^2=1$. For convience, we will consider
\begin{equation}
J:=J_{A,B,C}=\left(\begin{array}{cccc}
    0 & A  & B & C \\
    -A & 0  & C & -B \\
    -B & -C  & 0 & A \\
    -C & B  & -A & 0 \\
\end{array}
\right).
\end{equation}
It is obvious that $J_0=J_{-1,0,0}$. We will compute tha K\"ahler angle of $\Sigma$ with respect to $J$.

The parametrization of $\Sigma$ can be expressed as
\begin{equation}\label{e-Sigma-F}
F(x,y)=\left(x\sqrt{\frac{1}{a}+y^2}\cos\phi(y), x\sqrt{\frac{1}{a}+y^2}\sin\phi(y), \frac{y^2-x^2}{2}, -\frac{1}{\beta}\left[\phi(y)+\gamma(y)\right]\right),
\end{equation}
for $x,y\in {\mathbb R}$, where
\begin{equation}\label{e-gamma}
\gamma(y)=\arg\left(y+iP(y)^{-\frac{1}{2}}\right).
\end{equation}
Notice that
		\begin{equation*}
\lim_{t\to 0}P(t)=\lim_{t\to 0}\frac{1}{t^2}\left[(1+at^2)e^{\beta t^2}-1\right]=\beta+a>0.
		\end{equation*}
Hence
\begin{equation}\label{e-gamma-2}
\gamma(y)=\arg\left(y+iP(y)^{-\frac{1}{2}}\right)=
\begin{cases}
   \arctan\frac{1}{\sqrt{(1+ay^2)e^{\beta y^2}-1}}, & {\rm if} \ y>0;\\
    \frac{\pi}{2},  & {\rm if} \ y=0;\\
   -\arctan\frac{1}{\sqrt{(1+ay^2)e^{\beta y^2}-1}}+\pi, & {\rm if} \ y<0.\\
\end{cases}
\end{equation}

\vspace{.1in}

We first consider the case that $y>0$. In this case, by direct computation, we see that
		\begin{equation*}
\frac{d\gamma(y)}{dy}=-\frac{y}{\left(\frac{1}{a}+y^2\right)\sqrt{(1+ay^2)e^{\beta y^2}-1}}-\frac{\beta y}{\sqrt{(1+ay^2)e^{\beta y^2}-1}}.
		\end{equation*}
Therefore, we have
		\begin{equation}\label{e-phi-gamma}
\frac{d}{dy}\left[\phi(y)+\gamma(y)\right]=-\frac{\beta y}{\sqrt{(1+ay^2)e^{\beta y^2}-1}}.
		\end{equation}
It is clear that the tangent space of $\Sigma$ is spanned by
		\begin{equation*}
F_x=\left(\sqrt{\frac{1}{a}+y^2}\cos\phi(y), \sqrt{\frac{1}{a}+y^2}\sin\phi(y), -x, 0\right)^T
		\end{equation*}
and
\begin{equation*}
F_y=\left(\begin{array}{c}
  \frac{xy}{\sqrt{\frac{1}{a}+y^2}}\left(\cos\phi(y)-\frac{\sin\phi(y)}{\sqrt{(1+ay^2)e^{\beta y^2}-1}}\right) \\
  \frac{xy}{\sqrt{\frac{1}{a}+y^2}}\left(\sin\phi(y)+\frac{\cos\phi(y)}{\sqrt{(1+ay^2)e^{\beta y^2}-1}}\right)\\
   y\\
  \frac{y}{\sqrt{(1+ay^2)e^{\beta y^2}-1}}\\
\end{array}
\right).
\end{equation*}
The induced metric on $\Sigma$ is given by
\begin{equation*}
g_{xx}=\langle F_x,F_x\rangle=x^2+y^2+\frac{1}{a},
\end{equation*}
\begin{equation*}
g_{xy}=\langle F_x,F_y\rangle=0,
\end{equation*}
\begin{equation*}
g_{yy}=\langle F_y,F_y\rangle=\frac{\left(x^2+y^2+\frac{1}{a}\right)ay^2e^{\beta y^2}}{(1+ay^2)e^{\beta y^2}-1}.
\end{equation*}
Thus,
\begin{equation}\label{E-detg}
{\rm det}g=\frac{\left(x^2+y^2+\frac{1}{a}\right)^2ay^2e^{\beta y^2}}{(1+ay^2)e^{\beta y^2}-1}.
\end{equation}

\vspace{.1in}

For $J=J_{A,B,C}$, we have
\begin{equation*}
JF_{x}=\left(\begin{array}{c}
  A\sqrt{\frac{1}{a}+y^2}\sin\phi(y)-Bx \\
  -A\sqrt{\frac{1}{a}+y^2}\cos\phi(y)-Cx\\
  -B\sqrt{\frac{1}{a}+y^2}\cos\phi(y)-C\sqrt{\frac{1}{a}+y^2}\sin\phi(y)\\
  -C\sqrt{\frac{1}{a}+y^2}\cos\phi(y)+B\sqrt{\frac{1}{a}+y^2}\sin\phi(y)+Ax\\
\end{array}
\right).
\end{equation*}
By a direct calculation, we have
\begin{eqnarray*}
\langle JF_{x}, F_y\rangle=-\frac{y\left(x^2+y^2+\frac{1}{a}\right)\sqrt{ae^{\beta y^2}}}{\sqrt{(1+ay^2)e^{\beta y^2}-1}}\left[B\cos(\phi(y)+\gamma(y))+C\sin(\phi(y)+\gamma(y))\right].
\end{eqnarray*}
Therefore,
\begin{equation}\label{E-cosalpha}
\cos\alpha=\frac{\langle JF_{x}, F_y\rangle}{\sqrt{{\rm det}g}}=-\left[B\cos(\phi(y)+\gamma(y))+C\sin(\phi(y)+\gamma(y))\right].
\end{equation}

In particular, if $B=C=0$, then $\cos\alpha\equiv0$. That is, $\Sigma$ is Lagrangian with respect to $J_0=J_{-1,0,0}$. Now we assume that $B^2+C^2\neq 0$, then (\ref{E-cosalpha}) can be written as
\begin{eqnarray}\label{E-cosalpha-2}
\cos\alpha&=&-\sqrt{B^2+C^2}\cos(\phi(y)+\gamma(y)-\eta_0)\nonumber\\
 &=&\sqrt{B^2+C^2}\cos(\phi(y)+\gamma(y)-\eta_0+\pi),
\end{eqnarray}
with $\eta_0\in [0,2\pi]$ such that
\begin{equation}\label{E-eta}
\cos\eta_0=\frac{B}{\sqrt{B^2+C^2}}, \ \ \sin\eta_0=\frac{C}{\sqrt{B^2+C^2}}.
\end{equation}
So, for any fixed $y$, along the curve $\gamma_y(t)=F(t,y)$, the limit
\begin{align}
\lim_{t\to \infty} \cos\alpha(\gamma_y(t))\equiv\sqrt{B^2+C^2}\cos(\phi(y)+\gamma(y)-\eta_0+\pi)\neq 1 \quad \text{ for much } y.
\end{align}
Especially,  either $\lim_{\Sigma\ni p\to \infty}\cos\alpha(p)=0$ or $\lim_{\Sigma\ni p}\cos\alpha(p)$ does not exists.  In conclusion, for any $J$, $\lim_{\Sigma\ni p\to \infty}\cos\alpha_J(p)\neq 1$.

Notice that $\frac{d}{dy}\left[\phi(y)+\gamma(y)\right]<0$ for $y>0$. Also we have that
\begin{equation*}
\phi(0)=0, \lim_{y\to\infty}\phi(y)=\bar\phi, \gamma(0)=\frac{\pi}{2}, \lim_{y\to\infty}\gamma(y)=0.
\end{equation*}
Hence we have for all $y>0$
\begin{equation*}
\bar\phi\leq \phi(y)+\gamma(y)\leq \frac{\pi}{2},
\end{equation*}
so that
\begin{equation*}
\bar\phi-\eta_0+\pi\leq \phi(y)+\gamma(y)-\eta_0+\pi\leq \frac{3\pi}{2}-\eta_0.
\end{equation*}
Now we choose $A=0, B=0, C=-1$, then $\eta_0=\frac{3\pi}{2}$. With this choice of $\eta_0$ and corresponding complex structure
\begin{equation}\label{e-J-1}
J_1:=J_{0,0,-1}=\left(\begin{array}{cccc}
    0 & 0  & 0 & -1 \\
    0 & 0  & -1 & 0 \\
    0 & 1  & 0 & 0 \\
    1 & 0  & 0 & 0 \\
\end{array}
\right),
\end{equation}
we have
\begin{eqnarray}\label{e-y-p}
\cos\alpha
&=&\cos\left(\phi(y)+\gamma(y)-\frac{\pi}{2}\right)=\cos\left[\frac{\pi}{2}-(\phi(y)+\gamma(y))\right]\nonumber\\
&\geq& \cos\left[\frac{\pi}{2}-\bar\phi\right]=\sin\bar\phi.
\end{eqnarray}

Consequently, the level sets of $\cos\alpha$ are lines.

	\vspace{.1in}

For the case of $y<0$, we can proceed a similar argument to get
\begin{equation}\label{e-y-n}
\cos\alpha\geq\cos\left[\frac{\pi}{2}-\bar\phi\right]=\sin\bar\phi.
\end{equation}

\vspace{.1in}

Observing that $\phi(0)=0$ and $\gamma(0)=\frac{\pi}{2}$, the above computation yields for all $x\in{\mathbb R}$:
\begin{equation}\label{e-y-0}
\cos\alpha(x,0)=1.
\end{equation}

\vspace{.1in}

Combining (\ref{e-y-p}), (\ref{e-y-n}) and (\ref{e-y-0}) together, we finally obtain that
\begin{equation}\label{e-y-all}
\cos\alpha\geq\sin\bar\phi,
\end{equation}
on $\Sigma$.
Hence, we have
\begin{equation*}
-\frac{\pi}{2}+\bar\phi\leq \alpha\leq\frac{\pi}{2}-\bar\phi.
\end{equation*}

In particular, given any $\varepsilon >0$, after fixing $\beta > 0$ and choosing $a > 0$ such that $\bar\phi$ is sufficiently close to $\frac{\pi}{2}$, there exists a translating soliton $\Sigma$ determined by $\beta$ and $a$, which is symplectic with respect to $J_1$, satisfying
$$\cos\alpha\geq \sin\bar\phi \geq 1-\varepsilon.$$

	\vspace{.1in}

As shown by Joyce-Lee-Tsui, $\Sigma$ is diffeomorphic to ${\mathbb R}^2$ and asymptotic to a union of two planes. Hence, the genus of $\Sigma$ is zero, the second fundamental form of $\Sigma$ is uniformly bounded and $\Sigma$ has extrinsic quadratic area growth.

	\vspace{.1in}

Next we compute the second fundamental form of $\Sigma$. Without loss of generality, we  compute it at the points where $y>0$. The other case is similar. First, we choose an orthonormal frame on $\Sigma$ as follows
		\begin{equation*}
\nu_x=\frac{J_0F_x}{|F_x|}=\frac{\left(-\sqrt{\frac{1}{a}+y^2}\sin\phi(y), \sqrt{\frac{1}{a}+y^2}\cos\phi(y), 0, -x\right)^T}{\sqrt{x^2+y^2+\frac{1}{a}}}
		\end{equation*}
and
\begin{equation*}
\nu_y=\frac{J_0F_y}{|F_y|}=\frac{\left(\begin{array}{c}
  -  \frac{xy}{\sqrt{\frac{1}{a}+y^2}}\left(\sin\phi(y)+\frac{\cos\phi(y)}{\sqrt{(1+ay^2)e^{\beta y^2}-1}}\right)\\
\frac{xy}{\sqrt{\frac{1}{a}+y^2}}\left(\cos\phi(y)-\frac{\sin\phi(y)}{\sqrt{(1+ay^2)e^{\beta y^2}-1}}\right) \\
  -\frac{y}{\sqrt{(1+ay^2)e^{\beta y^2}-1}}\\
   y\\
\end{array}
\right)}{\sqrt{\frac{\left(x^2+y^2+\frac{1}{a}\right)ay^2e^{\beta y^2}}{(1+ay^2)e^{\beta y^2}-1}}}.
\end{equation*}
We also have
		\begin{equation*}
F_{xx}=\left(0, 0, -1, 0\right)^T,
		\end{equation*}
		\begin{equation*}
F_{xy}=F_{yx}=\left(\begin{array}{c}
  \frac{y}{\sqrt{\frac{1}{a}+y^2}}\left(\cos\phi(y)-\frac{\sin\phi(y)}{\sqrt{(1+ay^2)e^{\beta y^2}-1}}\right) \\
  \frac{y}{\sqrt{\frac{1}{a}+y^2}}\left(\sin\phi(y)+\frac{\cos\phi(y)}{\sqrt{(1+ay^2)e^{\beta y^2}-1}}\right)\\
   0\\
  0\\
\end{array}
\right),
\end{equation*}
and
\begin{equation*}
F_{yy}=\left(\begin{array}{c}
  \frac{x}{\sqrt{\frac{1}{a}+y^2}\left[(1+ay^2)e^{\beta y^2}-1\right]}\left\{\left(e^{\beta y^2}-1\right)\cos\phi(y)+\frac{(\beta y^2+a\beta y^4-1)e^{\beta y^2}+1}{\sqrt{(1+ay^2)e^{\beta y^2}-1}}\sin\phi(y)\right\} \\
  \frac{x}{\sqrt{\frac{1}{a}+y^2}\left[(1+ay^2)e^{\beta y^2}-1\right]}\left\{\left(e^{\beta y^2}-1\right)\sin\phi(y)-\frac{(\beta y^2+a\beta y^4-1)e^{\beta y^2}+1}{\sqrt{(1+ay^2)e^{\beta y^2}-1}}\cos\phi(y)\right\} \\
   1\\
  \frac{e^{\alpha y^2}(1-\alpha y^2-a\alpha y^4)-1}{\left[(1+ay^2)e^{\beta y^2}-1\right]^{\frac{3}{2}}}\\
\end{array}
\right).
\end{equation*}
Therefore, we have
\begin{equation*}
h_{xx}^x=\langle F_{xx},\nu_x\rangle=0,  \  h_{xx}^y=\langle F_{xx},\nu_y\rangle=\frac{1}{\sqrt{\left(x^2+y^2+\frac{1}{a}\right)ae^{\beta y^2}}},
\end{equation*}
\begin{equation*}
h_{xy}^x=h_{yx}^x=\langle F_{xy},\nu_x\rangle=\frac{y}{\sqrt{x^2+y^2+\frac{1}{a}}\sqrt{(1+ay^2)e^{\beta y^2}-1}},
\end{equation*}
\begin{equation*}
h_{xy}^y=h_{yx}^y=\langle F_{xy},\nu_y\rangle=0,
\end{equation*}
\begin{equation*}
h_{yy}^x=\langle F_{yy},\nu_x\rangle=0,
\end{equation*}
\begin{equation*}
h_{yy}^y=\langle F_{yy},\nu_y\rangle=-\frac{y^2\left[\beta\left(x^2+y^2+\frac{1}{a}\right)+1\right]\sqrt{ae^{\beta y^2}}}{\left[(1+ay^2)e^{\beta y^2}-1\right]\sqrt{x^2+y^2+\frac{1}{a}}}.
\end{equation*}
Hence, we have
\begin{equation*}
H^{x}=g^{ij}h^x_{ij}=g^{xx}h_{xx}^x+2g^{xy}h_{xy}^x+g^{yy}h_{yy}^x=0,
\end{equation*}
\begin{eqnarray*}
H^{y}
=g^{ij}h^y_{ij}=g^{xx}h_{xx}^y+2g^{xy}h_{xy}^y+g^{yy}h_{yy}^y
=-\frac{\beta}{\sqrt{\left(x^2+y^2+\frac{1}{a}\right)ae^{\beta y^2}}}.
\end{eqnarray*}
In particular, the mean curvature vector of $\Sigma$ is given by
\begin{eqnarray}\label{E-mean-curvature}
{\bf H}=-\frac{\beta}{\sqrt{\left(x^2+y^2+\frac{1}{a}\right)ae^{\beta y^2}}}\nu_y
={\bf T}^{\perp}
\end{eqnarray}
where ${\bf T}=(0,0,\beta,0)$. Hence, $\Sigma$ is a translating soliton with translating vector ${\bf T}$.

\vspace{.1in}

Next, we compute the norm of the second fundamental form of $\Sigma$. We have
\begin{eqnarray*}
|{\bf A}|^2
&=&g^{xx}g^{xx}h^x_{xx}h^x_{xx}+2g^{xx}g^{yy}h^x_{xy}h^x_{xy}+g^{yy}g^{yy}h^x_{yy}h^x_{yy}\\
& & +g^{xx}g^{xx}h^y_{xx}h^y_{xx}+2g^{xx}g^{yy}h^y_{xy}h^y_{xy}+g^{yy}g^{yy}h^y_{yy}h^y_{yy}\\
&=&2g^{xx}g^{yy}h^x_{xy}h^x_{xy}+g^{xx}g^{xx}h^y_{xx}h^y_{xx}+g^{yy}g^{yy}h^y_{yy}h^y_{yy}\\
&=& \frac{3+\left[\beta\left(x^2+y^2+\frac{1}{a}\right)+1\right]^2}{\left(x^2+y^2+\frac{1}{a}\right)^3ae^{\beta y^2}}.
\end{eqnarray*}
We also have
\begin{eqnarray*}
|\overline{\nabla}J_{\Sigma}|^2
&=& |{\bf A}|^2+2(K_{1234}-R_{1234})\\
&=&|{\bf A}|^2-\frac{2}{\sqrt{\det(g)}}g^{ij}\left(h^{3}_{1i}h^{4}_{2j}-h^{3}_{2i}h^{4}_{1j}\right)\\
&=& \frac{3+\left[\beta\left(x^2+y^2+\frac{1}{a}\right)+1\right]^2}{\left(x^2+y^2+\frac{1}{a}\right)^3ae^{\beta y^2}}-\frac{2\left[\beta\left(x^2+y^2+\frac{1}{a}\right)+2\right]}{\left(x^2+y^2+\frac{1}{a}\right)^3ae^{\beta y^2}}\\
&=& \frac{\beta^2}{ae^{\beta y^2}\left(x^2+y^2+\frac{1}{a}\right)}=|{\bf H}|^2.
\end{eqnarray*}
Furthermore, we proceed to compute $|\nabla\cos\alpha|^2$. From (\ref{e-phi-gamma}) and (\ref{e-y-p}), we compute
\begin{eqnarray*}
|\nabla\cos\alpha|^2
&=& g^{yy}\left(\frac{\partial\cos\alpha}{\partial y}\right)^2\\
&=& g^{yy}\left(\frac{d}{dy}\sin\left(\phi(y)+\gamma(y)\right)\right)^2\\
&=& g^{yy}\cos^2\left(\phi(y)+\gamma(y)\right)\left(\frac{d}{dy}\left(\phi(y)+\gamma(y)\right)\right)^2\\
&=& (1-\cos^2\alpha) \frac{(1+ay^2)e^{\beta y^2}-1}{\left(x^2+y^2+\frac{1}{a}\right)ay^2e^{\beta y^2}}\cdot \frac{\beta^2 y^2}{(1+ay^2)e^{\beta y^2}-1}\\
&=& \sin^2\alpha\frac{\beta^2}{ae^{\beta y^2}\left(x^2+y^2+\frac{1}{a}\right)}.
\end{eqnarray*}
In particular,
\begin{eqnarray}\label{e-H-J-alpha}
|\nabla\alpha|^2=\frac{\beta^2}{ae^{\beta y^2}\left(x^2+y^2+\frac{1}{a}\right)}=|{\bf H}|^2=|\overline{\nabla}J_{\Sigma}|^2.
\end{eqnarray}

By (\ref{e-Sigma-F}), we see that
\begin{eqnarray*}
r^2<|F(x,y)|^2<4r^2
\end{eqnarray*}
if and only if
\begin{eqnarray*}
r^2<\frac{(x^2+y^2)^2}{4}+\frac{1}{a}x^2+\frac{\left[\phi(y)+\gamma(y)\right]^2}{\beta^2}<4r^2.
\end{eqnarray*}
Therefore, for $r\geq 1$, there are constants $\sigma_1<\sigma_2$ such that
\begin{eqnarray*}
F(B_{\sigma_1\sqrt{r}}^{{\mathbb R}^2}(0))\subset \Sigma \cap B_{r}^{{\mathbb R}^4}(0) \subset F(B_{\sigma_2\sqrt{r}}^{{\mathbb R}^2}(0)).
\end{eqnarray*}
Therefore, using
 \begin{eqnarray*}
d\mu=\sqrt{\frac{\left(x^2+y^2+\frac{1}{a}\right)^2ay^2e^{\beta y^2}}{(1+ay^2)e^{\beta y^2}-1}}dxdy,
\end{eqnarray*}
and the fact that
$$\frac{ay^2e^{\beta y^2}}{(1+ay^2)e^{\beta y^2}-1}\leq 1,$$
we see that, for $r\geq 1$
\begin{eqnarray*}
{\rm Area}(\Sigma\cap B_r(0))
&=&\int_{\Sigma\cap B_r(0)}d\mu\\
&\leq&\int_{B_{\sigma_2\sqrt{r}}^{{\mathbb R}^2}(0)}\sqrt{\frac{\left(x^2+y^2+\frac{1}{a}\right)^2ay^2e^{\beta y^2}}{(1+ay^2)e^{\beta y^2}-1}}dxdy\\
&\leq &\int_{B_{\sigma_2\sqrt{r}}^{{\mathbb R}^2}(0)}\left(x^2+y^2+\frac{1}{a}\right)dxdy\\
&=& \pi\left(\frac{\sigma_2^4r^2}{2}+\frac{\sigma_2^2r}{a}\right)\leq \Lambda r^2
\end{eqnarray*}
for some positive constant $\Lambda$. This implies that $\Sigma$ has quadratic area growth.

Furthermore, there exist $0<\lambda_1<\lambda_2<\lambda_3<\lambda_4$ such that for $r$ large,
\begin{eqnarray*}
F(B_{\lambda_3\sqrt{r}}^{{\mathbb R}^2}(0)\backslash B_{\lambda_2\sqrt{r}}^{{\mathbb R}^2}(0))\subset \Sigma \cap (B_{2r}^{{\mathbb R}^4}(0)\backslash B_{r}^{{\mathbb R}^4}(0)) \subset F(B_{\lambda_4\sqrt{r}}^{{\mathbb R}^2}(0)\backslash B_{\lambda_1\sqrt{r}}^{{\mathbb R}^2}(0)).
\end{eqnarray*}
Therefore,
\begin{eqnarray*}
&& \int_{\Sigma\cap \{r<|{\bf x}|<2r\}}|\overline{\nabla}J_{\Sigma}|d\mu\\
&\geq &\int_{B_{\lambda_3\sqrt{r}}^{{\mathbb R}^2}(0)\backslash B_{\lambda_2\sqrt{r}}^{{\mathbb R}^2}(0)}\sqrt{\frac{\beta^2}{\left(x^2+y^2+\frac{1}{a}\right)ae^{\beta y^2}}}\sqrt{\frac{\left(x^2+y^2+\frac{1}{a}\right)^2ay^2e^{\beta y^2}}{(1+ay^2)e^{\beta y^2}-1}}dxdy\\
&=& \int_{B_{\lambda_3\sqrt{r}}^{{\mathbb R}^2}(0)\backslash B_{\lambda_2\sqrt{r}}^{{\mathbb R}^2}(0)}\sqrt{\frac{\beta^2\left(x^2+y^2+\frac{1}{a}\right)y^2}{(1+ay^2)e^{\beta y^2}-1}}dxdy\\
&\geq&\lambda_2\sqrt{r}\int_{-1}^{1}\int_{\lambda_2\sqrt{r}}^{\sqrt{\lambda_3^2r-1}}\beta\sqrt{\frac{y^2}{(1+ay^2)e^{\beta y^2}-1}}dxdy\\
&=&\lambda_2\sqrt{r}(\sqrt{\lambda_3^2r-1}-\lambda_2\sqrt{r})\int_{-1}^{1}\beta^2\sqrt{\frac{y^2}{ae^{\beta y^2}\left[(1+ay^2)e^{\beta y^2}-1\right]}}dy.
\end{eqnarray*}
Thus,
\begin{eqnarray*}
  &&    \lim_{r\to\infty}\left(r^{-1}\int_{\Sigma\cap\{r<|{\bf x}|<2r\}}|\overline{\nabla}J_{\Sigma}|d\mu\right)\\
&\geq& \lambda_2(\lambda_3-\lambda_2)\int_{-1}^{1}\beta^2\sqrt{\frac{y^2}{ae^{\beta y^2}\left[(1+ay^2)e^{\beta y^2}-1\right]}}dy>0.
\end{eqnarray*}
The H\"older inequality implies that
\begin{equation*}
      \lim_{r\to\infty}\int_{\Sigma\cap\{r<|{\bf x}|<2r\}}|\overline{\nabla}J_{\Sigma}|^2d\mu>0.
\end{equation*}
In particular, we have
\begin{eqnarray*}
\int_{\Sigma}|{\bf H}|^2d\mu=\int_{\Sigma}|\nabla\alpha|^2d\mu=\int_{\Sigma}|\overline{\nabla}J_{\Sigma}|^2d\mu=\infty.
\end{eqnarray*}
\end{proof}


\vspace{.2in}

    {\small\begin{thebibliography}{999}
			 \bibitem{Allard}
             \ \ \ William K. Allard, {\em On the first variation of a varifold}, Ann. of Math.(2){\bf 95}(1972),417-491.

			\bibitem {AB1}
			\ \  \  P. Aldrigo and Z. Balogh, {\em P\'olya-Szeg\"o inequalities on submanifolds with small total mean curvature}, J. Geom. Anal., {\bf  35} (2025), no. 12, Paper No. 394, 23 pp.
			
			
			\bibitem {BK}
			\ \ \ R. Bamler and B. Kleiner, {\em On the Multiplicity One Conjecture for Mean Curvature Flows of surfaces}, arXiv:2312.02106 [math.DG].


                 \bibitem {BS}
			\ \ \ C. Bao and Y. Shi, {\em Gauss maps of translating solitons of mean curvature flow}, Proc. Amer. Math. Soc., {\bf 142} (2014), no. 12, 4333-4339.


             \bibitem {Ber}
			\ \ \ L. Bers, {\em Local behavior of solutions of general linear elliptic equations}, Comm. Pure Appl. Math., {\bf 8} (1955), 473–496.



           \bibitem{B}
           \ \ \ S. Brendle {\em The isoperimetric inequality for a minimal submanifold in Euclidean space}, J. Amer. Math. Soc., {\bf 34} (2021), no. 2, 595–603.


         \bibitem {CHLS}		
			\ \ \ J. Chen, X. Han, J. Li and J. Sun, \emph{Tangent flows of symplectic mean curvature flows}, Journal of Mathematical Study, {\bf 59} (1) (2026), 40-59.
		
			
			\bibitem {CL1}
			\ \ \ J. Chen and J. Li, {\em Mean curvature flow of surfaces in 4-manifolds}, Adv. Math., {\bf 163} (2001), 287-309.
			
			\bibitem{ChenWen}
			\ \ \ Y. Chen and Y. Wen, {\em An $\epsilon$-regularity theorem for mean curvature flow in higher codimension}, preprint.
			
		
			\bibitem {CW}
			\ \ \ S. S. Chern and J. Wolfson, {\em Minimal surfaces by moving frames}, Amer. J. Math. {\bf 105} (1983), 59-83.
			
			\bibitem {CM}
			\ \ \ 	T. H. Colding and W. P. Minicozzi II, {A course in minimal surfaces},Grad. Stud. Math., 121, American Mathematical Society, Providence, RI, 2011. xii+313 pp.


			\bibitem {CM1}
			\ \ \ 	T. H. Colding and W. P. Minicozzi II, {\em Generic mean curvature flow I; Generic gingularities}, Ann. of Math., {\bf 175} (2012), 755-833.
			


			
             	\bibitem {CM2}
				\ \ \ 	T. H. Colding and W. P. Minicozzi II, {\em Uniqueness of blowups and Lojasiewicz inequalities}, Ann. of Math., (2) {\bf 182} (2015), no. 1, 221-285.


			
             	\bibitem {CM3}
				\ \ \ 	T. H. Colding and W. P. Minicozzi II, {\em The singular set of mean curvature flow with generic singularities}, Invent. Math., {\bf 204} (2016), no. 2, 443-471.


			
             	\bibitem {CM4}
				\ \ \ 	T. H. Colding and W. P. Minicozzi II, {\em Complexity of parabolic systems}, Publ. Math. Inst. Hautes Etudes Sci., {\bf 132} (2020), 83-135.


			
			 \bibitem {Ecker}
			\ \ \ K. Ecker,
            {\em On regularity for mean curvature flow of hypersurfaces}, Calc. Var. Partial Differential Equations, {\bf 3} (1995), no. 1, 107–126.



			
			\bibitem {GHH}
			\ \ \ C. Gui, H. Jian and H. Ju, {\em Properties of translating solutions to mean curvature flow}, Discrete Contin. Dyn. Syst., {\bf 28} (2010), no. 2, 441-453.
			
			\bibitem {Hamilton}
			\ \ \  R. Hamilton, {\em Harnack estimate for the mean curvature flow}, J. Diff. Geom., {\bf 41}(1) (1995), 215-226.
			
	
			
			
			\bibitem{HL2}
			\ \ \ X. Han and J. Li, {\em Translating solitons to symplectic and Lagrangian mean curvature flows}, Internat. J. Math., {\bf 20} (2009), no. 4, 443-458.
			
			\bibitem{HL3}
			\ \ \ X. Han and J. Li, {\em Symplectic critical surfaces in K\"ahler surfaces}, J. Eur. Math. Soc., {\bf 12} (2010), 505-527.
			
			


 			\bibitem{HLY}
			\ \ \ X. Han, J. Li and L. Yang, {\em Symplectic mean curvature flow in $\mathbb{CP}^{2}.$} Calc. Var. PDE, \textbf{48} (2013), 111-129.
			



			
			\bibitem {Huisken1}
			\ \ \  G. Huisken, {\em Flow by mean curvature of convex surfaces into spheres}, J. Diff. Geom., {\bf 20} (1984), 237-266.
			
			
			\bibitem {Huisken2}
			\ \ \  G. Huisken, {\em Asymptotic behavior for singularities of the mean curvature flow}, J. Diff. Geom., {\bf 31} (1990), 285-299.
			
			
			\bibitem{HuS1}
			\ \ \ G. Huisken and C. Sinestrari, {\em Convexity estimates for mean curvature flow and singularities of mean convex surfaces}, Acta Math. {\bf 183} (1999), no. 1, 45-70.
			
			
			
			\bibitem{HuS2}
			\ \ \ G. Huisken and C. Sinestrari, {\em Mean curvature flow singularities for mean convex surfaces}, Calc. Var. Partial Differential Equations {\bf 8} (1999), no. 1, 1-14.
			
			
			\bibitem{Il1}
			\ \ \ T. Ilmanen, {\em Singularity of mean curvature flow of surfaces}, preprint.
			
			
			
			
			\bibitem {JLC}
			\ \ \ H. Jian, Q. Liu and X. Chen, {\em Convexity and symmetry of translating solitons in mean curvature flows}, Chinese Ann. Math. Ser. B, {\bf 26} (2005), no. 3, 413-422.
			
			
			\bibitem {JJLW}
			\ \ \ H. Jian, H. Ju, Y. Liu and W. Sun, {\em Symmetry of translating solutions to mean curvature flows}, Acta Math. Sci. Ser. B (Engl. Ed.), {\bf 30} (2010), no. 6, 2006-2016.
			
			
			
			\bibitem {JLT}
			\ \ \   D. Joyce, Y.-I. Lee and M.-P. Tsui, {\em Self-similar solutions and translating solitons for Lagrangian mean curvature flow}, J. Differential Geom., {\bf 84} (2010), no. 1, 127-161.

			\bibitem {Khan}
			\ \ \   I. Khan, {\em The structure of translating surfaces with finite total curvature}, Calc. Var. Partial Differential Equations, {\bf 62} (2023), no. 3, Paper No. 104, 28 pp.

            \bibitem{KP}
            \ \ \ S. G. Krantz and H. R. Parks, {\em A primer of real analytic functions}, Second edition. Birkhäuser Advanced Texts: Basler Lehrbücher.  Birkhäuser Boston, Inc., Boston, MA, 2002.

            \bibitem{Langer}
            \ \ \ J. Langer  {\em A compactness theorem for surfaces with $L^p$-bounded second fundamental form},  Math. Ann., {\bf 270}(2), 223–234 (1985)

           \bibitem {Li-Schoen}
			\ \ \
            P. Li and R. Schoen, {\em $L^p$ and mean value properties of subharmonic functions on Riemannian manifolds}, Acta Math. {\bf 153} (1984), no. 3-4, 279–301
            \bibitem{Lo}
          \ \ \  S. Lojasiewicz,  {\em Ensembles Semi-analytiques}, IHES lecture notes (1965)

			\bibitem {LT}
			\ \ \ S. Lynch and G. Tinaglia, {\em Translators asymptotic to planes}, J. Geom. Anal., {\bf 36} (2026), no. 1, Paper No. 27, 7 pp.
			
			\bibitem {MM}
			\ \ L. Ma and V. Miquel, {\em Bernstein theorem for translating solitons of hypersurfaces}, Manuscripta Math., {\bf 162} (2020), no. 1-2, 115-132.
			
			\bibitem{Morrey}
            \ \ \ C. B. Morrey, {\em Multiple integrals in the calculus of variations}, Reprint of the 1966 edition. Classics in Mathematics. Springer-Verlag, Berlin, 2008. x+506 pp.

			\bibitem {MS}
			\ \ \ J. Michael and L. Simon, {\em Sobolev and mean value inequalities on generalized submanifolds of ${\mathbb R}^n$}, Commun. Pure Appl. Math., {\bf 26} (1973), 361-379.

            \bibitem{MonSe}
			\ \ \ A.  Mondino and D.  Semola, {\em  Polya-Szego inequality and Dirichlet p-spectral gap for non-smooth spaces with Ricci curvature bounded below} J. Math. Pures Appl. (9){\bf 137}  (2020), 238–274.

            \bibitem{MS1}
            \ \ \ S. M\"uller and V. \v{S}ver\'{a}k, {\em On surfaces of finite total curvature}, J. Differential Geom. {\bf 42}(2): (1995) 229-258.


			
			\bibitem {NT}
			\ \ \ A. Neves and G, Tian, {\em Translating solutions to Lagrangian mean curvature flow}, Trans. AMS, {\bf 365} (2013), no. 11, 5655-5680.
			
			

			
			
			\bibitem {SY}
			\ \ \ R. Schoen and S. T. Yau, {\em Lectures on differential geometry}, Conf. Proc. Lecture Notes Geom. Topology, I, International Press, Cambridge, MA, 1994, v+235 pp.
			
			\bibitem{Simon}
            \ \ \ L. Simon, {\em Theorems on regularity and singularity of energy minimizing maps.} Based on lecture notes by Norbert Hungerbühler. Lectures in Mathematics ETH Zürich. Birkhäuser Verlag, Basel, 1996.
		
			\bibitem{SZ}
            \ \ \ J. Sun and J. Zhou, {\em Compactness of surfaces in $\mathbb{R}^n$ with small total curvature.} J. Geom. Anal. {\bf 31} (2021), no. 8, 8238-8270.
			
			
			
		
			
			\bibitem {WangXJ}
			\ \ \ X. Wang, {\em Convex solutions to the mean curvature flow}, Ann. of Math., (2) {\bf 173} (2011), no.3, 1185-1239.
			
			

			
			
			\bibitem{Xin}
			\ \ \ Y. Xin, {\em Translating solitons of the mean curvature flow}, Calc. Var. Partial Differential Equations {\bf 54} (2015), no. 2, 1995-2016.
			
	\end{thebibliography}}
\end{document}